\documentclass[12pt,a4paper]{amsart}
\usepackage{graphicx,latexsym,amsfonts,amsmath,amssymb,rotating,txfonts,mathrsfs,enumerate, mathtools}
\usepackage{epic}
\usepackage{curves}
\usepackage{pdfsync}
\usepackage{ytableau}
\usepackage{tikz}
\usepackage{calrsfs}
\usepackage[utf8]{inputenc}

\input xy
\xyoption{all}

\newcommand{\Spec}{{\rm Spec}}

\newcommand{\Hom}{ \,{\rm Hom} \,}
\newcommand{\Sym}{ \,{\rm Sym} \,}

\newcommand{\im}{ \,{\rm Im} \,}

{\bf}{\rm}
\newtheorem{theorem}{Theorem}[section]
\newtheorem*{theorem*}{Theorem}
\newtheorem{proposition}[theorem]{Proposition}
\newtheorem{corollary}[theorem]{Corollary}
\newtheorem{lemma}[theorem]{Lemma}
\newtheorem{definition}[theorem]{Definition}
\newtheorem{remark}[theorem]{Remark}
\newtheorem{conjecture}[theorem]{Conjecture}
\newtheorem{example}[theorem]{Example}

{\bf}{\it}


\newcommand{\CC}{{\mathbb C }}

\newcommand{\PP}{ {\mathbb P }}
\newcommand{\QQ}{{\mathbb Q }}

\newcommand{\ff}{{\mathbf f }}


\newcommand{\calc}{\mathcal{C}}
\newcommand{\cald}{\mathcal{D}}

\newcommand{\calo}{\mathcal{O}}

\newcommand{\cale}{\mathcal{E}}

\newcommand{\cals}{\mathcal{S}}


\newcommand{\tg}{\tilde{g}}

\newcommand{\reg}{\mathrm{reg}}
\newcommand{\codim}{\mathrm{codim}}

\newcommand{\Euler}{\mathrm{Euler}}

\newcommand{\mapd}[2]{J_k(#1,#2)}

\newcommand{\Tp}{\mathrm{Tp}}
\newcommand{\bbb}{\mathbf{b}}

\newcommand{\epd}[1]{\mathrm{eP}[#1]}

\newcommand{\mdeg}[1]{\mathrm{mdeg}[#1]}
\newcommand{\emu}{\mathrm{emult}}

\newcommand{\dist}{{\mathrm{dst}}}
\newcommand{\lead}{\mathrm{lead}}
\newcommand{\Span}{\mathrm{Span}}
\newcommand{\supp}{\mathrm{supp}}
\newcommand{\Curv}{\mathrm{Curv}}

\newcommand{\res}{\operatornamewithlimits{Res}}

\newcommand{\ires}{\res_{z_1=\infty}\res_{z_{2}=\infty}\dots\res_{z_k=\infty}}
\newcommand{\iresd}{\res_{z_1=\infty}\res_{z_{2}=\infty}\dots\res_{z_d=\infty}}
\newcommand{\sires}{\res_{\mathbf{z}=\infty}}

\newcommand{\dbz}{\,d\mathbf{z}}

\newcommand{\HC}{\mathrm{HC}}

\newcommand{\Thom}{\mathrm{Thom}}

\newcommand{\symdot}{\mathrm{Sym}^{\le k}\CC^n}

\newcommand{\grass}{\mathrm{Grass}}

\newcommand{\flag}{\mathrm{Flag}}
\newcommand{\diff}{\mathrm{Diff}}
\newcommand{\TT}{\mathrm{T}}

\newcommand{\Lt}{\tilde{\Lambda}}

\newcommand{\bz}{\mathbf{z}}

\newcommand{\bx}{\mathbf{x}}
\newcommand{\tx}{\tilde{x}}
\newcommand{\ta}{\tilde{a}}

\newcommand{\baa}{\mathbf{a}}

\newcommand{\bff}{\mathbf{f}}

\newcommand{\bU}{\mathbf{U}}
\newcommand{\bW}{\mathbf{W}}

\newcommand{\kt}{{K}}

\newcommand{\OO}{\mathcal{O}}

\newcommand{\jetreg}[2]{J_{k}^{\mathrm{reg}}({#1},{#2})}

\newcommand{\jetnondeg}[2]{J_{k}^{\mathrm{nondeg}}({#1},{#2})}

\newcommand{\tc}{\hat T}
\newcommand{\hN}{\hat{N}}

\newcommand{\GL}{\mathrm{GL}}
\newcommand{\sym}{\mathrm{Sym}}

\newcommand{\Hilb}{\mathrm{Hilb}}
\newcommand{\CHilb}{\mathrm{CHilb}}
\newcommand{\GHilb}{\mathrm{GHilb}}
\newcommand{\RHilb}{\mathrm{RHilb}}
\newcommand{\BHilb}{\mathrm{BHilb}}

\newcommand{\N}{\mathrm{N}}
\newcommand{\BN}{\mathrm{BN}}
\newcommand{\CN}{\mathrm{CN}}
\newcommand{\hch}{\mathrm{HC}}
\newcommand{\hcpt}{H^*_{\mathrm{cpt}}}
\newcommand{\C}{\mathbb{C}}

\def\a{\alpha}
\def\b{\beta}
\def\g{\gamma}

\def\l{\lambda}

\def\s{\sigma}

\def\L{\Lambda}

\setlength{\textwidth}{6.3in}
\setlength{\textheight}{9in}

\setlength{\evensidemargin}{0.1in} \setlength{\oddsidemargin}{0.1in}
\setcounter{tocdepth}{1}

\title{Tautological integrals on Hilbert scheme of points I} 

\author{Gergely B\'erczi}
\address{Department of Xathematics, Aarhus University}
\email{gergely.berczi@math.au.dk}

\date{}
\begin{document}

\begin{abstract}
We develop a new method to study intersection theory of the main component of the Hilbert scheme of points on complex manifolds. The main result is an iterated residue formula for tautological integrals. We formulate a Chern-Segre-type positivity conjecture for these integrals, and we introduce higher dimensional Segre and Chern integrals.
\end{abstract}

\maketitle
{\scriptsize
\tableofcontents}

\section{Introduction}\label{sec:intro}
The Hilbert scheme of $k$ points on a smooth complex variety $X$, denoted by $\Hilb^k(X)$, is formed by all length-k subschemes of $X$.  
On a surface $S$ the Hilbert scheme $\Hilb^k(S)$ is a nonsingular irreducible variety of dimension $2k$. It has been extensively studied in the last 50 years and it occupies a central position in many important branches of mathematics, see e.g \cite{nakajima}.  While Hilbert schemes on surfaces are well-understood and broadly studied, the Hilbert scheme of points on manifolds of dimension three or higher are extremely wild objects with several unknown irreducible components and bad singularities \cite{vakil}, and our understanding of their geometry is very limited \cite{joachimthesis,joachim}.
The main (also called geometric) component 
\[\GHilb^k(X)=\overline{\{\xi \in \Hilb^k(X): \xi=p_1 \sqcup \ldots \sqcup p_k: p_i \neq p_j\}}\]
is the closure of the locus of non-reduced subschemes. This is a singular component of dimension $k \cdot \dim(X)$, and can be considered as a canonical compactification of the configurations space of $k$ distinct points on $X$. It is the most  interesting and relevant part of $\Hilb^k(X)$ from an enumerative geometry viewpoint, because several classical enumerative geometry problems can be reformulated using its topology and intersection theory, see \cite{nakajima,berczitau3}. 

On surfaces $\Hilb^k(S)=\GHilb^k(S)$, and the cohomological intersection theory of $\Hilb^k(S)$ can be approached from several different directions: 1) via the inductive recursions set up in \cite{egl, esa,esa2}; 2) using Nakajima calculus \cite{nakajima,groj,lehn,gottsche2} or more recently 3) virtual localisation on Quot schemes \cite{GP, mop2}. Lehn's conjecture \cite{lehn} on top Segre numbers of tautological line bundles, and its recent extension, the Segre-Verlinde duality conjectures \cite{mop1,johnson,gottschekool,mellitgottsche}, incapsulate the complexity of this theory. However, these techniques fail at a fundamental level in higher dimensions. 

The present paper accompanied with \cite{berczitau3} develops a new approach to calculate tautological integrals over the main component of the Hilbert scheme in higher dimensions. Let $X$ be a smooth complex variety of dimension $n$ and let $V$ be a rank $r$ algebraic vector bundle on $X$. Let $V^{[k]}$ be the corresponding tautological rank $rk$ bundle on $\Hilb^k(X)$, whose fibre at $\xi \in \Hilb^k(X)$ is $H^0(\xi,V|_\xi)$. Tautological integrals refer to Chern polynomials of $V^{[k]}$. The result of this paper can be interpreted as a generating function of tautological integrals in the dimension of $X$, not in the number of points $k$; this is a previously unseen feature of integrals.

The key idea is to reduce integration over $\GHilb^k(X)$ to a small punctual component, the so-called curvilinear component $\CHilb^k(X)$, where \cite{bercziG&T} provides integral formula. The punctual curvilinear locus at $p\in X$ is the set of curvilinear subschemes supported at $p$: 
\[\mathrm{Curv}^k_p(X)=\{\xi \in \Hilb^k_p(X): \xi \subset \mathcal{C}_p \text{ for some smooth curve } \mathcal{C} \subset X\}
=\{\xi: \calo_\xi \simeq \CC[z]/z^{k}\}\]

We call the closure $\CHilb^k_p(X)=\overline{\mathrm{Curv}^{k}_p(X)}$ the punctual curvilinear component supported at $p$, and $\CHilb^k(X)=\cup_{p\in X}\CHilb_p^k(X)$. We note that $\CHilb_p^k(X)$ is an irreducible component of the punctual Hilbert scheme $\Hilb^k_p(X)$ of dimension $(k-1)(n-1)$, and conjecturally, this is the smallest component \cite{joachimthesis}. On a smooth surface $S$, $\CHilb^k_p(S)=\Hilb^k_p(S)$ according to \cite{briancon}.

 Our strategy to overcome the incapsulated complexity of deformation theory is the following.
\begin{enumerate}
\item We introduce a homotopy trick involving stable holomorphic maps $f: \CC^n \to \CC^n$, which reduces the support of our tautological integrand form. The reduced support is locally irreducible at the the punctual points and its punctual part sits in the curvilinear component $\CHilb^k(X)$.
\item We define the fully nested Hilbert scheme $\N^k(X)$ and use a sieve method from \cite{rennemo} to reduce integration over the various diagonals of $\N^k(X)$. The deepest term is an integral over the punctual part $\N^k_0(X)$, and this let us use $X=\CC^n$ and equivariant integration over $\N^k_0(\CC^n)$. The problem lies in the complexity of $\N^k_0(\CC^n)$, which sits over $\GHilb^k_0(\CC^n)$, and has several wild, big components.
\item We define a partial blow-up of $\N^k(\CC^n)$ which fibers over the flag manifold $\flag_k(\CC^n)$, and using the symmetries of the flag manifold, we transform equivariant localisation into an iterated residue. A key technical part of this paper is a residue vanishing theorem on the fully nested Hilbert scheme: this shows that the fixed point contribution of those components of $\N^k(\CC^n)$ which do not map dominantly to $\CHilb^k(\CC^n)$ is zero.  As a result, we can reduce integration to a neighborhood of $\CHilb^k(\CC^n)$.  
\item We show that the support of our reduced tautological form at the points of $\CHilb^k(\CC^n)$ is isomorphic to the total space of the Haiman bundle $B$, and Thom isomorphism reduces integration to $\CHilb^k(\CC^n)$.
 \end{enumerate}

To explain our main formula, let $\Pi(k)$ denote the set of partitions of $\{1,\ldots, k\}$ into nonempty subsets; an element $\a=\{\a_1,\ldots, \a_s\} \in \Pi(k)$ consists of subsets $\a_i \subset \{1,\ldots, k\}$ such that $\a_i\cap \a_j=\emptyset$ for $1\le i<j\le s$ and $\{1,\ldots, k\}=\cup_{i=1}^s \a_i$. We introduce a set of variables 
\[\bz^{\a_i}=\{z^{\a_i}_1,\ldots, z^{\a_i}_{|\a_i|-1}\}\]
for each element of the partition, where $|\a|$ stands for the number of elements in $\a$.

\begin{theorem}[\textbf{Integrals over $\GHilb^{k}(X)$}]\label{mainthm}
Let $V$ be a rank $r$ vector bundle over the smooth projective variety $X$ of dimension $n$, with Chern roots $\theta_1, \ldots, \theta_r$. Let $\Phi$ be a Chern polynomial of degree $nk$. Then  
\[\int_{\GHilb^{k}(X)}\Phi(V^{[k]}) =\sum_{(\a_1,\ldots, \a_s) \in \Pi(k)} \int_{X^s} \res_{\bz^{\a_1}=\infty}\ldots \res_{\bz^{\a_s}=\infty} \mathcal{R}^\a(\theta_i, \bz) d\bz^{\a_1}\ldots d\bz^{\a_s}\]
where $\mathcal{R}^\a(\theta_i, \bz)$ stands for the rational form 
\[\Phi(V(\bz^{\a_1}) \oplus \ldots \oplus V(\bz^{\a_s})) 
\prod_{l=1}^s \left(\frac{(-1)^{|\a_l|-1} \prod_{1\le i<j \le |\a_l|-1}(z_i^{\a_l}-z_j^{\a_l})Q_{|\a_l|-1}d\bz^{\a_l}}
{\prod_{i+j\le q\le 
|\a_l|-1}(z_i^{\a_l}+z_j^{\a_l}-z_q^{\a_l})(z_1^{\a_l}\ldots z_{|\a_l|-1}^{\a_l})^{n+1}}\prod_{i=1}^{|\a_l|-1} 
s_X\left(\frac{1}{z_i^{\a_l}}\right)\right).\]
Here we use the following notations:
\begin{itemize}
	\item $\res_{\bz^{\a_l}=\infty}=\res_{z^{\a_l}_1=\infty} \ldots \res_{z^{\a_l}_{|\a_l|-1}=\infty}$ is the iterated residue at infinity.
	\item $s_X\left(\frac{1}{z}\right)$ is the Segre series of $X$ and $Q_k$ are the universal Borel-multidegrees explained in \cite{bsz}.
	\item $V(z)$ stands for the bundle $V$ tensored by the line $\mathbb{C}_z$, which is the representation of a torus $T$ with weight $z$. Hence its Chern roots are $z+\theta_1,\ldots ,z+\theta_r$. Moreover 
	\[V(\bz^{\a_l})=V \oplus V(z^{\a_l}_1) \oplus \ldots \oplus V(z^{\a_l}_{|\a_l|-1})\]
	has rank $r|\a_l|$, and 
	\begin{equation}\label{residueeuler} \Phi(V(\bz^{\a_1}) \oplus \ldots \oplus V(\bz^{\a_s})) =  \Phi(\theta_i^1,z^{\a_1}_1+\theta_i,\ldots z^{\a_1}_{|\a_1|-1}+\theta_i, \ldots, \theta_i^s,z^{\a_r}_1+\theta_i,\ldots z^{\a_r}_{|\a_r|-1}+\theta_i)
	\end{equation}
	Note that we have $s$ copies of the roots $\theta_1,\ldots , \theta_r$ of $V$, and we think of the $i$th copy $\theta_1^i,\ldots , \theta_r^i$ as the Chern roots of $V=V^i$ sitting over the $i$th copy of $X$ in $X^s$.
	\item The iterated residue is a homogeneous symmetric polynomial of degree $ms$ in the Chern roots $\theta^i_j$, that is, the Chern roots of $V^1 \oplus \ldots \oplus V^s$ over $X^s$, and integration over $X^s$ evaluates the homogeneous $(m,m,\ldots,m)$ part with at fundamental class of $X^s$.   
\end{itemize}

\end{theorem}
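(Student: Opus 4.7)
The plan is to execute the four-step reduction sketched in the introduction and convert each geometric step into a concrete cohomological computation. First I would pick a generic stable holomorphic test map $f$ (equivalently, work locally via germs $\CC^n \to \CC^n$) and homotope the tautological form $\Phi(V^{[k]})$ along a deformation whose reduced support, over every punctual stratum, is concentrated inside the curvilinear component $\CHilb^k(X)\subset \GHilb^k(X)$. This is the geometric manoeuvre that replaces the ill-behaved cycle $\GHilb^k(X)$ by a locus which is locally irreducible at the punctual points and avoids the wild non-curvilinear components of $\Hilb^k(X)$.

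Second I would pass to the fully nested Hilbert scheme $\N^k(X)=\{\xi_1\subset\cdots\subset \xi_k: \mathrm{length}(\xi_i)=i\}$, which is proper over $\GHilb^k(X)$ and is equipped with a natural morphism to $X^k$ sending a flag to its sequence of supports. Stratifying $X^k$ by the partial diagonals indexed by partitions $\a\in \Pi(k)$ and applying the M\"obius/sieve identity of \cite{rennemo} rewrites
\[\int_{\GHilb^{k}(X)}\Phi(V^{[k]}) = \sum_{(\a_1,\ldots,\a_s)\in \Pi(k)} \int_{X^s} I_{|\a_1|}\cdots I_{|\a_s|},\]
where each $I_m$ is an equivariant punctual integral on $\N^m_0(\CC^n)$ and the outer integral over $X^s$ records the $s$ distinct supports. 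The sum over $\Pi(k)$ in the final formula arises at precisely this stage; the Chern roots $\theta_i$ enter the $l$-th factor as the weights of $V$ at the $l$-th base point of $X^s$.

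Third, and most substantially, I would compute the punctual piece $I_m$ via a partial blow-up $\widetilde{\N}^m_0(\CC^n)$ which fibres over the flag variety $\flag_m(\CC^n)$ by sending a flag of ideals to its flag of quotients. The maximal torus $T\subset \GL(n)$ acts with isolated fixed flags, and the Borel-localisation machinery of \cite{bsz} converts the equivariant push-forward on $\flag_m(\CC^n)$ into the iterated residue at infinity in variables $z_1^{\a_l},\ldots,z_{m-1}^{\a_l}$. Each ingredient of $\mathcal{R}^\a$ has a direct source in this calculation: the Vandermonde $\prod_{i<j}(z_i-z_j)$ together with $Q_{m-1}$ is the Borel multidegree of the flag bundle; the denominator $\prod_{i+j\le q}(z_i^{\a_l}+z_j^{\a_l}-z_q^{\a_l})$ is the product of nontrivial weights of the model curvilinear tangent directions at the standard fixed flag; the factors $(z_1\cdots z_{m-1})^{n+1}$ come from the Euler classes of the $n$-dimensional ambient space at each step; the products $\prod_{i}s_X(1/z_i^{\a_l})$ assemble the Segre series of $X$ from localisation along the base $X^s$; and $\Phi(V(\bz^{\a_l}))$ encodes the weights of $V^{[k]}$ at such a flag, where the line $\CC_{z_i^{\a_l}}$ realises the $T$-character on the $i$-th successive quotient.

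The principal obstacle, and the part that demands the real technical investment, is the residue vanishing theorem: $\N^m_0(\CC^n)$ has numerous irreducible components besides the one dominating $\CHilb^m_0(\CC^n)$, and one must show that every non-curvilinear fixed-flag contribution disappears after the operator $\res_{\bz^{\a_l}=\infty}$ is applied. I would attack this by a homogeneity/degree count: the combinatorics of the normal weights at a non-curvilinear fixed flag forces the corresponding rational form to decay strictly faster than $z^{-1}$ at infinity in at least one variable, so the iterated residue annihilates it. With the vanishing in hand, step (4) becomes clean, since along $\CHilb^m_0(\CC^n)$ the reduced support of the deformed form is the total space of Haiman's tautological bundle $B\to \CHilb^m_0(\CC^n)$, and the Thom isomorphism for $B$ delivers exactly the factor appearing in $\mathcal{R}^\a$. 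Assembling the pieces and summing over $\a\in\Pi(k)$ produces the displayed formula.
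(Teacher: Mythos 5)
Your overall architecture tracks the paper's Steps 1--5 faithfully, but two of your ingredients are wrong in ways that would derail an actual proof.

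First, you define $\N^k(X)=\{\xi_1\subset\cdots\subset\xi_k:\mathrm{length}(\xi_i)=i\}$, but this is the \emph{ordinary} nested Hilbert scheme, not the paper's fully nested Hilbert scheme. The paper explicitly contrasts these objects (\S\ref{sec:nested}): the fully nested scheme is the closure of the image of the generic ordered configuration in the product $\prod_{\a\in\Pi(k)}\GHilb^{[\a]}(X)$, so it records a subscheme for \emph{every subset} of the $k$ labels, not just one chain. This is not a cosmetic distinction: the sieve you invoke needs a projection $\pi_\a$ to an approximating Hilbert scheme $\GHilb^{[\a]}(X)=\prod_i\GHilb^{[\a_i]}(X)$ for each partition $\a$ in order to define the approximating classes $V^\a$ and the recursively supported forms $\Phi^\a$ (Proposition \ref{thm:support}). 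The ordinary nested Hilbert scheme carries none of these projections, so on it the M\"obius inversion you want to perform simply cannot be set up.

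Second, and more seriously, your proposed mechanism for the residue vanishing theorem---a homogeneity/degree count showing each non-curvilinear fixed-flag contribution decays faster than $z^{-1}$ in some variable---is exactly what the paper shows fails. In the $k=3$ worked example (\S\ref{subsec:k=3}), the paper points out that terms not involving $z_1$ but involving $z_2$ are \emph{not} automatically killed: a denominator factor like $az_1+bz_2$ expands in powers of $z_1/z_2$ on the $|z_1|\ll|z_2|$ contour and can compensate for arbitrarily negative degree in $z_1$. The degree criterion of Lemma \ref{lemma:vanishprop} only applies when all poles mixing $z_l$ with later variables have nonvanishing coefficient on the later variable; non-curvilinear fixed points violate that. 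What actually makes the residue vanish (Theorem \ref{vanishing1}) is a subtler inductive argument: one stratifies by the boundary divisor $\widehat{\CN}^{bound}_\ff$, identifies the normal direction as a $\PP^1$ on which the integrand is constant (Lemma \ref{lemma:crucial}), integrates it out, and iterates down the flag $\ff_{-1},\ff_{-2},\dots$ --- reducing every non-curvilinear contribution to a single fixed point over the Porteous ideal, where \emph{only then} the degree count of Lemma \ref{lemma:vanishprop} applies. You are also implicitly using the curvilinear residue vanishing of \cite{bercziG&T} (Theorem \ref{vanishing2}) to simplify the leading term; that is a distinct statement that should be cited explicitly. Without the boundary-divisor induction your vanishing claim is a non-sequitur and the whole reduction from the nested scheme to $\CHilb^k$ collapses.

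Finally, a minor accounting issue: the exponent $(z_1^{\a_l}\cdots z_{|\a_l|-1}^{\a_l})^{n+1}$ is not ``the Euler class of the $n$-dimensional ambient space at each step''; it arises as the product of the Euler class $z_1\cdots z_{k-1}$ of the Haiman bundle $B$ (after Thom isomorphism) with the $z^{-n}$ factors that appear when each $\prod_{i=1}^n(\lambda_i-z_l)$ is converted into the Segre series $s_X(1/z_l)/z_l^n$. Getting this right matters for degree-bookkeeping in the vanishing argument.
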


Let us add more explanation to the residue formula. First, if we set the degree of every variable $z_i^{\a_l}$ and $\theta_i^j$ to be $1$, then the total degree of the rational expression $\mathcal{R}^\a(\theta_i, \bz)$ is $nk-(n+1)(k-s)=(n+1)s-k$.
The iterated residue has $k-s$ variables, and hence 
\[\res_{\bz^{\a_1}=\infty}\ldots \res_{\bz^{\a_s}=\infty} \mathcal{R}^\a(\theta_i, \bz) d\bz^{\a_s}\ldots d\bz^{\a_1}\]
is a symmetric polynomial in the Chern roots $\theta_1^i,\ldots, \theta_r^i$ of $V^i$ for $1\le i \le s$, of degree 
\[(n+1)s-k+(k-s)=ns=\dim(X^s)\]
Here $V^i$ is the pull-back of $V$ from the $i$th factor in $X^s$, and integration over $X^s$ gives the result.  
This shows that the dependence on Chern classes of $X$ in fact can be expressed via the Segre classes of $X$. 
For fixed $k$ the rational expression in $\mathcal{R}^\a$ in the formula is independent of the dimension $n$ of $X$, and the iterated residue depends on $n$ only through the total Segre class $s_X$ of $X$. The iterated residue is then some linear combination of the coefficients of (the expansion of) $\mathcal{R}^\a$'s multiplied by Segre classes of $X$. By increasing the dimension, the iterated residue involves new terms of the expansion of $\mathcal{R}^\a$'s, and we can think of the our residue formula as a generating function of tautological integrals for fixed $k$ but varying $n$. 

A very special case of the integral formula appeared in \cite{bsz2}, where we studied equivariant push-forwards along maps $f: X \to Y$ of complex manifolds, and we only had to deal with the class $\Phi=c_d((f^*TY)^{[k]})$ where $k,n,d,r$ satisfy some very restrictive numerical conditions. To lift these restrictions, in this paper we develop a new approach, and in particular, reduce the support of the integrand to the curvilinear locus, and prove a residue vanishing theorem on the fully nested Hilbert scheme.

An old conjecture of Rim\'anyi \cite{rimanyi} says that Thom polynomials of Morin singularities have positive Chern-expansion. We show that this conjecture can be translated into the following surprising and mysterious conjecture.

\begin{conjecture}[Chern-Segre positivity conjecture]
The tautological integral in Theorem \ref{mainthm} has positive coefficients when written in Chern classes of $F$ and Segre classes of $X$. 
\end{conjecture}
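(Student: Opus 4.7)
The plan is to approach this conjectural statement by leveraging its advertised equivalence with Rim\'anyi's positivity conjecture for Thom polynomials of Morin singularities. First I would pin down the precise family of tautological integrals from Theorem \ref{mainthm} which encodes Morin classes: for a stable map germ $f:(\CC^n,0)\to(\CC^{n+r},0)$, the Thom polynomial $\Tp(A_k)$ is expressible (see \cite{bsz2,berczitau3}) as $\int_{\GHilb^k(\CC^n)}\Phi((f^*T\CC^{n+r})^{[k]})$ for a specific top Chern class $\Phi$; Rim\'anyi's conjecture then asserts nonnegativity of the coefficients of $\Tp(A_k)$ when written as a polynomial in Chern classes of the virtual bundle $f^*TY-TX$.

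The second step is a matching of variables. The Segre series $s_X(1/z)$ entering $\mathcal R^\a(\theta_i,\bz)$ is by definition the multiplicative inverse of the total Chern class of $TX$, so via $c(TX)\cdot s(TX)=1$ any polynomial in $c_i(f^*TY-TX)$ rewrites uniquely as a polynomial in $c_i(V)$ (with $V=f^*TY$) and the Segre classes $s_j(X)$. Under this rewriting, Rim\'anyi's conjecture becomes precisely the special case of the Chern-Segre conjecture in which $\Phi$ is the Morin top Chern class and $V=f^*TY$, so Chern-Segre positivity formally implies Rim\'anyi's positivity.

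The third and hardest step is the converse: the upgrade of the Morin case to arbitrary $\Phi$ and $V$. Here I would exploit the functoriality of Theorem \ref{mainthm} in $V$: the rational form $\mathcal R^\a$ depends on $V$ only through $V(\bz^{\a_l})$, which is exact in $V$, so the integral is linear in $\Phi$. Combined with the splitting principle and the freedom to vary both $n$ and $r$, one should be able to express every Chern-Segre monomial appearing on the left-hand side as a nonnegative combination of Morin Thom polynomials for auxiliary target dimensions; such a universality statement would close the gap between the two conjectures.

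The main obstacle is exactly this third step, together with the absence of a direct positive model for the iterated residue. Both the Borel multidegrees $Q_{|\a_l|-1}$ in the numerator and the denominator factors $(z_i^{\a_l}+z_j^{\a_l}-z_q^{\a_l})$ carry mixed signs, so positivity of the Chern-Segre expansion is invisible from the formula itself; every positive coefficient arises only after delicate cancellations in the Laurent expansion at $\bz=\infty$. An unconditional proof will almost certainly require a positive combinatorial interpretation of the residue --- Schubert-calculus expansion on the flag-manifold blow-up used in step (3) of the strategy, a lattice-point enumeration, or a Littlewood-Richardson-type rule --- and the lack of any such model is precisely why both this conjecture and Rim\'anyi's remain open.
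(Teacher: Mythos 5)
The statement you are analyzing is a \emph{conjecture}, not a theorem, and the paper offers no proof of it --- only the claim that Rim\'anyi's positivity conjecture for Thom polynomials of Morin singularities translates into a special case of the Chern--Segre conjecture. Your proposal correctly recognizes that the conjecture remains open, but it misallocates the proof burden. The implication the paper is actually using is Chern--Segre $\Rightarrow$ Rim\'anyi, which is exactly your Step 2: specialize $\Phi$ to the Euler class of $(f^*TY)^{[k]}$, and use $c(TX)\cdot s(TX)=1$ to rewrite the iterated-residue formula in Chern classes of $V=f^*TY$ and Segre classes of $X$; this expresses the Thom polynomial $\Tp(A_k)$ as a tautological integral of the form appearing in Theorem~\ref{mainthm}, so positivity of the general family would give positivity of these particular expansions. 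That is the sense in which Rim\'anyi's conjecture is ``translated into'' the Chern--Segre conjecture.

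Your Step 3 --- upgrading from the Morin case back to arbitrary $\Phi$ and $V$ so as to deduce Chern--Segre \emph{from} Rim\'anyi --- is not something the paper claims, and it would not close the problem even if it succeeded, because Rim\'anyi's conjecture is itself open. You treat the two statements as equivalent and propose to reduce one to the other, but the paper presents Chern--Segre as a strict generalization (``surprising and mysterious''), not a known reformulation with an existing proof on the other side. The universality argument you sketch (expressing an arbitrary Chern--Segre monomial as a nonnegative combination of Morin Thom polynomials for auxiliary target dimensions) is also far from obvious: the iterated residue's denominator factors $(z_i^{\a_l}+z_j^{\a_l}-z_q^{\a_l})$ and the multidegrees $Q_{|\a_l|-1}$ have no manifest sign structure, and there is no known positive basis of Thom polynomials into which general $\Phi$-integrals decompose. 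Your concluding diagnosis --- that an unconditional proof will require a positive combinatorial model for the iterated residue (a Schubert expansion on the flag-manifold blow-up, a lattice-point enumeration, or a Littlewood--Richardson-type rule) --- is the correct assessment of why this conjecture, like Rim\'anyi's, remains open, and it is consistent with how the paper frames it.
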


Finally, in the last section, we introduce higher dimensional Segre and Chern integrals, and prove a product formula for these involving iterated residues. 

In the second part \cite{berczitau3} of this work we will extend the theory for so-called geometric subsets of $\GHilb^k(X)$. These arise when we fix the topological type (local algebra) of our subscheme $\xi=\xi_1 \sqcup \ldots \sqcup \xi_s \in \Hilb^k(X)$ at every point of the support, and we take the closure in the Hilbert scheme. Tautological integrals over geometric subsets have important applications in enumerative geometry. We devote the third part \cite{berczitau4} to study some of these: we develop formulas for counts of projective hypersurfaces with given set of singularities in sufficiently ample linear systems, and develop higher dimensional Nakajima calculus. 

\textbf{Acknowledgments}. The author gratefully acknowledges useful discussions 
with Andr\'as Szenes. This research was supported by Aarhus University Research Foundation grant AUFF 29289.

\section{Overview of the strategy}\label{sec:strategy} 
This section is an overview of our strategy, where we explain the key ideas and steps of the argument but leave the details for subsequent sections. 

Let $X$ be a smooth complex manifold of dimension $\dim(X)=n$. For $n>2$ and $k\ge 7$ the Hilbert scheme $\Hilb^k(X)$ is singular and not irreducible. We will work with the ordered Hilbert scheme $\Hilb^{[k]}(X)$ which is a branched cover of the ordinary Hilbert scheme (see \S \ref{sec:nested}) with an associated ordered Hilbert-Chow morphism $\HC: \Hilb^{[k]}(X) \to X^{\times k}$. The geometric (main) component of $\Hilb^{[k]}(X)$ is
\[\GHilb^k(X)=\overline{\{x_1 \sqcup \ldots \sqcup x_k: x_i \neq x_j\}} \subset \Hilb^{[k]}(X)\]
is the closure of the locus of reduced subschemes (a reduced point is a $k$-tuple of different points on $X$). The main components is singular of dimension $nk$. For $p\in X$ we define the punctual geometric part supported at $p$ as 
\[\GHilb^k_p(X)=\Hilb_p^{[k]}(X) \cap \GHilb^k(X)\]
On surfaces this is irreducible and equal to the full punctual Hilbert scheme.  However, for $\dim(X)\ge 3$ and $k>11$ $\GHilb^k_p(X)$ is not irreducible; its components are called the \textit{smoothable components} of $\Hilb_p^{k}(X)$. There is a distinguished smoothable component, called the curvilinear component $\CHilb^k_p(X)$, whose dimension is $(n-1)(k-1)$, but there are other, higher dimensional smoothable components. Description of the smoothable components is out of reach, the component structure, deformation theory and singularities of the Hilbert scheme is widely open problem. Surprisingly, the following is true: 
\[\text{Xain idea: Integration on } \GHilb^k(X) \text{ can be reduced to integration over } \CHilb^k(X).\]
This combined with the residue formula of \cite{bercziG&T} for integrals over the curvilinear component will give the desired formula.  

\noindent \textbf{Step 1: The stability trick} Let $V$ be a bundle over the complex manifold $X$ and $\Phi(V^{[k]})$ a Chern polynomial of the tautological bundle. After Step 3 below, we can in fact assume that $X=\CC^n$. Let $f: X\to X$ be a stable Thom-Boardman map, see \S\ref{sec:stablemaps} for definitions. Stable maps are dense in the space of holomorphic maps, and we will pick one in the homotopy class of the identity if that exists, otherwise we approximate the identity map with stable maps. In \S \ref{sec:prooftauintegrals} we will deform the integrand $\Phi(V^{[k]})$ to $\Phi(V^{[k]})_f$ such that 
\[\int_{\GHilb^k(X)}\Phi(V^{[k]})=\int_{\GHilb^k(X)} \Phi(V^{[k]})_f
\]
but $\Phi(V^{[k]})_f$ is represented by a form whose support has better geometric properties: it is locally irreducible and curvilinear, as follows. We define the $f$-Hilbert scheme 
\[\GHilb^k(f)=\overline{\{\xi=\xi_1 \sqcup \ldots \sqcup \xi_k \in \GHilb^k(X): f(\xi_1)=\ldots =f(\xi_s) \in X\}}\]
as the set of subschemes supported on the fibers of $f$ 
In \cite{bsz2} we prove that 
\[\mathrm{supp}(\Phi(V^{[k]})_f) \subset \GHilb^k(f).\]
In Proposition \ref{prop:hilbfloc} we prove that for stable Thom-Boardman map $f$ 
\[\GHilb^k_0(f)=\GHilb^k(f) \cap \GHilb^k_0(X) \subset \CHilb^k(X),\]
that is, the punctual $k$-fold locus of a stable map sits in the curvilinear component, hence 
\begin{equation}\label{support1}
\mathrm{supp}(\Phi_f)\cap \GHilb^k_0(X) \subseteq \CHilb^k(X)
\end{equation}
Moreover, we show that a small neighborhood $\GHilb^k_\nabla(f)$ of $\GHilb^k_0(f)$ in $\GHilb^k(f)$ is given by a tautological (Haiman) bundle $B$:
\begin{equation}\label{localmodel}
\xymatrix{\GHilb^k_\nabla(f) \ar[d]  \ar[r] &  B  \ar[d]  \\
  \GHilb^k_0(f) \ar@{^{(}->}[r]^{\iota} &  \CHilb^k(X)}
 \end{equation}
with a  topological isomorphism $\GHilb^k_\nabla(f) \simeq \iota^*(B)$.  

\noindent \textbf{Step 2: The Sieve } We introduce a master blow-up $\pi_\Lambda: \N^k(X) \to \GHilb^k(X)$ which we call the fully-nested Hilbert scheme. It admits blow-up morphisms $\pi_\a: \N^k(X) \to \GHilb^\a(X)$ to approximating Hilbert spaces $\GHilb^\a(X)$ defined for partitions $\a=\a_1 \sqcup \ldots \sqcup \a_r =\{1,\ldots, k\}$ of the $k$ points into smaller groups. For a bundle $V$ over $X$ this defines approximating tautological bundles $V^\a$ over $\GHilb^\a(X)$, and the pull-backs $\pi_\a^*V^\a$ of tautological bundles from the different approximating sets provide necessary abundance of bundles over $N^k(X)$ to define a sieve formula, following ideas of Li and Rennemo. This sieve gives $K$-theoretic decomposition of $V^{[k]}$ as a sum 
\[V^{[k]}=\sum_{\a \in \Pi(k)}W^\a\]
of bundles indexed by partitions of $\{1, \ldots k\}$. For any partition $\{1,\ldots, k\}=\a_1 \sqcup \ldots \sqcup \a_s$ the bundle $W^\a$ is supported on the approximating punctual subset  
\[\supp(W^\a)=N^{\a}_0(X)=\HC^{-1}(\Delta_{\a})\]
and $W^\a$ is a linear combination of the classes $V^{[\b]}$ for those partitions $\b \in \Pi(k)$ which are refinements of $\a$. In particular, for $\a=\Lambda=\{1,\ldots, k\}$ the trivial partition
\begin{equation}\label{deepestterm}
W^\Lambda=\pi_\Lambda^* V^{[k]}+\sum_{\a \in \Pi(k)\setminus \Lambda} \gamma_\a \pi_\a^*V^\a   
\end{equation}
is a linear combination of tautological classes pulled-back from the tautological approximating classes. Let $\Phi$ be a Chern polynomial in the Chern roots of $V^{[k]}$. Then the sieve reduces the integration over $\GHilb^k(X)$ to $N^k_\nabla(X)$:
\begin{equation}\label{step1} 
\xymatrixcolsep{5pc} \xymatrix{\int_{\GHilb^k(X)}\Phi(V^{[k]}) \ar@^{~>}[r]^-{\mathrm{Sieve}} & \int_{N^\Lambda_\nabla(X)}\Phi(W^\Lambda)+ \sum_{\a \in \Pi_k \setminus \Lambda} \int_{N^{\alpha}_{\nabla}(X)}\Phi(W^{\alpha})}
\end{equation}
where the terms in the sum are determined inductively, and $N^\a_\nabla(X)$ stands for a small, tubular neighborhood of the $\a$-punctual part $N^\a_0(X)$. This leaves us with calculating the leading first term.

\noindent \textbf{Step 3: From $X$ to $X=\CC^n$.} Since $\Phi(W^\Lambda)$ is compactly supported in a neighborhood $N^k_\nabla(X)$ of the punctual part $N^k_0(X)$, we can work locally, and pull-back our local neighborhood $N^k_\nabla(X)$ from a universal bundle. The Chern-Weil map then reduces push-forward (integration) to equivariant push-forward (integration) over $X=\CC^n$. The punctual nested Hilbert scheme $N^k_0(\CC^n)$ is not irreducible, but $\Phi(W^\Lambda)$ is supported in a tubular neighborhood which is the union of tubular neighborhoods of its components. 
\begin{equation}\label{step2} 
\xymatrixcolsep{5pc} \xymatrix{\int_{N^\Lambda_\nabla(X)}\Phi(W^{\Lambda}) \ar@^{~>}[r]^-{\mathrm{Chern-Weil}} & \int^T_{N^{\Lambda}_{\nabla}(\CC^n)}\Phi(W^{\Lambda})},
\end{equation}
and we will apply equivariant integration $\int^T$ using the induced action of the maximal torus $T\subset \GL(m,\CC)$.

\noindent \textbf{Step 4: Residue Vanishing Theorem on the nested Hilbert scheme} The next crucial step in the strategy is the construction of a blow-up $\hat{N}^k_\nabla(\CC^n) \to N^k_\nabla(\CC^n)$ and $\widehat{\GHilb}^k_\nabla(\CC^n) \to \GHilb^k_\nabla(\CC^n)$ such that $\hat{N}^k_\nabla(\CC^n)$ fibers over the flag $\flag_{k-1}(T\CC^n)$ of $(1,2,\ldots, k-1)$ dimensional subspaces of the tangent bundle, which itself fibers over $\CC^n$:
\[\hat{N}^k_\nabla(\CC^n) \to \widehat{\GHilb}^k_\nabla \to \flag_{k-1}(T\CC^n)\to \CC^n.\]
 The fiber over the origin is the balanced Hilbert scheme $\BHilb^{k}_0(\CC^n)$ consisiting of subschemes with baricenter at the origin. This fibration introduces symmetry which allows us to transform the Atiyah-Bott equivariant localisation over $\BHilb^k_0(\CC^n)$ into an iterated residue. The symmetry arranges fixed points into cycles and leads to residue vanishing properties. Our first residue vanishing theorem tells that the contribution of the terms $\Phi(V^\a)$ in \eqref{deepestterm} to the push-forward (integral) formula is zero for $\a \neq \Lambda$. But the leading term is a pull-back from the Hilbert scheme, hence 
\begin{equation}\label{step3} 
\xymatrixcolsep{5pc} \xymatrix{ \int^T_{N^{\Lambda}_{\nabla}(\CC^n)}\Phi(W^{\Lambda}) \ar@^{~>}[r]^-{\mathrm{\text{Sieve Res. Vanishing }}} & \int^T_{\GHilb^{k}_{\nabla}(\CC^n)}\Phi(V^{\Lambda})}
\end{equation}
where $\Hilb^{k}_{\nabla}(\CC^n)$ is a neighborhood of the punctual Hilbert scheme on $\CC^n$. In short, the sieve and the first residue vanishing theorem allows us to go back from the master blow-up space to the Hilbert scheme and use equivariant techniques over $X=\CC^n$.
By Step 1, we can assume that the support of the integrand is locally irreducible and curvilinearly supported, hence the neighborhood of the punctual Hilbert scheme can be replaced by the neighborhood of the curvilinear component:
\begin{equation}\label{step3b} 
\int^T_{\GHilb^{k}_{\nabla}(\CC^n)}\Phi(V^{\Lambda})=\int^T_{\CHilb^{k}_{\nabla}(\CC^n)}\Phi(V^{\Lambda})
\end{equation}
We note that in \cite{bsz2} we also prove a Residue Vanishing Theorem, but the argument in \cite{bsz2} only works for very special tautological classes, and the Sieve Residue Vanishing theorem is of slightly different nature. 

By the stability trick we can assume $\Phi(V^{[k]})$ is supported on $\GHilb^k_\nabla(f)$, hence the local model \eqref{localmodel} combined with Thom isomorphism gives  
\[\xymatrixcolsep{5pc} \xymatrix{\int_{\CHilb^k_\nabla(\CC^n)}\Phi(V^{[k]}) \ar@{~>}[r]^-{\text{Local\ model}} &  \int_{\CHilb^k(\CC^n)}\frac{\Phi(V^{[k]})}{\Euler(B)}}\]

\noindent \textbf{Step 5: Residue Vanishing Theorem on the curvilinear Hilbert scheme} The problem is now reduced to integration over the curvilinear component $\CHilb^k(\CC^n)$. We adapt and generalise the formula developed in \cite{bercziG&T,bsz} to perform this integration. The key step is to prove a new residue vanishing theorem, which is stronger than \cite{bsz,bsz2}. This tells us that only one fixed point, the Porteous point, has nonzero contribution in the residue formula  This allows us to ignore the complexity of the singular spaces involved , and reduces it to understand the geometry of a distinguished fixed point 
\[\xymatrixcolsep{5pc} \xymatrix{\int_{\CHilb^k(\CC^n)}\frac{\Phi(V^{[k]})}{\Euler(B)} \ar@{~>}[r]^-{\text{Strong Res Vanishing}} & \text{Integral formula for } n \ge k}\]
This residue formula gives the deepest term in the sieve, and the other terms follow by induction. 

\noindent \textbf{Step 6: Removing the $n\ge k$ condition} We finally apply the trick in \cite{bercziG&T} to derive the integral formula for any $k$ on $\CC^n$. This uses the non-reductive quotient model (i.e jet curve model) of the curvilinear Hilbert scheme in \cite{bsz}.

\section{Hilbert scheme of points on smooth varieties}\label{sec:hilbertschemes}

\subsection{Components and punctual components}
Let $X$ be a smooth projective variety of dimension $n$ and let  
\[\Hilb^k(X)=\{\xi \subset X:\dim(\xi)=0,\mathrm{length}(\xi)=\dim H^0(\xi,\calo_\xi)=k\}\]
denote the Hilbert scheme of $k$ points on $X$ parametrizing all length-$k$ subschemes of $X$. For $p\in X$ let 
\[\Hilb^{k}_p(X)=\{ \xi \in \Hilb^k(X): \mathrm{supp}(\xi)=p\}\]
denote the punctual Hilbert scheme consisting of subschemes supported at $p$. Let 
\[\rho: \Hilb^k(X) \to \Sym^kX,\  \xi \mapsto \Sigma_{p\in X}\mathrm{length}(\calo_{\xi,p})p\]
denote the Hilbert-Chow morphism. Then $\Hilb^k(X)_p=\rho^{-1}(kp)$.

The Hilbert scheme of $k$ points on smooth surfaces is a $2k$ dimensional nonsingular variety \cite{fogarty}. It occupies a central position in many important branches of mathematics and mathematical physics. In particular, it plays a pivotal role in enumerative geometry,  see e.g \cite{nakajima}. The cohomological intersection theory of the Hilbert scheme of points on surfaces can be approached from several different directions: 1) via the inductive recursions set up in \cite{egl, esa,esa2}; 2) using Nakajima calculus \cite{nakajima,groj,lehn} or more recently 3) virtual localisation on Quot schemes \cite{virtual, mop2}. Lehn's conjecture \cite{lehn} on top Segre numbers of tautological line bundles incapsulates the complexity of this theory.

While Hilbert schemes on surfaces are well-understood and broadly studied, the Hilbert scheme of points on manifolds of dimension three or higher are extremely wild objects with several unknown irreducible components and bad singularities \cite{vakil}, and our undertanding of them is very limited. 

In enumerative geometry applications we are mostly interested in the geometric component (also called the main component, or smoothable component) of the Hilbert scheme $\Hilb^k(X)$. This component, which we denote by $\GHilb^k(X)$ is the closure of the open locus formed by the reduced points in $\Hilb^k(X)$, that is, those supported at $k$ distinct points on $X$. This is an irreducible, but highly singular component of dimension $km$. Ekedahl and Skjelnes \cite{ekedahl} constructs the geometric component as a certain blow-up along an ideal of $\Sym^k X$--this is the generalisation of the classical result of Haiman \cite{haiman} on surfaces. 

In this paper we develop iterated residue formulas for certain tautological integrals on the geometric component. The idea is to reduce integration to the punctual curvilinear component, which, in turn, is a projective compactification of a non-reductive quotient: the moduli of $k$-jets in $X$ up to polynomial reparametrisatons. 
A subscheme $\xi \in \Hilb^k_p(X)$ is called curvilinear if $\xi$ is contained in some smooth jet of a curve $\calc_p \subset X$ at $p$.  Equivalently, $\xi$ is curvilinear if $\calo_\xi$ is isomorphic  to the $\CC$-algebra $\CC[z]/z^{k}$.
The punctual curvilinear locus at $p\in X$ is the set of curvilinear subschemes supported at $p$: 
\begin{multline}\nonumber 
\mathrm{Curv}^k_p(X)=\{\xi \in \Hilb^k_p(X): \xi \subset \mathcal{C}_p \text{ for some smooth curve } \mathcal{C} \subset X\}=\\
=\{\xi \in \Hilb^k_p(X):\calo_\xi \simeq \CC[z]/z^{k}\}
\end{multline}

We call the closure $\CHilb^k_p(X)=\overline{\mathrm{Curv}^{k}_p(X)}$ the punctual curvilinear component supported at $p$. $\CHilb_p^k(X)$ is an irreducible component of the punctual Hilbert scheme $\Hilb^k_p(X)$ of dimension $(k-1)(n-1)$. On a smooth surface $S$, $\CHilb^k_p(S)=\Hilb^k_p(S)$ according to Brianchon. Moreover, 
\[\GHilb^k_p(S)=\Hilb_p^{[k]}(S) \cap \GHilb^k(S)\]
is irreducible and equal to the full punctual Hilbert scheme.  However, for $\dim(X)\ge 3$ $\GHilb^k_p(S)$ is typically not irreducible; its components are called the smoothable components of $\Hilb_p^{k}(X)$. The punctual curvilinear component $\CHilb^k_p(X)$ is a smoothable component. On the other hand, the  Iarrobino-type punctual components \cite{iarrobino} are not necessarily smoothable: their dimension can be higher than $km$. Description of the smoothable components is a hard problem and is unknown in general.  It was an open question until recently whether there exist divisorial components of $\GHilb^k_p(S)$, that is, components $C_p \subset R_p^{[k]}$ whose sweep $\cup_{p\in X} C_p$ over $X$ is a divisor of the good component. The dimension of this $C_p$ is necessarily $m(k-1)-1$.  Erman and Velasco in \cite{erman} give affirmative answer to this question when $d\ge 4, k\ge 11$. A beautiful illustration of the components and structure of $\Hilb^k(X)$ can be found in the PhD thesis of Jelisiejew \cite{joachimthesis}, called the 'Bellis Hilbertis'.

To sum up, when $\dim(X)\ge 3$, and the number of points $k$ is large enough, the punctual Hilbert scheme $\Hilb^k_p(X)$ is not necessarily irreducible or reduced, and it has smoothable and non-smoothable components. Among the smoothable components, there is a distinguished one, the curvilinear component, we will call all other smoothable components exotic components.



\section{Stable maps}\label{sec:stablemaps}

Let $J(n)=J(n,1)$ be the space of germs of holomorphic functions $(\CC^n,0) \to (\CC,0)$; in local coordinates $(x_1,\ldots,x_n)$ at the origin 
\[J(n,1)=\{h\in\CC[[x_1\ldots x_n]];\; h(0)=0\}\] 
is the algebra of power series without a constant term. Let $J_k(n,1)$ be the space of $k$-jets of holomorphic functions on $\CC^n$ near the origin, i.e. the quotient of $J(n,1)$ by the ideal of those
power series whose lowest order term is of degree at least $k+1$.

Our basic object is $J_k(n,m)$, the space of $k$-jets of holomorphic
maps $(\CC^n,0)\to(\CC^m,0)$. This is a finite-dimensional complex
vector space, which one can identify as $J_k(n,1) \otimes \CC^m$; hence
$\dim J_k(n,m) =m\binom{n+k}{k}-m$.  We will call the elements of
$J_k(n,m)$ as map-jets of order $k$, or simply map-jets. 

One can compose map-jets via substitution and elimination of terms of degree greater than $k$; this leads to the composition maps
\begin{equation}
  \label{comp}
J_k(n,m) \times J_k(m,p) \to J_k(n,p),\;\;  (\Psi_2,\Psi_1)\mapsto
\Psi_2\circ\Psi_1.
\end{equation}
The  set
\[
\diff_k(n) = \{\Delta\in \mapd nn;\; \mathrm{Lin}(\Delta) \text{ invertible}\}.
\]
is an algebraic group under the composition map \eqref{comp}, and this gives the 
so-called {\em left-right} action (also called $\mathcal{A}$-action) of the group $\diff_k(n) \times \diff_k(n)$ on $\mapd nm$:
\[ [(\Delta_L,\Delta_R),\Psi]
 \mapsto \Delta_L\circ\Psi\circ \Delta_R^{-1}
 \] 

Let $r$ be a nonnegative integer. An {\em unfolding} of a map-jet
$\Psi \in J_k(n,m)$ is a map-jet $\widehat{\Psi}\in \mapd{n+r}{m+r}$ of
the form
\[(x_1,\ldots ,x_n,y_1,\ldots, y_r)\mapsto
(F(x_1,\ldots, x_n,y_1,\ldots, y_r),y_1,\ldots, y_r)
\]
where $F\in \mapd {n+r}m$ satisfies
\[F(x_1\ldots, x_n,0,\ldots,
0)=\Psi(x_1,\ldots, x_n).\] The {\em trivial unfolding} is the
map-jet
\[(x_1,\ldots, x_n, y_1,\ldots, y_r)\to (\Psi(x_1,\ldots
,x_n),y_1, \ldots ,y_r).\]

\begin{definition}
  A map-jet $\Psi \in J_k(n,m)$ is {\em $k$-stable} if all unfoldings of
  $\Psi$ are left-right equivalent to the trivial unfolding. A holomorphic map $f:X \rightarrow Y$ of
complex manifolds is $k$-stable if the $k$-jet $f_p$ of its germ at $p$ is stable for all $p\in X$.
\end{definition}

Informally, the germ $f_p$ is stable if for any
small deformation $\tilde{f}$ of $f$, there is a point in the
vicinity of $p$ at which the germ of $\tilde{f}$ is left-right
equivalent to the germ of $f$ at $p$.

\begin{proposition}{\cite{arnold}} Stable maps $f: X\to X$ form an open dense subset in the space of holomorphic maps.
\end{proposition}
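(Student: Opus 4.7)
The plan is to fix the jet order $k$ and prove openness and density of $k$-stable holomorphic self-maps through Mather's infinitesimal stability criterion, together with a finite-dimensional Sard argument rather than a direct appeal to Thom's jet transversality theorem.

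The key input is Mather's theorem: a germ $\Psi \in \mapd n n$ is $k$-stable iff it is infinitesimally $k$-stable, that is, the finite-dimensional $\CC$-linear map
\[ (\xi,\eta) \longmapsto d\Psi\cdot\xi - \eta\circ\Psi \]
from $\theta_n/\frakm_n^{k+1} \oplus \theta_n/\frakm_n^{k+1}$ to $\theta(\Psi)/\frakm_n^{k+1}\theta(\Psi)$ is surjective. This is a maximal-rank condition on a matrix whose entries are polynomials in the $k$-jet coefficients of $\Psi$, hence $k$-stability cuts out a Zariski-open subset $\mathrm{Stab}_k \subseteq \mapd n n$. Openness of the set of $k$-stable holomorphic self-maps in the Whitney $C^k$-topology now follows: the $k$-jet section $j^k f : X \to \mapd n n$ is continuous in $f$, so the preimage of the open set $\mathrm{Stab}_k$ under $j^k f$ varies continuously with $f$.

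For density, note that the complement $\Sigma := \mapd n n \setminus \mathrm{Stab}_k$ is an $\mathcal{A}$-invariant closed algebraic subvariety of positive codimension, being the union of non-open $\mathcal{A}$-orbits classified by higher Thom-Boardman symbols. Given an arbitrary holomorphic $f : X \to X$ and a chart $U \subset X$, I would consider the finite-dimensional family of perturbations $f_t = f + P_t$ where $P_t$ runs through the vector space of polynomial maps of degree $\le k$ with small support in $U$. The evaluation map $(p,t) \mapsto j^k_p f_t$ is submersive in the fibre directions, so Sard's theorem applied to the projection $\{(p,t) : j^k_p f_t \in \Sigma\} \to T$ forces the set of bad parameters to lie in a proper subvariety of $T$. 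A generic $t$ therefore makes $f_t$ pointwise $k$-stable on $U$, and an exhaustion-and-patching argument, with successive perturbations chosen sufficiently small in the Whitney topology to preserve openness on earlier charts, extends density to all of $X$.

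The main obstacle will be the globalisation step: one must order the chart cover and the successive perturbations carefully to ensure that each new perturbation neither enlarges the unstable locus on previously treated regions nor leaves a residual unstable set at infinity. Openness from the second paragraph makes this straightforward on compact $X$, and in the paper's application the issue is further mitigated because Step 1 of the strategy only requires $k$-stable approximation on a small neighbourhood of the punctual support of the integrand, and Step 3 reduces the problem to $X = \CC^n$, where the standard argument of Arnold-Mather applies verbatim.
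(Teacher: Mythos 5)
The paper does not prove this statement at all --- it is quoted verbatim from \cite{arnold} --- so the only benchmark is the Mather--Arnold theory behind that citation. Your openness argument (infinitesimal $k$-stability as a maximal-rank, hence Zariski-open, condition on the $k$-jet) is fine, but the density argument has a genuine gap. Positive codimension of the unstable locus $\Sigma\subset J_k(n,n)$ is not enough: by your own dimension count, if $F(p,t)=j^k_pf_t$ is submersive then $F^{-1}(\Sigma)$ has dimension $\dim T+n-\codim\Sigma$, and its projection to $T$ is contained in a proper subvariety only when $\codim\Sigma>n$. What a generic $t$ actually buys you is transversality of $j^kf_t$ to the strata of $\Sigma$ --- i.e.\ a Thom--Boardman map --- not emptiness of $(j^kf_t)^{-1}(\Sigma)$, i.e.\ not stability of every germ. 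Demanding that $f_t$ be pointwise $k$-stable is exactly the requirement $\codim\Sigma>\dim X$, which is Mather's nice-dimensions condition and genuinely fails for large $n$ even in the $C^\infty$ category; so density cannot be deduced from ``$\Sigma$ has positive codimension''. (Also, $\Sigma$ is a proper subvariety simply because it is Zariski closed in the irreducible affine space $J_k(n,n)$ and regular jets are stable; it is not ``the union of non-open $\mathcal{A}$-orbits classified by higher Thom--Boardman symbols'' --- a stable fold already has a non-open jet orbit, and Thom--Boardman symbols do not detect stability.)

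The second gap is the globalisation step, which is intrinsically non-holomorphic. There is no holomorphic perturbation ``with small support in $U$'': a holomorphic map agreeing with $f$ off a compact subset of connected $X$ equals $f$, and $f+P_t$ is not a well-defined holomorphic self-map of a compact complex manifold. For compact $X$ the space of holomorphic self-maps is a finite-dimensional complex space in which such linear families typically do not exist, so the exhaustion-and-patching scheme cannot be run; it is a $C^\infty$ technique. This rigidity is precisely why the statement is delicate as a claim about holomorphic $f\colon X\to X$, why the paper merely cites \cite{arnold} and hedges (``we will pick one in the homotopy class of the identity if that exists, otherwise we approximate''), and why in its actual use (Step 1 and \S\ref{sec:prooftauintegrals}) everything is reduced to $X=\CC^n$. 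A repairable version of your argument would be: prove the jet-level statement (stable $k$-jets form a Zariski open dense subset of $J_k(n,n)$), and then make the approximation claim only for maps $\CC^n\to\CC^n$ perturbed by polynomial maps --- where the parametric transversality argument is valid and yields Thom--Boardman maps --- stating explicitly that pointwise stability of the approximation needs the codimension bound $\codim\Sigma>n$.
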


Next we recall the Thom-Boardman classification of maps, which was introduced by Thom \cite{thom} and Boardman \cite{boardman}  (see also \cite{arnold}). 
Let $I=(i_1\ge i_2 \ge \ldots \ge i_k)$ be a finite nonincreasing sequence of nonnegative integers. 
\begin{definition}[Thom, \cite{thom}]\label{thomboardmandef1} For a smooth map $f: X \to Y$, we define $\Sigma^I(f) \subset X$ inductively as follows. Let 
\[\Sigma^i(f)=\{x \in X : \dim(\ker(d_xf))=i\}\]
Suppose that $\Sigma^i(f) \subset X$ is a smooth submanifold and $0\le j\le i$; then we define $\Sigma^{ij}(f)$ to be $\Sigma^j(f|_\Sigma^i(f))$. Inductively, assume that $\Sigma^{i_1,\ldots, i_r}(f) \subset X$ is smooth and define
\[\Sigma^{i_1,\ldots, i_{r+1}}(f) = \Sigma^{i_{r+1}}((f|_{\Sigma^{i_1,\ldots, i_r}(f)})\]
\end{definition}

Boardman in \cite{boardman} proposed a different definition to avoid the condition on the smoothness of the sets $\Sigma^I(f)$. For $I=(i_1,\ldots, i_k)$ he defined a smooth, not necessarily closed submanifold $\Sigma^I \subset J_k(X,Y)$ of the $k$-jet space, independent of the choice $f$. 

\begin{definition} Let $B$ be an ideal in $\mathcal{E}_n=J(n)\oplus \CC$, the ring at the origin of germs of holomorphic functions on $\CC^n$. The $k$th Jacobian extension $\Delta_k(B)$ is the ideal generated by $B$ together with the $k \times k$ determinants $\det(\partial \varphi_i/\partial x_j)$ formed by partial derivatives of functions in $B$ in some fixed local coordinates at the origin on $\CC^n$. It will be convenient to introduce the relabeling $\Delta^k(B):=\Delta_{n-k+1}(B)$. Note that $\Delta^k(B)$ actually does not depend on the choice of these local coordinates.
\end{definition}
We get a chain of ideals
\[B = \Delta^0(B) \subseteq  \Delta^1(B) \subseteq \ldots  \subseteq \Delta^{n+1}(B) = \mathcal{E}_n\]
The largest $\Delta^k$ such that $\Delta^k(B) \neq \mathcal{E}_n$ is called the critical Jacobian extension. Note that the critical extension of an ideal $B$ is $\Delta^{n-r}=\Delta_{r+1}$, where $r=\dim_\CC(\mathfrak{n}^2+B/\mathfrak{n}^2)$.
\begin{definition}[Boardman, \cite{boardman}] Let $I=(i_1,\ldots, i_k)$ be a nonincreasing set of nonnegative integers. The map jet $f=(f_1,\ldots, f_m) \in J_k(\CC^n,\CC^m)$ belongs to $\Sigma^I$ (and said to have Boardman symbol $I$) if the ideal $B=(f_1,\ldots, f_m) \subset \mathcal{E}_n$ has successive critical extensions 
\[\Delta^{i_1}B,\ \  \Delta^{i_2}\Delta^{i_1}B,\ \  \Delta^{i_3}\Delta^{i_2}\Delta^{i_1}B ,\ldots \]
The Boardman class $\Sigma^I$ is a smooth submanifold of $J_k(n,m)$.
\end{definition}
\begin{proposition}[\cite{arnold}] Let $I=(i_1,\ldots, i_k)$ be a Boardman symbol. 
\begin{enumerate}
\item The codimension of the submanifold $\Sigma^I$ in $J_k(n,m)$ is given by the formula
\[\mathrm{codim}(\Sigma^I)=(m-n+i_1)\mu(i_1,\ldots, i_k)-(i_1-i_2)\mu(i_2,\ldots, i_k)-\ldots -(i_{k-1}-i_k)\mu(i_k)\]
where $\mu(i_1,\ldots, i_s)$ is the number of sequences $(j_1,\ldots, j_s)$ satisfying $j_1>0$, $j_1 \ge \ldots j_s$ and $i_s \ge j_s$.
\item The multiplicity of $f\in \Sigma^I(n,m)$ is
\[\dim(A_f)=i_1+\ldots +i_k+1\]
\end{enumerate}
\end{proposition}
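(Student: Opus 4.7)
The plan is to prove both statements by induction on the length $k$ of the Boardman symbol $I=(i_1,\ldots,i_k)$, using the recursive definition $\Sigma^{i_1,\ldots,i_{r+1}} = \Sigma^{i_{r+1}}(f|_{\Sigma^{i_1,\ldots,i_r}})$ as encoded jet-theoretically through the critical Jacobian extensions $\Delta^{n-i_r+1}$. I would work in local coordinates at a fixed point of $\Sigma^I$ and use the explicit presentation of $J_k(n,m)$ as a symmetric-tensor bundle, so that the $\mathcal{A}$-action by $\diff_k(n)\times\diff_k(m)$ reduces to polynomial substitutions and I can bring the jet to a normal form adapted to the chain $\Sigma^{i_1}\supset\Sigma^{i_1,i_2}\supset\cdots$

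For the base case $k=1$, the Boardman class $\Sigma^{i_1}\subset J_1(n,m)$ is just the Thom--Porteous locus of linear maps $\CC^n\to\CC^m$ with kernel of dimension exactly $i_1$, whose codimension is classically $i_1(m-n+i_1)$. This agrees with the stated formula, since $\mu(i_1)$ counts one-term sequences $(j_1)$ with $1\le j_1\le i_1$, giving $\mu(i_1)=i_1$. For the multiplicity in this case, choose coordinates so that $f=(x_1,\ldots,x_{n-i_1},h_{n-i_1+1},\ldots,h_m)$ with $h_j\in\frakm^2$; then the local algebra $A_f$ is a quotient of $\CC[[x_{n-i_1+1},\ldots,x_n]]$ and at the generic point of $\Sigma^{i_1}$ one obtains $\dim A_f=i_1+1$, matching the formula.

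For the inductive step of (1), restrict $f$ to the previously constructed smooth submanifold $\Sigma^{i_1,\ldots,i_r}\subset J_k(n,m)$ and apply the critical Jacobian extension whose generic rank drop is $i_{r+1}$. The subtlety is that the minors imposing the new rank condition are not independent of the conditions already cutting out $\Sigma^{i_1,\ldots,i_r}$: they are parametrized precisely by the sequences $(j_1,\ldots,j_{r+1})$ with $j_1>0$, $j_1\ge\cdots\ge j_{r+1}$ and $i_{r+1}\ge j_{r+1}$, which is the combinatorial meaning of $\mu(i_1,\ldots,i_{r+1})$. A careful count of the new independent equations, combined with the telescoping identity $(m-n+i_{r+1})\mu(i_{r+1})$ plus correction terms of the form $(i_r-i_{r+1})\mu(i_{r+1})$, assembles into the stated sum after reindexing. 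The main obstacle, as always with Boardman's theorem, is this combinatorial bookkeeping: one must verify that the sequences counted by $\mu$ are in bijection with the independent jet-level conditions, and that no hidden syzygy between successive Jacobian extensions collapses the count.

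For (2), I would use the same coordinate normal form and proceed by induction on $k$. At each step of the Boardman chain, the $i_r$-dimensional kernel of the restricted differential contributes exactly $i_r$ fresh generators to the associated graded of the local algebra $A_f=\calo_{X,p}/f^*\frakm_{Y,f(p)}$, because one picks up $i_r$ new indeterminates that survive modulo the image ideal. Summing over $r=1,\ldots,k$ and adding the ground field contribution gives $\dim A_f=1+i_1+\cdots+i_k$. The identification of this with the multiplicity is Nakayama's lemma applied to the decreasing filtration induced by the Boardman chain, where the filtration quotients are precisely the kernel spaces whose dimensions are $i_r$.
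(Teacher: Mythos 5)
The paper gives no proof of this proposition: it is cited verbatim from Arnold's book (and ultimately goes back to Boardman's original 1967 paper), so there is no internal argument to compare against. What you have produced is a high-level outline of the classical route via iterated Jacobian extensions, and the framework you chose is indeed the right one. But the outline has genuine gaps at precisely the places where Boardman's theorem is hard.

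For part (1), your base case is fine: $\Sigma^{i_1}\subset J_1(n,m)$ is the corank-$i_1$ Porteous locus of codimension $i_1(m-n+i_1)$, and $\mu(i_1)=i_1$. The problem is the inductive step. You assert that the minors cutting out $\Sigma^{i_1,\ldots,i_{r+1}}$ inside $\Sigma^{i_1,\ldots,i_r}$ are ``parametrized precisely by'' $\mu(i_1,\ldots,i_{r+1})$-many sequences and that the count ``assembles into the stated sum after reindexing'' via a telescoping identity. Neither claim is established. The codimension formula subtracts terms $(i_{\ell-1}-i_\ell)\,\mu(i_\ell,\ldots,i_k)$ in which the truncated tail $\mu(i_\ell,\ldots,i_k)$ appears, not the single-variable $\mu(i_\ell)$ your ``correction terms'' contain, so the telescoping structure you are invoking does not match the shape of the target formula. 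More importantly, the heart of Boardman's theorem is precisely the verification that the successive Jacobian-extension conditions are independent and are enumerated by the $j$-sequences, that no syzygies cause collapse, and that the ``intrinsic derivative'' used at each stage is well-defined and transversality propagates; you explicitly defer this (``the main obstacle\ldots is this combinatorial bookkeeping'') rather than carrying it out. Without it, the argument does not go through. Incidentally, the condition on the $j$-sequences should be $j_\ell\le i_\ell$ for \emph{every} $\ell$, not only for the last index; otherwise $j_1$ is unbounded and $\mu$ is infinite. The paper's phrasing ``$i_s\ge j_s$'' is shorthand for the full constraint, and your proposal inherits the looser reading, which would make the count wrong.

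For part (2), the claim that ``the $i_r$-dimensional kernel contributes exactly $i_r$ fresh generators to the associated graded'' is a heuristic, not a proof, and the word ``generators'' is misleading since a contribution in filtration degree $r>1$ does not consist of algebra generators. What you actually need is the identity $\dim_\CC(\mathfrak{m}_{A_f}^r/\mathfrak{m}_{A_f}^{r+1})=i_r$ for a generic germ in the Boardman class, and this is a nontrivial consequence of Boardman's structure theory for the Jacobian extension ideals (it is not true for an arbitrary germ with the given symbol, only for a generic one). Nakayama's lemma, which you cite, tells you that a filtration with the right graded pieces has the right total dimension, but it does not produce the graded pieces; that is where the work is. As it stands, both parts of the proposal reduce to ``the classical bookkeeping works out,'' which is exactly the content of the cited theorem rather than a proof of it.
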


\begin{definition}[Boardman, \cite{boardman}] For a map $f: X \to Y$ between manifolds and $x \in X$ let $f=(f_1, \ldots ,f_m)$ be the coordinate functions of $f$ in some local coordinate system. Then $x \in \Sigma^I(f)$ if $B=(f_1,\ldots, f_m) \in \Sigma^I$.
\end{definition} 

\begin{remark} Thom-Boardman singularities of order $k$ are $k$-determined, that is, it is enough to
look at the first $k$ differentials of a map to decide whether it belongs to the given Thom-
Boardman class (this is clear from Boardman's definition). They are also stable in
the sense that if $f:\CC^n \to \CC^m$ belongs to $\Sigma^I$,then so does $f \oplus id_\CC: \CC^{n+1} \to \CC^{m+1}$.
\end{remark}

Let $J_k(X,Y)=J_k(X) \otimes \CC^m \to X$ denote the $k$-jet bundle whose fiber over $x \in X$ is the set $J_k(n,m)$ of $k$-jets of germs at $x$. For a map $f: X \to Y$ its $k$-jet extension defines a section $s_f$ of $J_k(X,Y)$. 

\begin{theorem}[Boardman, \cite{boardman}] \begin{enumerate}
\item Assume that the $k$-jet extension $j^kf$ of the map $f: X  \to Y$ is transverse to the manifolds $\Sigma^I$ (we call these maps Thom-Boardman maps). Then 
\[\Sigma^I(f)=(j^kf)^{-1}(\Sigma^I)\]
for any $I$ of length $k$. 
\item Any smooth map $f:X \to Y$ can be arbitrarily well approximated , together with any number of its derivatives, by a Thom-Boardman map.
\end{enumerate}
\end{theorem}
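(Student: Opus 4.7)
The plan is to prove parts (1) and (2) separately: part (1) is the algebraic reconciliation of Thom's inductive definition of $\Sigma^I(f)$ (via successive kernel ranks of restricted differentials) with Boardman's ideal-theoretic definition (via iterated critical Jacobian extensions), and part (2) is a straightforward application of Thom's jet transversality theorem.

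For part (1) I proceed by induction on the length of $I=(i_1,\ldots,i_k)$. In the base case $k=1$, Thom's condition $\dim\ker(d_xf)=i_1$ is equivalent to $\mathrm{rank}(d_xf)=n-i_1$, which by elementary linear algebra is equivalent to the vanishing at $x$ of every $(n-i_1+1)\times(n-i_1+1)$ minor of the Jacobian of $f$ while some $(n-i_1)\times(n-i_1)$ minor is a unit in $\mathcal{E}_n$. Unwinding the relabeling $\Delta^k=\Delta_{n-k+1}$ in the paper, this is exactly the condition that the critical Jacobian extension of $B=(f_1,\ldots,f_m)$ is $\Delta^{i_1}$, so $f\in\Sigma^{i_1}$ in Boardman's sense. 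For the inductive step, assume the equality for symbols of length less than $k$. By induction and transversality, $M:=\Sigma^{i_1,\ldots,i_{k-1}}(f)=(j^{k-1}f)^{-1}(\Sigma^{i_1,\ldots,i_{k-1}})$ is a smooth submanifold of $X$, and Thom's definition requires identifying the singularities of type $i_k$ of the restricted map $f|_M$. The crucial lemma, which is the heart of Boardman's construction, is that the vanishing ideal of $M$ in $\mathcal{E}_n$ is generated by $B$ together with the minors appearing in the formation of $\Delta^{i_{k-1}}\cdots\Delta^{i_1}B$, so that the rank-loss pattern of $d(f|_M)$ at a point $x\in M$ is controlled by the next critical Jacobian extension $\Delta^{i_k}\Delta^{i_{k-1}}\cdots\Delta^{i_1}B$ at $x$. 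This gives $\Sigma^I(f)=(j^kf)^{-1}(\Sigma^I)$ and closes the induction.

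For part (2), I apply Thom's jet transversality theorem. Each Boardman stratum $\Sigma^I\subset J_k(X,Y)$ is a smooth locally closed submanifold by the codimension formula quoted above, and for fixed $k$ there are only finitely many Boardman symbols of length $k$ with components bounded by $\min(n,m)$. Thom's theorem then asserts that the set of $f\in C^\infty(X,Y)$ with $j^kf$ transverse to all these finitely many strata simultaneously is residual, hence dense, in the Whitney $C^\infty$ topology; approximating the derivatives up to order $k$ is automatic from the definition of this topology, and passing to arbitrary order requires intersecting the resulting residual sets over $k=1,2,\ldots$, which remains residual by the Baire property.

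The main technical obstacle I expect is the inductive ideal-theoretic lemma in part (1): one has to show that the successive Jacobian extensions $\Delta^{i_r}\cdots\Delta^{i_1}B$ of the ambient ideal $B$ correctly detect the Thom stratification of the restricted map $f|_M$, even though the restriction a priori lives on the submanifold $M$ whose defining ideal is only implicitly determined by the Jacobian extensions themselves. The cleanest approach, following Boardman's original paper, is to choose local coordinates on source and target adapted to the kernel and cokernel filtration of $df$ so that $f$ takes a normal form in which the minors of the Jacobian and the restriction operation can be computed explicitly, at which point the equality of ideals becomes a direct determinantal computation; ensuring compatibility of these local normal forms across successive steps of the induction is the most delicate part.
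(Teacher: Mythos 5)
The paper does not prove this theorem; it is quoted verbatim from Boardman's original paper and used as background, so there is no ``paper's own proof'' to compare against. That said, your sketch does reproduce the standard architecture of Boardman's argument, and you correctly isolate the technical crux of part~(1), namely the lemma that the vanishing ideal of the partial stratum $M = \Sigma^{i_1,\ldots,i_{k-1}}(f)$ is generated by $B$ together with the minors appearing in $\Delta^{i_{k-1}}\cdots\Delta^{i_1}B$, so that the rank stratification of $d(f|_M)$ is read off from the next Jacobian extension.

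Two small cautions. First, your inductive step quietly uses transversality of $j^{k-1}f$ to $\Sigma^{i_1,\ldots,i_{k-1}}$ in order to conclude that $M$ is a submanifold; this does not follow from transversality of $j^kf$ to the length-$k$ strata alone, because $\Sigma^{i_1,\ldots,i_{k-1}}$ lives in $J_{k-1}(X,Y)$ and need not be (a smooth) preimage of a length-$k$ stratum. In Boardman's setup one requires transversality to \emph{all} $\Sigma^I$ of all lengths up to $k$, or equivalently transversality to the $\Sigma^I$ in $J_r(X,Y)$ for each $r\le k$; the hypothesis as stated in the theorem should be read in this stronger sense, and your induction needs it. Second, the ideal-theoretic lemma is asserted with a pointer to Boardman but not argued; as written your part~(1) is a correct roadmap rather than a proof. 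Part~(2) is fine: finiteness of symbols at each level, residuality from Thom transversality, and a Baire-category intersection over $k$ are exactly what is needed, and the approximation of derivatives is indeed built into the Whitney topology.
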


In fact, we have the following stronger result, which follows from Thom's transversality theorem.
\begin{proposition}[(see \cite{wilson})] The set of Thom-Boardman maps is residual, that is, the intersection of countable number of open dense subsets in $J_k(X,Y)$. In this sense, a generic map is Thom-Boardman. 
\end{proposition}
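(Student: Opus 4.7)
The plan is to derive this as a direct consequence of Thom's multijet transversality theorem applied one Boardman stratum at a time, combined with the Baire category theorem. The starting point is the single-stratum version of Thom's theorem: for any smooth (not necessarily closed) submanifold $W \subset J_k(X,Y)$, the set
\[
T_W = \{f \in C^\infty(X,Y) : j^k f \pitchfork W\}
\]
is residual in $C^\infty(X,Y)$ equipped with its Whitney (Fréchet) topology. When $W$ is locally closed one first writes $W$ as a countable union of compact submanifolds with boundary $W = \bigcup_j K_j$, observes that transversality of $j^k f$ to $K_j$ on each compact exhaustion of $X$ is an \emph{open} condition, and then appeals to the parametric transversality theorem to get density.

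First I would apply this to each Boardman submanifold $\Sigma^I \subset J_k(X,Y)$: by the previous definition each $\Sigma^I$ is smooth, so the set $T_{\Sigma^I} = \{f: j^k f \pitchfork \Sigma^I\}$ is residual. These are exactly the maps for which $\Sigma^I(f) = (j^k f)^{-1}(\Sigma^I)$ is a smooth submanifold of the expected codimension — i.e.\ the maps that are ``Thom-Boardman with respect to the stratum $I$.''

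Second, I would use that the indexing set of Boardman symbols of length $k$, namely non-increasing sequences $(i_1 \geq i_2 \geq \cdots \geq i_k \geq 0)$ with entries bounded by $\min(n,m)$, is a \emph{countable} set (in fact finite once the source and target dimensions are fixed, but countability is all we need). Hence
\[
\mathcal{T} \;=\; \bigcap_{I}\, T_{\Sigma^I}
\]
is a countable intersection of residual subsets of $C^\infty(X,Y)$. Since $C^\infty(X,Y)$ with the Whitney topology is a Baire space, $\mathcal{T}$ is itself residual, and in particular dense.

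The main obstacle I anticipate is purely bookkeeping: ensuring that Thom's transversality theorem is applied in the correct topology so that the resulting subsets really are countable intersections of open \emph{dense} sets, not merely dense. This is handled by exhausting $X$ by compact sets $X = \bigcup_\ell X_\ell$ and $\Sigma^I$ by compact pieces, and writing $T_{\Sigma^I} = \bigcap_{\ell,j} U_{\ell,j}$ where $U_{\ell,j}$ is the open dense set of maps whose $k$-jet is transverse to the $j$-th compact piece of $\Sigma^I$ over $X_\ell$; since there are only countably many pairs $(\ell,j)$ and countably many symbols $I$, the final intersection remains countable and the Baire argument applies.
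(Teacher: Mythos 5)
Your proposal is correct and follows exactly the route the paper indicates: the paper simply cites Wilson and notes the proposition ``follows from Thom's transversality theorem,'' and your argument — transversality of $j^k f$ to each (smooth, locally closed) Boardman submanifold $\Sigma^I$ gives a residual set, there are countably (in fact finitely) many symbols $I$, and $C^\infty(X,Y)$ with the Whitney topology is Baire — is the standard unpacking of that remark. Your bookkeeping with compact exhaustions to ensure each $T_{\Sigma^I}$ is a countable intersection of open dense sets is precisely the care needed to justify the word ``residual.''
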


\begin{remark}\label{remark:notstratification}
Thom-Boardman singularities do not give a stratification of the jet space $J_k(X,Y)$: they provide a partition of $X$ of a generic mapping into locally closed submanifolds $\Sigma^I(f)$, but the closure of a submanifold is not necessarily a union of similar submanifolds. Indeed, it can be shown (see Lander \cite{lander}) that $\Sigma^2 \cap \overline{\Sigma^{1,\ldots, 1}} \neq \emptyset$ for any number of $1$'s. But if this number of $1$'s is sufficiently large, then $\dim(\Sigma^{1,\ldots, 1})<\dim(\Sigma^2)$.
\end{remark}

\begin{remark} Existence of good approximation of smooth maps does not imply that Thom-Boardman maps form a dense open subset of the space $J_k(X,Y)$ of $k$-jets of maps from $X$ to $Y$. Thom's Transversality Theorem (see e.g. \cite{arnold})  implies that this set is residual. But Wilson \cite{wilson}] showed that the Thom-Boardman maps form an open subset of $J(X,Y)$, if and only if either $n\le m$ and $3n-4<2m$ or $n>m$ and $2m<n+4$. These numerical conditions hold precisely if $\codim(\Sigma^2)>n$, that is, there are no corank-$2$ singularities of $f$ (such maps are also called corank $1$ maps or Morin maps). Moreover, If these numerical conditions hold, then a map is Thom-Boardman if and only if its germ is stable at each point. 

\end{remark}

Recall from Remark \ref{remark:notstratification} that the Thom-Boardman classes $\Sigma^I$ do not give a stratification of the jet space $J_k(X,Y)$. But we still have the following structural statement about the Thom-Boardman classification. 

\begin{theorem}{(see \cite{bsz2})} \label{prop:boardman} 
 For a Boardman symbol $I=(i_1,\ldots, i_k)$ and a Thom-Boardman map $f: X \to Y$ we have 
\[\Sigma^{i_1+\ldots +i_k}(f) \subset \overline{\Sigma^{i_1,\ldots i_k}(f)}\]
\end{theorem}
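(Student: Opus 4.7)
The plan is to reduce the statement on $X$ to a closure relation between Boardman strata in the jet space $J_k(n,m)$, and then transfer it back via transversality of $j^kf$. Concretely, I would first prove the jet-space inclusion
\[\Sigma^{i_1+\ldots+i_k}\ \subset\ \overline{\Sigma^{i_1,\ldots,i_k}}\ \text{ in } J_k(n,m),\]
after which the claim on $X$ follows from two standard ingredients: (i) the Boardman stratification of $J_k(n,m)$ is Whitney regular (Mather), so every point $p\in\Sigma^{\sum i_j}$ is the limit of $p_\nu\in\Sigma^{i_1,\ldots,i_k}$ with Whitney-(a) convergence of tangent spaces, and (ii) since $f$ is Thom-Boardman, $j^kf$ is transverse to every Boardman stratum, so transversality to $\Sigma^{\sum i_j}$ at $p=j^kf(x)$ propagates, via Whitney-(a), to transversality to $\Sigma^{i_1,\ldots,i_k}$ at the nearby points $p_\nu$. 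Lifting the intersections back to $X$ then yields a sequence $x_\nu\to x$ with $x_\nu\in\Sigma^{i_1,\ldots,i_k}(f)$.

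For the jet-space inclusion itself I would argue by induction on the length $k$ of the Boardman symbol. The base case $k=1$ is vacuous. For the inductive step, start with a jet $J_0\in\Sigma^{i_1+\ldots+i_k}$ put into a normal form in which $\ker dJ_0$ is spanned by the first $\sum i_j$ coordinates. Add a small linear perturbation $t\cdot L$ supported on this kernel, of rank $\bigl(\sum i_j\bigr)-i_1$, so that for $t\neq 0$ the corank of $dJ_t$ drops to exactly $i_1$. The first critical Jacobian extension $\Delta^{i_1}$ is then realised on a smooth submanifold $C_t$ near $J_t$, and restricting $J_t$ to $C_t$ produces an auxiliary jet problem of length $k-1$ with total corank $i_2+\ldots+i_k$. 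The inductive hypothesis, applied to this restricted jet problem, lets me adjust the higher-order terms of $J_t$ so that the iterated extensions $\Delta^{i_2},\ldots,\Delta^{i_k}$ achieve exactly the prescribed coranks; this places $J_t$ in $\Sigma^{i_1,\ldots,i_k}$, and letting $t\to 0$ gives $J_0\in\overline{\Sigma^{i_1,\ldots,i_k}}$. The codimension count of Proposition 4.4 provides enough parameters to carry out these corrections simultaneously.

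The main obstacle is controlling the iterated Jacobian extensions during this inductive perturbation: the extensions depend nonlinearly on the higher-order jet data, and one must verify that each successive adjustment does not destroy the coranks arranged at earlier levels. A cleaner alternative in the stable numerical range where Thom-Boardman maps coincide with left-right stable maps (the range $(n,m)$ indicated in the earlier remark) is to use Mather's explicit polynomial normal forms for stable Thom-Boardman germs and to build the deformation coefficient by coefficient; this bypasses the Whitney-regularity step but is restricted in $(n,m)$. Outside the stable range, the Whitney regularity of the Boardman stratification in $J_k(n,m)$ is essential, and I would invoke the classical theorem of Mather for this input.
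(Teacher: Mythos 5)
The paper itself gives no proof of this statement; it is cited to \cite{bsz2}. So I cannot compare your approach to the paper's, but I can assess the proof proposal on its own terms, and it has two genuine gaps.

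First, your transfer step rests on the assertion that ``the Boardman stratification of $J_k(n,m)$ is Whitney regular (Mather).'' This sits in direct tension with the paper's own Remark~\ref{remark:notstratification}: the Boardman decomposition is \emph{not} a stratification in the usual sense. The frontier condition fails --- Lander's example shows $\Sigma^2 \cap \overline{\Sigma^{1,\ldots,1}} \neq \emptyset$ while $\dim(\Sigma^{1,\ldots,1}) < \dim(\Sigma^2)$ once the string of $1$'s is long, so $\Sigma^2 \not\subset \overline{\Sigma^{1,\ldots,1}}$ even though the closure meets $\Sigma^2$. Since Whitney regularity (with connected strata) forces the frontier condition, the blanket claim cannot be correct. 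What your argument actually needs is Whitney condition (a) for the one specific pair $\bigl(\Sigma^{i_1+\ldots+i_k},\ \Sigma^{i_1,\ldots,i_k}\bigr)$ --- a pair for which the frontier-condition pathology does not arise, because the second stratum has strictly larger dimension. That is a plausible but nontrivial separate claim; it is not a corollary of Mather's work on the canonical stratification of jet space (Mather's stratification refines the Boardman decomposition, and the Whitney conditions need not descend to the coarser partition). As stated, this is a hole in the transfer step, not merely an omitted citation.

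Second, the inductive proof of the jet-space inclusion $\Sigma^{i_1+\ldots+i_k}\subset\overline{\Sigma^{i_1,\ldots,i_k}}$ is a sketch rather than a proof, and you say as much: ``The main obstacle is controlling the iterated Jacobian extensions during this inductive perturbation.'' The difficulty is real, and two specific points are unaddressed. (i) After the linear perturbation that drops the corank of $dJ_t$ to $i_1$, the restriction of $J_t$ to $C_t=\Sigma^{i_1}(J_t)$ has corank determined by the interaction between the perturbation and the original higher-order data of $J_0$; there is no reason it should equal $i_2+\ldots+i_k$ without further adjustment. (ii) Those further adjustments to the higher-order terms must be made uniformly in $t$ so that $J_t\to J_0$ as $t\to 0$; but $C_t$ itself degenerates (collapses onto a subspace of $\ker dJ_0$) as $t\to 0$, and the coefficients needed to produce the prescribed coranks on $C_t$ can a priori blow up. In the concrete $\Sigma^2\subset\overline{\Sigma^{1,1}}$ case one can pick by hand a perturbation that makes everything work, but the general inductive step needs a quantitative argument that the required corrections stay bounded, which is exactly the part your proposal flags as an obstacle rather than resolves.

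Your ``cleaner alternative'' via Mather's normal forms for stable germs is closer to a workable route, and given the heavy use of stable maps and local algebras elsewhere in the paper it is plausible that the cited reference argues in that spirit. But as you note, that route is a priori restricted to the nice-dimension range, whereas the theorem is stated for arbitrary Thom--Boardman maps.
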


\subsection{The $f$-Hilbert scheme for stable maps}\label{sec:proofmain}
We start with the introduction and analysis of the multipoint Hilbert scheme $\GHilb^k(f)$, which plays a central role in the proof. This is followed by local analysis of the $k$-fold locus in $X$ at its punctual locus. We then give the proof of the multipoint residue formula. 
 
Let $f: X\to Y$ be a map between smooth compact complex manifolds of dimension $\dim(X)=n, \dim(Y)=m$. This induces the $k$th Hilbert extension map
\[\mathrm{hf}^{[k]}: \Hilb^{[k]}(X) \to \Hilb^{[k]}(X \times Y)\]
which sends the subscheme $\xi_I$ to $\xi_{(I,I_{\Gamma(f)})}$ where $I_{\Gamma(f)}$ is the ideal of the graph $\Gamma(f)$ of $f$. Heuristically, the map $f^{[n]}$ pushes $\xi_I$ onto the graph along the $Y$ axis. $\mathrm{hf}^{[k]}$ is a regular embedding (see \cite{bsz2}). We introduce two subsets of $\Hilb^{[k]}(X)$ which play central role in our argument. 

\begin{definition} \begin{enumerate} \item The $f$-Hilbert scheme $\Hilb^k(f)$ is defined by the following pull-back diagram
\[\xymatrix{\Hilb^k(f) \ar@{^{(}->}[r] \ar@{^{(}->}[d]^-{j} & \Hilb^k(X) \times Y \ar@{^{(}->}[d]^-{j} \\ 
\Hilb^k(X) \ar@{^{(}->}[r]^-{\mathrm{hf}^{[k]}} & \Hilb^k(X \times Y)}\]
We use the same notation $f$ for the associated map 
\[f: \Hilb^k(f) \to Y \ \ \ \xi \mapsto f(\xi).\]
\item The geometric component $\GHilb^k(f) \subset \Hilb^{[k]}(f)$ is the closure of the open part consisting of $k$ points on a level set of $f$: 
\[\GHilb^k(f)=\overline{\{\xi=\xi_1 \sqcup \ldots \sqcup \xi_k \in \Hilb^{[k]}(X): f(\xi_1)=\ldots =f(\xi_s) \in Y\}}\subset \Hilb^k(f)\]
\end{enumerate}
\end{definition}
\begin{remark}Intuitively, both $\Hilb^k(f)$ and $\GHilb^k(f)$ contain subschemes of length $k$ in $X$ whose projection to the graph of $f$ is horizontal. However, not every such 'horizontal' subscheme can be approximated by $k$ different points on level sets. More precisely, a punctual subscheme $\xi \in \Hilb^k_p(f)$ whose support is $p\in X$ sits in $\GHilb^k_p(f)$ if and only if 
\[\xi =\lim_{i\to \infty} \xi_i \text{ where } \xi_i \subset f^{-1}(p_i) \text{ for some } p_i \to p\]
\end{remark}
In fact, the test curve model of Morin singularities gives the following description. 
\begin{proposition}\label{prop:hilbf}  Let $\Hilb^k_0(f)=\cup_{p\in X} \Hilb^k_p(f)$ denote the punctual part of $\Hilb^k(f)$, consisting of subschemes supported at {\it some} point on $X$, and let $\pi_X: \Hilb^k_0(f) \to X$ denote the projection. For the Morin algebra $A_{k}=k[t]/t^{k}$ of dimension $k$ let 
\[\Hilb_{A_k}(f)=\{\xi \in \Hilb^k_0(f): \calo_\xi \simeq A_k\}\]
Then 
\[\overline{\pi_X(\Hilb_{A_k}(f))}=\Sigma_{A_k}(f)\]
where we recall that $\Sigma_{A_k}(f)=\overline{\{p\in X: A_{f,p} \simeq A_k\}}$ is the Thom locus whose cohomology cycle gives the Thom polynomial $\Tp_k$. 
\end{proposition}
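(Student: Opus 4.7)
The plan is to first identify
\[\pi_X(\Hilb_{A_k}(f))=\bigl\{p\in X:A_{f,p}\twoheadrightarrow A_k\bigr\},\]
where $A_{f,p}=\calo_{X,p}/f^*\mathfrak{m}_{f(p)}$ is the local algebra of $f$ at $p$, and then to apply the test-curve characterisation of the Morin closure to conclude.

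For the first step, I would unwind the pull-back definition of $\Hilb^k(f)$: a punctual element $\xi\in\Hilb^k_0(f)$ supported at $p$ is precisely a length-$k$ subscheme $\xi\subset f^{-1}(f(p))$, equivalently a surjection of local $\CC$-algebras $A_{f,p}\twoheadrightarrow\calo_\xi$. Imposing $\calo_\xi\simeq A_k$ then identifies $\Hilb_{A_k}(f)$ fibrewise over $X$ with the set of algebra surjections $A_{f,p}\twoheadrightarrow A_k$, and projecting to $X$ gives the displayed equality. Conversely, each such surjection defines a curvilinear length-$k$ subscheme of $f^{-1}(f(p))$ supported at $p$, hence an element of $\Hilb_{A_k}(f)$ above $p$, so the identification is both-sided.

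For the second step, I would invoke the classical test-curve characterisation of Morin strata (Porteous--Gaffney; cf.\ Theorem \ref{prop:boardman}):
\[\Sigma_{A_k}(f)=\overline{\bigl\{p\in X:A_{f,p}\simeq A_k\bigr\}}=\overline{\bigl\{p\in X:A_{f,p}\twoheadrightarrow A_k\bigr\}}.\]
The left equality is the definition of $\Sigma_{A_k}(f)$ and the inclusion $\subseteq$ in the right equality is immediate. For the reverse inclusion, a surjection $A_{f,p}\twoheadrightarrow A_k=\CC[t]/t^k$ is equivalent to a $(k-1)$-jet of a curve $\gamma:\Spec(\CC[t]/t^k)\to(X,p)$ along which $f\circ\gamma$ is constant; this test-curve datum places $p$ in a Thom-Boardman stratum $\Sigma^I(f)$ with $\sum i_j\geq k$, and Theorem \ref{prop:boardman} shows each such stratum lies in $\overline{\Sigma^{1,\ldots,1}}(f)=\Sigma_{A_k}(f)$. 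Combining the two displayed equalities yields $\overline{\pi_X(\Hilb_{A_k}(f))}=\Sigma_{A_k}(f)$.

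The main obstacle is the right-hand equality in the second step: a priori the surjection locus $\{A_{f,p}\twoheadrightarrow A_k\}$ could contain entire Thom-Boardman strata whose generic point has $A_{f,p}$ strictly larger than $A_k$ and not a priori lying in the Morin closure. This is where the stability hypothesis on $f$ enters essentially: stability forces $j^kf$ to be transverse to every Boardman class $\Sigma^I\subset J_k(X,X)$, so the strata $\Sigma^I(f)$ have the expected dimensions and their closure order in $X$ reproduces the closure order of Boardman classes in the jet space. Theorem \ref{prop:boardman} then applies verbatim to yield the required containment in $\overline{\Sigma^{1,\ldots,1}}(f)$ and completes the proof.
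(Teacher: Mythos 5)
Your first step, identifying $\pi_X(\Hilb_{A_k}(f))$ with the surjection locus $\{p\in X:A_{f,p}\twoheadrightarrow A_k\}$, is correct and a clean abstraction of what the paper does concretely via test curves. Combined with the trivial inclusion $\{A_{f,p}\simeq A_k\}\subseteq\{A_{f,p}\twoheadrightarrow A_k\}$ this recovers the inclusion $\Sigma_{A_k}(f)\subseteq\overline{\pi_X(\Hilb_{A_k}(f))}$, which is the direction the paper's proof establishes by exhibiting, for a generic Morin point $p$, the curvilinear subscheme $\xi_h$ cut out by the unique test curve $h$.

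The genuine gap is in your argument for the reverse inclusion $\{A_{f,p}\twoheadrightarrow A_k\}\subseteq\overline{\{A_{f,p}\simeq A_k\}}$. You invoke Theorem~\ref{prop:boardman} to claim that any stratum $\Sigma^I(f)$ with $\sum i_j$ large lies in $\overline{\Sigma^{1,\ldots,1}}(f)$. But that theorem goes the opposite way: it states $\Sigma^{i_1+\cdots+i_k}(f)\subset\overline{\Sigma^{i_1,\ldots,i_k}(f)}$, i.e.\ the pure-corank Porteous class $\Sigma^{\sum i_j}$ lies in the closure of $\Sigma^I$; it says nothing about $\Sigma^I$ lying in the closure of the Morin class $\Sigma^{1,\ldots,1}$. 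Remark~\ref{remark:notstratification} warns explicitly against exactly this inference: Boardman classes do not stratify the jet space, $\Sigma^2$ is in general \emph{not} contained in $\overline{\Sigma^{1,\ldots,1}}$, and once the number of $1$'s is large enough one even has $\dim\Sigma^{1,\ldots,1}<\dim\Sigma^2$, making the containment impossible on dimension grounds. Your appeal to the stability of $f$ does not repair this: transversality of $j^kf$ to the Boardman classes guarantees that the source strata $\Sigma^I(f)$ have the expected codimensions, but it does not change the adjacency relations among the Boardman classes $\Sigma^I$ themselves inside $J_k(X,Y)$, which is where the obstruction lives. (There is also a minor off-by-one: the multiplicity formula gives $\dim A_{f,p}=\sum i_j+1$, so a surjection onto $A_k$ forces $\sum i_j\geq k-1$, not $\geq k$.) To close the argument along these lines you would need a genuine adjacency statement of the form ``every $\Sigma^I$ admitting a length-$k$ curvilinear quotient lies in $\overline{\Sigma_{A_k}}$'', which is strictly stronger than Theorem~\ref{prop:boardman} and is not supplied by the paper either; the paper's proof, in fact, also addresses only the inclusion $\Sigma_{A_k}(f)\subseteq\overline{\pi_X(\Hilb_{A_k}(f))}$.
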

\begin{proof}
The test curve model for the map $f:X \to Y$ says that for a generic point $p\in \Sigma_{A_k}(f)$ there is a test curve $h: (\CC,0) \to (X,p)$ whose $k$-jet is annihilated by $f$, that is, $j^kf \circ j^kh =0$.
The $k$-jet $j^kh$ in this equation is unique up to $\diff_k(\CC)$. The image of $j^kh$ in $X$ is a $k$-jet of a smooth curve $C_h$ at $p$, and it defines a curvilinear subscheme $\xi_h=(h_1,\ldots, h_n)\in \mathrm{Curv}^k_p(X)$ where $(h_1,\ldots, h_n)$ is the ideal of $h$ in some local coordinates. We claim that $\xi_h \in \Hilb^k(f)$, and this finishes the proof. To check this, note that $j^kf \circ j^kh =j^k(f \circ h)=0$ means that the first $k$ derivatives of the curve $f \circ h$ are zero. Equivalently, the first $k$ derivatives of the corresponding curve  
\[\mathrm{Graph}(C_h)=\{(j^kh(x),j^kf (j^kh(x)):x\in \CC\}  \subset X\times Y\]
on $\mathrm{Graph}(f)$ are horizontal in $X\times Y$.  
\end{proof}
Next we show that for Thom-Boardman maps $\Hilb^k(f)$ can be replaced by $\GHilb^k(f)$ in Proposition \ref{prop:hilbf}.
\begin{proposition}\label{imagepunctual}  Let $f:X \to Y$ be a Thom-Boardman map. Then $\overline{\pi_X(\GHilb_{A_k}(f))}=\Sigma_{A_k}(f)$. 
\end{proposition}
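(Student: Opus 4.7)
My plan is to obtain one inclusion for free from Proposition \ref{prop:hilbf}, and then to upgrade the curvilinear scheme $\xi_h$ produced there from being merely an element of $\Hilb^k(f)$ to lying in the geometric component $\GHilb^k(f)$, by using the Thom-Boardman/stability hypothesis to put $f$ into a Morin normal form around a generic $A_k$ point and explicitly producing $k$-tuples of distinct points on a common fiber of $f$ that converge to $\xi_h$.

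The inclusion $\overline{\pi_X(\GHilb_{A_k}(f))}\subseteq \Sigma_{A_k}(f)$ is immediate: by definition $\GHilb_{A_k}(f)\subseteq \Hilb_{A_k}(f)$, so the inclusion follows at once from Proposition \ref{prop:hilbf}. For the reverse inclusion, both sides being closed in $X$, it suffices to exhibit a dense subset of $\Sigma_{A_k}(f)$ contained in $\pi_X(\GHilb_{A_k}(f))$. I would fix a generic $p\in \Sigma_{A_k}(f)$. Since $f$ is Thom-Boardman, $j^kf$ is transverse to the Boardman stratum $\Sigma^I\subset J_k(X,Y)$ carving out $\Sigma_{A_k}$, and on a dense open subset of $\Sigma_{A_k}(f)$ the germ of $f$ at $p$ is left-right equivalent to the standard corank-$1$ Morin model. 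In suitable coordinates $(x_1,\ldots, x_{n-1},t)$ near $p$ and after a target change, $f$ takes the form $(x_1,\ldots, x_{n-1},P_x(t))$ with $P_x(t)=t^k+a_{k-2}(x)t^{k-2}+\ldots +a_0(x)$, where $x\mapsto (a_0(x),\ldots, a_{k-2}(x))$ is a submersion at $0$ (this is versality of the unfolding of $t\mapsto t^k$).

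For all sufficiently small $x$ outside a proper closed subvariety, the polynomial $P_x$ has $k$ distinct simple roots $t_1(x),\ldots, t_k(x)$, and the reduced $k$-tuple $\xi_x=(x,t_1(x))\sqcup \ldots \sqcup (x,t_k(x))$ consists of $k$ distinct points all mapped by $f$ to the same target point $(x,0)$. Thus $\xi_x\in \GHilb^k(f)$ by the very definition of the geometric component. As $x\to 0$ the roots collapse to $0$, and the ideal of $\xi_x$, generated by $X_i-x_i$ together with $\prod_j(T-t_j(x))$, tends to the ideal $(X_1,\ldots, X_{n-1},T^k)$. Hence $\xi_x$ converges in $\Hilb^{[k]}(X)$ to the curvilinear subscheme at $p$ of length $k$ along $h(\tau)=(0,\ldots, 0,\tau)$, which is exactly the scheme $\xi_h$ built from the test curve in the proof of Proposition \ref{prop:hilbf}. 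Since $\GHilb^k(f)$ is closed, $\xi_h\in \GHilb^k(f)$, and because $\calo_{\xi_h}\simeq \CC[t]/t^k=A_k$, we obtain $\xi_h\in \GHilb_{A_k}(f)$, so $p=\pi_X(\xi_h)\in \pi_X(\GHilb_{A_k}(f))$. Closing up gives $\Sigma_{A_k}(f)\subseteq \overline{\pi_X(\GHilb_{A_k}(f))}$.

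The main obstacle is justifying the availability of the Morin normal form at the generic point of $\Sigma_{A_k}(f)$ purely from the Thom-Boardman assumption, so that $P_x$ really is a versal unfolding of $t\mapsto t^k$. I would handle this by combining Boardman's transversality theorem (which makes $\Sigma_{A_k}(f)=(j^kf)^{-1}(\Sigma^I)$ smooth of the correct codimension) with the classical corank-$1$ classification: on the open dense locus where the local algebra of the germ is $A_k$ and the germ is stable, it is left-right equivalent to the versal unfolding written above. Once this normal form is secured, the rest reduces to the elementary fact that a one-parameter family of polynomials of the form $t^k+a_{k-2}(x)t^{k-2}+\ldots +a_0(x)$ with a submersive coefficient map has nearby fibers with $k$ distinct simple roots, which is exactly what the limit argument requires.
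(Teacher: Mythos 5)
Your proof is correct in outline but takes a genuinely different route from the paper. The paper's argument is a short dimension count: it first observes that at a point $p$ with $\GHilb^k_p(f)\neq\emptyset$ and $\dim A_{f,p}=k$ the only possible punctual subscheme is $\mathrm{Spec}(A_{f,p})$, so $\pi_X(\GHilb^k_0(f))\subset\Sigma_{A_k}(f)$ with $\pi_X$ generically one-to-one; it then invokes irreducibility of $\Sigma_{A_k}(f)$ for Thom--Boardman $f$ and the equality of codimensions $(k-1)(m-n+1)$ to conclude density. Your proof instead establishes density \emph{constructively}: you pass to the corank-$1$ prenormal form at a generic $A_k$ point and explicitly exhibit reduced $k$-tuples on a common fiber degenerating to the curvilinear scheme $\xi_h$, which forces $\xi_h\in\GHilb^k(f)$ by closedness. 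What the paper's dimension count buys is brevity, but it leans on an unstated computation of $\dim\GHilb^k_0(f)$, which in turn implicitly presupposes the nonemptiness you are trying to prove; your construction supplies that missing nonemptiness directly and is the more self-contained argument.

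Two points you should tighten. First, your displayed local model $f=(x_1,\ldots,x_{n-1},P_x(t))$ with a single polynomial $P_x$ is the $m=n$ case; for $m>n$ a corank-$1$ germ with local algebra $A_k$ has $m-n+1$ nonlinear components $(P_1(x,t),\ldots,P_{m-n+1}(x,t))$ with $\bigl(P_i(0,t)\bigr)=(t^k)$, and one must argue that for generic nearby $x$ \emph{in the unfolded parameter space} the common zero set in $t$ still consists of $k$ simple points -- the degeneration picture is the same but the genericity claim needs the full unfolding, not a single coefficient map. Second, you correctly flag the main obstacle -- passing from ``Thom--Boardman'' to ``Morin normal form on a dense open subset of $\Sigma_{A_k}(f)$.'' Transversality of $j^kf$ to the Boardman stratum $\Sigma^{1_{k-1},0}$ is not literally the same as transversality to the $\mathcal{A}$-orbit (Mather stability), so you should either cite the classical equivalence for corank-$1$ germs, or argue directly that transversality to $\Sigma^{1_{k-1}}$ in $J_k(X,Y)$ forces the coefficient map of the unfolding to be submersive at a generic point of $\Sigma_{A_k}(f)$. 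As it stands this step is sketched rather than proved, but it is the right thing to establish and the rest of the argument is sound.
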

\begin{proof}
Let $p\in X$ is a point of multiplicity $k$, that is, $\dim(A_{f,p})=k$. If $\GHilb^k_p(f)$ is nonempty, then $\GHilb^k_p(f)=\mathrm{Spec}(A_{f,p})$
is the scheme given by the local algebra of $f$ at $p$, and this ideal sits in the curvilinear component $\CHilb^k_p(X)$ by Proposition \ref{prop:boardman}. This gives a 1-1 map $\GHilb^k_0(f) \to \pi_X(\GHilb^k_0)$ close to points $p$ where $f_p$ has multiplicity $k$. Hence $\pi_X(\GHilb^k_0(f)) \subset \Sigma_{A_k}(f)$. However, for Thom-Boardman maps $\Sigma_{A_k}(f)$ is irreducible of codimension $(k-1)(m-n+1)$, and $\GHilb^k_0(f)$ has the same codimension.
\end{proof}

\subsection{Local analysis of multipoint locus}

Recall the $k$-fold locus $X_k(f)=\overline{\pi_1(X_k^\times)}$ of the map $f:X \to Y$ where 
\[X_k^\times=\{(p_1,\ldots, p_k):f(p_1)=\ldots =f(p_k), p_i \neq p_i\} \subset X^{\times k} \]
and $\pi_1$ is projection to the first factor. The punctual $k$-fold locus is
\[X_k^0(f)=\pi_1(\Delta \cap \overline{X_k^\times})\]
where $\Delta$ is the small diagonal of the Cartesian product $X^{\times k}$. Equivalently, 
\[X_k(f)=\pi_1(HC(\GHilb^k(f))) \text{ and } X_k^0(f)=\pi_1(HC(\GHilb^k_0(f)))\]
where $HC: \Hilb^{[k]}(X) \to X^k$ is the ordered Hilbert-Chow morphism. 
By Proposition \ref{imagepunctual}, for Thom-Boardman f, the map 
\[\pi_X=\pi_1 \circ HC: \GHilb^{k+1}_0(f) \to \pi_X(\GHilb^{k+1}_0(f))=\Sigma_{A_k}\]
is isomorphism over the $k$-fold locus, i.e where the multiplicity of $f$ is strictly $k$, not larger.

In this section we study local behaviour of the $k$-fold locus $X_k(f)$ near its punctual part. We start with looking at Porteous points. 

\begin{definition} The point $p\in X$ is called $k$-Porteous point of the holomorphic map $f:X \to Y$ if the local algebra of $f$ at $p\in \Sigma^k$ is isomorphic to $A_{\Sigma^k}=\CC[x_1,\ldots, x_k]/(x_1,\ldots, x_k)^2$. The multiplicity of $f_p$ is $k+1$, and its Thom-Boardman type is $(k,0,0,\dots)$, that is, $p \in \Sigma^k(f)$.
\end{definition}

The local algebra of $f$ at a generic point of the Morin locus $\Sigma_{A_k}(f)$ is the Morin algebra $A_k=\CC[t]/t^k$ with one generator, that is $\dim_\CC(A_k/\mathfrak{m}^2)=1$. We will call all other points, where the local algebra is not Morin, or equivalently $\dim(A_f/\mathfrak{m}^2)\ge 2$, boundary points. We call $V_f=A_f/\mathfrak{m} \subset \CC^n$ the supporting subspace. A generic boundary point has two-dimensional supporting subspace. 
For a $k$-Porteous germ $f$ the supporting subspace $V_f=A_f/\mathfrak{m}^2=\ker(f) \subset \CC^n$ is $k$-dimensional, and for any $n-1$-dimensional hyperplane $W \subset \CC^n$ with $\dim(W\cap V_f)=k-1$, the restriction $f|_{W}:(W,0) \to (\CC^n,0)$ is $k-1$ Porteous.


\begin{proposition}\label{prop:kporteous}
Let $f:(\CC^n,0) \to (\CC^m,0)$ be a stable Thom-Boardman map germ with Porteous local algebra $A_{\Sigma^k}$, and let $\xi_f=\mathrm{Spec}(A_{\Sigma^k}) \in \CHilb^{k+1}_0(\CC^n)$ be the corresponding point in the Hilbert scheme. Then
\begin{enumerate}
\item a small $\Delta$-neighborhood of $\GHilb^{k+1}_0(f)$ in $\GHilb^{k+1}(f)$ fibers as 
\[\xymatrix{\GHilb^{k+1}_\nabla(f) \ar[r]^{\pi^{k+1}_f} & \GHilb^{k+1}_0(f) \ar@{->>}[r]^{\pi_{\CC^n}} & \overline{\Sigma_{A_k}(f)} \subset \CC^n}\]
where $\pi^{k+1}_f$ is a fibration with affine fibers of dimension $k$, and the projection $\pi_{\CC^n}$ is surjective birational morphism, bijection over $\overline{\Sigma}_k(f) \setminus \Sigma_{k+1}$. Hence $\GHilb^{k+1}(f)$ is locally irreducible at the punctual part $\GHilb^{k+1}_0(f)$.
\item Pick a basis $\CC^n=\Span(e_1,\ldots, e_n)$ such that the supporting subspace of $f$ is $V_f=\Span(e_1,\ldots, e_k)$, and let $A_{f,p} \in \GHilb^{k+1}_0(f)$ be a boundary local algebra with supporting subspace $V_{f,p}$ such that $\Span(e_1, e_k) \subset V_{f,p}$. Let $W=\Span(e_1,\ldots, e_{k-1},e_{k+1},\ldots, e_n)$. Then $f|_W: W \to \CC^n$ is $k-1$ Porteous germ with fibration $\pi^{k}_{f|_W}: \GHilb^{k}_\nabla(f|_W) \to \GHilb^{k}_0(f|_W)
$, and the fiber over $A_{f,p}$ is isomorphic to 
\[(\pi^{k+1}_f)^{-1}(A_{f,p}) \simeq  (\pi^{k}_{f|_W})^{-1}(A_{f|_W,p}) \times \Span_\CC(e_k).\]
 \item The fiber over the origin $0 \in \Sigma_{A_k}(f)$ is isomorphic to 
\[\GHilb^{k+1}_{0}(f) \simeq \{((y_1,0,\ldots ,0),(0,y_2,\ldots, 0),\ldots (0,\ldots, 0,y_k)): y_i \in \CC\}\simeq (\CC^1)^{\otimes k}\]
\end{enumerate}
\end{proposition}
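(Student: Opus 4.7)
The plan is to work in local coordinates adapted to the Porteous germ and exploit the normal form of a stable $\Sigma^k$-germ. Choose coordinates on $\CC^n$ so that $V_f = \ker(df_0) = \Span(e_1,\ldots,e_k)$. Stability together with the prescribed local algebra $A_{\Sigma^k} = \CC[x_1,\ldots,x_k]/(x_1,\ldots,x_k)^2$ fixes $f$ up to left-right equivalence; after this reduction, the fiber through the origin is scheme-theoretically the Porteous fat point $\xi_f$, each ray $\CC \cdot e_i$ for $i \le k$ is contracted by $f$ to second order (since the defining ideal contains $(x_1,\ldots,x_k)^2$), and $x_{k+1},\ldots,x_n$ give transverse coordinates on the target.

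For Part (3), in this normal form the $(k{+}1)$-point configurations on level sets of $f$ that coalesce to the origin take the canonical form $\{0, y_1 e_1, \ldots, y_k e_k\}$ for small parameters $y_i$. Indeed $f$ identifies any two collinear points on a single ray $\CC \cdot e_i$ to second order, so a $(k{+}1)$-tuple of distinct level-set points accumulating at $0$ must, after reindexing, lie on the $k$ independent directions $e_i$ together with the base point $0$. Taking the closure as some or all $y_i$ tend to zero produces precisely the $k$-parameter family in the statement. I would confirm that no additional limits arise by a dimension count: $\GHilb^{k+1}_0(f)$ has dimension $k$ at $\xi_f$ as forced by Proposition \ref{imagepunctual} combined with the codimension of the $\Sigma^k$-stratum, and Proposition \ref{prop:boardman} places the punctual locus inside $\CHilb^{k+1}(\CC^n)$, ruling out non-smoothable (exotic) fat-point limits.

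For Part (1), I would spread this punctual picture over the multiplicity-$(k{+}1)$ stratum in a neighborhood of the origin. The projection $\pi_{\CC^n}$ sends a punctual subscheme to its support point; over the open Morin part $\overline{\Sigma_{A_k}(f)} \setminus \Sigma_{k+1}$ the fiber is the unique curvilinear subscheme at that point, giving birationality and bijectivity. The fibration $\pi^{k+1}_f$ then adds $k$ affine parameters that infinitesimally separate the coincident points along the $k$ directions of $V_f$, producing $k$-dimensional affine fibers of the tubular neighborhood $\GHilb^{k+1}_\nabla(f)$ over $\GHilb^{k+1}_0(f)$. Local irreducibility of $\GHilb^{k+1}(f)$ near its punctual part follows because $\GHilb^{k+1}(f)$ is by construction the closure of a connected variety of reduced configurations, and the affine fibration exhibits the relevant neighborhood as a single irreducible affine bundle over the irreducible base.

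For Part (2), the inductive structure is transparent in the normal form: restriction to $W=\Span(e_1,\ldots,e_{k-1},e_{k+1},\ldots,e_n)$ drops the supporting direction $e_k$ and converts $f$ into a $\Sigma^{k-1}$-Porteous germ $f|_W$ with local algebra $\CC[x_1,\ldots,x_{k-1}]/(x_1,\ldots,x_{k-1})^2$, to which Part (1) applies with $k$ replaced by $k-1$. For a boundary algebra $A_{f,p}$ whose supporting subspace contains both $e_1$ and $e_k$, the fiber of $\pi^{k+1}_f$ over $A_{f,p}$ splits as the fiber of $\pi^{k}_{f|_W}$ over $A_{f|_W,p}$, accounting for the directions of $V_{f,p} \cap W$, times an independent copy of $\Span_\CC(e_k)$ parametrizing the separation of the extra point along $e_k$. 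The main obstacle will be Part (3): rigorously excluding exotic limit configurations beyond the explicit family and identifying the resulting fat-point limits with points of $\CHilb^{k+1}$ so that the bijection with $\CC^k$ is scheme-theoretic. This will rely on the stable unfolding property together with an explicit description of the curvilinear ideals at the origin in the Porteous normal form.
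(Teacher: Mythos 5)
Your proposal captures the right geometric intuition (adapted normal form, inductive restriction to a hyperplane $W$, dimension counts) but it has a genuine gap exactly at the point you yourself flag as the main obstacle, and this gap propagates into Parts (1) and (2) as well.

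The central unjustified assertion is that in the Porteous normal form, the $(k{+}1)$-point configurations on level sets of $f$ that coalesce to the origin ``must, after reindexing, lie on the $k$ independent directions $e_i$ together with the base point $0$.'' This does not follow from the observation that $f$ contracts each ray $\CC e_i$ to second order. The relevant object is the family of level sets of the \emph{stable unfolding} $\tg_k$ of the genotype $g_k: (x_1,\ldots,x_k)\mapsto (x_ix_j)$, and the $(k{+}1)$-tuples of points on a generic such level set are \emph{not} of the form $\{0,y_1e_1,\ldots,y_ke_k\}$. The paper's proof computes them explicitly: after diagonalizing the linear system coming from equations $([12])$--$([1k])$, the $(k{+}1)$ level-set points take the form $\bigl(\b_i, \Lt_{12}/(\b_i+\a_2),\ldots,\Lt_{1k}/(\b_i+\a_k)\bigr)$ where the $\b_i$ are the $k{+}1$ roots of a degree-$(k{+}1)$ polynomial subject to $\sum\b_i=0$ and the auxiliary conditions (i), (ii). Only in the degenerate limit $\Lt_{12}=\dots=\Lt_{1k}=0$ does the fiber collapse to the axis configuration in Part (3). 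Deriving this requires the algebraic analysis (diagonalization, the role of the $\a_j$, the missing degree-$k$ term, the case $\prod\Lt_{1j}=0$); without it the ``canonical form'' claim is an assumption, not a deduction. The dimension-count sanity check via Propositions \ref{imagepunctual} and \ref{prop:boardman} can rule out exotic components once one knows the family is $k$-dimensional and nondegenerate, but it cannot substitute for establishing what the family actually is.

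The irreducibility argument in your Part (1) is then circular: you conclude local irreducibility ``because the affine fibration exhibits the relevant neighborhood as a single irreducible affine bundle over the irreducible base,'' but the existence and structure of that fibration is precisely what Part (1) asks to prove. Similarly, the isomorphism in Part (2), $(\pi^{k+1}_f)^{-1}(A_{f,p}) \simeq (\pi^{k}_{f|_W})^{-1}(A_{f|_W,p}) \times \Span_\CC(e_k)$, is asserted to be ``transparent in the normal form,'' but in the paper it is extracted from the case analysis of the fibration $\GHilb^{k+1}_\nabla(\tg_k)\to\CC^{k-1}$ at special fibers with $\Lt_{1k}=0$, where equation $[1k]'$ splits into $\tx_k=0$ (giving the $(k{-}1)$-Porteous piece on $W$) or $\tx_1=-\a_k$ (giving the free $\tx_k$-direction), together with a continuity argument that these special fibers are limits of the generic ones. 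That case analysis is where the product decomposition actually comes from, and it is missing from your sketch. In short: the plan is geometrically plausible and points in the right direction, but the proof requires the explicit unfolding and level-set computation that the paper carries out, and your proposal replaces that computation with an unproved structural claim.
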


\begin{proof}
We follow the procedure described in the previous section to construct a stable Porteous germ in any codimension. The rank 0 genotype of an $k$-Porteous singularity is 
\[g_k: (x_1,\ldots, x_k) \mapsto (x_ix_j :1\le i\le j \le k)\]
whose codimension is $l=k(k+1)/2-k={k \choose 2}$. A stable unfolding of codimension $l$ is the map $\tg_k:(\CC^{k^2(k+1)/2},0) \to (\CC^{k^2(k+1)/2+{k \choose 2}},0)$ 
\[
\tg_k: (x_i, a_{ij}^s: 1\le i,j,s \le k) \mapsto \\
(x_ix_j+\sum_{s=1}^k a_{ij}^s x_s, a_{ij}^s: 1\le i,j,s \le k)
\]
where we add a linear form $L_{ij}=a_{ij}^1x_1+\ldots +a_{ij}^kx_k$ to each coordinate function, with the restriction that $k$ out of the $k^2(k+1)/2$ coefficients $a_{ij}^s$ are equal to zero. The choice of this $k$-tuple is not unique, resulting in different stable unfoldings, and a possible choice is
\begin{equation}\label{whicharezero}
a_{1j}^1=0 \text{ for } 1\le j \le k.
\end{equation}

We would like to describe the level set $\tg_k^{-1}(\Lambda_{ij}, a^s_{ij})$ for $|\Lambda_{ij}|,|a^s_{ij}|<\epsilon$. This is equivalent to solving the system 
\begin{equation}
x_ix_j+\sum_{s=1}^k a_{ij}^s x_s-\Lambda_{ij}=0 \tag{[ij]}
\end{equation}
With the choice \eqref{whicharezero} the equations indexed by ([12]),([13]),...,([1k]) can be written in the following form
\begin{equation}\begin{pmatrix}x_1+a_{12}^2 & a_{12}^3 & a_{12}^4 & \cdots & a_{12}^k \\
a_{13}^2 & x_1+a_{13}^3 & a_{13}^4 & \cdots & a_{13}^k \\
a_{14}^2 & a_{14}^3 & x_1+a_{14}^4 & & a_{14}^k \\
\cdots & & & \ddots & \\
a_{1k}^2 & & & & x_1+a_{1k}^k
\end{pmatrix} 
\begin{pmatrix}
x_2 \\ x_3\\ \vdots \\ \\ x_k
\end{pmatrix}=\begin{pmatrix}
\Lambda_{12} \\ \Lambda_{13} \\ \vdots \\ \\ \Lambda_{1k}
\end{pmatrix} \tag{[12])-([1k]}
\end{equation}   
For generic choice of $x_1,a_{1j}^s$ the $(k-1) \times (k-1)$ matrix $A$ on the left hand side is diagonalizable, and in an eigenbasis given by $J$ the equation can written as $(J^{-1}AJ)J^{-1}\mathbf{x}=J^{-1}\Lambda$. The trace of $A$ is linear, whereas the determinant of $A$ is a degree $k-1$ polynomial in $x_1$, hence the same is true for the diagonal matrix $J^{-1}AJ$. Therefore the diagonal entries of $J^{-1}AJ$ are linear forms in $x_1$, and in the eigenbasis $J$ the equation ([12])-([1k]) has the form 
\[\mathrm{diag}(x_1+\a_2,x_1+\a_3,\ldots, x_1+\a_k)\tilde{\mathbf{x}}=\tilde{\Lambda}.\]
that is, with $\tx_1=x_1$
\begin{equation}
\tx_j(x_1+\a_j)=\tilde{\Lambda}_{1j} \text{ for } 2\le j \le k \tag{[1j]'}
\end{equation}
for $\tilde{\bx}=J^{-1}\bx, \tilde{\Lambda}=J^{-1}\Lambda$, and some $\a_2,\ldots, \a_k \in \CC$. Note that $\a_2,\ldots, \a_k$ depend only on $a_{1j}^s$, but not on $\L_{ij}$. 
Now equation ([11]) can be rewritten in the basis $J$ as
\begin{equation}\label{eq11}
\Lt_{11}=\tx_1^2+\ta_{11}^2\tx_2+\ldots +\ta_{11}^k\tx_k
\end{equation}
After substituting [1j]' into this equation has various different forms as follows. 

First assume that $\prod_{i=2}^k \Lt_{1i} \neq 0$ then we multiply \eqref{eq11} by $\prod_{i\neq 1}(\tx_1+\a_i)$ and obtain  
\begin{equation}
\Lt_{11}= \tx_1^2\prod_{i \neq 1}(\tx_1+\a_i)+\ta_{11}^2\Lt_{12}\prod_{i \neq 1,2}(\tx_1+\a_i)+\ldots + \ta_{11}^k\Lt_{1k}\prod_{i \neq 1,k}(\tx_1+\a_i)  \tag{[11]'}
\end{equation}
The first term has degree $k+1$ in $\tx_1$, hence the $x_1$ coordinates of the $k+1$ points in $\tg_k^{-1}(\Lambda_{ij}, a_{ij})$ are the $k+1$ solutions (with multiplicity) of (11) and the corresponding $x_2,\ldots, x_k$ coordinates are given by [1j]'.
For $s>1$ equation ([ss]) has slightly different form 
\begin{multline} \Lt_{ss}= \Lt_{1s}^2 \prod_{i \neq 1,s}(\tx_1+\a_i)+ \ta_{ss}^1\tx_1(\tx_1+\a_s)^2\prod_{i \neq 1,s}(\tx_1+\a_i)+\\ \tag{[ss]'}
+\ta_{ss}^2\Lt_{12}(\tx_1+\a_s)^2\prod_{i \neq 1,2,s}(\tx_1+\a_i)+\ldots + \ta_{ss}^k\Lt_{1k}(\tx_1+\a_s)^2\prod_{i \neq 1,k,s}(\tx_1+\a_i) \nonumber
\end{multline}
but these are generically also degree $k+1$ polynomials in $\tx_1$. On the other hand, the equations $([ij]')$ for $1<i\neq j$ have degree less than $k+1$ in $x_1$, hence in order to get $k+1$ points in the level set $\tg_k^{-1}(\Lambda_{ij}, a_{ij})$ these equations must vanish. Hence the level set $\tg_k^{-1}(\Lambda_{ij}, a_{ij})$ has (maximal) multiplicity $k+1$ if and only if 
\begin{enumerate}[(i)]
\item The equations $([11]'),([22]'), \ldots, ([kk]')$ are collinear.
\item The equations $([ij]')$ for $1<i \neq j$ are identically $0$. 
\end{enumerate}
Equations given by (i) and (ii) together with $[11]',[12]',\ldots, [1k]'$ cut out the $k+1$-fold locus $\CC^{k^2(k+1)/2}_{k+1}$ of $\tg_k$ from the source space $\CC^{k^2(k+1)/2}$. 
Note that $a_{1j}^s$ determine the eigenvalues $\a_2,\ldots, \a_k$, and the nonzero $\Lt_{1j}$'s together with $\a_2,\ldots, \a_k$ determine the roots $\b_1,\ldots, \b_{k+1}$. Indeed, substituting $\tx_1=-\a_j$ into [11]' we get 
\[\Lt_{11}=\ta_{11}^j \Lt_{1j} \prod_{i \neq j}(\a_i-\a_j)\]
which determine $\ta_{11}^j$ for $2\le j \le k$ and hence this determines the roots. Due to the missing degree $k$ term from [11]', $\sum_{i=1}^{k+1}\b_i=0$. For fixed $\Lt_{12},\ldots, \Lt_{1k}$ any $k+1$-tuple $\bbb=(\b_1,\ldots, \b_{k+1})$ with $\sum_{i=1}^{k+1}\b_i=0$ and $||\bbb||$ sufficiently small appears as root set for some choice of $a_{1j}^s$. Then we choose $a_{ij}^s$ for $i>1$ such that (i) and (ii) are satisfied, and by [1j]'
\[\tg_k^{-1}(\Lt_{ij},a_{ij})=\left\{\left(\b_i,\frac{\Lt_{12}}{\b_i+\a_2},\ldots, \frac{\Lt_{1k}}{\b_i+\a_k}, a_{ij}^s \right):1\le i \le k+1\right\}.\]
Then  
\[\GHilb^{k+1}_\nabla(\tg_k)=\overline{\left\{\sqcup_{i=1}^{k+1}\left(\b_i,\frac{\Lt_{12}}{\b_i+\a_2},\ldots, \frac{\Lt_{1k}}{\b_i+\a_k},\baa \right): |\baa|,|\Lt_{ij}| <\varepsilon \text{ and (i),(ii) holds} \right\}}\]
and $\pi_f^{k+1}$ is the projection to the punctual part $\GHilb^{k+1}_0(\tg_k)$ is where $\b_1=\ldots =\b_{k+1}=0$. Let $\pi_{\CC^{k^2(k+1)/2}}:\Hilb^{k+1}_0(\CC^{k^2(k+1)/2}) \to \CC^{k^2(k+1)/2}$ be the projection. By Proposition \ref{prop:hilbf}, $\pi_{\CC^{k^2(k+1)/2}}(\GHilb^{k+1}_0(\tg_k))=\overline{\Sigma_{A_k}(\tg_k)}$, and the Morin locus $\Sigma_{A_k}(\tg_k)=\Sigma^{1,1,\ldots, 1}(f)$ is irreducible since $\tg_k$ is stable, Thom-Boardman. Hence $\GHilb^{k+1}_\nabla(\tg_k)$ fibers over the irreducible $\overline{\Sigma_{A_k}(\tg_k)}$ with affine fibers of dimension $k$: the roots $\b_i$ can run freely under the restriction that their sum is zero:
\[\GHilb^{k+1}_{\Lt_{12},\ldots, \Lt_{1k}}(\tg_k)=\overline{\left\{\sqcup_{i=1}^{k+1}\left(\b_i,\frac{\Lt_{12}}{\b_i+\a_2},\ldots, \frac{\Lt_{1k}}{\b_i+\a_k},\baa \right): \text{ (i),(ii) holds } \right\}} \simeq \mathbf{A}^{k}_{\varepsilon}\]
Hence $\GHilb^{k+1}_\nabla(\tg_k)$ is irreducible, and we finally note that if the matrix $A$ in (12)-(1k) is 
not diagonalizable, we can still use its Jordan normal form and the 
corresponding level set is given by the limit of level sets corresponding to 
diagonalizable matrices.

To finish the proof of (1) and to prove (2) we study the special fibers of the fibration 
\begin{equation}\label{fibration} \GHilb^{k+1}_\nabla(\tg_k) \to \CC^{k-1},\ (\Lt_{ij},a_{ij}^s) \mapsto (\Lt_{12}, \ldots, \Lt_{1k}).
\end{equation}
We have seen that if $\prod_{i=2}^k \Lt_{1i}\neq 0$ then the fibers are locally affine spaces of dimension $k$. If $\prod_{i=2}^k \Lt_{1i}=0$, wlog assume $\Lt_{1k}=0$. By [12]'  $\tx_k(\tx_1+\a_k)=0$. If $\tx_k=0$, then $\tg_k|_{\tx_k=0}$ is a $k-1$-Porteous singularity which is the stable unfolding of the genotype 
\[g_{k-1}: (x_1, \ldots, x_{k-1}) \mapsto (x_ix_j :1\le i\le j \le k-1).\] 
By induction we can assume that the $k$-fold locus $\GHilb^k(\tg_k|_{\tx_k=0})$ is locally irreducible on the $\tx_k=0$ hyperplane at the origin. The $k+1$th point comes from [12]' when $\tx_1+\a_k=0$. Then $\tx_k$ runs freely but $\tx_j=\frac{\Lt_{1j}}{\a_j-\a_k}$ for $1 \le j \le k-1$ from [1j]', assuming $\a_j \neq \a_k$. Hence the special fiber $\GHilb^{k+1}_{\Lt_{12},\ldots, \Lt_{1k-1}}(\tg_k)$ is locally affine, of dimension $k$, and hence it must be the limit of the generic fibers in \eqref{fibration}. This proves (1). 

For (2) note that the previous paragraph says that 
 \[(\pi^{k+1}_f)^{-1}(A_{f,p}) \simeq \GHilb^{k+1}_{\Lt_{12},\ldots, \Lt_{1k-1},0}(\tg_k)\simeq \GHilb^{k}_{\Lt_{12},\ldots, \Lt_{1k-1}}(\tg_{k-1}) \times \{(0,\ldots 0,\tx_k): \tx_k \in \CC\}.\]

For (3) we note that the fiber over the origin is the fiber $\GHilb^{k+1}_{\Lt_{12},\ldots, \Lt_{1k}}(\tg_k)$ where $\Lt_{12}=\ldots =\Lt_{1k}=0$. If $\a_i \neq \a_j$ for $2\le i<j \le k$, then from the equations [1j]' this is 
\[\GHilb^{k+1}_{0,\ldots, 0}(\tg_k)\simeq \{(0,\ldots,0,y_j,0,\ldots, 0): 1\le j \le k\}=\mathbf{A}_{\epsilon}^k\]

\end{proof}


Proposition \ref{prop:kporteous} shows irreducibility of $\GHilb^{k+1}(f)$ at points $p$ where $f_p$ has multiplicity $k+1$. Next we study local irreducibility at higher multiplicity points. These points naturally sit in the closure of the locus where the multiplicity is $k+1$.

\begin{proposition}\label{prop:k+lporteous}
Let $f:(\CC^n,0) \to (\CC^m,0)$ be a stable Thom-Boardman map germ with local algebra $A_{\Sigma^{k+\ell}}$ for some $\ell>0$. Then $\GHilb^{k+1}(f)$ is locally irreducible at points of $\GHilb^{k+1}_0(f)$ 
\end{proposition}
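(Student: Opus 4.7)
The plan is to upgrade the analysis of Proposition \ref{prop:kporteous} to higher-multiplicity points by passing first to the full $(k+\ell+1)$-Porteous Hilbert scheme, and then descending to the $(k+1)$-fold locus via a forgetful-point construction.

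First I would use stability of $f$ to replace it locally by the stable unfolding $\tg_{k+\ell}$ of the $(k+\ell)$-Porteous genotype, exactly following the construction in the proof of Proposition \ref{prop:kporteous} with $k$ replaced by $k+\ell$. That proposition gives an explicit description of the level sets of $\tg_{k+\ell}$: generically they consist of $k+\ell+1$ points with $\tx_1$-coordinates $\b_1,\ldots,\b_{k+\ell+1}$ satisfying $\sum_i \b_i=0$, and the remaining coordinates given by $\tx_j=\Lt_{1j}/(\b_i+\a_j)$ for $j\ge 2$. Moreover, $\GHilb^{k+\ell+1}_\nabla(\tg_{k+\ell})$ is locally irreducible at its punctual stratum at the origin.

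Next I would identify the punctual fiber $\GHilb^{k+1}_0(\tg_{k+\ell})$ at the origin explicitly. Since the local algebra $A_{\Sigma^{k+\ell}}=\CC[x_1,\ldots,x_{k+\ell}]/(x_ix_j)$ satisfies $\mathfrak{m}^2=0$, every ideal of $A_{\Sigma^{k+\ell}}$ automatically contains $\mathfrak{m}^2$, so length-$(k+1)$ quotients correspond bijectively to $\ell$-dimensional subspaces of $\mathfrak{m}\simeq\CC^{k+\ell}$, and each such quotient is itself a smoothable Porteous algebra $A_{\Sigma^k}$. Therefore $\GHilb^{k+1}_0(\tg_{k+\ell})$ at the origin is the Grassmannian $\Grass(\ell,k+\ell)$, which is smooth and irreducible of dimension $k\ell$.

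The main step is to construct, on ordered Hilbert schemes, a forgetful rational map from the irreducible $\GHilb^{[k+\ell+1]}_\nabla(\tg_{k+\ell})$ to $\GHilb^{[k+1]}_\nabla(\tg_{k+\ell})$ that remembers only $k+1$ of the $k+\ell+1$ points of each level set. Over the open locus where all roots $\b_i$ are distinct this is a literal forgetful morphism, and it extends across the punctual stratum by continuity using the explicit coordinate formulas above. The symmetric group $S_{k+\ell+1}$ permuting the ordered points acts transitively on the $\binom{k+\ell+1}{k+1}$ possible choices of which $k+1$ points to keep, so all these choices have the same image, and the image is locally irreducible as the continuous image of an irreducible space. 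Taking the $S_{k+1}$-quotient then yields local irreducibility of $\GHilb^{k+1}_\nabla(\tg_{k+\ell})$ at the punctual stratum. The main obstacle is to verify that, as $k+1$ of the roots $\b_i$ cluster at $0$ while the remaining $\ell$ roots drift away, the limit subscheme in $\GHilb^{k+1}_0$ depends only on the $\ell$-dimensional subspace of $\mathfrak{m}\simeq\CC^{k+\ell}$ spanned by the escape directions and not on the particular path of approach. This requires analysing the limiting behaviour of the coordinates $\Lt_{1j}/(\b_i+\a_j)$ in the eigenbasis $J$ of Proposition \ref{prop:kporteous}, and identifying the resulting subspace with a point of $\Grass(\ell,k+\ell)$ in a manner compatible with both the forgetful map and the intrinsic description via quotients of the local algebra.
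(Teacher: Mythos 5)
Your identification of the punctual fiber $\GHilb^{k+1}_0(\tg_{k+\ell})$ with the Grassmannian $\Grass(\ell,k+\ell)\simeq\Grass_k(\ker f_0)$ is correct and agrees with the paper's. The gap is in the step ``the image is locally irreducible as the continuous image of an irreducible space.'' That implication is false: the image of an irreducible --- even smooth, irreducible --- variety under a proper morphism is irreducible, but it need not be \emph{locally} irreducible (unibranch). The normalisation map $\PP^1\to C$ onto a nodal cubic $C$ is the standard counterexample. So surjecting from the locally irreducible $\GHilb^{[k+\ell+1]}_\nabla(\tg_{k+\ell})$ onto a dense subset of $\GHilb^{[k+1]}_\nabla(\tg_{k+\ell})$ does not by itself give unibranchness of the target at the punctual stratum. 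To salvage the forgetful-map route one would at minimum need to (i) extend the forgetful rational map to a proper morphism across the punctual stratum, (ii) know the source is normal (not just locally irreducible) so that Stein factorisation identifies the target with its normalisation, and (iii) show the fibers over the punctual stratum are connected. None of these are addressed, and the one obstacle you do flag --- the limiting behaviour of the escaping roots --- is only a fragment of (i) and (iii), not the unibranchness question itself.

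The paper takes a shorter route that sidesteps all of this: after identifying $\GHilb^{k+1}_0(f)=\Grass_k(\ker f_0)$, it invokes Proposition~\ref{prop:kporteous} over each point of this Grassmannian to exhibit $\GHilb^{k+1}_\nabla(f)$ as fibering over the (smooth, irreducible) Grassmannian with affine, hence locally irreducible, fibers. A fibration over a unibranch base with unibranch fibers has unibranch total space, which gives the claim directly with no need to control branches of images of a forgetful map. If you wish to keep your descent from the $(k+\ell+1)$-fold locus, you would have to add the missing normality and fiber-connectedness arguments; otherwise fibering $\GHilb^{k+1}(f)$ over the Grassmannian, as the paper does, is the cleaner path.
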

\begin{proof} Since $\xi_f=\Spec(A_{\Sigma^{k+\ell}})$ is a $k+\ell$-Porteous point, its kernel $\ker(f_0)$ is a $k+\ell$-dimensional subspace in $\CC^n$. Let $\xi_i \in \GHilb^{k+1}(f)$ be a sequence of $k$-fold points on $\CC^n$ with limit $\xi =\lim_{i\to \infty} \xi_i \in \GHilb^{k+1}_0(f)$ Then $\xi$ is a $k$-Porteous point with a $k$-dimensional kernel $\ker(\xi) \subset \ker(f_0)$, and any this kernel uniquely determines the point $\xi$.  Hence 
\[\GHilb^{k+1}_0(f)=\grass_k(\ker(f_0))\]
and by Proposition \ref{prop:kporteous} the $k+1$-fold locus $\GHilb^{k+1}(f)$ fibers over this Grassmannian with locally irreducible fibers. 
\end{proof}

We summarize the results of this section in the following proposition, which gives the local geometry of $\GHilb^{k}(f)$ near the punctual part $\GHilb^k_0(f)$ for stable Thom-Boardman maps $f:X \to N$. 
\begin{definition} Let $X$ be a smooth variety. Then $\pi_X^* \calo_X \subset \calo_X^{[k]}|_{\GHilb^k_0(X)}$ and $B=\calo_X^{[k]}/\pi_X^* \calo_X$ is the Haiman bundle over $\GHilb^k_0(X)$.
\end{definition}

\begin{proposition}\label{prop:hilbfloc}  Let $f:X \to Y$ be a stable Thom-Boardman map. Then 
\begin{enumerate}
\item $\GHilb^{k}(f)$ is locally irreducible at points of the punctual part $\GHilb^{k}_0(f)$.
\item $\GHilb^{k}(f)$ is locally isomorphic to the total space of the Haiman bundle $B$ established by a topological isomorphism $\nu$:
\begin{equation}\label{localmodelb}
\xymatrix{\GHilb^k_\nabla(f) \ar[d]  \ar[r]^\nu &  B  \ar[d]  \\
  \GHilb^k_0(f) \ar@{^{(}->}[r] &  \CHilb^k(X)}
 \end{equation}
 \end{enumerate}
\end{proposition}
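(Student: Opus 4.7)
The plan is to extend the explicit Porteous analysis of Propositions \ref{prop:kporteous} and \ref{prop:k+lporteous} to an arbitrary stable Thom--Boardman germ, and then package the resulting local description into a globally defined topological isomorphism onto the Haiman bundle.

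First I would fix a point $\xi_0 \in \GHilb^k_0(f)$ supported at $p \in X$ with local algebra $A := A_{f,p}$ of dimension $k$, and reduce to a model germ. Since $f$ is stable, Mather's classification via left--right equivalence says that the germ $f_p$ is equivalent to the minimal stable unfolding of the genotype determined by $A$. This lets us transport the analysis of level sets from $(X,p)$ to an explicit polynomial model, as was done in the Porteous case in Proposition \ref{prop:kporteous}. The Propositions \ref{prop:kporteous} and \ref{prop:k+lporteous} already cover the extremal Porteous local algebras $A_{\Sigma^{k-1+\ell}}$, so the remaining work is to run the same style of argument uniformly for all admissible local algebras of length $k$.

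Next I would describe the model level sets. Writing the stable unfolding in coordinates $(x, a)$ where $x$ are the corank directions and $a$ are the unfolding parameters, the equations cutting out the fibres $\tilde g^{-1}(\Lambda, a)$ split into two groups: those expressing the corank coordinates of each of the $k$ points in terms of a single parameter (analogous to the equations $[1j]'$ in the proof of Proposition \ref{prop:kporteous}), and the higher relations that force the multiplicity to equal $k$ (analogous to (i), (ii) there). Eliminating the first group produces an affine chart of $\GHilb^k_\nabla(f)$ fibred over $\GHilb^k_0(f)$, with the fibre over $\xi_0$ an affine space of dimension $k-1$ parametrised by splitting data $(\beta_1,\ldots,\beta_k)$ subject to $\sum_i \beta_i = 0$. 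Local irreducibility at $\xi_0$ then follows, exactly as in Proposition \ref{prop:kporteous}(1), by checking that the (possibly special) fibres over the boundary strata of $\GHilb^k_0(f)$ arise as flat limits of the generic affine fibres. Combined with Proposition \ref{prop:k+lporteous}, which handles the higher Porteous strata via the Grassmannian of $k$-planes in $\ker(f_p)$, this proves part (1).

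For part (2), I would identify the fibre just produced with the Haiman bundle fibre. At $\xi_0$ the $(k-1)$-dimensional space of deformations of $\xi_0$ inside $\GHilb^k_\nabla(f)$ that collapse onto $\xi_0$ under the Hilbert--Chow morphism is canonically isomorphic to $H^0(\xi_0, \calo_{\xi_0})/\CC$: a tangent deformation assigns to each of the $k$ component points a scalar describing its infinitesimal displacement along the level sets of $f$, while the overall constant shift corresponds to translating the whole subscheme along a level. This is precisely the fibre of $B = \calo_X^{[k]}/\pi_X^*\calo_X$ at $\xi_0$. Performing this identification in local trivialisations, and checking continuity as $\xi_0$ varies across the strata of $\GHilb^k_0(f)$, produces the desired topological isomorphism $\nu$ lying over the inclusion $\GHilb^k_0(f) \hookrightarrow \CHilb^k(X)$ furnished by Propositions \ref{imagepunctual} and \ref{prop:boardman}.

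The main obstacle, as in the Porteous case, will be the boundary behaviour: one must show that the splitting parameters $\beta_i$ degenerate consistently as $\xi_0$ moves into a deeper Thom--Boardman stratum where the local algebra jumps, so that both the affine fibration of part (1) and the Haiman bundle identification of part (2) glue continuously. The inductive mechanism from the proof of Proposition \ref{prop:kporteous}(1), together with the closure relations between Thom--Boardman strata recorded in Proposition \ref{prop:boardman}, is what makes this gluing work uniformly across all stable Thom--Boardman types.
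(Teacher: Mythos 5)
Your plan takes a genuinely different route from the paper's proof, and the departure hides a concrete gap in part (1). The paper does not redo any explicit level-set analysis beyond the Porteous case; instead it invokes Propositions \ref{prop:kporteous} and \ref{prop:k+lporteous} together with the closure relation $\Sigma^{i_1+\ldots+i_k}(f)\subset\overline{\Sigma^{i_1,\ldots,i_k}(f)}$ from Proposition \ref{prop:boardman} and a short propagation argument: every Thom--Boardman stratum $\Sigma_A(f)$ contains Porteous points in its closure, and local irreducibility propagates along stratum closures, so the Porteous computations already suffice for all stable germs. You propose instead to rerun the elimination--diagonalisation analysis of Proposition \ref{prop:kporteous} for an arbitrary stable local algebra via Mather's normal form. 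This is not merely longer; there is a real obstruction. The structural device that drives that proof --- the equations $[1j]'$ assembling into a square matrix $A(x_1)$ of affine-linear entries whose diagonalisation reduces the corank coordinates to rational functions of a single root variable --- is specific to the Porteous genotype $g_k\colon x\mapsto (x_ix_j)$, whose relations are all quadratic. A genotype with cubic or mixed relations (e.g.\ an ideal like $(x^2,xy,y^3)$) does not produce a linear system of this shape, and there is no a priori reason the elimination lands you in a $(k-1)$-dimensional affine fibre parametrised by splitting data $(\beta_1,\ldots,\beta_k)$ with $\sum_i\beta_i=0$. So the claim that ``the remaining work is to run the same style of argument uniformly for all admissible local algebras'' understates the difficulty: each algebra type is a genuinely new computation, and as written part (1) is incomplete precisely where the paper's closure argument does the work.

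For part (2) the paper again goes a different way: it quotes \cite{bsz2} for the isomorphism $\GHilb^k_\nabla(f)\simeq B$ over the open dense curvilinear locus $\Curv^k(f)$, and then extends over the closure by combining the extendability of $B$ with the affine fibration established in Proposition \ref{prop:kporteous}. Your direct tangent-space identification of the fibre with $H^0(\xi_0,\calo_{\xi_0})/\CC$ is a nice geometric picture the paper does not make explicit, but it is asserted rather than proved: the statement that a first-order deformation ``assigns to each of the $k$ component points a scalar describing its infinitesimal displacement along the level sets of $f$'' presupposes exactly the affine local model you would need to build from part (1), and the interpretation of the constant shift as translation along a level set is not automatic for non-reduced $\xi_0$. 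In short, the approach is not wrong in spirit, but both halves of the proposal rest on uniformity claims across all stable Thom--Boardman types that the paper deliberately avoids having to verify --- part (1) by propagating from the Porteous case, and part (2) by importing the curvilinear identification from \cite{bsz2}.
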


 \begin{proof}
(1) Local irreducibility follows from Proposition \ref{prop:k+lporteous} and Proposition \ref{prop:boardman}. Indeed, for a Thom-Boardman map every singularity locus $\Sigma_A(f) \subset X$ contains Porteous-points in its closure, and local irreducibility is a closed condition. 

(2) In \cite{bsz2} we prove that $\GHilb^k_\nabla(f) \simeq B$ over the open dense curvilinear locus $\Curv^k(f)\subset \GHilb^k_\nabla(f)$. But $B$ extends to $\GHilb^k(f)$, and by Proposition \ref{prop:kporteous} $\GHilb^k_\nabla(f)$ fibers over $\GHilb^k_0(f)$ with affine fibers, which must coinside with $B$.
 \end{proof}

If $f: X \to Y$ is a stable Thom-Boardman map, then for any algebra $A \in \CHilb^k(\CC^n)$ there is a point $p \in X$ where the local algebra $A_{f,p}$ of $f$ is isomorphic to $A$. Hence the evaluation map 
\[\xymatrix{\GHilb^k_0(f) \ar@{->>}[r]^{ev} \ar[d]^{\pi_X} & \CHilb^k_0(\CC^n) \\ \Sigma_{A_k}(f)}\]
which sends $\xi_p \in \pi_m^{-1]}(p)$ to $\Spec(A_{f,p})$ is surjective. Hence \eqref{localmodelb} gives  
\begin{equation}\label{localmodelc} \xymatrix{\GHilb^k_\nabla(f) \ar[d]  \ar[r]^\nu &  B|_{\CHilb^k_0(\CC^n)}  \ar[d]   \\
\GHilb^k_0(f) \ar@{->>}[r]^{ev} & \CHilb^k_0(\CC^n)}
\end{equation}
This diagrams roughly says that for any local algebra $A \in \CHilb^k_0(\CC^n)$ the fiber $B_A$, can be identified with the fiber $B_{A_{f,p}}$ for a local algebra $A\simeq A_{f,p} \in \GHilb_0^k(f)$ for a stable Thom-Boardman map.

\section{First examples: Hilbert scheme of $2$ and $3$ points}\label{sec:tauintegral}

Let $X$ be a smooth projective variety of dimension $n$, and let $V$ be a rank $r$ algebraic vector bundle on $X$. Let $V^{[k]}$ be the corresponding tautological rank $rk$ bundle on $\Hilb^k(X)$ whose fibre at $\xi \in \Hilb^k(X)$ is $H^0(\xi,V|_\xi)$. 

 Let $\Phi(c_1,\ldots, c_{rk})$ be a Chern polynomial in $c_i=c_i(V^{[k]})$ of weighted degree equal to $\dim \GHilb^k(X)=km$. The Chern numbers 
\[\int_{\GHilb^k(X)} \Phi(V^{[k]})\]
are called tautological integrals of $V^{[k]}$ on the geometric component. Rennemo \cite{rennemo} shows that for any $\Phi$ there is a universal polynomial $P_{\Phi}$ (independent of $X$) such that the integral $\int_{\GHilb^k(X)} \Phi$ is obtained by substituting the Chern roots of $X$ and $V$ into $P_{\Phi}$. Over the curvilinear component $\CHilb^k(X)$ closed formula for $P_\Phi$ is developed in \cite{bercziG&T}. 

We first prove Theorem \ref{mainthm} when the number of points is $k=2$ and $k=3$. In these cases both the Hilbert scheme and the punctual Hilbert scheme is irreducible: 
\[\Hilb^k(\CC^n)=\GHilb^k(\CC^n) \text{ and } \GHilb^k_0(\CC^n)=\CHilb^k(\CC^n),\] 
and we do not have to deal with all technical difficulties, still, the proof shows most of the key ideas and steps. 

\subsection{Hilbert scheme of $2$ points}
 
For $k=2$ the Theorem \ref{mainthm} reads as follows. 
\begin{proposition}[\textbf{Integration over $\GHilb^2(X)$}]
	Let $V$ be a rank-$r$ bundle over $X$, and $\Phi$ be a symmetric 
	polynomial of degree $2n$ in $2r$ variables. Then
	\[\int_{\GHilb^2(X)}\Phi\left(V^{[2]}\right)=\int_X \res_{z=\infty} \Phi\left(V\oplus V(z)\right)\,
	s_X\left(\frac{1}{z}\right)\,\frac{dz}{z^{n+1}}+\int_{X \times X} \Phi\left(V_1 \oplus V_2\right)\]
where 
\begin{itemize}
	\item $V_i$ is the bundle $V$ pulled back from the $i$th copy of $X$ for $i=1,2$,
	\item $V(z)$ stands for the bundle $V$ tensored by the line $\mathbb{C}_z$, which is the representation of a torus $T$ with weight $z$. Then 
	\[ \Phi(V\oplus V(z)) =  \Phi(\theta_1,z+\theta_1,\theta_2,z+\theta_2,\dots,\theta_r,z+\theta_r)
	\]
	\item $s_X\left(\frac{1}{z}\right)$ is the Segre series of $X$.
\end{itemize}
\end{proposition}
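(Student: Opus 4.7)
The plan is to execute the six-step strategy of \S\ref{sec:strategy} in the elementary case $k=2$. Here $\GHilb^2(X)=\Hilb^{[2]}(X)$ is already smooth, equal to the blow-up $\mathrm{Bl}_\Delta X^2$, and the exceptional divisor $E\simeq\PP(TX)$ is exactly the punctual curvilinear component $\CHilb^2(X)$; there are no exotic components and the residue-vanishing theorems of Steps~4 and 5 are vacuous. The partition set $\Pi(2)=\{\Lambda,(\{1\},\{2\})\}$ has just two elements, so Rennemo's sieve produces a clean two-term decomposition matching the two summands of the claim.

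For the discrete partition $\alpha=(\{1\},\{2\})$, the bundle $V_1\oplus V_2$ pulls back from $X\times X$ along the Hilbert--Chow map $HC:\Hilb^{[2]}(X)\to X^{\times 2}$, so the projection formula yields
\[\int_{\GHilb^2(X)} \Phi(V_1\oplus V_2) \;=\; \int_{X\times X}\Phi(V_1\oplus V_2),\]
producing the second summand immediately.

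The full partition $\Lambda$ contributes the sieve correction, which is supported on a tubular neighborhood $E_\nabla$ of $E$. By Proposition~\ref{prop:hilbfloc}, $E_\nabla$ is the total space of the Haiman line bundle $B=\calo_{\PP(TX)}(-1)$, and the Chern roots of $V^{[2]}|_E$ are $\{\theta_i\}\cup\{\theta_i+z\}$ with $z:=c_1(B)$. Thom isomorphism together with the standard $\PP^{n-1}$-bundle push-forward expressed as a residue,
\[\pi_*\bigl(f(z)\bigr) \;=\; \res_{z=\infty} f(z)\,s_X\!\bigl(\tfrac{1}{z}\bigr)\,\frac{dz}{z^{n}},\]
combine to produce
\[\int_X \res_{z=\infty}\Phi\bigl(V\oplus V(z)\bigr)\,s_X\!\bigl(\tfrac{1}{z}\bigr)\,\frac{dz}{z^{n+1}},\]
which is the first summand of the theorem.

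The main obstacle is bookkeeping: fixing the orientation on $B$ so the residue genuinely sits at infinity, and verifying that the Euler-class factor from Thom isomorphism contributes precisely $1/z$ so that the universal constant $Q_1$ in the main theorem emerges with the correct sign. Both checks are direct because $\PP(TX)$ is smooth and only a single fibered component contributes; the deeper obstructions (exotic components in $\GHilb^k_0(X)$, residue vanishing on $\N^k_0(\CC^n)$, and the stability trick) are all postponed to the general $k$ argument.
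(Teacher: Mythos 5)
Your proof is correct and takes essentially the same route as the paper: the same two-term sieve decomposition $\Phi(V^{[2]}) = [\Phi(V^{[2]})-\Phi(V^{[1,1]})] + \Phi(V^{[1,1]})$, the same Thom-isomorphism reduction to $\CHilb^2_0=\PP(TX)$, and the same residue evaluation (the paper phrases the last step as Kalkman's formula for the $\CC^*$-quotient $\nabla_X\setminus 0\to\PP(TX)$, which is the same as the $\PP^{n-1}$-bundle pushforward residue you invoke). Two small remarks: your citation of Proposition~\ref{prop:hilbfloc} is a slight misattribution, since that proposition concerns $\GHilb^k(f)$ for a stable Thom--Boardman map $f$ rather than $\GHilb^k(X)$ itself; for $k=2$ the identification of $E_\nabla$ with the total space of $\calo_{\PP(TX)}(-1)$ follows directly from the tubular-neighborhood theorem and the standard fact that the exceptional divisor of $\mathrm{Bl}_\Delta X^2$ has normal bundle $\calo(-1)$, which is what the paper uses. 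You also correctly note that the numerator $\Phi(V^{[2]})-\Phi(V^{[1,1]})$ reduces, in the residue, to just $\Phi(V\oplus V(z))$ because the subtracted term is $z$-independent; that observation is what makes the extra factor $1/z$ from the Thom class harmless, and it is worth stating explicitly rather than burying it in ``bookkeeping.''
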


\begin{proof}
Let $\hch:\GHilb^2(X)\to X\times X$ be the Hilbert-Chow morphism, and introduce the notation $V^{[1,1]} = \hch^*(V_1 \oplus V_2)$.	We begin by writing 
	\[\int_{\GHilb^2(X)}\Phi(V^{[2]})=
	\int_{\GHilb^2(X)}\left[ \Phi(V^{[2]})-\Phi(V^{[1,1]})\right] +
	\int_{\GHilb^2(X)}\Phi(V^{[1,1]}),
	\]
	and observing that we can rewrite the second term as follows:
	\[ \int_{\GHilb^2(X)}\Phi(V^{[1,1]}) = \int_{X \times X} \Phi\left(V_1 \oplus V_2\right). \]
	As to the first term, we note that the integrand may be represented as a form supported in a neighborhood of $\CHilb^2_0$ which is, in turn, isomorphic to the normal bundle $\nabla_0\to\CHilb^2_0$ (a line bundle in this case) of the punctual Hilbert scheme in the entire Hilbert scheme. According to the Thom isomorphism theorem, we have $\hcpt(\nabla_0)\simeq H^*(\CHilb^2_0)\cdot u$, where $u$ is the compactly supported Thom class. We thus have
	\begin{equation*}
		\int_{\GHilb^2(X)}\left[ \Phi(V^{[2]})-\Phi(V^{[1,1]})\right] =
		\int_{\nabla_0}\left[ \Phi(V^{[2]})-\Phi(V^{[1,1]}\right] =
		\int_{\CHilb^2_0} \frac{\Phi(V^{[2]})-\Phi(V^{[1,1]})}{u}
	\end{equation*} 
	
	Now we use the family version of Kalkman's formula: we represent $\CHilb^2_0$ as a $T=\C^*$ quotient of the normal bundle $\nabla_{ X}$ of $\Delta X\subset X\times X$. We lift the bundles $V^{[2]}$ and $V^{[1,1]}$ to equivariant bundles $V_z^{[2]}$ and $V_z^{[1,1]}$ on $\nabla_{ X}$, the Euler class $u$ to $u_z$, and then the formula reads
	\[ \int_{\CHilb^2_0} \frac{\Phi(V^{[2]})-\Phi(V^{[1,1]})}{u}
	=  \int_{\Delta X} \res_{z=\infty} \frac{\Phi(V_z^{[2]})-\Phi(V_z^{[1,1]})}{u_z\cdot c_m(\nabla^z_{\Delta X})},
	\]
	where all the objects under the integral sign are restricted to $\Delta X$, the zero-section of $\nabla_{ X}$; identifying $\Delta X$ with $X$, these restrictions are as follows:
\begin{itemize}
	\item $V_z^{[2]}=V\oplus V(z)$
	\item  $V_z^{[1,1]}=V^{[1,1]}$
	\item $u_z=z$
	\item $c_m(\nabla^z_{ X})=\prod_{i=1}^{m}(z+\lambda_i)$, where the $\lambda$s are the Chern roots of $\nabla_{ X}$.
\end{itemize}
	Thus we have
	\[  \int_{\CHilb^2_0} \frac{\Phi(V^{[2]})-\Phi(V^{[1,1]})}{u}
	= \int_{\Delta X} \res_{z=\infty} \frac{\Phi(V\oplus V(z))-\Phi(V^{[1,1]})}{z\cdot \prod_{i=1}^{m}(z+\lambda_i)} \]
	Finally, we observe that the second term of the numerator is independent of $z$, and thus does not contribute to the residue. We  thus arrive at the final formula
\[	\int_{\GHilb^2(X)}\left[ \Phi(V^{[2]})-\Phi(V^{[1,1]})\right] =	\int_X \res_{z=\infty} \Phi\left(V\oplus V(z)\right)\,
	s_X\left(\frac{1}{z}\right)\,\frac{dz}{z^{m+1}}, \]
and this completes the proof.
\end{proof}

\subsection{The Hilbert scheme of three points}\label{subsec:k=3}

In this section we follow our strategy to prove the formula for $3$ points. Xost of the key ingredients of the general argument already appears here. 

For $k=3$ the geometric component $\GHilb^3(X)=\Hilb^3(X)$ is, again, equal to the full Hilbert scheme, and the punctual part is irreducible, consisting only of the curvilinear component $\CHilb^3_0(\CC^n)$, there are no exotic components.  

\begin{proposition}\label{prop:k=3}
	Let $V$ be a rank-$r$ bundle over the $n$-dimensional manifold $X$ and $\Phi$ a symmetric 
	polynomial of degree $3n$ in $3r$ variables. Then
	\begin{eqnarray*} \int_{\GHilb^3(X)}\Phi\left(V^{[3]}\right)&=&\int_X \res_{z_1=\infty} \res_{z_2=\infty} \frac{\Phi\left(V\oplus V(z_1)\oplus V(z_2) \right)(z_1-z_2)dz_1dz_2}{(z_1z_2)^{n+1}(2z_1-z_2)}\,
		s_X\left(\frac{1}{z_1}\right) s_X\left(\frac{1}{z_2}\right)\,+ \\
		& +& \sum_{i=1}^3 \int_{X\times X} \res_{z=\infty} \frac{\Phi\left(V\oplus V(z) \oplus V_i \right)dz}{z^{n+1}}\,
		s_X\left(\frac{1}{z}\right)
		+\\
		&+& \int_{X \times X \times X} \Phi\left(V \oplus V \oplus V \right)
	\end{eqnarray*}
	
\end{proposition}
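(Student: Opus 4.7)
The plan is to follow the six-step strategy of Section~\ref{sec:strategy}, which simplifies substantially for $k=3$ because $\GHilb^3(X)=\Hilb^3(X)$ and $\GHilb^3_0(X)=\CHilb^3_0(X)$ are both irreducible. In particular the stability trick (Step~1) and the two residue-vanishing theorems (Steps~4 and~5) become tautologies, and only the sieve, Thom isomorphism, and one application of the family Kalkman formula are really needed.

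First, the sieve (Step~2) gives the inclusion--exclusion decomposition
\[
\Phi(V^{[3]}) \;=\; \Phi^\Lambda \;+\; \sum_{i=1}^{3}\Phi^{\alpha_i} \;+\; \Phi(V^{[1,1,1]})
\]
along the lines of \eqref{deepestterm}. Here $\Phi(V^{[1,1,1]})=\Phi(V\oplus V\oplus V)$ is the pullback along the Hilbert--Chow morphism $\hch:\Hilb^3(X)\to X^3$ and contributes the last line of the formula directly. Each $\Phi^{\alpha_i}$ is attached to one of the three two-block partitions $\alpha_i$ of $\{1,2,3\}$ and is compactly supported in a tubular neighbourhood that fibres over $X\times X$ with fibre $\GHilb^2_\nabla(X)$: two of the three points collide on one factor while the third point lies as a spectator on the other. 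Applying the $k=2$ proposition to the $\GHilb^2$-integration (with the free $V$ on the spectator factor carried through) reproduces one copy of the middle line; summing over the three equivalent partitions gives the displayed $\sum_{i=1}^{3}$.

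The remaining leading term $\Phi^\Lambda$ is compactly supported in a tubular neighbourhood of the small diagonal $\Delta_3\cong X$, topologically the normal bundle of $\CHilb^3_0(X)$ in $\Hilb^3(X)$. Thom isomorphism yields
\[
\int_{\Hilb^3(X)}\Phi^\Lambda \;=\; \int_{\CHilb^3_0(X)} \frac{\Phi^\Lambda}{u},
\]
where $u$ is the compactly supported Thom class. Passing to $X=\CC^n$ (Step~3) and applying the family Kalkman formula for the two-torus $T=(\CC^*)^2$ acting with weights $z_1,z_2$ on the first and second curvilinear jet directions converts this equivariant push-forward into $\res_{z_1=\infty}\res_{z_2=\infty}$ of a rational form whose ingredients can be identified factor by factor: $(z_1 z_2)^{n+1}$ comes from the Euler class of the normal bundle of $\Delta_3\hookrightarrow X^3$; $(2z_1-z_2)$ is the weight of the quadratic curvilinearity constraint, equivalently the Borel multidegree $Q_2$ of \cite{bsz}; the Vandermonde $(z_1-z_2)$ in the numerator is the Weyl denominator for $\GL_2\subset\GL_n$ arising from symmetrisation over $\flag_2(T\CC^n)$; and the Segre factors $s_X(1/z_i)$ come from the Chern--Weil representation of the normal bundle to the diagonal.

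The main obstacle is precisely this last calculation, i.e.\ correctly identifying $Q_2$ and matching the curvilinearity weights with the Euler-class denominator inside the Kalkman residue. For $k=3$ this is tractable because $\CHilb^3_0(\CC^n)$ is essentially a projectivised second-order jet bundle over $\CC^n$, so fixed-point localisation on the single flag-fibered blow-up $\widehat{\CHilb}^3_0(\CC^n)\to\flag_2(T\CC^n)$ yields a unique non-vanishing Porteous-type fixed-point contribution without invoking either residue-vanishing theorem; for larger $k$ the full Steps~4 and~5 would be needed to eliminate spurious fixed-point contributions from exotic and non-curvilinear components.
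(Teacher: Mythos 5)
You have correctly identified the broad skeleton -- sieve into trivial partition, three two-block partitions, and the full partition, with the middle partitions handled by the $k=2$ result and the last contributing $\int_{X^3}\Phi(V\oplus V\oplus V)$. But there are two substantive problems with the treatment of the leading term $\Phi^\Lambda$.

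First, you assert that because $\GHilb^3(X)=\Hilb^3(X)$ and $\GHilb^3_0=\CHilb^3_0$ are irreducible, ``the two residue-vanishing theorems become tautologies'' and the Porteous contribution is the ``unique non-vanishing fixed-point contribution without invoking either residue-vanishing theorem.'' This misses the crux of the $k=3$ computation. The integrand $\Phi^\Lambda$ is a linear combination of $\Phi(V^{[1,2,3]})$, $\Phi(V^{[i,j],[l]})$, and $\Phi(V^{[1],[2],[3]})$, and the approximating classes are only defined on the \emph{fully nested} Hilbert scheme $N^3(X)$ -- the sieve does not descend to $\Hilb^3(X)$. The punctual part $\hat N^\bff_0$ of the balanced nested Hilbert scheme over the flag is \emph{not} irreducible even for $k=3$: besides the curvilinear $\PP^1$ it has a second component $\PP^1\times\PP^1$ sitting over the Porteous ideal $\mathfrak m^2$, and the residual $T_\bz^2$ has four fixed points distributed over both components. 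Contributions of the non-pulled-back sieve terms (e.g.\ $\Phi(V_{z_2}^{[1,3],[2]})$) at the Porteous fixed points contain a dangerous mixed factor $(z_2-z_1)^2$ in the tangent Euler class, so they are \emph{not} killed by degree reasons alone; it takes the explicit pairwise cancellation recorded in the paper's fixed-point tables (the $k=3$ instance of Theorem~\ref{vanishing1}) to see that only the curvilinear Porteous point $(\xi_{curv},e_1,e_1,e_1)$ survives. Your proposal to ``localize directly on $\widehat{\CHilb}^3_0(\CC^n)$'' is not available until this cancellation is established, since before that step the integrand simply does not live over $\CHilb^3$.

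Second, a smaller but real misidentification: you attribute the $(2z_1-z_2)$ in the denominator to ``the Borel multidegree $Q_2$ of \cite{bsz}.'' In fact $Q_2=1$ (Remark~\ref{remarkq}). The factor $2z_1-z_2$ comes from the product $\prod_{i+j\le l\le k-1}(z_i+z_j-z_l)$, i.e.\ it is the equivariant Euler class of a normal direction of $\CHilb^3_\ff$ inside the ambient Grassmannian at the distinguished fixed point $\tau_\dist=([1],[2])$. Separately, the $z_1z_2$ in the denominator is $\Euler(B)$ for the Haiman bundle, and the $(z_1z_2)^{n}$ with the Segre factors comes from inverting $\prod_{l,i}(\lambda_i-z_l)$. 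Confusing the Rossman/$Q$ contribution with the tangent weight would produce wrong formulas for $k\ge5$, where $Q_{k-1}\ne1$, so it is worth flagging here.

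Your remark that the ``family Kalkman'' route might replace the two-stage localisation over $\flag_2(T\CC^n)$ and then over the fiber is a plausible alternative in spirit, but as written it would need a concrete presentation of $\CHilb^3_0(X)$ as a $T^2$ quotient, which the paper avoids precisely because the fibers are singular; the paper instead uses Rossmann localisation with equivariant multiplicities in a smooth ambient Grassmannian.
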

\begin{proof}
	We let $\GHilb^{[1,2,3]}(X)$ denote the ordered Hilbert scheme of $3$ points, which is a branched 6-fold cover of $\GHilb^3(X)$. Let $\GHilb^{[1,2],[3]}(X)=\overline{\im(\phi^{[1,2],[3]})}$ denote the closure of the image of the rational morphism 
	\[\phi^{[1,2],[3]}: \GHilb^{[1,2,3]}(X) \dasharrow \GHilb^{[1,2]}(X) \times X\]
	which is defined on the locus where point number $3$ does not collide with points number $1$ and $2$. We define $\GHilb^{[2,3],[1]}(X), \GHilb^{[1,3],[2]}(X)$ and $\GHilb^{[1],[2],[3]}(X)$ similarly, and we call these birational models \textit{ approximating Hilbert schemes} for short. Note that $\GHilb^{[1],[2],[3]}(X)=X\times X \times X$. We form a rational map 
\[\phi=(id, \phi^{[1,2],[3]}, \phi^{[1,3],[2]},  \phi^{[2,3],[1]}, \phi^{[1],[2],[3]})\]
which sends a generic point of the ordered Hilbert scheme to the product of the approximating sets:
\[\xymatrix{ \GHilb^{[1,2,3]}(X) \ar@{-->}[r]^-{\phi} & \GHilb^{[1,2,3]} \times \GHilb^{[1,2],[3]} \times \GHilb^{[1,3],[2]} \times \GHilb^{[2,3],[1]} \times \GHilb^{[1],[2],[3]}}\]
We call $N^3(X)=\overline{\im(\phi)}$ the \textit{fully nested Hilbert scheme}, which is endowed  
with projections $\pi^{[i,j],[k]}$ and $\pi^{[1],[2],[3]}$ to the corresponding approximating sets. Pull-back along these projections provide \textit{approximating bundles}
\[V^{[i,j],[k]}=\pi^{[i,j],[k]*}(V^{[i,j]} \oplus V) \text{ and } V^{[1],[2],[3]}=\pi^*(V \oplus V \oplus V)\]
where $V^{[ij]} = V^{[2]}$ stands for the tautological bundle over $\GHilb^{[i,j]}(X)$.

Let $\hch:\GHilb^{[1],[2],[3]}(X)\to X\times X \times X$ be the ordered Hilbert-Chow morphism, and for any partition $\l$ of $\{1,2,3\}$ let 
$N^\l_0(X)=(\pi^{[1],[2],[3]})^{-1} \Delta_\l$
denote the punctual part sitting over the corresponding diagonal of $X \times X \times X$. 
The sieve formula in Step 1 of the strategy has the following form:
\begin{align} \nonumber
	         \int_{\GHilb^3(X)} \Phi(V^{[3]})&=\int_{\GHilb^{[1],[2],[3]}(X)} \Phi(V^{[1,2,3]})=\int_{N^3(X)} \pi^{[1,2,3]*}\Phi(V^{[1,2,3]})=\\ \nonumber
		&=\int_{N^3(X)}\left[ \Phi(V^{[1,2,3]})-\Phi(V^{[1,2],[3]]})-\Phi(V^{[1,3],[2]]})-\Phi(V^{[2,3],[1]]})+2\Phi(V^{[1],[2],[3]]})\right] +\\ \nonumber
		&+\int_{N^3(X)}\left[ \Phi(V^{[1,2],[3]]})-\Phi(V^{[1],[2],[3]})\right]\\ \nonumber
		&+\int_{N^3(X)}\left[ \Phi(V^{[1,3],[2]]})-\Phi(V^{[1],[2],[3]})\right]\\ \nonumber
		&+\int_{N^3(X)}\left[ \Phi(V^{[2,3],[1]]})-\Phi(V^{[1],[2],[3]})\right]\\ \nonumber
		&+\int_{N^3(X)} \Phi(V^{[1],[2],[3]}) \nonumber
		\end{align}
The second term can be written as 
\[ \int_{N^3(X)}\Phi(V^{[1,2],[3]]})-\Phi(V^{[1],[2],[3]}) = \int_{\Hilb^{[1,2]}(X) \times X} (\Phi(V^{[1,2]})-\Phi(V^{[1],[2]})) \oplus V. \]
and we can apply the residue formula we presented for $k=2$ points in the previous section. This gives the second term in Proposition \ref{prop:k=3}. The last term can be written as 
\[\int_{N^3(X)} 2\Phi(V^{[1],[2],[3]})=\int_{X\times X\times X}V_1 \oplus V_2 \oplus V_3\]
It remains to prove the residue formula for the first term, where the integrand 
\[\Phi^+=\Phi(V^{[1,2,3]})-\Phi(V^{[1,2],[3]]})-\Phi(V^{[1,3],[2]]})-\Phi(V^{[2,3],[1]]})+2\Phi(V^{[1],[2],[3]]})\] 
is supported in a neighborhood $N^3_\nabla(X)$ of the punctual part $N^3_0(X)$. Hence, as explained in \S \ref{subsec:reduction}, we can reduce the calculation to equivariant  integration assuming $X=\CC^n$. In practice this means that we apply equivariant integration, and then substitute the Chern roots of $X$ into the torus weights (this is the Chern-Weil map).

The next crucial step is to define partial blow-ups $\widehat{\GHilb}^3(\CC^n) \to \GHilb^3(\CC^n)$ and $\hat{N}^3(\CC^n) \to N^3(\CC^n)$ with fibration 
\begin{equation}\label{fibration2}
\xymatrix{\hat{N}^3(\CC^n) \ar[r]^-\pi & \widehat{\GHilb}^3(\CC^n) \ar[r]^-\rho & \flag_2(T\CC^n) \ar[r] & \CC^n}
\end{equation}  
We define $\widehat{\GHilb}^3(\CC^n)$ as the closure of the graph of the rational map $\rho: \GHilb^3(\CC^n) \dasharrow \flag_{2}(T\CC^n)$ defined on the open locus formed by $3$ different points as
\[(p_1,p_2,p_3) \mapsto (\frac{1}{3}(p_1+p_2+p_3), \mathrm{Span}(p_1,p_2),\mathrm{Span}(p_1,p_2,p_3)).\]
In particular, the fiber over a flag over the origin $\bW=(0,W_1 \subset W_2 \subset \CC^n)$ is the \textit{balanced Hilbert scheme}
\[\rho^{-1}(\bW)=\BHilb^3(W_2)=\{\xi \in \GHilb^3(W_2): \text{baricenter of }\xi \text{ is the origin}\}\] 
Similarly, we define $\hat{N}^3(\CC^n)$ as the closure of the graph of the rational composition 
\[\rho \circ \pi: N^3(\CC^n) \to \GHilb^3(\CC^n) \dasharrow \flag_{2}(T\CC^n) \to \CC^n\]

We apply torus equivariant localisation for the action of the maximal torus $T \subset \GL(n)$ on the fiber $\hat{N}^3_{\mathrm{origin}}$ over the origin in $\CC^n$:
\[\hat{N}^3_{\mathrm{origin}} \to \flag_{2}(T_0\CC^n)=\flag_2(\CC^n).\]
This action is induced from the standard diagonal action on $\CC^n$ with weights $\l_1,\ldots, \l_m$ in the basis $\{e_1,\ldots, e_m\}$. The fibration \eqref{fibration2} is equivariant, and Proposition \ref{ABtoresidue} turns our integral over $\flag_k(\CC^n)$ into an iterated residue
\begin{equation}\label{integralfirst}
\int_{\hat{N}^3(\CC^n)} \Phi^+=\int_{\CC^n}\res_{z_1=\infty} \res_{z_2=\infty}\frac{(z_1-z_2)\Phi^+_{\bff}dz_2dz_1}{\prod_{i=1}^{n}(z_1-\lambda_i)\prod_{i=1}^{n}(z_2-\lambda_i) }
\end{equation}
where integration over $\CC^n$ is a shorthand notation for substituting the Chern roots of $X$ into $\l_1,\ldots, \l_m$, followed by integration over $X$ (see \S \ref{subsec:reduction}).
Here 
\[\Phi^+_\bff=\int_{\hat{N}^\bff}\Phi^+\]
is the integral of the form on the fiber $\hat{N}^\bff=(\rho \circ \pi)^{-1}(\bff)$ over a fixed reference flag $\bff=(\mathrm{Span}(e_1)\subset \mathrm{Span}(e_1,e_2) \subset \CC^n)\in \flag_2(\CC^n)$. Here $\CC_{[2]}=\Span(e_1,e_2)$, and $z_1,z_2$ stand for the weights of the residual $T_\bz^2=(\CC^*)^2$ action on $\hat{N}^\bff$, and $\hat{N}^\bff$ is the balanced fully nested Hilbert scheme formed by schemes with baricenter at the origin.

We will apply a second localisation with respect to this 2-dimensional torus acting on $\hat{N}^\bff$. The geometry of $\hat{N}^\bff$ is sketched in Figure 1. The dimension is $\dim(\hat{N}^\bff)=3$, whose generic point is a triple $(p_1,p_2,p_3) \in (\CC^2)^{\times 3}$ with $p_1+p_2+p_3=0$, such that the line $p_1p_2$ has fixed direction $e_1$.  The punctual part 
\[\hN^\bff_0=\pi^{-1}(\widehat{\GHilb}^3_0)\cap \hat{N}^\bff\]
consists of triples $(\xi,[v_{12}],[v_{13}],[v_{23}]) \in \GHilb^3_0(\CC^2) \times \PP^1 \times \PP^1$ where $\xi$ is a punctual scheme of length $3$, and $v_{ij}$ are directions corresponding to the length-2 subschemes formed by points $i,j$, but $[v_{12}]=[e_1]$ is fixed. Note that the punctual part $\GHilb^3_0(\CC^2)=\CHilb^3(\CC^2)$ has two types of points: curvilinear subschemes which form a 1-dimensional set, and the Porteous point $\xi_{por}$ corresponding to the ideal $\mathfrak{m}^2=(x^2,xy,y^2)$. 

The punctual part $\hN^\bff_0$ has two components: $\PP^1=CN_{main}^\bff$ is the curvilinear component which is mapped by $\pi$ dominantly to $\GHilb^3_0(\CC_{[2]})$, and a second component $CN_{Por}^\bff=\PP^1 \times \PP^1$ which sits over the Porteous point. A point in $\PP^1 \times \PP^1$ is given by a tuple $(\xi_{por},[v_{12}],[v_{13}],[v_{23}])$ with $[v_{12}]=[e_1]$. So in fact, this is determined by the two directions $[v_{13}],[v_{23}]$, indicated in Figure 1 by yellow resp. green arrows.

\begin{figure}[ht!]\label{figure:fixedpoints}
\centering
\includegraphics[width=90mm]{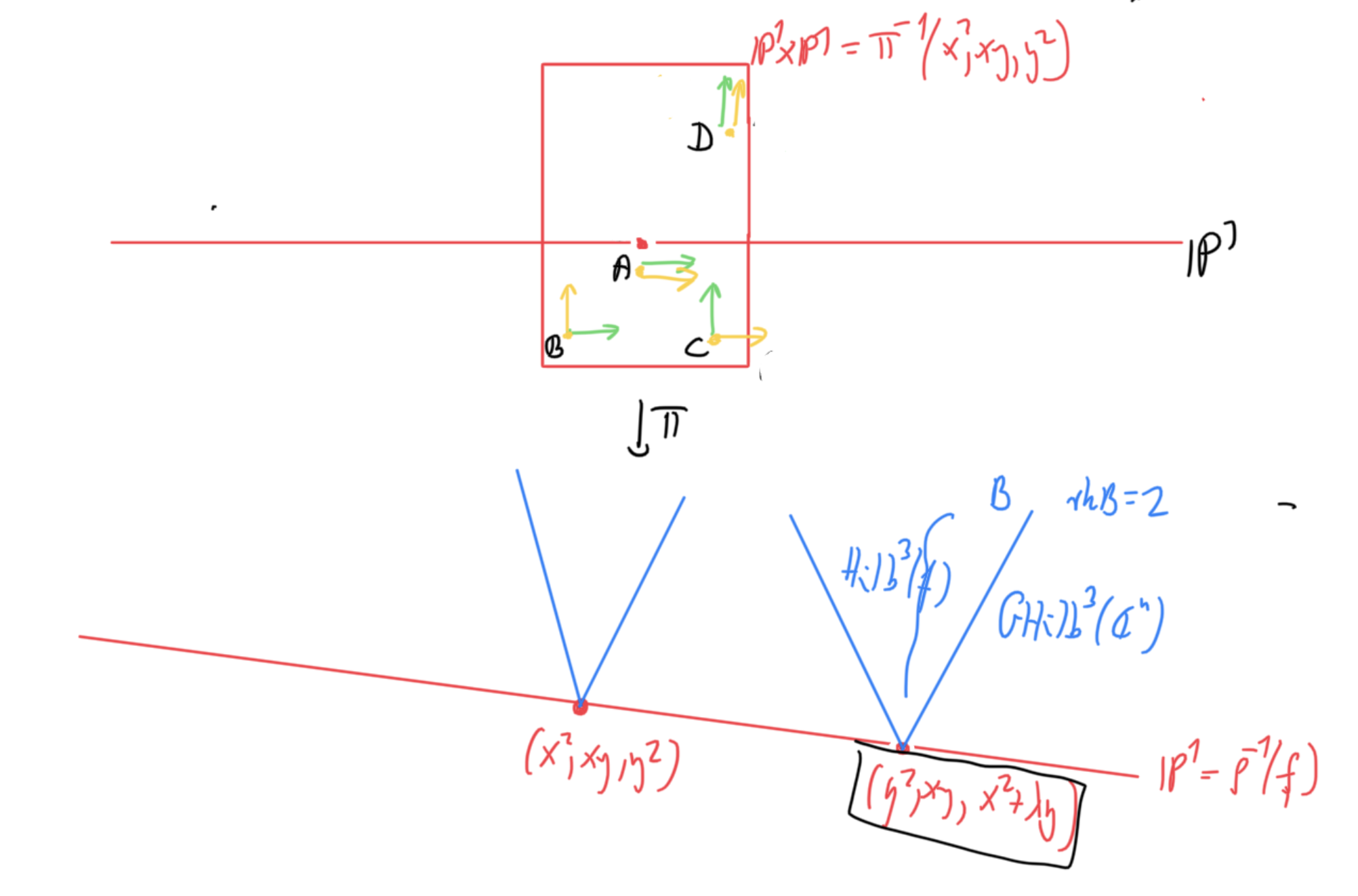}
\caption{$\pi: N^\bff_0 \subset N^\bff$}
\end{figure}

For equivariant localisation over $N^\bff$, in general, we will use a smooth ambient space 
\[\xymatrix{\hat{N}^\bff_0 \ar@{^{(}->}[r] \ar[d] & \widehat{\grass}_k(S^\bullet \CC^n) \ar[ld] \\ \flag_{k-1}(\CC^n) & }\]
For $k=3$, however, $\hat{N}^\bff$ is smooth, hence we can use $\hat{N}^\bff_0 \subset \hat{N}^\bff$ as a smooth ambient space. We apply Atiyah-Bott localisation on $\hat{N}^\bff$ directly. We introduce the notations
\begin{equation}\label{equivbundles}
V_{x,y}^{[1,2,3]}=V\oplus V(x) \oplus V(y),\ \ V_{x}^{[i,j],[k]}=V \oplus V(x) \oplus V,\ \ V^{[1],[2],[3]}=V \oplus V \oplus V
\end{equation}
The following table contains the restrictions of the terms of $\Phi^+_\bff$ at different fixed points $F \in (\hat{N}^\bff)^{T_\bz^2}$.
\begin{center}
\begin{tabular}{|c|c|c|c|c|c|}
\hline
$F$ & $V_F^{[1,2,3]}$ & $V_F^{[1,2],[3]}$ & $V_F^{[1,3],[2]}$ & $V_F^{[2,3],[1]}$ & $V_F^{[1],[2],[3]}$ \\
\hline
$(\xi_{por},e_1,e_1,e_2)$ & $V_{z_1,z_2}^{[1,2,3]}$ & $V_{z_1}^{[1,2],[3]}$ & $V_{z_1}^{[1,3],[2]}$ & $V_{z_2}^{[2,3],[1]}$ & $V^{[1],[2],[3]}$ \\
\hline
$(\xi_{por},e_1,e_2,e_1)$ & $V_{z_1,z_2}^{[1,2,3]}$ & $V_{z_1}^{[1,2],[3]}$ & $V_{z_2}^{[1,3],[2]}$ & $V_{z_1}^{[2,3],[1]}$ & $V^{[1],[2],[3]}$ \\
\hline
$(\xi_{por},e_1,e_2,e_2)$ & $V_{z_1,z_2}^{[1,2,3]}$ & $V_{z_1}^{[1,2],[3]}$ & $V_{z_2}^{[1,3],[2]}$ & $V_{z_2}^{[2,3],[1]}$ & $V^{[1],[2],[3]}$ \\
\hline
$(\xi_{curv},e_1,e_1,e_1)$ & $V_{z_1,z_1}^{[1,2,3]}$ & $V_{z_1}^{[1,2],[3]}$ & $V_{z_1}^{[1,3],[2]}$ & $V_{z_1}^{[2,3],[1]}$ & $V^{[1],[2],[3]}$ \\
\hline
\end{tabular}
\end {center}
Note that here $\xi_{por}=(x^2,xy,y^2)$ stands for the Porteous point and $\xi_{curv}=(y,x^3)$ stands for the curvilinear fixed point in $\Hilb^3(\CC^2)$. Substitute these into the Atiyah-Bott formula:
{\small \begin{equation}\label{sum}
\int\displaylimits_{\hat{N}^3(\CC^n)}\Phi^+=\res_{z_1,z_2=\infty}\sum\limits_{F \in (\hat{N}^3_\bff)^{T_\bz^2}} \frac{(z_1-z_2)\left(\Phi(V_F^{[1,2,3]})-\Phi(V_F^{[1,2],[3]})-\Phi(V_F^{[1,3],[2]})-\Phi(V_F^{[2,3],[1]})+2\Phi(V_F^{[1],[2],[3]})\right)}{\Euler^z(T_FN^3(\CC^2)) \prod_{i=1}^{n}(z_2-\lambda_i)\prod_{i=1}^{n}(z_1-\lambda_i) }\end{equation}}
The crucial observation is that only $V_F^{[1,2,3]}$ depends on both weights $z_1,z_2$, for all other terms, one of $z_1,z_2$ or both are missing. In particular, the residue of those terms where $z_2$ does not appear is zero, because for $m \ge 3$ the total degree in $z_2$ of the rational expression is larger than $-1$. The residue of those terms in which $z_1$ does not appear but $z_2$ does are not automatically zero: the problem is that a factor of the form $az_1+bz_2$ in the denominator has powers of $\frac{z_1}{z_2}$ in its Taylor expansion on the $|z_1|<|z_2|$ contour, and hence the big negative degree in $z_1$ can be compensated. 

But the residue vanishing theorem for fully nested Hilbert schemes (Theorem \ref{vanishing1}) tells us that in fact all these problematic terms cancel each other, and only $V_F^{[1,2,3]}$ survives. To see this in this small example, we list the normal and tangent weights to $\PP^1 \times \PP^1$ at the fixed points:

\begin{center}
\begin{tabular}{|c|c|c|c|c|c|}
\hline
$F$ &  Tangent to $\PP^1 \times \PP^1$ & Ideal  & Normal to $\PP^1 \times \PP^1$ \\
\hline
$(\xi_{por},e_1,e_1,e_2)$ & $-(z_2-z_1)^2$ & $(x-\epsilon, y^2)\cap (x+2\epsilon ,y)$ & $z_1$\\
\hline
$(\xi_{por},e_1,e_2,e_1)$ &  $-(z_2-z_1)^2$ & $(x+\epsilon, y^2) \cap (x-2\epsilon, y)$ &$z_1$\\
\hline
$(\xi_{por},e_1,e_2,e_2)$ &  $(z_2-z_1)^2$ & $(x^2,y+\epsilon)\cap (x,y-2\epsilon)$  & $z_2$\\
\hline
$(\xi_{curv},e_1,e_1,e_1)$ & $(z_2-z_1)^2$ & $(x^2+\epsilon y,xy,y^2)$ & $2z_1-z_2$ \\
\hline
\end{tabular}
\end {center}
Note that the ideals and normal directions correspond to the following points of the Hilbert scheme: DRAWING COXES HERE

Putting together the localisation data collected in the tables, the sum over the fixed points in $\PP^1 \times \PP^1$ in \eqref{sum} of those terms which contain $z_2$, and hence do not vanish,  reads as follows. There are $4$ such terms, corresponding to the vertical yellow and green arrays at the fixed points $B,C,D$ in Figure \ref{figure:fixedpoints}:
{\small \begin{multline}
\res_{z_1,z_2=\infty}\sum\limits_{F \in (\PP^1 \times \PP^1)^{T_\bz^2}} \frac{(z_1-z_2)\left(\Phi(V_F^{[1,2,3]})-\Phi(V_F^{[1,2],[3]})-\Phi(V_F^{[1,3],[2]})-\Phi(V_F^{[2,3],[1]})+2\Phi(V_F^{[1],[2],[3]})\right)}{\Euler^z(T_FN^3(\CC^2)) \prod_{i=1}^{n}(z_2-\lambda_i)\prod_{i=1}^{n}(z_1-\lambda_i) }=\\
\res_{z_1,z_2=\infty} \sum\limits_{F \in (\PP^1 \times \PP^1)^{T_\bz^2}} \frac{\Phi(V_{z_1,z_2}^{[1,2,3]})(z_1-z_2)}{\Euler^z(T_FN^3(\CC^2))\prod_{i=1}^{n}(z_2-\lambda_i)\prod_{i=1}^{n}(z_1-\lambda_i)}+\\
\res_{z_1,z_2=\infty} \frac{z_1-z_2}{\prod_{i=1}^{n}(z_2-\lambda_i)\prod_{i=1}^{n}(z_1-\lambda_i)} \left(\frac{\Phi(V_{z_2}^{[2,3],[1]})}{z_1(z_2-z_1)^2}+\frac{\Phi(V_{z_2}^{[1,3],[2]})}{z_1(z_2-z_1)^2}-\frac{\Phi(V_{z_2}^{[2,3],[1]})}{z_2(z_2-z_1)^2}-\frac{\Phi(V_{z_2}^{[1,3],[2]})}{z_2(z_2-z_1)^2}\right)=\\
\res_{z_1,z_2=\infty} \sum\limits_{F \in (\PP^1 \times \PP^1)^{T_\bz^2}} \frac{\Phi(V_{z_1,z_2}^{[1,2,3]})(z_1-z_2)}{\Euler^z(T_FN^3(\CC^2))\prod_{i=1}^{n}(z_2-\lambda_i)\prod_{i=1}^{n}(z_1-\lambda_i)}+2
\res_{z_1,z_2=\infty} \frac{\Phi(V^{\oplus 2} \oplus V(z_2))}{z_1z_2\prod_{i=1}^{n}(z_2-\lambda_i)\prod_{i=1}^{n}(z_1-\lambda_i)} 
\end{multline}}
Note that the factors $z_1-z_2$ cancel, hence the second residue vanishes: the total degree in $z_1$ of the rational expression is less than $-1$ if $m \ge 3$. The first term is the Atiyah-Bott contribution at the fixed point $A$ of the curvilinear component $\PP^1$. The key point is that the form $V_{z_1,z_2}^{[1,2,3]}=V\oplus V(z_1) \oplus V(z_2)=\pi^*V_{z_1,z_2}^{[1,2,3]}$ is a pull-back form from $\pi: N^3(\CC^n) \to \Hilb^3(\CC^n)=\CHilb^3(\CC^n)$. Hence we can integrate over the Hilbert scheme: 
\[{\small
\int\displaylimits_{\hat{N}^3(\CC^n)} \Phi^+= \res_{z_1,z_2=\infty}\sum\limits_{F \in \Hilb^3(\CC^2)^{T_\bz^2}}\frac{(z_1-z_2)\Phi(V\oplus V(z_1) \oplus V(z_2))dz_2dz_1}{\Euler^z(T_F(\Hilb^3(\CC^2)) \prod_{i=1}^{n}(z_2-\lambda_i)\prod_{i=1}^{n}(z_1-\lambda_i) }=\int\displaylimits_{\Hilb^3_\nabla(\CC^n)} \Phi(V_{z_1,z_2}^{[1,2,3]})
}\]
The next crucial observation is that the small neighborhood $\Hilb^3_\nabla(\CC^n)$ of $\Hilb^3_0(\CC^n)=\CHilb^3(\CC^n)$ can be modeled as a bundle over $\CHilb^3(\CC^n)$. More precisely, we will identify the normal bundle as a rank $2$ bundle $B$ over $\CHilb^3(\CC^n)$ such that the total space of $B$ is topologically isomorphic to $\Hilb^3_\nabla(\CC^n)$. We will see that $B$ is a tautological bundle over the punctual part, namely 
\[B|_{\CHilb^k(\CC^n)}=\calo_X^{[k]}/\calo_X\]
and hence the equivariant bundle over the fiber $\CHilb^3(\CC^2)$ over the reference flag $\bff$ has equivariant Euler class 
\[\Euler(B_{z_1.z_2})=z_1z_2\]
The Thom isomorphism and the iterated residue formula of \cite{bercziG&T,bsz} for integrals over the curvilinear component then gives 
\[\int_{\Hilb^3_\nabla(\CC^n)} \Phi=\int_{\CHilb^3(\CC^n)} \frac{\Phi}{\Euler(B)}=\res_{z_1=\infty} \res_{z_2=\infty}\frac{(z_1-z_2)\Phi(V\oplus V(z_1) \oplus V(z_2))dz_1dz_2}{(z_1z_2)(2z_1-z_2)\prod_{i=1}^{n}(z_2-\lambda_i)\prod_{i=1}^{n}(z_1-\lambda_i) }\]
After the substitution $z_i \to -z_i$ and the identity 
\[\frac{1}{\prod_{i=1}^{n}(z_1+\lambda_i)\prod_{i=1}^{n}(z_2+\lambda_i)}(\l_i \to \text{Chern roots of } X)=\frac{s_X(1/z_1)s_X(1/z_2)}{(z_1z_2)^m}\]
completes the proof of Proposition \ref{prop:k=3}.
\end{proof}

\subsection{The integral formula in general}\label{subsec:integralformula}

Let $X=\CC^n$ be the affine space with the standard $\GL(n)$ action which induces a $\GL(n)$ action on the Hilbert scheme $\GHilb^{k+1}(\CC^n)$. We formulate the equivariant version of Theorem \ref{mainthm}

\begin{theorem}[\textbf{Equivariant integrals over $\GHilb^{k}(\CC^n)$}]\label{equivariantintegral}
Let $\l_1,\ldots, \l_n$ denote the weights for the maximal torus $T \subset \GL(n)$ acting on $\CC^n$. Let $V$ be an equivariant bundle over $\CC^n$ with weights (i.e equivariant Chern root) $\theta_1, \ldots, \theta_r$ and $\Phi(V^{[k+1]})$ a tautological class given by the Chern polynomial $\Phi$. Then    
\[\int_{G\Hilb^{k}(\CC^n)}c_{d}(V^{[k]}) =\sum_{(\a_1,\ldots, \a_s) \in \Pi(k)} \int_{X^s} \res_{\bz^{\a_1}=\infty}\ldots \res_{\bz^{\a_s}=\infty} \mathcal{R}^\a(\theta_i, \bz) d\bz^{\a_s}\ldots d\bz^{\a_1}\]
where $\mathcal{R}^\a(\theta_i, \bz)$ is the rational form 
\[c_d(V(\bz^{\a_1}) \oplus \ldots \oplus V(\bz^{\a_s})) 
\prod_{l=1}^s \frac{(-1)^{|\a_l|-1} \prod_{1\le i<j \le |\a_l|-1}(z_i^{\a_l}-z_j^{\a_l})Q_{|\a_l|-1}d\bz^{\a_l}}
{\prod_{i+j\le l\le 
|\l_l|-1}(z_i^{\a_l}+z_j^{\a_l}-z_l^{\a_l})(z_1^{\a_l}\ldots z_{|\a_l|-1}^{\a_l})\prod_{i=1}^{|\a_l|-1} \prod_{j=1}^m
\left(\l_j-z_i^{\a_l}\right)}\]
with the same notations as in Theorem \ref{mainthm}.
\end{theorem}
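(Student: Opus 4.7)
The plan is to run the six-step strategy outlined in Section~\ref{sec:strategy} in the equivariant setting, generalizing what was carried out explicitly for $k=2$ and $k=3$ in the previous subsection. Since $\CC^n$ is not compact, the integrals are understood as equivariant push-forwards to a point, so the Chern-Weil reduction of Step~3 is tautological here: the whole argument takes place $T$-equivariantly on $\CC^n$ from the start, and the global statement of Theorem~\ref{mainthm} follows afterwards by substituting the Chern roots of $X$ for the weights $\lambda_j$ and integrating against the fundamental class of $X^s$.

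First I would set up the sieve of Step~2 in full generality on the fully nested Hilbert scheme $\N^k(\CC^n)$, using the approximating Hilbert schemes $\GHilb^\a(\CC^n)$ and their tautological bundles $V^\a$ to produce the $K$-theoretic decomposition $V^{[k]}=\sum_{\a\in\Pi(k)} W^\a$ of \eqref{deepestterm}. As in the $k=3$ case, this splits
\[
\int^T_{\GHilb^k(\CC^n)}\Phi(V^{[k]})=\sum_{\a\in\Pi(k)}\int^T_{N^\a_\nabla(\CC^n)}\Phi(W^\a),
\]
and for every non-trivial partition $\a\neq\Lambda$ the corresponding term reduces, by the compatibility of approximating bundles across refinements, to an integral over a product of lower Hilbert schemes $\prod_l \GHilb^{|\a_l|}(\CC^n)$. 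These lower-order terms will therefore produce, by induction on $k$, exactly the summands of Theorem~\ref{equivariantintegral} indexed by partitions with $s>1$ blocks. So the work reduces to handling the deepest term indexed by $\a=\Lambda$, supported in a neighborhood of the punctual part.

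For the deepest term I would execute Step~4: construct the partial blow-up $\hat{N}^k_\nabla(\CC^n)\to\widehat{\GHilb}^k_\nabla(\CC^n)\to\flag_{k-1}(T\CC^n)\to\CC^n$ and run Atiyah-Bott localization on the fiber (the balanced Hilbert scheme) over a reference flag $\bff$. This turns the integral into an iterated residue over the torus weights $z_1,\dots,z_{k-1}$ of the residual torus acting on the fiber:
\[
\int^T_{N^\Lambda_\nabla(\CC^n)}\Phi(W^\Lambda)=\int_{\CC^n}\res_{\bz=\infty}\frac{\prod_{i<j}(z_i-z_j)\,\Phi(W^\Lambda)_\bff\,\dbz}{\prod_{i=1}^{k-1}\prod_{j=1}^n(z_i-\lambda_j)}.
\]
The key technical input is the Sieve Residue Vanishing Theorem, which kills every contribution of $W^\Lambda$ coming from pull-backs $\pi^*_\a V^\a$ with $\a\neq\Lambda$: the argument in the $k=3$ case (where the cancellations on the $\PP^1\times\PP^1$ component over the Porteous point were explicit) generalizes because the fixed-point loci of $\hat N^\Lambda_\nabla$ that do not dominate the curvilinear component lie in factors whose equivariant normal weights force the iterated residue to vanish by degree count on the non-dominant variables. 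This collapses the integral onto $\widehat{\GHilb}^k_\nabla(\CC^n)$ with integrand $\pi^*V^{[k]}$.

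Finally, Steps 1 and 5 finish the calculation. The stability trick (Step~1) combined with Proposition~\ref{prop:hilbfloc} identifies the relevant neighborhood with the total space of the Haiman bundle $B$ over $\CHilb^k(\CC^n)$; Thom isomorphism then rewrites the integral as $\int_{\CHilb^k(\CC^n)}\Phi(V^{[k]})/\Euler(B)$, where the equivariant Euler class of $B$ restricted to the reference flag produces the factor $(z_1\cdots z_{k-1})^{n+1}$ (together with the $Q_{k-1}$ normalization from \cite{bsz}). The strong residue vanishing of Step~5 on the curvilinear component, extending the one in \cite{bsz,bsz2}, shows that only the Porteous fixed point contributes, and the jet-curve model of Step~6 removes the restriction $n\geq k$. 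Assembling these contributions with the denominators coming from the factors $\prod_{i+j\le l\le|\a_l|-1}(z^{\a_l}_i+z^{\a_l}_j-z^{\a_l}_l)$ of the curvilinear residue formula yields exactly $\mathcal{R}^\Lambda$, and combining with the inductive contributions of the non-trivial partitions gives the full sum over $\Pi(k)$. The main obstacle I expect is the Sieve Residue Vanishing Theorem: in contrast to the $k=3$ case where the vanishing could be read off from an explicit fixed-point picture, in general one must control the combinatorics of the fixed loci of $\hat N^\Lambda_\nabla(\CC^n)$ and verify that every non-dominant component contributes an iterated residue whose rational factor has total $\bz$-degree below the vanishing threshold after the symmetrization that arises from the flag manifold.
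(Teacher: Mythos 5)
Your overall architecture mirrors the paper's six-step strategy, and you correctly identify the deepest term as the crux. But there is a genuine gap in the mechanism you propose for the Sieve Residue Vanishing Theorem (the paper's Theorem~\ref{vanishing1}). You assert that the fixed-point loci of $\hat N^\Lambda_\nabla$ which fail to dominate the curvilinear component contribute terms ``whose equivariant normal weights force the iterated residue to vanish by degree count on the non-dominant variables.'' This is exactly what does \emph{not} happen, and the paper already explains this in the $k=3$ worked example: because the tangent Euler class at a fixed point in the extra $\PP^1\times\PP^1$ component contains factors like $(z_1-z_2)^2$, the Laurent expansion on the nested contour $|z_1|\ll|z_2|$ reintroduces positive powers of $z_1/z_2$, so a single fixed-point term with low $z_1$-degree need not have vanishing residue. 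The actual mechanism is a \emph{cancellation between several fixed points}: one must identify the codimension-one boundary divisor $\widehat{\CN}^{bound}_\ff$, push forward along the $\PP^1$-fibers $\PP[e_{k-1},N_{F_{-1}}]$ (Lemmas~\ref{lemma:normal} and~\ref{lemma:crucial}), observe that $\Phi(V(\bz^{\a,\tau}))$ is constant along those fibers, and then iterate, peeling off $z_{k-1},z_{k-2},\dots$ one at a time. Without this cancellation structure your proposed degree count simply fails, as the $k=3$ case already shows.

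Two smaller points. First, the paper applies the stability trick \emph{before} the sieve, replacing $\Phi$ by the properly supported $\Phi_f$ at the outset (cf.\ Definition~\ref{geocond} and Lemma~\ref{lemma:support}); this is what licenses replacing $\GHilb^k_\nabla(\CC^n)$ by $\CHilb^k_\nabla(\CC^n)$ after the sieve and first residue vanishing. Postponing Step~1 to the very end, as you do, leaves the punctual Hilbert scheme with its non-curvilinear smoothable components unhandled at the stage where you pass from $\widehat{\BN}^k$ back to the Hilbert scheme. Second, the Euler class of the Haiman bundle $B$ at the reference flag contributes only $z_1\cdots z_{k-1}$ (one factor per $z_i$), not $(z_1\cdots z_{k-1})^{n+1}$; the additional $(z_1\cdots z_{k-1})^n$ in the statement of Theorem~\ref{mainthm} arises from rewriting the flag-manifold denominator $\prod_{l}\prod_{j}(\lambda_j-z_l)$ in terms of the Segre series $s_X(1/z_l)$, and in the equivariant Theorem~\ref{equivariantintegral} that denominator is left intact, which is why the exponent there is just $1$.
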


\section{Local models for Hilbert schemes}\label{sec:model}


In this section let $X$ be a complex manifold of dimension $n$, and let $J_kX \to X$ be the bundle of $k$-jets of germs of parametrized curves in $X$; that is, $J_kX$ is the of equivalence classes of germs of holomorphic
maps $f:(\CC,0) \to (X,p)$, with the equivalence relation $f\sim g$
if and only if the derivatives $f^{(j)}(0)=g^{(j)}(0)$ are equal for
$0\le j \le k$ when computed in some local coordinate system of $X$ near $p\in X$. The projection map $J_kX \to X$ is simply $f \mapsto f(0)$. If we choose local holomorphic coordinates on an open neighbourhood $\Omega \subset X$
around $p$, the elements of the fibre $J_kX_p$ can be represented by Taylor expansions 
\[f(t)=p+tf'(0)+\frac{t^2}{2!}f''(0)+\ldots +\frac{t^k}{k!}f^{(k)}(0)+O(t^{k+1}) \]
 up to order $k$ at $t=0$ of $\CC^n$-valued maps $f=(f_1,f_2,\ldots, f_n)$
on open neighbourhoods of 0 in $\CC$. Locally in these coordinates the fibre $J_kX_p$ can be identified with the set of $k$-tuples of vectors $(f'(0),\ldots, f^{(k)}(0)/k!)=(\CC^n)^k$ 
which further can be identified with $J_k(1,m)$.

Note that $J_kX$ is not
a vector bundle over $X$ since the transition functions are polynomial but
not linear. In fact, let $\diff_X$ denote the principal $\diff_k(n)$-bundle over $X$ formed by all local polynomial coordinate systems on $X$. Then 
\[J_kX=\diff_X \times_{\diff_k(n)} J_k(1,n).\] 
is the associated bundle whose structure group is $\diff_k(n)$.

Let $J_k^{\reg}X$ denote the bundle of $k$-jets of germs of parametrized regular curves in $X$, that is, where the first derivative $f'\neq 0$ is nonzero. After fixing local coordinates near $p\in X$ the fibre $J_k^{\reg}X_p$ can be identified with $\jetreg 1n$ and 
\[J_k^{\reg}X=\diff_X \times_{\diff_k(n)} J_k^\reg(1,n).\]

Let $\cald_X^{\le k}$ denote the dual bundle formed by at most $k$th order differential operators over $X$. Then $\cald_X^{\le 0}=\calo_X$, and we let $\cald_X^k=\cald_X^{\le k}/\cald_X^{\le 0}$. We have a filtration of bundles 
\begin{equation}\label{dfiltration}
\calo_X=\cald_X^{\le 0} \subset \cald_X^{\le 1} \subset \ldots \subset \cald_X^{\le k}
\end{equation}
where the graded component $\cald_X^{\le i}/\cald_X^{\le i-1}\simeq \Sym^iTX$ but this filtration is not split in general, so $\cald_X^{\le k} \nsimeq \Sym^{\le k}TX$. After choosing local coordinates on $X$ near $p$, the fibre $\cald^{k}_{X,p}$ can be identified with the space $J_k(m,1)^*\simeq \symdot$ of $k$th order differential operators on $\CC^n$.
Therefore $\cald_X^k$ is the associated bundle 
\[\cald_X^k=\diff_X \times_{\diff_k(n)} J_k(n,1)^*=\diff_X \times_{\diff_k(n)} \symdot.\]
Given a regular $k$-jet $f:(\CC,0) \to (X,p)$ we may push forward the differential operators of order k on $\CC$ to $X$ along $f$ and obtain a $k$-dimensional subspace of $\cald^{\le k}_{X,p}$. This gives the bundle map 
\begin{equation}\label{diffoppushforward}
J_k^\reg X \to \grass_k(\cald_X^{k})
\end{equation} 
Note that $\diff_k(1)=\jetreg 11$ acts fibrewise on the jet bundle $J_k^{\reg}X$ and the map \eqref{diffoppushforward} is $\diff_k(1)$-invariant resulting an embedding 
\begin{equation}\label{invdiffoppush}
\tilde{\phi}: J_k^{\reg}X/\diff_k(1) \hookrightarrow \grass_k(\cald_X^{k})
\end{equation}
This is the fibered version of the map 
\begin{equation}\label{embedgrass1}
\phi:J_k(1,n)/\diff_k(1) \hookrightarrow \grass(k,J_k(n,1)^*)
\end{equation}
\begin{theorem}[\textbf{The test curve model of curvilinear Hilbert schemes}, \cite{bsz}, \cite{b}]\label{bszmodel}
\begin{enumerate}
\item For any $k,n$ we have 
\[\CHilb^{k+1}_0(\CC^n)=\overline{\mathrm{im}(\phi)} \subset \grass_k(J_k(n,1)^*)=\grass_k(\mathfrak{m}),\]
where $\mathfrak{m}$ is the maximal ideal at the origin. 
\item Let $\{e_1,\ldots, e_n\}$ be a basis of $\CC^n$. For $k\le n$ the $\GL(n)$-orbit of 
\[p_{k,n}=\phi(e_1,\ldots, e_k)=\mathrm{Span}_\CC (e_1,e_2+e_1^2,\ldots, \sum_{i_1+\ldots +i_r=k}e_{i_1}\ldots e_{i_r})\] 
forms a dense subset of the image $\jetreg 1n$ and
\[\CHilb^{k+1}_0(\CC^n)=\overline{\GL_n \cdot p_{k,n}}.\] 
\item There are two canonically defined bundles on $\CHilb^{k+1}_0(\CC^n)$: 
\begin{enumerate}
\item The tautological rank $k+1$ bundle $\calo_{\CC^n}^{[k+1]}$. 
\item The tautological rank $k$ bundle $\cale$, which is the restriction of the tautological bundle over $\grass_k(\wedge^k \symdot)$. 
\end{enumerate}
These fit into the short exact sequence 
\[\xymatrix{0 \ar[r] & \calo_\grass \ar[r] & \calo_{\CC^n}^{[k+1]} \ar[r] & \cale \ar[r] & 0}.\]
\item Similarly, we get an embedding of the curvilinear Hilbert scheme 
\[\CHilb^{k+1}(X)=\overline{\mathrm{im}(\tilde{\phi})} \subset \grass_k(\cald_X^{k}),\] 
and for a bundle $V$ over $X$ we obtain the short exact sequence of bundles over $\CHilb^{k+1}(X)$:
\[\xymatrix{0 \ar[r] & V \ar[r] & V^{[k+1]}=\calo_{X}^{[k+1]}\otimes V \ar[r] & \cale_X \ar[r] & 0}\]
where $\cale_X$ is the tautological bundle over $\grass_k(\cald_X^{k})$.
\end{enumerate}
\end{theorem}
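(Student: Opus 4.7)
The strategy is to unpack the test curve map $\phi$ in parts (1)--(2) as an equivariant reformulation of the Gorenstein duality between curvilinear schemes of length $k+1$ and regular $k$-jets of curves modulo $k$-th order reparametrisation, and then extract the tautological bundles in (3)--(4) from this description. Concretely, the plan is to define, for a regular jet $f\in J_k^{\reg}(1,n)$ with coordinates $f^\ast(x_i)\in t\CC[t]/t^{k+1}$, the $k$-dimensional subspace
\[\phi(f)\;=\;\im\bigl(f_\ast\colon J_k(1,1)_+^{\ast}\hookrightarrow J_k(n,1)^{\ast}\bigr)\;=\;\bigl(\ker f^\ast\cap\mathfrak{m}/\mathfrak{m}^{k+1}\bigr)^{\perp},\]
where $f^\ast:\calo_{\CC^n,0}/\mathfrak{m}^{k+1}\to\CC[z]/z^{k+1}$ is the truncated pullback, which is surjective precisely because $f$ is regular. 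The dual (ideal) description makes $\phi$ a bijection between $J_k^{\reg}(1,n)/\diff_k(1)$ and $\mathrm{Curv}^{k+1}_0(\CC^n)$: a reparametrisation $\sigma\in\diff_k(1)$ replaces $f^\ast$ by $\sigma^\ast\circ f^\ast$ and hence preserves the kernel, while conversely, any curvilinear scheme with $\calo_\xi\simeq\CC[z]/z^{k+1}$ yields such a surjection uniquely up to automorphisms of $\CC[z]/z^{k+1}$, i.e.~up to $\diff_k(1)$. Part (1) then follows by taking Zariski closure on both sides.

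For part (2), the standard jet $f(t)=(t,t^2,\ldots,t^k,0,\ldots,0)$ satisfies $f^\ast(x_i)=t^i$ for $i\le k$, and a direct computation with the pairing between $\mathfrak{m}/\mathfrak{m}^{k+1}$ and $\sym^{\le k}\CC^n$ shows that $\phi(f)$ is precisely the span in the formula for $p_{k,n}$, whose $r$-th generator encodes the chain rule expressing $f_\ast(\partial_t^r/r!)$ as a differential operator at the origin. For density of the $\GL_n$-orbit when $k\le n$, note that a regular jet is generic iff its derivatives $f^{(1)}(0),\ldots,f^{(k)}(0)/k!$ are linearly independent, an open dense condition; on this open locus one uses a GL$(n)$ transformation to bring these derivatives into the standard flag $e_1,\ldots,e_k$, and then successive applications of $\diff_k(1)$-reparametrisations kill the remaining ``non-standard'' terms order by order, exhibiting every generic regular jet as lying in $\GL_n\cdot p_{k,n}\cdot\diff_k(1)$.

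For part (3), the fibre of $\calo_{\CC^n}^{[k+1]}$ over $\xi$ is $\calo_{\CC^n,0}/I_\xi\simeq\CC\oplus\mathfrak{m}/I_\xi$, and the constant line supplies a canonical trivial subbundle, giving the first map $\calo_\grass\hookrightarrow\calo_{\CC^n}^{[k+1]}$; the quotient $\mathfrak{m}/I_\xi$ is by construction the fibre of the tautological bundle $\cale$ under the embedding $\phi$, so the exact sequence is tautological once the identification has been checked on fibres and the bundle maps shown to be algebraic (which follows because both bundles are defined as quotients of the trivial bundle $\grass_k\times J_k(n,1)^{\ast}$). Part (4) is then pure functoriality: all constructions are $\diff_k(n)$-equivariant with respect to change of coordinates at $p\in X$, so passing to the associated bundle with the frame bundle $\diff_X$ globalises the embedding to $\CHilb^{k+1}(X)\hookrightarrow\grass_k(\cald_X^k)$ and twisting by $V$ yields the claimed short exact sequence on $\CHilb^{k+1}(X)$.

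The main obstacle I anticipate is the injectivity/surjectivity analysis of $\phi$ in part (1): verifying that every curvilinear scheme comes from a regular jet and that two such jets differ by an element of $\diff_k(1)$ exactly when their kernels coincide is where the algebraic content is concentrated, and subtleties appear at the boundary of $\Curv^{k+1}_0$ where test curves degenerate. A secondary delicate point is proving in part (2) that the generic regular $k$-jet can be normalised both by $\GL(n)$ on the target and $\diff_k(1)$ on the source to the standard form simultaneously (rather than sequentially); this requires a careful induction on the jet order, using that for $k\le n$ there are enough target directions to absorb all reparametrisation freedom.
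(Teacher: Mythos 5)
The paper does not prove Theorem \ref{bszmodel}: it is stated as a cited result, imported from the references \cite{bsz} and \cite{b}. So there is no in-paper proof to compare against; what follows is an assessment of your proposal on its own merits and against the standard argument in those sources.

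Your proposal reconstructs the correct argument, and in essentially the same way as the cited literature: the apolarity/Gorenstein-duality description of $\phi$ (regular jet $f \leftrightarrow$ surjection $f^{*}\colon\calo_{\CC^n,0}\to\CC[t]/t^{k+1}$ $\leftrightarrow$ curvilinear ideal $I_\xi=\ker f^{*}$, with $\diff_k(1)$ absorbing the automorphism ambiguity of $\CC[t]/t^{k+1}$) gives the bijection of part~(1); the Fa\`a di Bruno computation gives the explicit $p_{k,n}$ in part~(2); the $\GL_n\ltimes\diff_k(1)$-normalisation under the genericity condition $k\le n$ gives density; and the fibrewise description of $\calo^{[k+1]}_\xi=\calo/I_\xi$ with the constant line $\CC\cdot 1$ gives part~(3), with part~(4) following by $\diff_k(n)$-equivariance. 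You also correctly flag where the real work sits: the boundary behaviour under Zariski closure in part~(1), and showing in part~(2) that the $\GL(n)$- and $\diff_k(1)$-normalisations can be carried out compatibly rather than merely one after the other.

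One point to tighten in part~(3). Under $\phi$, the fibre of $\cale$ at $\phi(\xi)$ is the subspace $\phi(\xi)=(I_\xi/\mathfrak{m}^{k+1})^{\perp}\subset J_k(n,1)^{*}$, which is canonically the \emph{dual} $(\mathfrak{m}/I_\xi)^{*}$, not $\mathfrak{m}/I_\xi$ itself. Your sentence ``the quotient $\mathfrak{m}/I_\xi$ is by construction the fibre of the tautological bundle $\cale$'' therefore conflates a space with its dual. For the exact sequence $0\to\calo_\grass\to\calo^{[k+1]}\to\cale\to 0$ to be natural (rather than merely a pointwise statement about ranks), the surjection $\calo^{[k+1]}\twoheadrightarrow\cale$ should be exhibited directly from the pushforward of differential operators, i.e.\ as the composite $\calo/I_\xi\xrightarrow{(f^{*})^{-1}}\CC[t]/t^{k+1}\twoheadrightarrow t\CC[t]/t^{k+1}=J_k(1,1)\xrightarrow{\sim}J_k(1,1)^{*}\xrightarrow{f_{*}}\phi(\xi)$, and one must check that this is $\diff_k(1)$-equivariant, hence independent of the representative jet $f$. (The middle identification uses the Gorenstein self-pairing of the cyclic $\calo_\xi$-module; it is where the apparent duality is resolved.) The paper's own statement is not explicit about this either, but since this is precisely the point your plan identifies as ``tautological once the identification has been checked on fibres,'' it deserves to be spelled out rather than assumed.
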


\subsection{Fibration over the flags in $TX$.}

Let $k\le n$ and let $P_{n,k} \subset \GL_n$ denote the parabolic subgroup which preserves the flag 
\[\mathbf{f}=(\mathrm{Span}(e_1)   \subset \mathrm{Span}(e_1,e_2) \subset \ldots \subset \mathrm{Span}(e_1,\ldots, e_k)=\CC_{[k]} \subset \CC^n).\] 
\begin{definition}\label{def:xktilde}
Define the partial desingularization 
\[\widehat{\CHilb}^{k+1}_0(\CC^n)=\GL_n \times_{P_{n,k}} \overline{P_{n,k} \cdot p_{k,n}}\]
with the resolution map $\rho: \widehat{\CHilb}^{k+1}_0(\CC^n) \to \CHilb^{k+1}_0(\CC^n)$ given by $\rho(g,x)=g\cdot x$. The test curve model defines the embedding
\[\xymatrix{\widehat{\CHilb}^{k+1}_0(\CC^n) \ar@{^{(}->}[r]^-\rho \ar[d]^\rho & \widehat{\grass}_k(\sym^{\le k} \CC^n) \ar[ld] \\ \flag_k(\CC^n) & }\]
where $\widehat{\grass}_k(\sym^{\le k} \CC^n)=\GL_n \times_{P_{n,k}} \grass_k(\sym^{\le k}\CC_{[k]})$ and $\flag_k(\CC^n)=\GL(n)/P_{n,k}$ is the flag variety.
\end{definition} 
\begin{remark} The fiber $\rho^{-1}(\bff)$ consists of limit subschemes $\lim_{l \to \infty} \xi^l$ where $\xi^l=\{p_1^l \sqcup \ldots \sqcup p_{k+1}^l\}$ 
is non-reduced such that $\lim_{l \to \infty} \Span(p_1,\ldots, p_i)=\Span(e_1,\ldots, e_{i-1})$ for $2\le i \le k+1$. 
\end{remark}
Equivalently, let $\jetnondeg 1n \subset \jetreg 1n$ be the set of test curves with $\g',\ldots, \g^{(k)}$ linearly independent. These correspond to the nonsingular $n \times k$ matrices in $\Hom(\CC^k,\CC^n)$, and they fibre over the set of complete flags in $\CC^n$:
\begin{equation}\label{proj2}
\jetnondeg 1n/\diff_k(1) \to \Hom(\CC^k,\CC^n)/B_k=\flag_k(\CC^n)
\end{equation}
where $B_k \subset \GL(k)$ is the upper Borel. The image of the fibres under $\phi$ are isomorphic to $P_{n,k} \cdot p_k$, and therefore $\widehat{\CHilb}^{k+1}_0(\CC^n)$ is the fibrewise compactification of $\jetnondeg 1m$ over $\flag_k(\CC^n)$. 

We construct a fibred version of $\widehat{\CHilb}^{k+1}_0(\CC^n)$, a fibrewise partial desingularization 
\begin{equation}\label{desing}
\rho:\widehat{\CHilb}^{k+1}(X) \to \CHilb^{k+1}(X)
\end{equation}
over $X$, where $\widehat{\CHilb}^{k+1}(X)$ is a locally trivial bundle over $X$ with fibres isomorphic to $\widehat{\CHilb}^{k+1}_0(\CC^n)$. 
The immediate problem is that $J_k^{\reg}X/\diff_k(1)$ is embedded into $\flag_k(\cald_X^k)$ but there is no projection $\cald_X^k \to TX$ to define a fibration of $J_k^{\reg}X/\diff_k(1)$ over $\flag_k(TX)$. To resolve this problem we will work with a linearised bundle $\CHilb^{k+1}(X)^{\GL}$ associated to a principal $\GL_n$-bundle over $X$, rather than $\CHilb^{k+1}(X)$ which is associated to the principal $\diff_k(1)$-bundle $\diff_X$. This way we reduce the structure group of $\CHilb^{k+1}(X)$ to $\GL_n$, but the new space has the same topological intersection numbers. We explain the details in the next subsection. 

\subsection{Linearisation of the Hilbert scheme}

Recall from Step 1 the notation $\diff_k(n)=J_k^\reg(n,n)$ for the group of $k$th order diffeomorphism germs of $\CC^n$ at the origin. In Step 1 we introduced the bundle of differential operators
\[\cald_X^k=\diff_X \times_{\diff_k(n)} \symdot.\]
where $\diff_X$ stands for the principal $\diff_k(n)$-bundle over $X$ formed by all local polynomial coordinate systems on $X$. This is not a vector bundle---the structure group is $\diff_k(n)$---but we can linearise it. 
The set $\GL(n)$ of linear coordinate changes forms a subgroup of $\diff_k(n)$. Let $\GL_X$ denote the principal $\GL(n)$-bundle over $X$ formed by all local linear coordinate systems on $X$. Then the vector bundle $\sym^{\le k} TX=\oplus_{i=1}^k \sym^i TX$ is associated to the same $\symdot$ considered as a $\GL(n)$-module:
\[\sym^{\le k} TX=\GL_X \times_{\GL(n)} \symdot.\]
Note that $\cald_X^{k}$ and  $\Sym^{\le k}TX$ are not isomorphic bundles, and in particular the filtration defined in \eqref{dfiltration} does not split. Hence there is no projection map $\cald_X^k \to TX$ but there is a natural projection $\sym^{\le k} TX \to TX$.

There is an induced $\diff_k(n)$ action on $\grass_k(\symdot)$ and the image $\CHilb^{k+1}_0(\CC^n)=\overline{\im(\phi^\grass)}$ is $\diff_k(n)$-invariant subvariety of $\grass_k(\symdot)$. Then $\CHilb^{k+1}(X)$ is the associated bundle
\[\CHilb^{k+1}(X)=\diff_X \times_{\diff_k(n)} \CHilb^{k+1}_0(\CC^n) \subset \diff_X \times_{\diff_k(n)} \grass_k(\symdot)=\grass_{k}(\cald^k_X).\]
We define the corresponding linearised bundle 
\[\CHilb^{k+1}(X)^{\GL}=\GL_X \times_{\GL_n} \CHilb^{k+1}_0(\CC^n) \subset \GL_X \times_{\GL_n}\grass_k(\symdot)=\grass_k(\sym^{\le k} TX)\]
which is the linearised version of $\CHilb^{k+1}(X)$ remembering the linear action on the fibres. 
For torus localisation purposes we can replace $\CHilb^{k+1}(X)$ with its linearised version $\CHilb^{k+1}(X)^{\GL}$. Similarly to Definition \ref{def:xktilde} we have
\begin{definition}\label{def:widetilde} Define the partial desingularisation 
\[\widehat{\CHilb}^{k+1}(X)=\GL_X \times_{P_{n,k}}  \overline{P_{n,k} \cdot p_{k,n}}\]
and 
\[\widehat{\grass}_k(\sym^{\le k} TX)=\GL_X \times_{P_{n,k}} \grass_k(\sym^{\le k}\CC_{[k]})\]
with the partial resolution map 
\[\rho: \widehat{\CHilb}^{k+1}(X)=\GL_X \times_{\GL(n)} \left(\GL(n) \times_{P_{n,k}} \overline{P_{n,k}\cdot \mathfrak{p}_{m,k}}\right) \to \GL_X \times_{\GL(n)} \left(\overline{\GL(n)\cdot \mathfrak{p}_{m,k}}\right)=\CHilb^{k+1}(X)^{\GL}.\]
This results the diagram 
\[\xymatrix{\widehat{\CHilb}^{k+1}(X) \ar@{^{(}->}[r]^-\rho \ar[d]^\rho & \widehat{\grass}_k(\sym^{\le k} TX) \ar[ld] \\ \flag_k(TX) \ar[d]^\pi & \\ X & }\]
\end{definition}

\subsection{Neighborhood of $\CHilb^{k+1}(X)$ in $\GHilb^{k+1}(X)$} We start with $X=\CC^n$, and how to construct the model of a small neighborhood in $\GHilb^{k+1}(\CC^n)$ of the curvilinear locus $\CHilb^{k+1}(\CC^n)$ supported at some point on $\CC^n$. 
\begin{definition} Let $\xi=\xi_1 \sqcup \xi_2 \sqcup \ldots \sqcup \xi_s \in \GHilb^{k+1}(\CC^n)$ be a subscheme whose support consists of the $s$ points $p_1,\ldots, p_s \in \CC^n$ with $\mathrm{length}(\xi_i)=l_i$. The baricenter of $\xi$ is 
\[\bbb(\xi)=l_1p_1+\ldots +l_sp_s \in \CC^n\]
The balanced Hilbert scheme centered/balanced at $p\in \CC^n$ is 
\[\BHilb^{k+1}_p(\CC^n)=\bbb^{-1}(p) \subset \GHilb^{k+1}(\CC^n)\]
consists of all subschemes $\xi$ whose baricenter is $p$.
\end{definition}
Let $\{0\} \in U \subset \CC^n$ be a small open neighborhood of the origin and let 
\[\BHilb^{k+1}(U)=\{\xi \in \BHilb^{k+1}_{\{0\}}(\CC^n): \mathrm{supp}(\xi) \subset U\}\]
be a small neighborhood of $\CHilb^{k+1}_0(\CC^n)$ in $\BHilb^{k+1}_{\{0\}}(\CC^n)$. 
We construct a blow-up $\widehat{\BHilb}^{k+1}(U)$ of $\BHilb^{k+1}(U)$ which fibers above the flag $\flag_k(\CC^n)$. We follow a similar path to the construction of $\widehat{\CHilb}^{k+1}_0(\CC^n)=\GL_n \times_{P_{n,k}} \overline{P_{n,k} \cdot p_k}$. We use the following simple observation.
\begin{lemma}\label{keylemma}
Every point $\xi \in \BHilb^{k+1}(\CC^n)$ is contained in a $k$-dimensional linear subspace of $\CC^n$.
\end{lemma}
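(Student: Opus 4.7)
The plan is to show that the affine hull of $\xi$ has dimension at most $k$, and then use the balanced condition to promote the affine hull to a linear subspace.

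First I would define the affine hull of $\xi$ scheme-theoretically: let $A(\xi)\subseteq \CC^n$ be the intersection of all affine hyperplanes $\{\ell = c\}$ (with $\ell$ a linear form and $c\in\CC$) such that the function $\ell - c$ vanishes identically on $\xi$. Equivalently, writing $W=\CC\cdot 1 \oplus (\CC^n)^*\subset \calo_{\CC^n}$ for the space of affine-linear functions, $A(\xi)$ is cut out by the kernel of the restriction map
\[ r_\xi : W \longrightarrow \calo_\xi,\qquad f \longmapsto f|_\xi. \]
Since $\calo_\xi$ has dimension $k+1$ and $r_\xi(1)\neq 0$, the image $r_\xi(W)$ has dimension at most $k+1$, so $\ker r_\xi$ has codimension at most $k+1$ in $W$. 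The $1$-parameter family of constants contributes one of these dimensions, hence the codimension of $A(\xi)$ in $\CC^n$ (which is the codimension of $\ker r_\xi \cap (\CC^n)^*$ in $(\CC^n)^*$) is at most $k$, i.e.\ $\dim A(\xi)\le k$. This is the standard bound ``length-$\ell$ subscheme spans an affine subspace of dimension at most $\ell-1$,'' and by construction $\xi\subset A(\xi)$.

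Next I would show that the origin lies in $A(\xi)$. Write $\xi=\xi_1\sqcup\cdots\sqcup\xi_s$ with supports $p_1,\ldots,p_s$ and lengths $l_i$, so $l_1+\cdots+l_s=k+1$ and, by the balanced assumption,
\[ l_1 p_1+\cdots+l_s p_s = 0. \]
Dividing by $k+1$ exhibits $0$ as an affine combination of the support points $p_1,\ldots,p_s$ with coefficients summing to $1$. Since each $p_i$ lies in $A(\xi)$ (the support of $\xi$ is contained in $\xi$, hence in $A(\xi)$), and $A(\xi)$ is closed under affine combinations, we conclude $0\in A(\xi)$. Therefore $A(\xi)$ is a linear subspace of $\CC^n$ of dimension $d\le k$, and choosing any $k$-dimensional linear subspace containing it gives the desired $W\subset\CC^n$ with $\xi\subset W$.

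The routine point that requires care — but which I expect to be the only subtle one — is verifying that the dimension bound ``$r_\xi(W)$ has dimension $\le k+1$'' really controls the scheme-theoretic hull even for non-reduced $\xi$; this is immediate once one checks that $\xi$ is defined by the ideal it generates inside $\calo_A$ for any affine subspace $A$ containing it, which in turn follows from the fact that $\calo_\xi$ is a quotient of $\calo_{\CC^n}$ and the restriction factors through $\calo_A$ for $A=A(\xi)$. Apart from this bookkeeping, no deeper input is needed, so I do not anticipate a genuine obstacle.
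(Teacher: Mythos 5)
Your proof is correct and takes a genuinely different route from the paper's. The paper exploits the defining property of the geometric component: since $\BHilb^{k+1}(\CC^n)\subset\GHilb^{k+1}(\CC^n)$, the subscheme $\xi$ is a limit of reduced balanced subschemes $\xi^l=p_1^l\sqcup\cdots\sqcup p_{k+1}^l$ with $\sum_i p_i^l=0$; this relation forces $\Span(p_1^l,\ldots,p_{k+1}^l)$ to have dimension at most $k$, and one then passes to the limit of the $V^l$ in the Grassmannian. Your argument instead computes the affine hull $A(\xi)$ directly from the length and balance conditions and never invokes smoothability. This is more elementary (no limit/compactness step) and in fact proves a strictly stronger statement, namely that \emph{any} balanced length-$(k+1)$ subscheme of $\CC^n$ lies in a $k$-dimensional linear subspace, whether or not it sits in the geometric component. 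One small slip in the write-up: you assert "the codimension of $A(\xi)$ in $\CC^n$ ... is at most $k$, i.e.\ $\dim A(\xi)\le k$" --- the inequality is reversed; the correct chain is $\dim\ker r_\xi \ge (n+1)-(k+1)=n-k$, and since $1\notin\ker r_\xi$ the projection $\ker r_\xi\hookrightarrow (\CC^n)^*$ is injective, so $A(\xi)$ is cut out by at least $n-k$ independent affine-linear equations, giving $\dim A(\xi)\le k$. (The parenthetical identifying this codimension with that of $\ker r_\xi\cap(\CC^n)^*$ in $(\CC^n)^*$ also only becomes accurate once you have shown $0\in A(\xi)$, which you do afterward.) The remaining step --- using $\sum l_ip_i=0$ to exhibit the origin as an affine combination of the support points and conclude $0\in A(\xi)$, hence that $A(\xi)$ is linear --- is exactly right.
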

\begin{proof} Any subscheme $\xi \in \BHilb^{k+1}(\CC^n)$ is the limit $\xi=\lim_{l \to \infty} \xi^l$ of reduced subschemes 
\[\xi^l=p_1^l \sqcup \ldots \sqcup p^l_{k+1}\]
with $p_1+\ldots +p_{k+1}=0$. Hence $V^l=\mathrm{Span}(p_1,\ldots, p_{k+1})$ is a $k$-dimensional subspace in $\CC^n$, and $\xi \in  \lim_{l\to \infty} V^l$.

\end{proof}

Now let 
\[\mathbf{f}=(\mathrm{Span}(e_1)   \subset \mathrm{Span}(e_1,e_2) \subset \ldots \subset \mathrm{Span}(e_1,\ldots, e_k)=\CC_{[k]} \subset \CC^n).\] 
be our base flag as before and recall 
\[\CHilb^{k+1}_\ff(\CC^n)=\rho^{-1}(\ff)=\overline{P_{k,n}\cdot \mathfrak{p}_{k,n}}  \subset \grass_k(\sym^{\le k}\CC_{[k]})\]
is the part of the curvilinear Hilbert scheme which sits over $\bff$. We define similarly 
{\small \[\BHilb^{k+1}_\bff(U)=\overline{\{(p_1 \sqcup \ldots \sqcup p_{k+1}) \in \BHilb^{k+1}(U): \Span(p_1,\ldots, p_i)=\Span(e_1,\ldots, e_{i-1}) \text{ for } 2\le i \le k+1\}}\]}
be the space of those subschemes in the neighborhood $U$ which sit in the flag $\bff$.  For a complex vector space $V$ let $\grass_{k+1}(S^\bullet V)$ denote the Grassmannian of $k+1$-dimensional quotients of $S^\bullet V=\oplus_{i=0}^k \sym^kV^*$. Then 
\[\BHilb^{k+1}_\bff(U) \subset \grass_{k+1}(S^\bullet \CC_{[k]})\]
is $P_{k,m}$-invariant, and we arrive at
\begin{definition}\label{def:hatbhilb} We define the partial resolution 
\[\widehat{\BHilb}^{k+1}(U)=\GL(n) \times_{P_{n,k}} \BHilb^{k+1}_\bff(U)\]
which fibers over $\flag_k(\CC^n)=\GL(n) / {P_{n,k}}$, and 
\[\widehat{\grass}_{k+1}(S^\bullet \CC^n)=\GL_X \times_{P_{n,k}} \grass_{k+1}(S^\bullet \CC_{[k]})\]
which fit into the diagram 
\begin{equation}\label{diagramtwo}
\xymatrix{
\widehat{\CHilb}^{k+1}(\CC^n) \ar@{^{(}->}[r] \ar[d]^\rho &  \widehat{\BHilb}^{k+1}(U) \ar[ld] \ar@{^{(}->}[r]^-{\iota} & \widehat{\grass}_{k+1}(S^\bullet \CC^n) \ar[lld]^{\rho_\grass}\\
\flag_k(\CC^n)  & } 
\end{equation}
and by Lemma \ref{keylemma} $\rho: \widehat{\BHilb}^{k+1}(U) \to \BHilb^{k+1}(U)$ is surjective rational map (generically 1-1). 
\end{definition}

Finally, the same argument works over a smooth manifold $X$: we let $\bU \subset TX$ be a small $\GL_X$-invariant tubular neighborhood of the zero section, with the exponential map $\exp: \bU \to X$, which identifies $U_x$ with $\exp(\bU_x) \subset X$. We may assume that $\bU=\GL_X \times_{\GL(n)} U$ for some $U \subset \CC^n$. We define 
\[\widehat{\BHilb}^{k+1}(\bU)=\GL_X \times_{P_{n,k}} \BHilb^{k+1}_\bff(U)\]
We get the diagram
\begin{equation}\label{diagramfour}
\xymatrix{
\widehat{\CHilb}^{k+1}(X) \ar@{^{(}->}[r] \ar[d]^\rho &  \widehat{\BHilb}^{k+1}(\bU) \ar[ld]^{\rho_{\BHilb}} \ar@{^{(}->}[r]^-{\iota} & \grass_{k+1}(S^\bullet TX) \ar[lld]^{\rho_\grass}\\
\flag_k(TX) \ar[d]^\mu & &\\
X &&} 
\end{equation}
with a birational morphism $\widehat{\BHilb}^{k+1}(\bU) \to \BHilb^{k+1}(\bU)=\cup_{p\in X} \BHilb^{k+1}(U_p)$. 
\begin{remark} The fiber $\rho^{-1}(f)$ over a flag 
\[f=(F_1 \subset F_2 \subset \ldots \subset F_k \subset T_pX)\]
sitting over the point $p\in X$ consists of limit subschemes $\lim_{l \to \infty} \xi^l$ where $\xi^l=\{p_1^l \sqcup \ldots \sqcup p_{k+1}^l\}$ is non-reduced such that $p_i \in U_p$ and $\lim_{l \to \infty} \Span(p_1,\ldots, p_i)=F_{i-1}$ for $2\le i \le k+1$. 
\end{remark}

We obtain the following relation among the tautological bundles
 \[\calo_X^{[k+1]}=\iota^* \cale^{k+1}\]
 and for a bundle $V$ on $X$ the restriction of the tautological bundle to the curvilinear locus is 
 \begin{equation}\label{taurestrictedtocurvi}
 V^{[k+1]}|_{\widehat{\CHilb}^{k+1}(X)}=V \otimes \calo_X^{[k+1]}
 \end{equation}

\section{Fully nested Hilbert schemes}\label{sec:nested}

The central object in our proof of the main theorems is a generalisation of the standard nested Hilbert schemes of points. Recall the space  
\[\Hilb^{[1,2,\ldots, k]}(X)=\{(\xi_1 \subset \xi_2 \subset \ldots \subset \xi_k)| \xi \subset X, \dim H^0(\xi_i)=i\} \subset X \times \Hilb^2(X) \times \ldots \times \Hilb^k(X)\] 
which is formed by flags of subschemes is called the nested Hilbert scheme on $X$. Our fully nested Hilbert scheme defined in this section is associated to the ordered Hilbert scheme of $k$ labeled points, and it keeps track degenerations of any subset of the $k$ labeled points.
\subsection{Approximating bundles}
The ordered Hilbert scheme of points $\Hilb^{[k]}(X)$ is a branched cover of the ordinary Hilbert scheme, defined by the commutative diagram
\begin{equation*}
\xymatrix{\Hilb^{[k]}(X) \ar[r] \ar[d]^{\HC} &  \Hilb^k(X) \ar[d]^{\HC} \\
 X^k \ar[r]  & \Sym^k(X) } 
\end{equation*}
where the vertical arrows are the Hilbert-Chow morphisms taking a subscheme/ordered subscheme $Z$ to its support cycle. This is a ramified cover of $\Hilb^k(X)$ with a stratified ramification locus sitting over the diagonals of $\Sym^k(X)$. Define the bundle $F^{[k]}$ on $\Hilb^{[k]}(X)$ as the pullback of $F^{[k]}$ along $\Hilb^{[k]}(X) \to \Hilb^k(X)$. Note that we loosely use the same notation for $F^{[k]}$ on both $\Hilb^k(X)$ and $\Hilb^{[k]}(X)$.

Following Li \cite{junli} and Rennemo \cite{rennemo}, we define for every partition $\alpha \in \Pi(k)$ of $\{1,\ldots, k\}$ the scheme $\Hilb^{[\alpha]}(X)$ which is a certain approximation of  $\Hilb^{[k]}(X)$. We fix some notation and conventions first. For simplicity, let $\RHilb^{[k]}(X) \subset \GHilb^{[k]}(X)$ denote the open set of reduced subschemes of the form $\xi=x_1 \sqcup \ldots \sqcup x_k$, formed by $k$ different points on $X$.

\begin{definition}\label{def:approximatingsets} Let $\a=(\a_1,\ldots, \a_s) \in \Pi(k)$ be a partition of $\{1,\ldots, k\}$. 
\begin{enumerate}
\item We let $\sim_\alpha$ be the equivalence relation on $\{1,\ldots, k\}$ given by letting the
elements of $\alpha$ form the equivalence classes.
We introduce the partial order by setting $\alpha \le \beta$ if $\sim_\a$ is a refinement of $\sim_\beta$.
Let 
\[\Delta_\alpha=\{(x_1,\ldots, x_k) \in X^k | x_i=x_j \text{ if } i\sim_\a j\}.\]
denote the closed diagonal, then $\Delta_\b \subseteq \Delta_\a$ whenever $\a \le \b$.
\item Let 
\[\Hilb^{[\a]}(X)=\prod_{i=1}^s \Hilb^{[\a_i]}(X),\]
where for a subset $A \subset \{1,\ldots, k\}$, $\Hilb^{[A]}(X)$ denotes the ordered Hilbert scheme of $|A|$ points labeled by $A$. The punctual part sits over the corresponding diagonal:
\[\Hilb^{[\a]}_0(X)=\prod_{i=1}^s \Hilb^{[\a_i]}_0(X)=\HC^{-1}(\Delta_\a).\]
\item We define
\[\GHilb^{[\a]}(X) = \prod_{i=1}^s \GHilb^{[\a_i]}(X)\]
which is the the closure of $\RHilb^{[k]}(X)$ in $\Hilb^{[\a]}(X)$. Like above, the punctual part is $\GHilb^{[\a]}_0(X)=\HC^{-1}(\Delta_\a)$. We will often use the simpler notation $\GHilb^{k}(X)=\GHilb^{[\Lambda]}(X)$ where $\Lambda=\{1,\ldots, k\}$ is the trivial partition, that is, the main component of the ordered Hilbert scheme on $k$ points. 
\end{enumerate}
\end{definition}
The fully nested Hilbert scheme $\N^k(X)$ parametrises ordered collections of $k$ points in $X$, with the additional data that when $l$ points with labels in the same set in the partition $\a$ come together at $x$, one must specify a length $l$ subscheme supported at $x$. The master blow-up space encodes information on how different subsets of points collide. 

\begin{definition}  Let $X$ be a complex nonsingular manifold and $k\ge 1$. \begin{enumerate} 
\item The fully nested Hilbert scheme is $\N^k(X)=\overline{im(h)}$, the closure of the image of the natural map
\[h: \RHilb^{[k]}(X) \to \prod_{\a \in \Pi(k)} \GHilb^{[\a]}(X).\]
The Hilbert-Chow morphism extends to $\N^k(X)$ and gives a morphism $\HC: \N^k(X) \to X^k$, hence obtain the $\a$-punctual locus $\N^\a_0(X)=\HC^{-1}(\Delta_\a)$. The projection $\pi_\a:\N^k(X) \to \GHilb^{[\a]}(X)$ fits into the diagram
\begin{equation}\label{commdiagram}
\xymatrix{\N^k(X) \ar[r]^-{\pi_\a} \ar[d]^{\HC} & \GHilb^{[\a]}(X) \ar[dl]^{\HC_\a} \\
X^k 
}
\end{equation}
\item Let $\a=(\a_1,\ldots, \a_s) \in \Pi(k)$ be a partition. Pull-back along the projection map $\pi_\a: \N^k(X)  \to \GHilb^{[\a]}(X)$ defines an approximating bundle $\pi_\a^*(F^{[\a]})$ over $\N^k(X)$, which satisfies 
 \[\pi_\a^*F^{[\a]}=\pi_{\a_1}^*F^{[[\a_1]]} \oplus \ldots \oplus \pi_{\a_s}^*F^{[[\a_s]]}.\]
We will loosely use the shorthand notation $F^{[\a]}$ for the bundle $\pi_\a^*(F^{[\a]})$. 
\item The punctual part, which sits over the $\a$-diagonal, fits into the diagram 
\begin{equation}\label{commdiagram2}
\xymatrix{\N^\a_0(X) \ar[r]^{\pi_\a} \ar[d]^{\HC} & \GHilb^{[\a]}_0(X) \ar[dl]^{\HC_\a} \\
\Delta_\a 
}
\end{equation}
and  $F^{[\a]}_0=\pi_\a^*(F^{[\a]}|_{\GHilb^{[\a]}_0(X)})$ its restriction to the punctual part $\N^\a_0(X)$. 
\end{enumerate}
\end{definition}

The terminology is self-explanatory: $\N^k(X)$ can be considered as a nested Hilbert scheme, but nested with respect to the full partially ordered net of subsets of $\{1,\ldots, k\}$. Note that $N^k(X)$ is irreducible, being the closure of the image under $h$ of an irreducible variety. It is a blow-up of $\GHilb^k(X)$ via the natural projection map 
\[\pi_\L: \N^k(X) \to \GHilb^k(X)\] 
where $\L=\{1,\ldots, k\}$ is the trivial partition. However, the geometry of the restriction $\pi_\L: \N^k_0(X) \to \GHilb^k_0(X)$
to the punctual part is more subtle. In particular, the preimage of the curvilinear component $\CHilb^k(X)$ is not necessarily irreducible which is a delicate part of our integration argument, addressed in the next section.  

\begin{definition}\label{def:curvgeometricsubsetQ} The curvilinear part of $\N^k(X)$ is $\CN^k(X)=\pi_\L^{-1}(\CHilb^{[k]}(X))$ where $\L=\{1,\ldots, k\}$ is the trivial partition. The punctual curvilinear component supported at $p\in X$ is 
\[\CN^k_p(X)=\pi_\L^{-1}(\CHilb^{[k]}_p(X))=\CN^k(X) \cap \HC^{-1}(\{p,\ldots, p\}).\]
\end{definition}

\begin{example} We have seen in \S \ref{subsec:k=3} that $CN^3_0(\CC^2)$ is not irreducible, and  it has $2$ components: the curvilinear component $\CN^3_{main}(\CC^2)$ of dimension $1$, and an other component $CN^3_{Por}=\PP^1 \times \PP^1$ sitting over the Porteous point $I=\mathfrak{m}^2 \in \CHilb^3_0(\CC^2)$. 
\end{example}

Despite these extra components, the projection $\pi^\L$ is isomorphism over the curvilinear locus in $\CHilb^{k}(X)$. Recall this is defined as
\[\mathrm{Curv}^k(X)=\{\xi \in \Hilb^k_0(X): \calo_\xi \simeq \CC[t]/t^k\},\]
and it is a dense open subset of $\CHilb^k(X)$. 


\subsection{Fibration of the fully nested Hilbert scheme over the flag manifold}

Recall the curvilinear part of $\N^{k}(X)$, defined as $\CN^{k}(X)=\pi_\L^{-1}(\CHilb^{k}(X))$ where $\L=\{0,1,\ldots, k\}$ is the trivial partition. The punctual curvilinear component supported at $p\in X$ is 
\[\CN^{k}_p(X)=\pi_\L^{-1}(\CHilb^{k}_p(X))=\CN^{k}(X) \cap \HC^{-1}(\{p,\ldots, p\}).\]

We follow the same argument which resulted in diagram \eqref{diagramfour}. Let $\bU \subset TX$ be a small $\GL_X$-invariant tubular neighborhood of the zero section, with the exponential map $\exp: \bU \to X$, which identifies $U_x$ with $\exp(\bU_x) \subset X$. We may assume that $\bU=\GL_X \times_{\GL(n)} U$ for some $U \subset \CC^n$. We define 
\[\widehat{\BN}^{k}(\bU)=\pi_\Lambda^{-1}(\widehat{\BHilb}^{k}(\bU)).\]
We get the following extension of diagram \eqref{diagramfour}:
\begin{equation}\label{diagramfive}
\xymatrix{
\widehat{\CN}^{k}(X) \ar@{^{(}->}[r]  \ar[d]^{\pi_\Lambda} & \widehat{\BN}^{k}(\bU) \ar[d]^{\pi_\Lambda}  \ar@{^{(}->}[r]^-{\iota} & \prod\limits_{(\a_1,\ldots, \a_s) \in \Pi(k)} \prod\limits_{i=1}^s \widetilde{\grass}_{|\a_i|}(S^\bullet TX) \ar[d]^{\pi_\Lambda}\\ 
\widehat{\CHilb}^{k}(X) \ar@{^{(}->}[r] \ar[d]^\rho &  \widehat{\BHilb}^{k}(\bU) \ar[ld]  \ar@{^{(}->}[r]^-{\iota} & \widehat{\grass}_{k}(S^\bullet TX) \ar[lld]^{\rho_{\grass}} \\
\flag_{k-1}(TX) \ar[d]^\mu & \\
X &} 
\end{equation}
and $\widehat{\BN}^{k}(\bU) \to \widehat{\BHilb}^{k}(\bU)$ is a birational morphism. When $X=\CC^n$, all vertical maps in diagram \eqref{diagramfive} are $\GL(n)$-equivariant, and we can take $\bU=T\CC^n$. 

\begin{remark}\label{fiberdim}
Note that for any $p\in X$ any any flag $\bff \in \flag_{k-1}(T_pX)$ the fiber $(\rho_\grass)^{-1}(\bff)$ is smooth and its dimension only depends on $k$, not on $n$. 
\end{remark}

\section{Equivariant localisation and multidegrees}\label{sec:equiv}

This section is a brief of equivariant cohomology and localisation. For
more details, we refer the reader to Berline--Getzler--Vergne \cite{bgv} and B\'erczi--Szenes \cite{bsz}. 

Let $\kt\cong U(1)^m$ be the maximal compact subgroup of
$T\cong(\CC^*)^m$, and denote by $\mathfrak{t}$ the Lie algebra of $\kt$.  
Identifying $T$ with the group $\CC^n$, we obtain a canonical basis of the weights of $T$:
$\lambda_1,\ldots ,\lambda_m\in\mathfrak{t}^*$. 

For a manifold $X$ endowed with the action of $\kt$, one can define a
differential $d_\kt$ on the space $S^\bullet \mathfrak{t}^*\otimes
\Omega^\bullet(X)^\kt$ of polynomial functions on $\mathfrak{t}$ with values
in $\kt$-invariant differential forms by the formula:
\[   
[d_\kt\alpha](X) = d(\alpha(X))-\iota(X_X)[\alpha(X)],
\]
where $X\in\mathfrak{t}$, and $\iota(X_X)$ is contraction by the corresponding
vector field on $X$. A homogeneous polynomial of degree $d$ with
values in $r$-forms is placed in degree $2d+r$, and then $d_\kt$ is an
operator of degree 1.  The cohomology of this complex--the so-called equivariant de Rham complex, denoted by $H^\bullet_T(X)$, is called the $T$-equivariant cohomology of $X$. Elements of $H_T^\bullet (X)$ are therefore polynomial functions $\mathfrak{t} \to \Omega^\bullet(X)^K$ and there is an integration (or push-forward map) $\int: H_T^\bullet(X) \to H_T^\bullet(\mathrm{point})=S^\bullet \mathfrak{t}^*$ defined as  
\[(\int_X \alpha)(X)=\int_X \alpha^{[\mathrm{dim}(X)]}(X) \text{ for all } X\in \mathfrak{t}\]
where $\alpha^{[\mathrm{dim}(X)]}$ is the differential-form-top-degree part of $\alpha$. The following proposition is the Atiyah-Bott-Berline-Vergne localisation theorem in the form of \cite{bgv}, Theorem 7.11. 
\begin{theorem}[(Atiyah-Bott \cite{atiyahbott}, Berline-Vergne \cite{berlinevergne})]\label{abbv} Suppose that $X$ is a compact complex manifold and $T$ is a complex torus acting smoothly on $X$, and the fixed point set $X^T$ of the $T$-action on X is finite. Then for any cohomology class $\a \in H_T^\bullet(X)$
\[\int_X \alpha=\sum_{f\in X^T}\frac{\a^{[0]}(f)}{\mathrm{Euler}^T(T_fX)}.\]
Here $\mathrm{Euler}^T(T_fX)$ is the $T$-equivariant Euler class of the tangent space $T_fX$, and $\alpha^{[0]}$ is the differential-form-degree-0 part of $\alpha$. 
\end{theorem}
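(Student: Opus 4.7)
The plan is to establish the formula via the Cartan model of equivariant cohomology by constructing an explicit equivariant primitive for $\alpha$ on the complement of a small neighborhood of the fixed-point set, and then computing the resulting boundary contribution at each fixed point. First I would fix a generic $X \in \mathfrak{t}$ so that the zero locus of the induced vector field $X_X$ is exactly $X^T$, and choose a $\kt$-invariant Riemannian metric $g$ on $X$. Let $\theta$ be the $1$-form $g$-dual to $X_X$; then $\iota(X_X)\theta = |X_X|_g^2$ is strictly positive outside $X^T$, so $\beta := \theta / |X_X|_g^2$ is a well-defined $\kt$-invariant $1$-form on $X \setminus X^T$ satisfying $\iota(X_X)\beta = 1$.

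A direct computation gives $d_\kt \beta = d\beta - 1$, hence if we set $\gamma = \beta \cdot \sum_{j \ge 0}(d\beta)^j$ (a finite sum modulo the top-degree cutoff), then $d_\kt(\gamma \cdot \alpha) = -\alpha$ on $X \setminus X^T$ because $d_\kt \alpha = 0$. For each small $\varepsilon > 0$, let $U_\varepsilon$ be a disjoint union of $\kt$-invariant balls of radius $\varepsilon$ around the points of $X^T$. Stokes' theorem on $X \setminus U_\varepsilon$ gives
\[
\int_X \alpha \;=\; \lim_{\varepsilon \to 0} \int_{X \setminus U_\varepsilon} \alpha \;=\; \lim_{\varepsilon \to 0} \sum_{f \in X^T} \int_{\partial B_\varepsilon(f)} \gamma \cdot \alpha.
\]
Thus the global integral reduces to a sum of local boundary contributions, one for each fixed point.

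For the local contribution at an isolated $f \in X^T$, I would invoke the equivariant tubular neighborhood theorem to identify a neighborhood of $f$ equivariantly with a ball in $T_fX$, and diagonalise the linear $T$-action with weights $\mu_1(X), \ldots, \mu_n(X)$ (these are complex, so one really writes real and imaginary parts). With the flat metric, $\beta$ becomes explicit, and the contribution of $\alpha$ factors as $\alpha^{[0]}(f)$ times a model Gaussian-type integral, since higher form-degree terms of $\alpha$ contribute nothing in the $\varepsilon \to 0$ limit once the geometric-series expansion is tracked. The remaining model integral on $T_fX \cong \CC^n$ evaluates, by a direct calculation in polar coordinates on each complex line, to $1 / \prod_{i=1}^n \mu_i(X) = 1/\Euler^T(T_fX)$.

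The main obstacle is this last local calculation, in particular verifying that after expanding $\gamma = \beta \sum_j (d\beta)^j$ the only surviving term in the $\varepsilon \to 0$ limit is the one of top form-degree, and checking that its evaluation against $\alpha^{[0]}(f)$ produces precisely the equivariant Euler class in the denominator rather than some rescaled version. Once the model computation on $\CC^n$ with diagonal weights is carried out, the general statement follows by summing over $f \in X^T$, and the identity extends from the open dense set of generic $X \in \mathfrak{t}$ to all $X$ by polynomial continuation in the Cartan model.
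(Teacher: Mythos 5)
The paper does not prove Theorem \ref{abbv}: it is stated as an external result, cited from Berline--Getzler--Vergne (Theorem 7.11 of \cite{bgv}) and from the original papers of Atiyah--Bott and Berline--Vergne. There is therefore no ``paper proof'' to compare against. Your argument is the classical Berline--Vergne proof, and it is correct in substance: you choose a generic $X\in\mathfrak t$ so that the zero set of $X_X$ is $X^T$, build the invariant $1$-form $\beta$ with $\iota(X_X)\beta=1$ off $X^T$, observe that $\gamma=\beta\sum_{j\ge 0}(d\beta)^j$ gives $d_\kt(\gamma\alpha)=-\alpha$ on $X\setminus X^T$ (using $d_\kt(d\beta)=0$, which follows from $\iota(X_X)d\beta = -d\iota(X_X)\beta = 0$), apply Stokes on $X\setminus U_\varepsilon$ (where the $\iota(X_X)$ part drops out of the integral for degree reasons), and reduce to a local model at each fixed point. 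You also correctly flag the only point that needs genuine care: the normalization of the boundary integral $\int_{\partial B_\varepsilon(f)}\gamma\,\alpha^{[0]}(f)$ against the chosen conventions for $\Euler^T$ and for integration in the Cartan model, so that the factors of $2\pi i$ land correctly and one obtains precisely $1/\Euler^T(T_fX)$. Finally, passing from generic $X$ to all of $\mathfrak t$ by polynomial continuation is the standard closing step. In short, the proposal is a correct, self-contained proof of the cited theorem, not a new route relative to the paper, since the paper supplies none.
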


The right hand side in the localisation formula considered in the fraction field of the polynomial ring of $H_T^\bullet (\mathrm{point})=H^\bullet(BT)=S^\bullet \mathfrak{t}^*$ (see more on details in Atiyah--Bott \cite{atiyahbott} and \cite{bgv}). Part of the statement is that the denominators cancel when the sum is simplified.

\subsection{Equivariant Poincar\'e duals and multidegrees}
\label{subsec:epdmult} 

Restricting the equivariant de Rham complex to compactly supported (or quickly
decreasing at infinity) differential forms, one obtains the compactly
supported equivariant cohomology groups $ H^\bullet_{\kt,\mathrm{cpt}}(X)
$. Clearly $H^\bullet_{\kt,\mathrm{cpt}}(X) $ is a module over
$H^\bullet_\kt(X)$. For the case when $X=W$ is an $N$-dimensional
complex vector space, and the action is linear, one has
$H^\bullet_\kt(W)= S^\bullet\mathfrak{t}^*$ and $ H^\bullet_{\kt,\mathrm{cpt}}(W) $ is
a free module over $H^\bullet_\kt(W)$ generated by a single element of
degree $2N$:
\begin{equation}
  \label{thomg}
   H^\bullet_{\kt,\mathrm{cpt}}(W) = H^\bullet_{\kt}(W)\cdot\Thom_{\kt}(W)
\end{equation}

Fixing coordinates $y_1,\dots,y_N$ on $W$, in which the $T$-action is
diagonal with weights $\eta_1,\ldots,  \eta_N$, one can write an explicit
representative of  $\Thom_{\kt}(W)$ as follows:
\[   \Thom_{\kt}(W) = 
e^{-\sum_{i=1}^N|y_i|^2}\sum_{\sigma\subset\{1,\ldots , N\}}
\prod_{i\in\sigma}\eta_i/2\cdot\prod_{i\notin \sigma}dy_i\,d\bar y_i
\]

We will say that an algebraic variety has dimension $d$ if its
maximal-dimensional irreducible components are of dimension $d$.  A
$T$-invariant algebraic subvariety $\Sigma$ of dimension $d$ in $W$
represents $\kt$-equivariant $2d$-cycle in the sense that
\begin{itemize}
\item a compactly-supported equivariant form $\mu$ of degree $2d$ is
  absolutely integrable over the components of maximal dimension of
  $\Sigma$, and $\int_\Sigma\mu\in S^\bullet \mathfrak{t}$;
\item if $d_\kt\mu=0$, then $\int_\Sigma\mu$ depends only on the class
  of $\mu$ in $ H^\bullet_{\kt,\mathrm{cpt}}(W) $,
\item and $\int_\Sigma\mu=0$  if $\mu=d_\kt\nu$ for a
  compactly-supported equivariant form $\nu$.
\end{itemize}

\begin{definition} \label{defepd} Let $\Sigma$ be an $T$-invariant algebraic
  subvariety of dimension $d$ in the vector space $W$. Then the
  equivariant Poincar\'e dual of $\Sigma$ is the polynomial on $\mathfrak{t}$
  defined by the integral
\begin{equation}
 \label{vergneepd}
 \epd\Sigma = \frac1{(2\pi)^d}\int_\Sigma\Thom_{\kt}(W).
\end{equation}  
\end{definition}
\begin{remark}
  \begin{enumerate}
  \item An immediate consequence of the definition is that for an equivariantly
closed differential form $\mu$ with compact support, we have
\[  \int_\Sigma\mu = \int_W \epd\Sigma\cdot\mu.
\]
This formula serves as the motivation for the term {\em equivariant
  Poincar\'e dual.}
\item This definition naturally extends to the case of an analytic
  subvariety of $\CC^n$  defined in the neighborhood of the origin, or
  more generally, to any $T$-invariant cycle in $\CC^n$.
  \end{enumerate}
\end{remark}

Another terminology for the equivariant Poincar\'e dual is {\em multidegree}, which is close in spirit to the original
construction of Joseph \cite{joseph}. Let  $\Sigma \subset W$ be a $T$-invariant
subvariety. Then we have
\[       \epd{\Sigma,W}_T=\mdeg{I(\Sigma),\CC[y_1,\ldots , y_N]}.
\] 

Some basic properties of the equivariant Poincar\'e dual are listed in \cite{bsz}, these are: Positivity, Additivity, Deformation invariance, Symmetry and a formula for complete intersections. Using these properties one can easily describe an algorithm for
computing $\mdeg{I,S}$ as follows (see Xiller--Sturmfels \cite[\S8.5]{milsturm}, Vergne \cite{voj} and \cite{bsz} for details). %

\subsection{The Rossman formula} \label{subsec:rossman} 

The Rossmann equivariant localisation formula is a variant of the Atiyah-Bott/Berline-Vergne localisation for singular varieties sitting in a smooth ambient space. 
Let $Z$ be a complex manifold with a holomorphic $T$-action, and let
$X\subset Z$ be a $T$-invariant analytic subvariety with an isolated
fixed point $p\in X^T$. Then one can find local analytic coordinates
near $p$, in which the action is linear and diagonal. Using these
coordinates, one can identify a neighborhood of the origin in $\TT_pZ$
with a neighborhood of $p$ in $Z$. We denote by $\tc_pX$ the part of
$\TT_pZ$ which corresponds to $X$ under this identification;
informally, we will call $\tc_pX$ the $T$-invariant {\em tangent cone}
of $X$ at $p$. This tangent cone is not quite canonical: it depends on
the choice of coordinates; the equivariant dual of
$\Sigma=\tc_pX$ in $W=\TT_pZ$, however, does not. Rossmann named this
 the {\em equivariant multiplicity of $X$ in $Z$ at $p$}:
\begin{equation}\label{emult}
   \emu_p[X,Z] \overset{\mathrm{def}}= \epd{\tc_pX,\TT_pZ}.
\end{equation}

\begin{remark}
In the algebraic framework one might need to pass to the {\em
tangent scheme} of $X$ at $p$ (cf. Fulton \cite{fulton}). This is canonically
defined, but we will not use this notion.
\end{remark}
The analog of the Atiyah-Bott formula for singular subvarieties of smooth ambient manifolds is the following statement.
\begin{proposition}[Rossmann's localisation formula \cite{rossmann}]\label{rossman} Let $\mu \in H_T^*(Z)$ be an equivariant class represented by a holomorphic equivariant map $\mathfrak{t} \to\Omega^\bullet(Z)$. Then 
\begin{equation}
  \label{rossform}
  \int_X\mu=\sum_{p\in X^T}\frac{\emu_p[X,Z]}{\mathrm{Euler}^T(\TT_pZ)}\cdot\mu^{[0]}(p),
\end{equation}
where $\mu^{[0]}(p)$ is the differential-form-degree-zero component
of $\mu$ evaluated at $p$.  
\end{proposition}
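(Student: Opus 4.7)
The plan is to prove Rossmann's formula \eqref{rossform} by a two-stage localization: first localize the integral $\int_X \mu$ to small $T$-invariant neighborhoods of the fixed points $p\in X^T$, and then linearize near each $p$ so that the remaining local integral can be evaluated by Atiyah-Bott on the smooth ambient tangent space $T_pZ$ (which has the single fixed point $0$) using the very definition of $\emu_p[X,Z]$.

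First I would reduce to a sum of local contributions. Since $\mu$ is equivariantly closed and $T$ acts on $Z$ with isolated fixed points on $X$, choose disjoint $T$-invariant open neighborhoods $U_p\subset Z$ of the points $p\in X^T$. On the complement $X\setminus \bigcup_p U_p$ the $T$-action has no fixed points, so a standard averaging/Cartan-homotopy argument (producing a $T$-equivariant primitive for $\mu$ outside neighborhoods of the fixed locus, as in \cite{bgv}) shows that one can replace $\mu$ by an equivariantly cohomologous form whose support is contained in $\bigcup_p U_p$. Hence
\[
\int_X \mu \;=\; \sum_{p\in X^T} \int_{X\cap U_p} \mu.
\]

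Next I would linearize near each fixed point. By the analytic slice theorem one can choose local $T$-equivariant analytic coordinates near $p$ identifying $U_p\subset Z$ with an open $T$-invariant neighborhood $V_p\subset T_pZ$ of the origin in which the action is linear and diagonal, and in which $X\cap U_p$ corresponds to an analytic subvariety $X_p\subset V_p$ whose $T$-invariant tangent cone at $0$ is $\tc_pX$. Rescaling by the $\CC^*$-action $t\cdot v = tv$ on $T_pZ$ provides a $T$-equivariant homotopy between the cycle $X_p$ and its tangent cone $\tc_pX$; since $\mu$ (transported to $V_p$) is equivariantly closed, this homotopy preserves the integral, giving
\[
\int_{X\cap U_p} \mu \;=\; \int_{\tc_pX} \mu.
\]
By Definition \ref{defepd} and the identity
$\int_{\Sigma}\mu = \int_{W}\epd{\Sigma}\cdot \mu$ recalled after it, this equals
\[
\int_{\tc_pX} \mu \;=\; \int_{T_pZ} \emu_p[X,Z]\cdot \mu.
\]

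Finally I would apply Atiyah-Bott-Berline-Vergne localization (Theorem \ref{abbv}) on the smooth vector space $T_pZ$. The linear $T$-action on $T_pZ$ has the origin as its unique fixed point, and the tangent space there is $T_pZ$ itself, with equivariant Euler class $\Euler^T(T_pZ)$. Since $\emu_p[X,Z]\in S^\bullet\mathfrak{t}^*$ is already of differential-form-degree zero, the degree-zero part at $0\in T_pZ$ of the product $\emu_p[X,Z]\cdot\mu$ is $\emu_p[X,Z]\cdot \mu^{[0]}(p)$. Theorem \ref{abbv} therefore yields
\[
\int_{T_pZ} \emu_p[X,Z]\cdot \mu \;=\; \frac{\emu_p[X,Z]}{\Euler^T(T_pZ)}\cdot \mu^{[0]}(p),
\]
and summing over $p\in X^T$ gives the desired formula. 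The main obstacle is the second step: one must justify both the coordinate linearization around an isolated $T$-fixed point on a singular subvariety (so that the local model $X_p\subset V_p$ is well-defined up to choices not affecting the equivariant Poincaré dual) and the equivariant homotopy of cycles from $X_p$ to $\tc_pX$ inside $T_pZ$; the independence of the answer from all such choices is exactly Rossmann's original observation that $\emu_p[X,Z]$ is canonically defined, while the integral on the right of \eqref{rossform} is manifestly canonical, so the two sides can be matched.
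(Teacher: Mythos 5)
The paper does not give a proof of this statement at all: it is quoted from Rossmann with a citation, and nothing in \S\ref{subsec:rossman} supplies an argument. So there is no ``paper's own proof'' to compare against, and your blind sketch is effectively supplying a proof the paper omits. The route you take (cut off $\mu$ to fixed-point neighborhoods, linearize via the Bochner/slice theorem, degenerate the local cycle to its tangent cone, evaluate the local contribution on $T_pZ$ by Atiyah--Bott and the definition of $\emu_p[X,Z]$) is the standard and expected one, and it is essentially sound.

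Two points deserve more care than you give them. First, once you have replaced $\mu$ by a compactly supported representative living near $\bigcup_p U_p$ and transported the problem to the linear model, the ambient space $T_pZ$ is noncompact, so Theorem~\ref{abbv} as stated (which requires compact $X$) does not apply directly; you need the version for compactly supported equivariant classes on a linear representation --- equivalently the Thom-isomorphism statement \eqref{thomg} together with $\int_W \Thom_K(W) = 1$ and the Euler class $\Euler^T(W)$. Second, the equality $\int_{X\cap U_p}\mu = \int_{\tc_pX}\mu$ is the real content of the statement: it requires that the $T$-equivariant cycle class of the germ $X_p$ agrees with that of its tangent cone $\tc_pX$ under the rescaling degeneration. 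You acknowledge this is ``exactly Rossmann's original observation,'' which is fair, but note that in the framework of this paper it is precisely the deformation-invariance property of the equivariant Poincar\'e dual recorded (without proof) among the properties listed after Definition~\ref{defepd}; citing that explicitly would close the circle. With those two points made precise your outline is a correct reconstruction of the argument.
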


\section{Proof of the main theorem}\label{sec:prooftauintegrals}

In this section we prove Theorem \ref{mainthm} and it equivariant counterpart, Theorem \ref{equivariantintegral}.

\noindent \textbf{Step 1: The stability trick} Assume $X$ is a complex manifold, and in fact, we can assume that $X=\CC^n$, due to the reduction to equivariant integrals below. Let $f: X\to X$ be a stable map, see \S\ref{sec:stablemaps} for definition. Stable maps are dense in the space of holomorphic maps, and we will pick one in the homotopy class of the identity if that exists, otherwise we approximate the identity map with stable maps. The map $f$ induces the $k$th Hilbert extension map
\[\mathrm{hf}^{[k]}: \GHilb^{k}(X) \to \GHilb^{k}(X \times X)\]
which sends the subscheme $\xi_I$ to $\xi_{(I,I_{\Gamma(f)})}$ where $I_{\Gamma(f)}$ is the ideal of the graph $\Gamma(f)$ of $f$. $\mathrm{hf}^{[k]}$ is a regular embedding, let 
$\GHilb^k(\Gamma(f))=\im(\mathrm{hf}^{[k]})$ denote its image, which is the Hilbert scheme of the graph. Let $\pi^{[k]}: \GHilb^k(\Gamma(f)) \to \GHilb^k(X), \xi_{(I,I_{\Gamma(f)})} \mapsto \xi_I$ be the inverse of $\mathrm{hf}^{[k]}$.
The normal bundle of $\GHilb^k(\Gamma(f))$ in $\GHilb^k(X \times X)$ is $(\pi^{[k]})^*(f^*TX)^{[k]}$, whereas the normal bundle of $\GHilb^k(X)$ in $\GHilb^k(X \times X)$ is $TX^{[k]}$, and $\pi^{[k]}$ extends to an isomorphism of a small neighborhood $\pi^{[k]}_\nabla: \GHilb^k_{\nabla}(\Gamma(f)) \to \GHilb^k_\nabla(X)$ with Thom classes $\Thom(\Gamma(f))$ and $\Thom(X)$ respectively, such that $(\pi^{[k]}_\nabla)^*(\Thom(X))=\Thom(\Gamma(f))$. Then for a bundle $V$ over $X$ and $\Phi(V^{[k]})$ a Chern polynomial of the tautological bundle we have 
\begin{equation}
\int_{\GHilb^k(X)}\Phi(V^{[k]})=\int_{\GHilb^k_\nabla(X)}\Phi(V^{[k]}) \cdot \Thom(X)=
\int_{\GHilb^k_\nabla(X)}\Phi(V^{[k]}) \cdot \pi^{[k]}_{\nabla *}\Thom(\Gamma(f))
\end{equation}
where 
\begin{equation}\label{thom}
\Thom(\Gamma(f))|_{\GHilb^k(\Gamma(f))}=\pi^{[k]*}\Euler((f^*TX)^{[k]})
\end{equation}
Recall the $f$-Hilbert scheme 
\[\GHilb^k(f)=\overline{\{\xi=\xi_1 \sqcup \ldots \sqcup \xi_k \in \GHilb^k(X): f(\xi_1)=\ldots =f(\xi_s) \in X\}}\]
as the set of subschemes supported on the fibers of $f$. 
\begin{proposition}[\cite{bsz2} Corollary 12.14] Let $f:X \to X$ be a stable Thom Boardman map. Then there is an embedding $\varphi_f: TX \hookrightarrow (f^*TX)^{[k]}$ such that the $k-1$-jet of $f$ induces a section $s_f$ of $f^*TX^{[k]}/\varphi_f(TX)$ presenting $\GHilb^k(f)$ as a local complete intersection.   
\[[\GHilb^k(f)]=\mathrm{Euler}(f^*TX^{[k]}/\varphi_f(TX))\]
\end{proposition}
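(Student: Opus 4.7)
My plan is to exhibit $\GHilb^k(f)$ as the scheme-theoretic zero locus of a natural section of a rank $n(k-1)$ bundle on a tubular neighborhood of $\GHilb^k(f)$ in $\GHilb^k(X)$, so that the class formula follows from the general fact that the Euler class of a bundle equals the class of any transverse vanishing locus. Let me describe the two ingredients and then discuss the transversality, which is where the real work lies.

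\textbf{The embedding $\varphi_f$.} For any vector bundle $V$ on $X$ there is a canonical ``constant section'' inclusion $V \hookrightarrow V^{[k]}$ over the punctual part of $\GHilb^k(X)$ fibered over $X$: a vector $v \in V_p$ is sent to $v \otimes 1_\xi \in H^0(\xi, V|_\xi) = V^{[k]}|_\xi$ for $\xi$ supported at $p$. This is precisely the first map in the short exact sequence of Theorem \ref{bszmodel}(4). I would apply this construction to $V = f^*TX$ and extend it to a neighborhood of $\GHilb^k(f)$ using the well-defined ``image-point'' map $\xi \mapsto f(\mathrm{supp}(\xi))$, available in such a neighborhood because $f$ collapses $\mathrm{supp}(\xi)$ to a single point in the limit. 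Identifying the source with $f^*TX$ via $df$ yields
\[\varphi_f : TX \hookrightarrow (f^*TX)^{[k]}.\]
The rank count (rank $n$ into rank $nk$) produces a quotient bundle of rank $n(k-1)$, matching the codimension of $\GHilb^k(f)$ established in Propositions \ref{prop:kporteous} and \ref{prop:k+lporteous}.

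\textbf{The section $s_f$.} The $(k{-}1)$-jet of $f$ on $\xi$ yields an element of $H^0(\xi, f^*TX|_\xi) = (f^*TX)^{[k]}|_\xi$: concretely, in local coordinates at $f(\mathrm{supp}(\xi))$, the entries of $s_f(\xi)$ are the restrictions $(f_i|_\xi)_{i=1}^n$ of the coordinate components of $f$ to $\xi$. Passing to the quotient modulo $\varphi_f(TX)|_\xi$ amounts to forgetting the constant terms $f_i(\mathrm{supp}(\xi))$. This defines a section $s_f$ of $(f^*TX)^{[k]}/\varphi_f(TX)$ whose vanishing precisely captures the condition that $f|_\xi$ is a constant morphism, i.e.\ that $\xi \in \GHilb^k(f)$. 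Thus set-theoretically $\{s_f = 0\} = \GHilb^k(f)$.

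\textbf{Transversality and conclusion.} The crux is to upgrade the set-theoretic equality $\{s_f = 0\} = \GHilb^k(f)$ to a scheme-theoretically transverse statement. I would argue this locally at a fixed $\xi_0 \in \GHilb^k(f)$ by exploiting the stability of $f$: transversality of the jet section $j^{k-1}f$ to each Thom--Boardman stratum $\Sigma^I \subset J_{k-1}(X,X)$ translates into surjectivity of $ds_f$ onto the quotient bundle at $\xi_0$. The explicit normal forms of stable germs developed in Propositions \ref{prop:kporteous}--\ref{prop:k+lporteous}, together with the local-irreducibility statement of Proposition \ref{prop:hilbfloc}, allow one to compute this linearization at Porteous and higher-multiplicity points and verify that its kernel has exactly the expected dimension $n$. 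The main obstacle is the uniform verification of transversality across all Thom--Boardman strata, since failure at even a single degenerate stratum would produce excess intersection and invalidate the clean Euler-class formula; here the stability of $f$ and the local-model analysis of Section 3 are essential. Once transversality is in hand, the standard fact that a transverse section's zero locus represents the Euler class of the bundle yields
\[[\GHilb^k(f)] = \mathrm{Euler}\bigl((f^*TX)^{[k]}/\varphi_f(TX)\bigr).\]
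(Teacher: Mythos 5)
The paper does not prove this proposition; it is cited from \cite{bsz2}, Corollary 12.14, and used without argument, so there is no in-paper proof to compare your sketch against.

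Setting that aside, your construction of $\varphi_f$ has a genuine gap. The constant-section inclusion $T_pX\hookrightarrow H^0(\xi,V|_\xi)$ exists only when $\xi$ is supported at a single point $p$; this is exactly the setting of Theorem~\ref{bszmodel}(4), which you cite and which is stated over the punctual $\CHilb^{k+1}(X)$. You propose to extend it to a tubular neighborhood of $\GHilb^k(f)$ in $\GHilb^k(X)$ via the assignment $\xi\mapsto f(\mathrm{supp}(\xi))$, but for $\xi$ in such a neighborhood not already on $\GHilb^k(f)$ the support of $\xi$ consists of several points $p_1,\ldots,p_s$ whose images $f(p_1),\ldots,f(p_s)$ are genuinely distinct, so $f(\mathrm{supp}(\xi))$ is not a single point and there is no canonical basepoint at which to take constant sections. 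Correspondingly there is no canonical rank-$n$ subspace of $(f^*TX)^{[k]}|_\xi\cong\bigoplus_{i=1}^s H^0(\xi_i,f^*TX|_{\xi_i})$: the locally constant sections there form a space of rank $ns$, not $n$. Since $\varphi_f$ is undefined off $\GHilb^k(f)$, so are the quotient bundle and the section $s_f$, and $\{s_f=0\}$ is not yet a well-defined object on which the Euler-class argument can run. The text surrounding this proposition in \S\ref{sec:prooftauintegrals} suggests the intended construction lives inside $\GHilb^k(X\times X)$, where $\mathrm{hf}^{[k]}$ and the diagonal embed $\GHilb^k(X)$ with normal bundles $(\pi^{[k]})^*(f^*TX)^{[k]}$ and $TX^{[k]}$ respectively, and a comparison of these two normal bundles on an ambient neighborhood supplies both $\varphi_f$ and $s_f$ canonically -- a different mechanism from the pointwise extension you describe. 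Finally, the local-irreducibility and dimension results of Propositions~\ref{prop:kporteous}--\ref{prop:hilbfloc} do not by themselves give surjectivity of $ds_f$ onto the quotient: that transversality is precisely what the cited corollary asserts, so deferring it, as you do, leaves the argument as an outline rather than a proof.
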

This implies that the support of the Euler class sits in the $f$-Hilbert scheme:
\[\mathrm{supp}(\Euler(f^*TX)^{[k]}) \subset \GHilb^k(f).\]
and hence 
\begin{corollary}\label{cor:mostimportant} The class $\Phi(V^{[k]}) \cdot \Thom(X)$ can be represented by a form $\omega_{V,f}$ whose support satisfies
\[\mathrm{supp}(\omega_{V,f}|_{\GHilb^k(X)}) \subset \GHilb^k(f).\]
\end{corollary}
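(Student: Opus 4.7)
The corollary follows by combining the computation of $\Thom(X)$ through $\pi^{[k]}_\nabla$, equation (\ref{thom}), and a Mathai--Quillen localisation at the zero locus of the section $s_f$ supplied by the preceding proposition. The final form $\omega_{V,f}$ is constructed, not merely known to exist.

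First I would use the diffeomorphism $\pi^{[k]}_\nabla : \GHilb^k_\nabla(\Gamma(f)) \to \GHilb^k_\nabla(X)$ of tubular neighborhoods together with the pullback identity $(\pi^{[k]}_\nabla)^*\Thom(X) = \Thom(\Gamma(f))$ to obtain $\Thom(X) = (\pi^{[k]}_\nabla)_*\Thom(\Gamma(f))$ and hence
\[\Phi(V^{[k]})\cdot\Thom(X) \;=\; (\pi^{[k]}_\nabla)_*\bigl(\Phi(V^{[k]})\cdot \Thom(\Gamma(f))\bigr).\]
By (\ref{thom}), on $\GHilb^k(\Gamma(f))$ the class $\Thom(\Gamma(f))$ coincides with $\pi^{[k]*}\Euler((f^*TX)^{[k]})$, so it suffices to represent $\Euler((f^*TX)^{[k]})$ by a closed form whose support is an arbitrarily small neighborhood of $\GHilb^k(f)$; pushing forward along $\pi^{[k]}_\nabla$ and multiplying by $\Phi(V^{[k]})$ will then produce the desired $\omega_{V,f}$.

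For this, I would invoke the short exact sequence
\[0 \longrightarrow TX \xrightarrow{\ \varphi_f\ } (f^*TX)^{[k]} \longrightarrow E := (f^*TX)^{[k]}/\varphi_f(TX) \longrightarrow 0\]
coming from the preceding proposition, which gives the multiplicative splitting $\Euler((f^*TX)^{[k]}) = \Euler(TX)\cdot \Euler(E)$. Pick any smooth Chern--Weil representative $\eta$ of $\Euler(TX)$ and apply the Mathai--Quillen (Gaussian Thom form) construction to the bundle $E$ using the section $s_f$: the resulting closed form $\tau_{s_f}$ represents $\Euler(E)$ and has support in an arbitrarily small tubular neighborhood of $\{s_f = 0\} = \GHilb^k(f)$. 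The product $\eta \cdot \tau_{s_f}$ represents $\Euler((f^*TX)^{[k]})$ with support concentrated near $\GHilb^k(f)$, and the required form is
\[\omega_{V,f} \;:=\; (\pi^{[k]}_\nabla)_*\bigl(\Phi(V^{[k]})\cdot \eta \cdot \tau_{s_f}\bigr).\]

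The principal obstacle is the singular nature of the ambient spaces: both $\GHilb^k(\Gamma(f))$ and $\GHilb^k(f)$ are typically singular, while the Mathai--Quillen argument requires $E$ to be locally trivial near $\GHilb^k(f)$ and $s_f$ to vanish transversally along it. Both are guaranteed by the local complete intersection statement of the proposition, but gluing Chern--Weil data across the singular strata of the underlying Hilbert schemes is the delicate step. A secondary conceptual point is that the initial replacement of $\Thom(X)$ by $(\pi^{[k]}_\nabla)_*\Thom(\Gamma(f))$ relies on $f$ being homotopic to $\id_X$ (so that $\Gamma(f)$ and $\Delta(X)$ define cohomologous Thom classes on a tubular neighborhood in $\GHilb^k(X\times X)$); this is the genericity assumption built into the choice of the stable Thom--Boardman map at the start of Step 1.
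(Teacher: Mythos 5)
Your argument follows the paper's route exactly: use the tubular-neighborhood isomorphism $\pi^{[k]}_\nabla$ and \eqref{thom} to reduce to $\Euler((f^*TX)^{[k]})$, split it as $\Euler(TX)\cdot\Euler(E)$ via the short exact sequence of the preceding proposition, and observe that since $s_f$ cuts out $\GHilb^k(f)$ as the zero locus of a section of $E$, the Euler class $\Euler(E)$ (hence the product) is represented by a form concentrated near $\GHilb^k(f)$. Your explicit Mathai--Quillen construction of $\tau_{s_f}$ simply makes precise the step the paper compresses into ``This implies that the support of the Euler class sits in the $f$-Hilbert scheme.''
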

We define via Thom isomorphism the class 
\[\Phi_f(V^{[k]})=\frac{\omega_{V,f}|_{\GHilb^k(X)}}{\Euler(TX^{[k]})} \in \Omega^\bullet(\GHilb^k(X)),\]
such that 
\begin{equation}\label{stabilitytrick}
\int_{\GHilb^k(X)}\Phi(V^{[k]})=\int_{\GHilb^k(X)} \Phi_f(V^{[k]})
\end{equation}
In short, any stable Thom-Boarman map $f$ sufficiently close to the identity defines a deformation $\Phi_f(V^{[k]})$ of $\Phi(V^{[k]})$ whose support has better geometric properties. Indeed, in Proposition \ref{prop:hilbfloc} we proved that for stable Thom-Boardman map $f$ 
\[\GHilb^k(f) \cap \GHilb^k_0(X) \subset \CHilb^k(X),\]
that is, the punctual $k$-fold locus of a stable map sits in the curvilinear component, hence 
\begin{equation}\label{support1}
\mathrm{supp}(\Phi_f)\cap \GHilb^k_0(X) \subseteq \mathrm{supp}(\Euler((f^*TX)^{[k]}))\cap \GHilb^k_0(X) \subset \CHilb^k(X)
\end{equation}
Moreover, by Proposition \ref{prop:hilbfloc},  $\GHilb^k(f)$ is locally irreducible at the curvilinear points, and locally isomorphic to the Haiman bundle $B$.

\subsection{Sieve on fully nested Hilbert schemes}\label{subsec:sieve}
Let $V$ be a rank $r$ vector bundle over the complex manifold $X$ of dimension $m$ with Chern roots $\theta_1,\ldots, \theta_r$, and $\Phi(c_1,\ldots ,c_{kr})$ be a Chern polynomial in the Chern roots of the tautological bundle $V^{[k]}$ over $\Hilb^{k}(X)$. The branched cover $\kappa: \GHilb^{[k]}(X) \to \GHilb^{k}(X)$ gives 
\begin{equation}\label{n!}
n! \int_{\GHilb^{k}(X)}\Phi = \int_{\GHilb^{[k]}(X)} \kappa^*\Phi
\end{equation}
one can work over ordered Hilbert schemes, and we keep the notation $\Phi$ for the pulled-back form. 

The first step in our strategy is to pull-pack integration to the fully nested Hilbert scheme $N^{k}(X)$, which admits plenty of approximating bundles (coming from the different factors) to build linear combinations with punctual supports. A key observation of \cite{junli,rennemo} is that these tautological bundles can be combined into a sieve formula,  which decomposes $\Phi$ as a sum 
\[\Phi=\sum_{\a \in \Pi(k)}\Phi^\a\]
of forms indexed by partitions of $\{1, \ldots k\}$. For any partition $\{1,\ldots, k\}=\a_1 \sqcup \ldots \sqcup \a_s$ the form $\Phi^\a$ is supported on the approximating punctual subset  
\[\supp(\Phi^\a)=N^{\a}_0(X)=\HC^{-1}(\Delta_{\a})\]
and $\Phi^\a$ is a linear combination of the classes $\Phi(V^{[\b]})$ for those partitions $\b \in \Pi(k+1)$ which are refinements of $\a$, that is, $\b \le \a$ holds.  

\begin{definition} Let $\a \in \Pi(k)$ be a partition and $\Phi$ a homogeneous symmetric polynomial in the Chern roots of $V^{[k]}$. Define the class $\Phi^\a \in H^*(N^{k}(X))$ inductively by putting $\Phi^{[1],\ldots [k]}=\Phi(V^{[1],\ldots [k]})=\Phi(V\oplus \ldots \oplus V)$ and for $\a>([1],\ldots [k])$  
\[\Phi^\a=\Phi(V^{[\a]})-\sum_{\b < \a}\Phi^\b.\]
\end{definition}

\begin{remark}\label{philambda}
Let $\Lambda=[1,\ldots, k]$ be the trivial partition. The formula above gives us 
\[\Phi^{\Lambda}(V^{[k]})=\sum_{\b \in \Pi(k)}(-1)^{|\b|-1}(|\b|-1)!\Phi(V^\b).\]
\end{remark}

\begin{proposition}\label{thm:support} The restriction of $\Phi^\a$ to $\N^{k}(X) \setminus \N^\a(X)$ vanishes, that is, the support of $\Phi^\a$ is $\N^\a(X)=\HC^{-1}(\Delta_{\a})\subset \N^{k}(X)$.
In particular, the support of $\Phi^\Lambda$ is $\N^{k+1}_0(X)=\pi_\Lambda^{-1}(\GHilb^{k}_0(X))$, the punctual part of $\N^{k}(X)$ where all components are punctual subschemes supported at some point of $X$.  
\end{proposition}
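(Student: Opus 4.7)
The plan is to proceed by induction on the partial order on $\Pi(k)$, exploiting the recursive definition $\Phi^\a = \Phi(V^{[\a]}) - \sum_{\b < \a} \Phi^\b$ together with a local product decomposition of $\N^k(X)$ near points off the $\a$-diagonal. The base case $\a = ([1],\dots,[k])$ is vacuous, since then $\N^\a(X) = \N^k(X)$. For the inductive step, fix $\a$ with the support statement assumed for all $\b < \a$, and take any $p \in \N^k(X)$ with $\HC(p) \notin \Delta_\a$; the goal is to show that $\Phi^\a$ vanishes on a neighborhood of $p$.

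Let $\gamma \in \Pi(k)$ be the finest partition with $\HC(p) = (x_1,\dots,x_k) \in \Delta_\gamma$, so $i \sim_\gamma j$ exactly when $x_i = x_j$. The hypothesis $\HC(p) \notin \Delta_\a$ translates to $\a \not\le \gamma$. Choosing pairwise disjoint open neighborhoods $U_l \subset X$ of the distinct support points of $\HC(p)$, indexed by the blocks of $\gamma$, a sufficiently small open neighborhood $\N^k_{p,\gamma}$ of $p$ in $\N^k(X)$ factors as the product $\prod_l \N^{\gamma_l}(U_l)$: every subscheme parametrized by this neighborhood decouples as the disjoint union of its restrictions to the $U_l$, and the labeling data respects the clustering prescribed by $\gamma$.

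The key geometric observation is a natural bundle isomorphism $V^{[\b]} \cong V^{[\b \wedge \gamma]}$ over $\N^k_{p,\gamma}$ for every $\b \in \Pi(k)$. Indeed, on this neighborhood the subscheme $\pi_\b(\xi)_i$ attached to a block $\b_i$ splits as the disjoint union of its pieces of length $|\b_i \cap \gamma_l|$ supported in $U_l$, so $V^{[\b_i]}$ decomposes as $\bigoplus_l V^{[\b_i \cap \gamma_l]}$, and the resulting summands are exactly the tautological bundles indexed by the blocks of $\b \wedge \gamma$. This yields the equality $\Phi(V^{[\b]}) = \Phi(V^{[\b \wedge \gamma]})$ of cohomology classes on $\N^k_{p,\gamma}$.

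The induction then closes by telescoping cancellation. On $\N^k_{p,\gamma}$ the inductive hypothesis forces $\Phi^\b = 0$ whenever $\b \not\le \gamma$, since then $\N^\b(X) \cap \N^k_{p,\gamma} = \emptyset$. The surviving terms of $\Phi^\a = \Phi(V^{[\a]}) - \sum_{\b < \a}\Phi^\b$ on $\N^k_{p,\gamma}$ are therefore indexed by $\b \le \a$ with $\b \le \gamma$, i.e.\ $\b \le \a \wedge \gamma$; and since $\a \not\le \gamma$ forces $\a \wedge \gamma < \a$, the strict inequality $\b < \a$ is automatic. Applying the recursion once more gives $\sum_{\b \le \a \wedge \gamma} \Phi^\b = \Phi(V^{[\a \wedge \gamma]})$, while the bundle isomorphism above gives $\Phi(V^{[\a]}) = \Phi(V^{[\a \wedge \gamma]})$ on $\N^k_{p,\gamma}$. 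Hence $\Phi^\a|_{\N^k_{p,\gamma}} = 0$, completing the inductive step. The main technical obstacle I anticipate is a clean verification of the local product decomposition $\N^k_{p,\gamma} \simeq \prod_l \N^{\gamma_l}(U_l)$ and the induced bundle isomorphism: this requires tracing through the closure defining $\N^k(X)$ as the image of the approximating map $h$, and checking that every approximating factor $\GHilb^{[\b]}(X)$ respects the decoupling imposed by the disjoint $U_l$.
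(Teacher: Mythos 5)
Your proof is correct and follows the same approach the paper points to: the paper's own proof is a one-line citation of Rennemo's inclusion-exclusion-and-induction argument, and you have reconstructed that argument in detail. The local product decomposition around a point whose support clusters by $\gamma$, the resulting identification $\Phi(V^{[\b]}) = \Phi(V^{[\b\wedge\gamma]})$ on that neighborhood, and the telescoping sum $\sum_{\b\le\a\wedge\gamma}\Phi^\b = \Phi(V^{[\a\wedge\gamma]})$ are precisely the steps that make the sieve work.
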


\begin{proof} This follows from easy inclusion-exclusion and induction, for the proof see \cite{rennemo}.
\end{proof}

\begin{definition}\label{geocond} We say that the form $\Phi \in \Omega^\bullet(\GHilb^k(X))$ is properly supported if $\supp(\Phi)$  intersects the punctual Hilbert scheme only in the curvilinear component:
\begin{equation}\label{supportcond}
\mathrm{supp}(\Phi) \cap \GHilb^k_0(X) \subset \CHilb^k_0(X).
\end{equation}
\end{definition}

\begin{lemma}\label{lemma:support} Let  $\Phi$ be a properly supported Chern polynomial in the sense of Definition \ref{geocond}. Then 
\begin{enumerate}
\item  $\pi_\Lambda^*\Phi$ is also properly supported, that is, it is represented by a form whose support intersects the punctual part $\N^{k}_0(X)$ only in the curvilinear part $\CN^{k}(X)=\pi_\L^{-1}(\CHilb^{k}(X))$. 
\item More generally, for $\a=(\a_1,\ldots, \a_s) \in \Pi(k)$ the form $\Phi^\a$ is represented by a form whose support intersects the punctual part $\N^{k}_0(X)$ only in the curvilinear part
\[\CN^\a(X)=\pi_\a^{-1}(\CHilb^{[\a_1]}(X)\times \ldots \times \CHilb^{[\a_s]}(X))\]
\end{enumerate}
\end{lemma}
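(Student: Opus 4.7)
Part (1) is essentially immediate from the commutative diagram \eqref{commdiagram}. Since $\pi_\Lambda$ is continuous, $\supp(\pi_\Lambda^*\Phi) \subseteq \pi_\Lambda^{-1}(\supp(\Phi))$, and the diagram gives $\pi_\Lambda(\N^k_0(X)) \subseteq \GHilb^k_0(X)$. Combining with the proper-support hypothesis yields
\[\supp(\pi_\Lambda^*\Phi) \cap \N^k_0(X) \subseteq \pi_\Lambda^{-1}\bigl(\supp(\Phi)\cap\GHilb^k_0(X)\bigr) \subseteq \pi_\Lambda^{-1}(\CHilb^k(X)) = \CN^k(X).\]

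For part (2), the plan is to induct on $\alpha\in\Pi(k)$ in the refinement order. The base case is the finest partition $([1],\ldots,[k])$, for which $\Phi^{([1],\ldots,[k])}=\Phi(V\oplus\cdots\oplus V)$ is pulled back from $X^k$ and $\CN^{([1],\ldots,[k])}=\pi_\alpha^{-1}(X^k)=\N^k(X)$, so the statement is vacuous. For the inductive step I split $\Phi^\alpha=\Phi(V^{[\alpha]})-\sum_{\beta<\alpha}\Phi^\beta$. For the leading piece, I apply the stability trick of Step 1 factor-by-factor: choose stable Thom--Boardman maps $f_1,\ldots,f_s\colon X\to X$ close to the identity on each copy of $X$, and represent $\tilde\Phi$ on $\prod_i\GHilb^{[\alpha_i]}(X)$ so that $\supp(\tilde\Phi)\subset\prod_i \GHilb^{[\alpha_i]}(f_i)$. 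Proposition \ref{prop:hilbfloc} applied in each factor gives
\[\supp(\tilde\Phi)\cap \prod_i\GHilb^{[\alpha_i]}_0(X)\subseteq \prod_i\CHilb^{[\alpha_i]}(X),\]
and since $\pi_\alpha(\N^k_0(X))\subseteq\prod_i\GHilb^{[\alpha_i]}_0(X)$, pulling back produces $\supp(\Phi(V^{[\alpha]}))\cap\N^k_0(X)\subseteq\CN^\alpha(X)$.

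The inductive hypothesis gives $\supp(\Phi^\beta)\cap\N^k_0(X)\subseteq\CN^\beta(X)$ for each $\beta<\alpha$, so it remains to combine these with the leading-term bound. The plan here is to use the stratification of $\HC^{-1}(\Delta_\alpha)$ (which contains $\supp(\Phi^\alpha)$ by Proposition \ref{thm:support}) by the deeper diagonals $\HC^{-1}(\Delta_\gamma)$, $\gamma\le\alpha$. On the open stratum of $\HC^{-1}(\Delta_\alpha)$ that avoids every proper $\HC^{-1}(\Delta_\beta)$ with $\beta<\alpha$, the lower-order sieve terms vanish and $\Phi^\alpha=\Phi(V^{[\alpha]})$, whose support is already controlled. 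On each deeper stratum intersecting $\N^k_0(X)$, apply the Möbius-inverted form $\Phi(V^{[\gamma]})=\sum_{\delta\le\gamma}\Phi^\delta$ with $\gamma<\alpha$ and the inductive hypothesis to each $\Phi^\delta$, producing cancellation of all contributions not lying in $\CN^\alpha(X)$.

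The main obstacle I expect is the verification that the cancellation really does confine the support to $\CN^\alpha(X)$: a priori $\CN^\beta(X)\cap\N^k_0(X)$ need not lie inside $\CN^\alpha(X)$ when $\beta<\alpha$, since curvilinear subschemes associated to the finer blocks of $\beta$ can glue to a non-curvilinear subscheme on the coarser blocks of $\alpha$. This is where the master blow-up structure of $\N^k(X)$ enters: the fully-nested data forces compatibility between the projections $\pi_\beta$ and $\pi_\alpha$, and the local model near $\CHilb^k(X)$ provided by the Haiman bundle in Proposition \ref{prop:hilbfloc} ensures that the relevant gluings remain curvilinear on the $\alpha$-approximation. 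Rigorously tracking this compatibility in the presence of the inclusion-exclusion will be the technical heart of the argument.
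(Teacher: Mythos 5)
Part~(1) of your argument is exactly the paper's: pull back a representative form, and use that for a proper map $\supp(\pi_\Lambda^*\omega)=\pi_\Lambda^{-1}(\supp\omega)$, then combine with \eqref{commdiagram}. This is fine (the paper's own proof phrases it almost verbatim).

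Part~(2) contains a genuine, unresolved gap, and you say so yourself: you do not establish the claim that the inclusion--exclusion cancellation confines the support of $\Phi^\alpha$ to $\CN^\alpha(X)$, you only describe where such an argument \emph{would} have to go. The concern you raise is exactly the right one and cannot be waved away: if $\beta<\alpha$ (so $\beta$ refines $\alpha$), $\CN^\beta(X)\cap\N^k_0(X)\not\subseteq\CN^\alpha(X)$ in general --- two curvilinear subschemes supported at a point can generate a Porteous (non-curvilinear) subscheme on the coarser block of $\alpha$. A naive bound of the form $\supp(\Phi^\alpha)\subseteq\bigcup_{\beta\le\alpha}\supp(\Phi(V^{[\beta]}))$ therefore does not suffice, and the whole point of the sieve is precisely the cancellation you leave unverified. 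Beyond the acknowledged gap there is a second concern with your inductive step: you propose choosing a \emph{separate} stable map $f_i$ for each block $\alpha_i$. But the sieve class $\Phi^\alpha=\Phi(V^{[\alpha]})-\sum_{\beta<\alpha}\Phi^\beta$ is a specific cohomological linear combination built from a single $\Phi$ on $\GHilb^{[k]}(X)$; if each summand is represented by a form deformed by a different stable map, you no longer control the cohomological identity, and the cancellation you need need not hold at the level of forms. The propagation of the proper-support property to each factor must be done with a single stable Thom--Boardman map $f$ (the one fixed in Step~1), using that the $f$-Hilbert scheme construction is compatible with the approximating Hilbert schemes $\GHilb^{[\beta]}(X)$. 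For comparison, the paper's own written proof is also terse --- it explicitly argues only the pullback/support fact used for part~(1) --- but the route it intends is to exploit this compatibility and Proposition~\ref{thm:support} rather than to re-run the stability trick per block.
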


\begin{proof}
If $\omega$ is a properly supported form representing $\Phi$, then $\pi^*\omega$ represents $\pi_\Lambda^*\Phi$, and the statement follows from the fact that support of the pull-back form under a proper map is equal to the pre-image of the support.  
\end{proof}

The first step in proving Theorem \ref{mainthm} is to pull back the integral over the fully nested Hilbert scheme, and apply the sieve formula:
\begin{equation}\label{motivic1}
\int_{\GHilb^{[k]}(X)}\Phi=\int_{\N^{k}(X)}\pi^* \Phi=\sum_{\a \in \Pi(k)} \int_{\N^{k}(X)} \Phi^\a
\end{equation}

\subsection{Reduction to equivariant integration on $X=\CC^n$}\label{subsec:reduction}

The sieve formula \eqref{motivic1} reduces integration of forms supported on $\GHilb^{k+1}(X)$ to small neighborhoods of the punctual Hilbert scheme sitting over various diagonals under the Hilbert-Chow morphism.    
Take a small neighborhood $\widehat{\N}^{k}(\bU)$ of $\widehat{\CN}^{k}(X)$ as in diagram \eqref{diagramfive}, which fibers over $\flag_{k-1}(TX)$ and hence fibers over $X$. This bundle can be pulled back along a classifying map $\tau: X \to \mathrm{BGL}(m)$ from the universal bundle 
\[\mathbf{E}=\mathrm{BGL}(m) \times_{\GL(n)} \widehat{\BN}^{k}(\CC^n)\]
where with the notations of diagram \eqref{diagramfive} 
\[\widehat{\BN}^{k}(\CC^n)=(\mu \circ \rho \circ \pi_\Lambda)^{-1}(0)=\pi_\Lambda^{-1}(\widehat{\BHilb}^{k}(\CC^n))\]
is the fiber over the origin $0\in X=\CC^n$, which is the balanced fully nested Hilbert scheme supported at the origin of $\CC^n$. We get a commutative diagram 
\begin{equation*}
\xymatrix{\widehat{\CN}^{k}(X) \ar[rd]^\pi \ar@{^{(}->}[r] & \widehat{\BN}^{k}(\bU) \ar[r] \ar[d]^{\pi} & \mathbf{E}  \ar[d] \\
 & X \ar[r]^{\tau}  &  \mathrm{BGL}(m) } 
\end{equation*}
which induces a diagram of cohomology maps
\begin{equation*}
\xymatrix{H^*(\widehat{\BN}^{k}(\bU))  \ar[d]^{\pi_*} &  H^*(\mathbf{E}) \ar[l] \ar[d]^{\res} \\
 H^*(X)   &  H^*(B\GL(n)) \ar[l]^{\mathrm{Sub}}} 
\end{equation*}
Here 
\begin{itemize}
\item $\res$ is the equivariant push-forward (integration) map along the fiber $\BN^{k}(\CC^n)$, and in the next section we develop an iterated residue formula derived from equivariant localisation.
\item $\mathrm{Sub}$ is the Chern-Weil map, which is the substitution of the Chern roots of $X$ into the generators $\lambda_1,\ldots, \lambda_m$ of $H^*(\mathrm{BGL}(m))=H_{\GL(n)}^*(pt)=\QQ[\l_1,\ldots, \l_m]$,
\end{itemize}

Commutativity of the diagram tells us that for any form $\omega$ supported on some neighborhood $\widehat{\BN}^{k}(\bU)$ we have  
\[\int_{\N^{k}(X)}\omega =\int_{X} \int_{\widehat{\BN}^{k}(\CC^n)} \omega |_{\{\l_1,\ldots, \l_m\} \to \text{Chern roots of } TX}\]
We apply this formula for the form $\Phi^\a$ coming from the sieve. According to Proposition \ref{thm:support} and Lemma \ref{lemma:support} for $\a=(\a_1,\ldots, \a_s) \in \Pi(k)$ the form $\Phi^\a$ is compactly supported in a neighborhood of 
\[\widehat{\CN}^\a(X)=\pi_\a^{-1}(\CHilb^{[\a_1]}(X)\times \ldots \times \CHilb^{[\a_s]}(X)) \subset \widehat{\BN}^{k}(\bU)\]
and hence
\begin{equation}\label{reduceintegraltoaffinespace}
\int_{\GHilb^{[k]}(X)}\Phi=\int_X \sum_{\a \in \Pi(k)} \int_{\widehat{\BN}^{\a}(\CC^n)} \Phi^\a |_{\{\l_1,\ldots, \l_m\} \to \text{Chern roots of TX}}
\end{equation}
where $\widehat{\BN}^\a(\CC^n)=\widehat{\BN}^{\a_1}(\CC^n) \times \ldots \times \widehat{\BN}^{\a_s}(\CC^n)$. Hence it is enough to study the $\a=\Lambda=[1,\ldots, k]$ trivial case when 
\[\widehat{\BN}^{\Lambda}(\CC^n)=\widehat{\BN}^k(\CC^n) \text{ and } \widehat{\CN}^{\Lambda}(\CC^n)=\widehat{\CN}^k(\CC^n).\] 
In the next section, using equivariant localisation we explain how to reduce the equivariant integration of a form which is supported on a small neighborhood of $\widehat{\CN}^k(\CC^n)$ to an integral over $\widehat{\CN}^k(\CC^n)$ itself. 

\subsection{Localisation over the flag} 
Let $V$ be a rank $r$ T-equivariant bundle over $\CC^n$ with T-equivariant Chern roots $\theta_1,\ldots, \theta_r$.  
Let $\l_1,\ldots, \l_n\in \mathfrak{t}^*$ denote the torus weights for the diagonal $T \subset \GL(n)$ action on $\CC^n$ with respect to the basis $e_1,\ldots, e_n \in \CC^n$ and let 
\[\ff=(\Span(e_1) \subset \Span(e_1,e_2)\subset \ldots \subset \Span(e_1,\ldots,e_{k-1}) \subset \CC^n)\]
denote the standard flag in $\CC^n$ fixed by the parabolic $P_{n,k} \subset \GL(n)$.

The Atiyah-Bott-Berline-Vergne  localisation formula of Proposition \ref{abbv} over the flag $\flag_{k-1}(\CC^n)$ gives  
\begin{equation} \label{flagloc}
\int_{\widehat{\BN}^k(\CC^n)} \Phi^\Lambda= \sum_{\sigma\in S_n/S_{n-k+1}}
\frac{\Phi^\Lambda_{\sigma(\ff)}}{\prod_{1\leq j \leq
k-1}\prod_{i=j+1}^n(\lambda_{\sigma\cdot
    i}-\lambda_{\sigma\cdot j})},
\end{equation}
where 
\begin{itemize}
\item $\sigma$ runs over the ordered $k-1$-element subsets of $\{1,\ldots, n\}$ labeling the torus-fixed flags $\sigma(\ff)=(\Span(e_{\sigma(1)}) \subset \ldots \subset \Span(e_{\sigma(1)},\ldots, e_{\sigma(k-1)}) \subset \CC^n)$ in $\CC^n$.
\item $\prod_{1\leq j \leq k-1}\prod_{i=j+1}^n(\lambda_{\sigma(i)}-\lambda_{\sigma(j)})$ is the equivariant Euler class of the tangent space of $\flag_{k-1}(\CC^n)$ at $\s(\ff)$.
\item Let $\widehat{\BN}_{\sigma(\ff)}=\rho^{-1}(\sigma(\ff)) \subset \widehat{\BN}^{k}(\CC^n)$ denote the fiber over $\bff$. Then $\Phi^\Lambda_{\sigma(\ff)}=(\int_{\widehat{\BN}_{\sigma(\ff)}} \Phi^\Lambda)^{[0]}(\sigma(\ff))\in S^\bullet \mathfrak{t}^*$ is the differential-form-degree-zero part evaluated at $\sigma(\ff)$.
\end{itemize}
The Chern roots of the tautological bundle over $\flag_{k-1}(\CC^n)$ at the fixed point $\sigma(\ff)$ are represented by $\l_{\s(1)}, \ldots ,\l_{\s(k-1)}\in \mathfrak{t}^*$ and therefore 
\begin{equation}\label{alphasigmaf}
\Phi^{\Lambda}_{\s(\ff)}=\sigma \cdot \Phi^{\Lambda}_\ff=\Phi^{\Lambda}_\ff(\l_{\s(1)}, \ldots ,\l_{\s(k-1)})\in S^\bullet \mathfrak{t}^*,
\end{equation}
is the $\sigma$-shift of the polynomial $\Phi^{\Lambda}_{\ff}=(\int_{\BN_\ff} \Phi^{\Lambda})^{[0]}(\ff)\in S^\bullet \mathfrak{t}^*$ corresponding to the distinguished fixed flag $\ff$. By \eqref{taurestrictedtocurvi} the restriction of $V^{[k]}$ to the curvilinear part $\widehat{\CN}^{k}(\CC^n)$ is the tensor product
\[V^{[k]}|_{\widehat{\CN}^{k}(\CC^n)}=V \otimes  \calo_{\CC^n}^{[k]}\]
The test curve model formulated in Theorem \ref{bszmodel} (3) then tells that  
\[V^{[k]}=V \otimes \calo_{\CC^n}^{[k]}=V \otimes \cale\]
where $\cale$ is the tautological bundle over $\grass_{k}(S^\bullet \CC^n)$. Hence the Chern roots of $V^{[k]}$ on $\widehat{\CN}^{k}(\CC^n)$, and hence at the torus fixed points, are  the pairwise sums formed from Chern roots of $V$ and Chern roots of $\cale$. This means that  
\begin{equation}\label{cdff}
\Phi^\Lambda(V^{[k]})=\Phi^\Lambda(\theta_j,\l_i+\theta_j: 1\le i \le n, 1\le j \le r)
\end{equation}
is a polynomial in the tautological bundle.

\subsection{Transforming the localisation formula into iterated residue}\label{subsec:transform}
We transform the right hand side of \eqref{flagloc} into an iterated residue motivated by B\'erczi--Szenes \cite{bsz}. This step turns out to be crucial in handling the combinatorial complexity of the fixed point data in the Atiyah-Bott localisation formula and condense the symmetry of this fixed point data in an efficient way which enables us to prove the residue vanishing theorem.

To describe this formula, we will need the notion of an {\em iterated
  residue} (cf. e.g. \cite{szenes}) at infinity.  Let
$\omega_1,\dots,\omega_N$ be affine linear forms on $\CC^{d}$; denoting
the coordinates by $z_1,\ldots, z_{d}$, this means that we can write
$\omega_i=a_i^0+a_i^1z_1+\ldots + a_i^{d}z_{d}$. We will use the shorthand
$h(\bz)$ for a function $h(z_1\ldots z_d)$, and $\dbz$ for the
holomorphic $n$-form $dz_1\wedge\dots\wedge dz_d$. Now, let $h(\bz)$
be an entire function, and define the {\em iterated residue at infinity}
as follows:
\begin{equation}
  \label{defresinf}
 \iresd \frac{h(\bz)\,\dbz}{\prod_{i=1}^N\omega_i}
  \overset{\mathrm{def}}=\left(\frac1{2\pi i}\right)^d
\int_{|z_1|=R_1}\ldots
\int_{|z_d|=R_d}\frac{h(\bz)\,\dbz}{\prod_{i=1}^N\omega_i},
 \end{equation}
 where $1\ll R_1 \ll \ldots \ll R_d$. The torus $\{|z_m|=R_m;\;m=1, \ldots
 , d\}$ is oriented in such a way that $\res_{z_1=\infty}\ldots
 \res_{z_d=\infty}\dbz/(z_1\cdots z_d)=(-1)^d$.
We will also use the following simplified notation: $\sires \overset{\mathrm{def}}=\iresd.$

In practice, one way to compute the iterated residue \eqref{defresinf} is the following algorithm: for each $i$, use the expansion
 \begin{equation}
   \label{omegaexp}
 \frac1{\omega_i}=\sum_{j=0}^\infty(-1)^j\frac{(a^{0}_i+a^1_iz_1+\ldots
   +a_{i}^{q(i)-1}z_{q(i)-1})^j}{(a_i^{q(i)}z_{q(i)})^{j+1}},
   \end{equation}
   where $q(i)$ is the largest value of $j$ for which $a_i^j\neq0$,
   then multiply the product of these expressions with $(-1)^dh(z_1\ldots
   z_d)$, and then take the coefficient of $z_1^{-1} \ldots z_d^{-1}$
   in the resulting Laurent series.

\begin{proposition}[{\rm B\'erczi--Szenes \cite{bsz}, Proposition 5.4}]\label{ABtoresidue}  For any homogeneous polynomial $Q(\bz)$ on $\CC^d$ we have
\begin{equation}\label{flagres}
\sum_{\sigma\in S_m/S_{m-d}}
\frac{Q(\lambda_{\sigma(1)},\ldots ,\lambda_{\sigma(d)})}
{\prod_{1\leq j\leq d}\prod_{i=j+1}^m(\lambda_{\sigma\cdot
    i}-\lambda_{\sigma\cdot j})}=\sires
\frac{\prod_{1\leq i<j\leq d}(z_j-z_i)\,Q(\bz)\dbz}
{\prod_{i=1}^d\prod_{j=1}^m(\lambda_j-z_i)}.
\end{equation}
\end{proposition}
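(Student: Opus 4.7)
The plan is to identify both sides as computing the same equivariant push-forward along the partial flag variety $\flag_d(\CC^m)=\GL(m)/P_{m,d}$. On the localisation side, $\flag_d(\CC^m)$ has $T$-fixed points in bijection with $S_m/S_{m-d}$: the coset of $\sigma$ corresponds to the coordinate flag $\Span(e_{\sigma(1)})\subset\cdots\subset\Span(e_{\sigma(1)},\ldots,e_{\sigma(d)})$. The tangent space at this fixed point has $T$-weights $\lambda_{\sigma(i)}-\lambda_{\sigma(j)}$ for $1\le j\le d$ and $j<i\le m$, which is exactly the denominator in the LHS. The tautological quotient line bundles $F_j/F_{j-1}$ have equivariant first Chern class specializing at $\sigma$ to $\lambda_{\sigma(j)}$, so the class obtained by evaluating $Q$ on these Chern roots restricts to $Q(\lambda_{\sigma(1)},\ldots,\lambda_{\sigma(d)})$ at the fixed point. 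By Theorem~\ref{abbv} the LHS therefore equals $\int_{\flag_d(\CC^m)} Q$.

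To match this with the RHS I would evaluate the iterated residue from outside in, taking $\res_{z_d=\infty}$ first, then $z_{d-1}$, and so on. In each variable $z_k$ the integrand is a rational function whose finite poles are precisely $z_k=\lambda_j$ for $j=1,\ldots,m$ (the Vandermonde factors contribute zeros, not poles), and one uses the standard identity
\[
\res_{z_k=\infty} g(z_k)\,dz_k \;=\; -\sum_{j=1}^{m}\res_{z_k=\lambda_j} g(z_k)\,dz_k.
\]
At each step, specializing $z_k=\lambda_{j_k}$ converts one slice $\prod_j(\lambda_j-z_k)$ of the denominator into an Euler-class factor $\prod_{j\ne j_k}(\lambda_j-\lambda_{j_k})$, while the Vandermonde factors $\prod_{i<k}(z_k-z_i)$ in the numerator collapse, after the remaining residues are evaluated, to the interpolation products $\prod_{i<k}(\lambda_{j_k}-\lambda_{j_i})$.

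Iterating $d$ times produces a sum over tuples $(j_1,\ldots,j_d)\in\{1,\ldots,m\}^d$. Distinctness of the $j_k$'s is automatic: if $j_k=j_{k'}$ with $k<k'$, the Vandermonde factor $(z_{k'}-z_k)$ forces the corresponding term to vanish when both variables get specialized to $\lambda_{j_k}$. The surviving summands are indexed by $\sigma\in S_m/S_{m-d}$ via $\sigma(k)=j_k$, and reassembling the Lagrange interpolation factors collected at every stage produces exactly the Atiyah-Bott denominator $\prod_{1\le j\le d}\prod_{i=j+1}^{m}(\lambda_{\sigma(i)}-\lambda_{\sigma(j)})$ with numerator $Q(\lambda_{\sigma(1)},\ldots,\lambda_{\sigma(d)})$.

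The main obstacle is the combinatorial bookkeeping of signs and the reassembly of the successive interpolation factors into the single Atiyah-Bott denominator. The cleanest way to organise this is by induction on $d$: the base case $d=1$ is the classical Lagrange identity $\sum_i P(\lambda_i)/\prod_{j\ne i}(\lambda_j-\lambda_i)=\res_{z=\infty} P(z)\,dz/\prod_j(\lambda_j-z)$, and the inductive step amounts to pulling one projective bundle off the tower $\flag_d(\CC^m)\to\flag_{d-1}(\CC^m)$ (whose fiber is $\PP$ of the rank $m-d+1$ quotient bundle) and applying the one-variable residue formula to that fibration.
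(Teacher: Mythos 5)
This proposition is cited from \cite{bsz} rather than proved in the present paper, so there is no internal proof to compare against; but your plan --- recognising the left-hand side as Atiyah--Bott localisation over $\flag_d(\CC^m)$ with the class $Q$ built from the tautological line bundles, and evaluating the right-hand side by iterating the one-variable residue-at-infinity identity (or equivalently inducting along the tower $\flag_d(\CC^m)\to\flag_{d-1}(\CC^m)$) --- is the standard argument and almost certainly the one used in \cite{bsz}. The geometric identifications you make (fixed points $\leftrightarrow S_m/S_{m-d}$, tangent weights $\lambda_{\sigma(i)}-\lambda_{\sigma(j)}$ for $j\le d<i$ or $j<i\le d$, restriction of $Q$, vanishing of repeated-index terms via the Vandermonde) are all correct, as is the use of $\res_{z=\infty}=-\sum_j\res_{z=\lambda_j}$ and the observation that no new poles appear after a variable is specialised.

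The step you explicitly defer --- the sign bookkeeping --- is, however, exactly where a complete proof runs into trouble. The two per-variable signs, one from $\res_{z_k=\infty}=-\sum\res_{z_k=\lambda_j}$ and one from $\partial_{z_k}(\lambda_{\sigma(k)}-z_k)=-1$, cancel pairwise; but when you split the Lagrange product $\prod_{j\neq\sigma(k)}(\lambda_j-\lambda_{\sigma(k)})$ into $\prod_{l>k}(\lambda_{\sigma(l)}-\lambda_{\sigma(k)})$ (the Atiyah--Bott denominator) and $\prod_{l<k}(\lambda_{\sigma(l)}-\lambda_{\sigma(k)})$, the latter is cancelled by the specialised Vandermonde $\prod_{i<j}(z_j-z_i)\mapsto\prod_{i<j}(\lambda_{\sigma(j)}-\lambda_{\sigma(i)})$ only up to a residual $(-1)^{\binom{d}{2}}$. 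A direct check with $d=m=2$ and $Q(z_1,z_2)=z_1$ gives left-hand side $-1$ and right-hand side $+1$ under the paper's conventions $\res_{z_1=\infty}\res_{z_2=\infty}\dbz/(z_1z_2)=1$. So the numerator in the displayed formula should be $\prod_{i<j}(z_i-z_j)$, as it in fact appears elsewhere in this paper (Corollary \ref{propflag} and the formulas of \S\ref{subsec:secondlocalization}), not $\prod_{i<j}(z_j-z_i)$ as printed. Your inductive set-up would go through once this is fixed, but the upshot is that the sign-tracking you postponed is not mere noise: carrying it out is what detects (and corrects) the sign in the statement.
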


\begin{remark}
  Changing the order of the variables in iterated residues, usually,
  changes the result. In this case, however, because all the poles are
  normal crossing, formula \eqref{flagres} remains true no matter in
  what order we take the iterated residues.
\end{remark}

Proposition \ref{ABtoresidue} for $d=k-1$, together with \eqref{flagloc},\eqref{alphasigmaf} and \eqref{cdff} gives
\begin{corollary}\label{propflag} Let $k\le n$. Then  
\begin{equation*}
\int_{\widehat{\BN}^{k}(\CC^n)} \Phi^\Lambda=\sires
\frac{\prod_{1\leq i<j\leq k-1}(z_i-z_j)\, \Phi^\Lambda_\ff(z_1, \ldots ,z_{k-1})\dbz}
{\prod_{l=1}^{k-1}\prod_{i=1}^n(\lambda_i-z_l)}
\end{equation*}
where 
\begin{itemize}
\item $\Phi_\ff^\Lambda=\int_{\widehat{\BN}_{\ff}} \Phi^\Lambda$ is the integral over the 
fiber, which we will calculate using a second equivariant localisation.
\item $z_1,\ldots, z_{k-1}$ are the $T$-weights of the tautological bundle over $\flag_{k-1}(\symdot)$. Equivalently, the $\GL(n)$ action on $\Hom^\ff(\CC^{k-1},\CC^n)$ reduces to $\GL(\CC_{[k-1]}) \subset \GL(n)$, and $z_1,\ldots z_{k-1}$ are the weights of the $T^{k-1}_\bz \subset \GL(\CC_{[k-1]})$ action. 
\end{itemize}
\end{corollary}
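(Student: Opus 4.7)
The plan is to assemble this corollary from the three ingredients stated just before it: the Atiyah--Bott--Berline--Vergne localisation \eqref{flagloc}, the covariance property \eqref{alphasigmaf} of the fibre integral under the Weyl group action, and the residue--sum identity of Proposition \ref{ABtoresidue}. No new geometry is needed; it is a bookkeeping argument.

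First I would start from \eqref{flagloc}, which expresses $\int_{\widehat{\BN}^k(\CC^n)} \Phi^\Lambda$ as a sum over the torus-fixed flags $\sigma(\ff)$ indexed by $\sigma \in S_n/S_{n-k+1}$: each summand carries the fibre integral $\Phi^\Lambda_{\sigma(\ff)}$ at $\sigma(\ff)$ in the numerator, divided by the equivariant Euler class
\[
\prod_{1 \le j \le k-1}\prod_{i=j+1}^n (\lambda_{\sigma(i)} - \lambda_{\sigma(j)})
\]
of the tangent space $T_{\sigma(\ff)}\flag_{k-1}(\CC^n)$. Next, \eqref{alphasigmaf} identifies $\Phi^\Lambda_{\sigma(\ff)}$ with $\Phi^\Lambda_\ff(\lambda_{\sigma(1)},\ldots,\lambda_{\sigma(k-1)})$, i.e.\ the polynomial $\Phi^\Lambda_\ff(z_1,\ldots,z_{k-1})$ at the reference flag, evaluated with the tautological weights $z_i$ replaced by the $\sigma$-shifted torus weights. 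With this substitution, the right-hand side of \eqref{flagloc} is exactly the left-hand side of \eqref{flagres} in Proposition \ref{ABtoresidue} with $d=k-1$, $m=n$, and $Q(\bz)=\Phi^\Lambda_\ff(\bz)$.

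Applying Proposition \ref{ABtoresidue} then converts the Weyl-symmetrised sum into the iterated residue
\[
\sires \frac{\prod_{1 \le i < j \le k-1}(z_j - z_i)\,\Phi^\Lambda_\ff(z_1,\ldots,z_{k-1})\,\dbz}{\prod_{l=1}^{k-1}\prod_{i=1}^n (\lambda_i - z_l)},
\]
which is the desired formula up to the sign $(-1)^{\binom{k-1}{2}}$ coming from the two possible orderings $\prod(z_i-z_j)$ versus $\prod(z_j-z_i)$ of the Vandermonde in the numerator; this sign is either absorbed into the orientation convention \eqref{defresinf} of the residue or into the normalisation of $\Phi^\Lambda_\ff$.

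The only non-formal point is to justify that Proposition \ref{ABtoresidue} actually applies, which requires $\Phi^\Lambda_\ff$ to be a \emph{polynomial} in $z_1,\ldots,z_{k-1}$ rather than a genuinely rational expression in them. This is where I expect the main technical work to sit, and it follows from the fact that the fibre $\widehat{\BN}_\ff=\rho^{-1}(\ff)$ is proper over a point (it is the balanced fully nested Hilbert scheme of subschemes whose supporting flag equals $\ff$, cut out inside the compact ambient space of diagram \eqref{diagramfive}), together with the observation that the $\GL(\CC_{[k-1]})$-action on $\widehat{\BN}_\ff$ reduces the residual torus acting on the fibre to $T^{k-1}_\bz$, whose equivariant push-forward of an equivariantly closed form on a compact space lands in $H^\ast_{T^{k-1}_\bz}(\mathrm{pt})=\QQ[z_1,\ldots,z_{k-1}]$. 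Once this polynomiality is in hand, the corollary follows immediately.
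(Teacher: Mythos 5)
Your argument is exactly the paper's: the corollary is stated as an immediate consequence of combining \eqref{flagloc}, \eqref{alphasigmaf}, and Proposition~\ref{ABtoresidue} with $d=k-1$, and you assemble these in the same order. Your observation about the Vandermonde sign $\prod(z_i-z_j)$ versus $\prod(z_j-z_i)$ is a genuine (if harmless) inconsistency in the paper's notation, and your remark on the polynomiality of $\Phi^\Lambda_\ff$ correctly supplies the hypothesis needed to invoke Proposition~\ref{ABtoresidue}, which the paper leaves implicit.
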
 
\subsection{Second equivariant localisation over the base flag}\label{subsec:secondlocalization} Next we proceed a second equivariant localisation on $\widehat{\BN}_\ff$ to compute $\Phi^\Lambda_\ff(z_1,\ldots ,z_{k-1})$.  
By Proposition \ref{prop:hilbfloc} the torus fixed points sit in the curvilinear locus $\widehat{\CN}_\ff=(\rho \circ \pi_\Lambda)^{-1}(\ff) \subset \widehat{\BN}_\ff$, and this curvilinear fiber sits over 
\[\widehat{\CHilb}_\ff=\rho^{-1}(\ff)\simeq \overline{P_{n,k-1}\cdot p_{n,k-1}} \subset \grass_{k-1}(\sym^{\le {k-1}}\CC_{[k-1]})\]
where  
\[p_{n,k-1}=\Span(e_1, \ldots, \sum_{\tau \in \mathcal{P}(k-1)}e_{\tau}) \in \grass_{k-1}(\sym^{\le k}\CC_{[k-1]}),\] 
and $P_{n,k-1} \subset \GL_n$ is the parabolic subgroup which preserves $\ff$. Equivalently, 
\[\widehat{\CHilb}_\ff=\overline{\phi(\Hom^\ff(\CC^{k-1},\CC^n)}\]
 where 
 \[\Hom^\ff(\CC^{k-1},\CC^n)=\{\psi \in \Hom(\CC^{k-1},\CC^n): \psi(e_i)\subset \CC_{[i]} \text{ for } i=1,\ldots, k-1\}\]
and $\CC_{[i]} \subset \CC^n$ is the subspace spanned by $e_1,\ldots, e_i$. 

The torus fixed points in $\widehat{\CHilb}^{k}_\ff \subset \grass_{k-1}(\sym^{\le k-1}\CC_{[k-1]})$ are subspaces of the form
\[F_\tau=\mathrm{Span}(e_{\tau_1}, e_{\tau_2} , \ldots , e_{\tau_{k-1}})\]
parametrised by $k-1$-tuples $\tau=(\tau_1,\ldots, \tau_{k-1})$ where $\tau_i \neq \tau_j$ for $1\le i <j \le k-1$ and 
\begin{equation}\label{tauconditions}
\tau_i = \{t^i_1,\ldots, t^i_{|\tau_i|}\} \text { such that } 1\le t^i_j \le i \text{ and } \sum_{j=1}^{|\tau_i|} t^i_j \le i \text{ for all } 1\le i \le k-1 
\end{equation}
We call these admissible $k-1$-tuples, and those $\tau$'s which correspond to fixed points in $\widehat{\CHilb}^{k}_\ff$ are called curvilinear admissible subsets, the set of these is $\mathcal{P}(k-1)$. 
 
 A point in $\widehat{\CN}_\ff \subset \prod_{\a \in \Pi(k)} \CHilb^\a(\CC_{[k-1]})$ has the form  $\xi=(\xi_A: A\subset \{1,\ldots, 
 k\})$. If this is torus fixed, then $\phi^\grass(\xi_{\{1\ldots, 
 k\}})=W_\tau \subset \sym^{\le k-1}\CC_{[k-1]}$ is a $k-1$-dimensional subspace for some $\tau=(\tau_1, \ldots, \tau_{k-1})$ and 
$\phi^\grass(\xi_A)=F_{A,\tau} \subset F_\tau$
is a torus-fixed subset of dimension $|A|-1$, hence 
\[F_{A,\tau}=\mathrm{Span}(e_{\tau_j}:j\in \Lambda_A)\]
for some $\Lambda_A \subset \{1,\ldots, k\}$ with $|\Lambda_A|=|A|-1$. The $T_\bz^{k-1}$ weights on the tautological bundle $\cale$ over $\grass_{k-1}(\symdot)$ are $z_1,\ldots, z_{k-1}$, hence for a subset $A \subset \{1,\ldots, k\}$ the equivariant Chern roots (i.e. torus weights) of $F_{A,\tau}$ are $\{z_{\tau_j}: j \in \Lambda_{A}\}$
where $z_{\tau_j}=\sum_{i \in \tau_j} z_i$. A simple but crucial consequence of \eqref{tauconditions} is the following
\begin{lemma}\label{crucial1} \begin{enumerate}
\item If $\tau \neq \{[1],[2],\ldots, [k-1]\}$, then at least one integer $2\le i \le k-1$ does not appear in $\tau$. Hence the $T^{k-1}_\bz$-weight of $F_{A,\tau}$ does not depend on $z_i$ for all $A$. 
\item If $\tau_{k-1} \neq [k-1]$ then $\tau$ does not contain $k-1$. In other words, the torus fixed points in $\widehat{\CHilb}^{k}_\ff$ which contain the weight $z_{k-1}$ are of the form $\tau=(\tau_1,\ldots, \tau_{k-2},[k-1])$.  
\end{enumerate}
\end{lemma}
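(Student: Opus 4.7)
The plan is to prove both parts by a short downward induction on $i$ driven by the constraint $\sum_j t^i_j \le i$, which forces any large integer to appear only in late slots. For part (1), I would argue the contrapositive: if $\{2,3,\ldots,k-1\}\subseteq \bigcup_l \tau_l$, then $\tau=([1],[2],\ldots,[k-1])$. Induct downward on $i$ from $k-1$ to $2$, assuming $\tau_j=\{j\}$ for $j>i$. Pick $l$ with $i\in\tau_l$; the sum condition on $\tau_l$ gives $l\ge i$, and the inductive hypothesis rules out $l>i$ because the singletons $\{i+1\},\ldots,\{k-1\}$ do not contain $i$. Hence $l=i$, and since $\tau_i$ has entries in $\{1,\ldots,i\}$ summing to at most $i$ and containing $i$, it must equal $\{i\}$. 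The slot $\tau_1$ is forced to be $\{1\}$ (the only nonempty option compatible with $\sum t^1_j\le 1$), and nonemptiness follows from the fact that each $\tau_l$ corresponds to a basis vector in the $(k-1)$-dimensional fixed subspace $F_\tau$ from the test curve model.

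The statement about weights is then immediate: the $T^{k-1}_\bz$-weights of $F_{A,\tau}$ are the sums $z_{\tau_j}=\sum_{m\in\tau_j}z_m$ for $j\in\Lambda_A$, so if the index $i$ fails to appear in any $\tau_l$, the variable $z_i$ cannot appear in any of these weights. Part (2) is just the base case of this induction extracted on its own: if $\tau_{k-1}\neq\{k-1\}$ then $k-1\notin\tau_{k-1}$ (otherwise the sum bound $\le k-1$ would force equality), and for $l<k-1$ the bound $\sum t^l_j\le l<k-1$ excludes $k-1\in\tau_l$ automatically. Hence $k-1$ appears in no $\tau_l$, and conversely any fixed point whose weights involve $z_{k-1}$ must have $\tau_{k-1}=\{k-1\}$.

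There is no real obstacle here beyond a bit of bookkeeping around nonemptiness of $\tau_1$ and distinctness of the $\tau_i$; the content is the simple inequality $l\ge\max(\tau_l)$ coming from the definition of admissible tuples in \eqref{tauconditions}. The lemma is elementary but will be used decisively in the residue vanishing argument, where its true strength is the observation that for $\tau\neq([1],\ldots,[k-1])$ the fixed point contribution is a rational function missing at least one variable $z_i$, so the iterated residue at infinity in that variable vanishes for degree reasons.
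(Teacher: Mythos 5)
The paper states this lemma without proof, calling it a ``simple but crucial consequence'' of the admissibility conditions \eqref{tauconditions}, so there is no argument in the paper to compare against. Your proof is correct and supplies the missing details: part (2) follows directly from the two constraints $\tau_l\subset\{1,\ldots,l\}$ (so $k-1$ can only sit in the last slot) and $\sum_j t^{k-1}_j\le k-1$ (so if $k-1\in\tau_{k-1}$ it must stand alone); part (1) follows by the downward induction you describe, with the base case being exactly part (2) and the inductive step using that the singletons $\{i+1\},\ldots,\{k-1\}$ cannot absorb $i$, forcing $i\in\tau_i$ and hence $\tau_i=\{i\}$. The only point worth flagging is your appeal to nonemptiness of $\tau_1$: this is indeed the right way to close the argument, and it is justified because the fixed subspaces $F_\tau$ live in $\grass_{k-1}(\Sym^{\le k-1}\CC_{[k-1]})$, whose basis monomials have degree at least one, so each $\tau_l$ is a nonempty multiset.
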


Let $\mathcal{F}_\ff$ denote the set of torus fixed points on $\widehat{\CN}_\ff$. Diagram \eqref{diagramfive} provides a smooth ambient space 
\[\widehat{\BN}_\ff \subset \widehat{\grass}=\prod\limits_{(\a_1,\ldots, \a_s) \in \Pi(k)} \prod\limits_{i=1}^s \widehat{\grass}_{|\a_i|}(S^\bullet \CC_{[k-1]})\] 
and for $F \in \mathcal{F}_\ff$ let $\emu_F[\widehat{\BN}_\ff,\widehat{\grass}]$ denote the $T$-multidegree at $F$. The Rossman localisation formula with Corollary \ref{propflag} gives  
\begin{equation}\label{intnumberone} 
\int_{\widehat{\BN}^{k}(\CC^n)}\Phi(V^{[k]})=
\sum_{F\in \mathcal{F}_\ff} \sires \frac{\emu_F[\widehat{\BN}_\ff,\widehat{\grass}] \prod_{i<j}(z_i-z_j) \Phi^\Lambda_F(z_1,\ldots ,z_{k-1})}{
\Euler_F(\widehat{\grass})  \prod_{l=1}^{k-1}\prod_{i=1}^n(\lambda_i-z_l)} \,\dbz.
  \end{equation}
Now recall from Remark \ref{philambda} the form of $\Phi^\Lambda$:
\[\Phi^\Lambda(V^{[k]})=\Phi(V^{[k]}) + \sum \limits_{\a \in \Pi(k)\setminus \Lambda} (-1)^\a \Phi(V^{\a}).\]
Let $F=(F_{A,\tau}: A \subset \{1,\ldots, {k-1})$ be a torus fixed point on $\widehat{\CN}_\ff$, where $\tau=(\tau_1,\ldots, \tau_{k-1})$. Next we identify $\Phi_F^\Lambda(\bz)$ in the formula. 
Recall from the introduction that $V(z)$ stands for the bundle $V$ tensored by the line $\mathbb{C}_z$, which is the representation of a torus $T$ with weight $z$. Hence its Chern roots are $z+\theta_1,\ldots ,z+\theta_r$. For $z=z_{\tau_j}$ the Chern roots of $V(z_{\tau_j})$ are $(\sum_{l\in \tau_j}z_l + \theta_i: 1\le i \le r)$ and for $A \subset \{1,\ldots, k-1\}$ we write  
\[V(\bz^{A,\tau})=V \oplus \left(\oplus_{j \in \Lambda_A} V(z_{\tau_j})\right),\]
which has rank $r|A|$.  
Let $\a=(\a_1,\ldots, \a_s) \in \Pi(k)$ be a partition of the $k$ points. Recall from \eqref{taurestrictedtocurvi} that on the curvilinear locus $V^\a$ is the tensor product 
\[V^{\a}=V^{[\a_1]}\oplus \ldots \oplus V^{[\a_s]}=(V \otimes \calo_{\CC^n}^{[\a_1]}) \oplus \ldots \oplus (V \otimes \calo_{\CC^n}^{[\a_s]})\]
and by the test curve model formulated in Theorem \ref{bszmodel} (3) the 
tautological bundle is $\calo_{\CC^n}^{[k+1]}/\calo_\grass=\cale$, where 
$\cale$ is the tautological bundle over $\grass_k(\symdot)$. Hence the Chern roots of $V^{\a}=V^{[\a_1]}\oplus \ldots \oplus V^{[\a_s]}$ at the fixed point $F=(F_{A,\tau}: A\subset \{1,\ldots, k-1\})$ are 
\[\{\theta_\ell, \theta_\ell+z_{\tau_j}: 1\le \ell \le r, 1\le i \le s, j \in \Lambda_{\a_i}\}\]
Hence if we write
\[\Phi(V(\bz^{\a,\tau}))=\Phi(\oplus_{i=1}^s V(\bz^{\a_i,\tau}))\]
then 
\begin{equation}\label{phiviaz}
\Phi_F^\Lambda(\bz)=\Phi(V(\bz^{\Lambda,\tau}))+\sum \limits_{\a \in \Pi(k)\setminus \Lambda} (-1)^\a \Phi(V(\bz^{\a,\tau}))
\end{equation}
and substituting to \eqref{intnumberone} we get
\begin{equation}\label{intnumbertwo} 
\int_{\widehat{\BN}^{k}(\CC^n)}\Phi_f^\Lambda(V^{[k]})
=\sum_{F\in \mathcal{F}_\ff} \sires \frac{\emu_F[\widehat{\BN}_\ff,\widehat{\grass}] \prod\limits_{i<j} (z_i-z_j) \left(\Phi(V(\bz^{\Lambda,\tau}))+\sum \limits_{\a \in \Pi(k)\setminus \Lambda} (-1)^\a \Phi(V(\bz^{\a,\tau}))\right)}{
\Euler_F(\widehat{\grass})  \prod_{l=1}^{k-1}\prod_{i=1}^n(\lambda_i-z_l)} 
  \end{equation}

\subsection{Residue Vanishing theorem on fully nested Hilbert schemes}
The price for using the fully nested Hilbert scheme and the sieve formula to reduce integration to the curvilinear part is that the curvilinear locus $\widehat{\CN_\ff}$ over the flag $\ff$ is not irreducible: besides the main component there are further extra components.  The following fundamental theorem say that the contribution of the fixed points sitting on extra components is zero. 

\begin{theorem}{\textbf{Residue Vanishing Theorem on the fully nested Hilbert scheme}}\label{vanishing1}  Let $\a=(\a_1,\ldots, \a_s)\in \Pi(k)$ be a partition with $s>1$. Then the corresponding sum in \eqref{intnumberthree} vanishes:
\begin{equation}\label{eqn:residue}
\sum_{F\in \mathcal{F}_\ff} \sires \frac{\emu_F[\widehat{\BN}_\ff,\widehat{\grass}] \prod_{i<j}(z_i-z_j) \Phi(V(\bz^{\a,\tau}))\dbz}{
\Euler_F(\widehat{\grass})  \prod_{l=1}^{k-1}\prod_{i=1}^n(\lambda_i-z_l)}=0.
\end{equation}
\end{theorem}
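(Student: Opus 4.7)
My plan is to reduce the theorem to a degree-count in the iterated residue by first trading the fixed-point sum for an equivariant integral, and then using the fact that for a partition $\alpha$ with $s > 1$ the resulting integrand depends on strictly fewer than $k-1$ residual weights.

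First, by linearity, interchange $\sires$ with the sum over $\mathcal{F}_\ff$ and apply Rossmann's localisation formula (Proposition \ref{rossman}) in reverse on the smooth ambient space $\widehat{\grass}$ from diagram \eqref{diagramfive}. This identifies
\[ \sum_{F\in\mathcal{F}_\ff} \frac{\emu_F[\widehat{\BN}_\ff,\widehat{\grass}]\,\Phi(V(\bz^{\alpha,\tau}))}{\Euler_F(\widehat{\grass})} \;=\; \int^{T_{\bz}^{k-1}\times T}_{\widehat{\BN}_\ff} \Phi(V^\alpha), \]
so that the left-hand side of \eqref{eqn:residue} becomes
\[ \sires\, \frac{\prod_{i<j}(z_i-z_j)\,\bigl(\int_{\widehat{\BN}_\ff} \Phi(V^\alpha)\bigr)\,\dbz}{\prod_{l=1}^{k-1}\prod_{i=1}^n(\lambda_i-z_l)}. \]

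Second, use the fact that $V^\alpha = \pi_\alpha^* V^{[\alpha]}$ is pulled back along the forgetful morphism $\pi_\alpha:\widehat{\BN}_\ff \to \widehat{\BHilb}^{[\alpha]}_\ff = \prod_{i=1}^s \widehat{\BHilb}^{[\alpha_i]}_{\ff_i}$. By the projection formula together with the test-curve model (Theorem \ref{bszmodel}), the equivariant integral $\int_{\widehat{\BN}_\ff}\Phi(V^\alpha)$ is a polynomial in the $\theta_i$, $\lambda_i$ and only those $z_j$ with $j\in J_\alpha$, where $J_\alpha := \bigsqcup_i \Lambda_{\alpha_i}$ has cardinality $k-s$. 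Hence the complement $I_\alpha := \{1,\dots,k-1\}\setminus J_\alpha$ is nonempty, of size $s-1$, and $\int_{\widehat{\BN}_\ff}\Phi(V^\alpha)$ is independent of every $z_\ell$ with $\ell \in I_\alpha$.

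Third, reorder and perform the iterated residues in the variables $\{z_\ell : \ell \in I_\alpha\}$ first (permissible because the pole divisor is normal crossing, as in the Remark following Proposition \ref{ABtoresidue}). For each such $z_\ell$, the integrand depends on $z_\ell$ only through the Vandermonde $\prod_{i<j}(z_i-z_j)$, contributing degree $k-2$ in $z_\ell$ at infinity, and through the denominator $\prod_{i=1}^n(\lambda_i-z_\ell)$ of degree $n$. Since Step 6 of the strategy permits us to assume $n \geq k$, the total degree in $z_\ell$ at infinity is $(k-2)-n \leq -2 < -1$, so $\res_{z_\ell=\infty}$ annihilates the expression. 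This proves \eqref{eqn:residue}.

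The main obstacle is the geometric identification in the second step: because $\widehat{\BN}_\ff$ is singular and the multiplicities $\emu_F[\widehat{\BN}_\ff,\widehat{\grass}]$ are nontrivial, verifying that the $T_\bz^{k-1}$-equivariant push-forward along $\pi_\alpha$ does not reintroduce dependence on $z_\ell$ for $\ell \in I_\alpha$ requires careful bookkeeping of fixed-point data grouped by $\alpha$-restriction, using the fibration structure of diagram \eqref{diagramfive}. Once this factorization is established, the residue vanishing is immediate from the degree count above.
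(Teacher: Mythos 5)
The core difficulty is exactly where you flag it: your second step asserts that $\int_{\widehat{\BN}_\ff}\Phi(V^\alpha)$ depends only on the $z_j$ with $j\in J_\alpha$, and your third step is an immediate degree count given that claim. But the independence assertion is not a consequence of $V^\alpha$ being a pull-back along $\pi_\alpha$. The fixed-point contributions $\emu_F[\widehat{\BN}_\ff,\widehat{\grass}]/\Euler_F(\widehat{\grass})$ each involve \emph{all} of $z_1,\ldots,z_{k-1}$, and $\pi_\alpha$ is a blow-down with positive-dimensional fibres over a singular, non-proper target, so invoking the projection formula to push down to $\widehat{\BHilb}^{[\alpha]}_\ff$ and conclude that the $z_\ell$ with $\ell\in I_\alpha$ drop out is not a one-liner; it is precisely the content that needs a geometric proof. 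In effect you have reduced the theorem to itself and labelled the reduction ``careful bookkeeping''.

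The paper does not prove, and does not need, the global independence you postulate. Its actual mechanism is an inductive peeling of one variable at a time: Lemma~\ref{lemma:zk-1doesnotappear} kills, by a local degree count, every fixed point whose $\Phi(V(\bz^{\a,\tau}))$ misses $z_{k-1}$, which forces $\tau_{k-1}=[k-1]$ among the survivors; then the boundary stratification $\widehat{\CN}_\ff^{bound}\supset\widehat{\CN}_\ff^{e_{k-1}}\supset\widehat{\CN}_\ff^{e_{k-1},\perp}$ together with Lemma~\ref{lemma:normal} and Lemma~\ref{lemma:crucial} isolates the normal weight $N_{F_{-1}}$, a fibre integral over $\PP[e_{k-1},N_{F_{-1}}]$ collapses the $z_{k-1}$-dependence into a factor $1/(z_{k-1}N_{F_{-1}})$ as in \eqref{formuladecompose1}--\eqref{formuladecompose2}, and this feeds the recursion that replaces $\ff$ by $\ff_{-1}$. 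Only at the very end, at the Porteous fixed point $\tau=([1],\ldots,[k-1])$, does a degree count in the style of your Step~3 (via Lemma~\ref{crucial1}(1) and Lemma~\ref{lemma:vanishprop}) finish the argument. So the ``degree count'' is the last move, not the whole proof, and it applies to one surviving fixed point, not to the aggregated equivariant integral.

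Two smaller points. First, your appeal to $n\ge k$ from ``Step 6'' is premature: the vanishing in Lemma~\ref{lemma:zk-1doesnotappear}(1) works locally at a fixed point without that hypothesis, because one also gains negative $z_{k-1}$-degree from $\emu_F/\Euler_F(\widehat{\grass})$, which your count ignores. Second, reordering the iterated residues requires the poles to be normal crossing; after aggregating over fixed points the putative pole divisor is no longer obviously normal crossing, so even the permutation step in your third paragraph needs justification. As written, the proposal identifies the right target (separate out the $z_\ell$ that ``should not matter'') but does not supply the geometric input---the boundary-divisor analysis of $\widehat{\CN}_\ff$---that the paper uses to actually realise that separation.
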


Before we present the proof, we introduce some terminology, used in the argument. First, we recall an algebraic condition for residue vanishing.
 \begin{lemma}[\cite{bsz} Proposition 6.3]
  \label{lemma:vanishprop}
Let $p(\bz)$ and $q(\bz)$ be polynomials in the variables $z_1\ldots 
z_{k-1}$, and assume that $q(\bz)\prod_{i=1}^NL_i$ is a product of linear factors. Then
\[ \ires\frac{p(\bz)\dbz}{q(\bz)} = 0
\]
if for some $l\leq k-1$, the following two conditions hold: 
\begin{enumerate}
\item $\deg(p(\bz);l)+1<\deg(q(\bz);l)=\lead(q(\bz);l)$, and
\item If $a_1z_1+\ldots +a_{k-1}z_{k-1}$ is a linear factor of $q$ then $a_l \neq 0$ implies $a_m \neq 0$ for $m>l$. 
\end{enumerate}
\end{lemma}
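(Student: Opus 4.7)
My plan is to establish the vanishing by direct analysis of the Laurent series expansion that computes the iterated residue at infinity. Recall from the algorithmic recipe given just after \eqref{omegaexp} that the iterated residue equals $(-1)^{k-1}$ times the coefficient of $z_1^{-1}\cdots z_{k-1}^{-1}$ in the Laurent series obtained by expanding each factor $1/\omega_i$ as a power series in $z_{q(i)}^{-1}$, where $q(i)$ is the largest index $j$ with $a_i^j\neq 0$, and multiplying the result by $p(\bz)$. The strategy is to track the exponent of $z_l$ throughout this expansion and to argue that the coefficient of $z_l^{-1}$ is identically zero, which forces the entire iterated residue to vanish.

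The key step uses condition (2). Any linear factor $\omega_i$ with $a_i^l\neq 0$ must also have some $a_i^m\neq 0$ for an $m>l$, so its leading index satisfies $q(i)>l$. Expanding
\[\frac{1}{\omega_i}=\frac{1}{a_i^{q(i)}z_{q(i)}}\sum_{j\ge 0}(-1)^j\frac{(a_i^0+a_i^1z_1+\cdots+a_i^{q(i)-1}z_{q(i)-1})^j}{(a_i^{q(i)}z_{q(i)})^j},\]
we see that $z_l$ appears only inside the polynomial numerator factors (since $l\le q(i)-1$), and therefore only with non-negative exponent. Factors not containing $z_l$ contribute no $z_l$-dependence at all, and $p(\bz)$ is a polynomial. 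Consequently every monomial in the resulting product has $z_l$-exponent at least zero, the coefficient of $z_l^{-1}$ in the Laurent series vanishes, and the iterated residue is zero.

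Condition (1) provides a complementary univariate perspective: viewing $p/q$ as a rational function of $z_l$ alone with coefficients in the remaining variables, the bound $\deg(p;l)+1<\deg(q;l)$ is precisely the classical criterion guaranteeing that $\res_{z_l=\infty}(p/q)\,dz_l=0$, and the equality $\deg(q;l)=\lead(q;l)$ records that the $z_l$-degree of $q$ is realized by its genuine linear factors with no cancellation in the product. I expect the main obstacle to be the careful bookkeeping that verifies condition (2) indeed forbids $q(i)=l$ for every factor of $q$ involving $z_l$, and checking that the algorithmic expansion truly produces only non-negative $z_l$-exponents uniformly across all factors. Once this notationally dense point is pinned down, the vanishing of the $z_l^{-1}$ coefficient is immediate and no further analytic manipulation of contours or exchange of residue operations is needed.
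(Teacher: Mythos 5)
The paper gives no proof of this lemma---it is quoted from \cite{bsz}---so the comparison is with the cited argument and with how the lemma is actually used later in the paper, and there your proposal has a genuine gap: the mechanism you build on is inverted. In \cite{bsz}, $\lead(q(\bz);l)$ is the $z_l$-degree of the product of the leading terms of the factors $L_i$ (leading with respect to the ordering $|z_1|\ll\cdots\ll|z_{k-1}|$, i.e.\ the highest-index variable occurring in $L_i$), so the equality $\deg(q(\bz);l)=\lead(q(\bz);l)$ in condition (1) says precisely that every factor containing $z_l$ has $z_l$ as its \emph{highest} variable; condition (2) as printed is a transcription slip for ``$a_l\neq0$ implies $a_m=0$ for $m>l$''. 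This reading is also forced by the later applications, e.g.\ the step where non-vanishing of a residue is used to produce a mixed factor $az_{k-2}+bz_{k-1}$ with $ab\neq0$ in the denominator. Under that hypothesis all $z_l$-containing factors are expanded in $z_l$ itself, so every term of the Laurent expansion has $z_l$-exponent at most $\deg(p(\bz);l)-\deg(q(\bz);l)<-1$; the exponent $-1$ never occurs and the residue vanishes. Condition (1) is thus the engine of the proof, not a ``complementary univariate perspective''. Your argument proves the opposite-shaped statement: if every factor containing $z_l$ has a higher leading variable, then all $z_l$-exponents are $\geq0$. That statement is true (and your justification of the coefficient-extraction recipe is fine), but it is not the cited lemma: with the meaning of $\lead$ from \cite{bsz}, your reading of (2) forces $\lead(q(\bz);l)=0$, which is incompatible with (1), so for $l<k-1$ you have proved a vacuous variant of the statement.

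Second, even on your own literal reading, the case $l=k-1$ is not covered. There condition (2) is vacuous (there is no $m>l$), every factor containing $z_{k-1}$ necessarily has $q(i)=k-1$ and is expanded in $z_{k-1}$, so negative $z_{k-1}$-exponents do occur and your key step (``condition (2) forbids $q(i)=l$'') fails. This is exactly the case the paper relies on most, e.g.\ in Lemma \ref{lemma:zk-1doesnotappear}, and it is where the degree bound of condition (1) must be invoked termwise: each of the $\deg(q(\bz);k-1)$ factors contributes $z_{k-1}$-exponent at most $-1$, the numerator at most $\deg(p(\bz);k-1)$, hence every term has exponent strictly below $-1$ and the coefficient of $z_{k-1}^{-1}$ vanishes. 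So the proof you want is the degree-count argument driven by condition (1) together with ``no $z_l$-containing factor involves a higher-index variable''; the non-negativity argument should be discarded rather than patched.
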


Recall diagram \eqref{diagramfive} for $X=\CC^n$ and Definition \ref{def:curvgeometricsubsetQ}: we used the notation $\widehat{\CN}^k(\CC^n)= \pi_{\Lambda}^{-1}(\widehat{\CHilb}^k(\CC^n))=(\mu \circ \rho \circ \pi_\Lambda)^{-1}(0)$ for the fibre over the origin and $\widehat{\CN}_\ff=(\rho \circ \pi_\Lambda)^{-1}(\ff) \subset \widehat{\CN}^k(\CC_{[k-1]})$ the part over the distinguished flag $\ff$.
The projection $\pi_\Lambda$ is given as
\[\pi_\Lambda: \widehat{\CN}_\ff \to \widehat{\CHilb}^k(\CC_{[k-1]})\]
\[(\xi_S: S \subset \{1,\ldots, k\}) \mapsto  \xi_\Lambda=\xi_{\{1,\ldots, k-1\}}.\]
\begin{lemma}\label{lemma:bir} $\pi_{\Lambda}$ is a briational morphism.
\end{lemma}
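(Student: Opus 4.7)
The plan is to exhibit a dense open subset of the image $\widehat{\CHilb}_\ff \subset \widehat{\CHilb}^k(\CC_{[k-1]})$ over which $\pi_\Lambda$ restricts to a set-theoretic bijection, and then deduce birationality from the irreducibility of both source and target.

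First I would establish irreducibility of both spaces. The fibre $\widehat{\CHilb}_\ff=\overline{P_{n,k-1}\cdot p_{n,k-1}}$ is the closure of a single orbit in $\grass_{k-1}(\sym^{\le k-1}\CC_{[k-1]})$, hence irreducible. For $\widehat{\CN}_\ff$, the space $\widehat{\CN}^k(\CC^n)$ is the closure of the image of the irreducible variety $\RHilb^{[k]}(\CC^n)$ under the gluing map $h$ from the definition of the fully nested Hilbert scheme, hence irreducible; since the fibration $\widehat{\CN}^k(\CC^n)\to \flag_{k-1}(\CC^n)=\GL(n)/P_{n,k-1}$ is $\GL(n)$-equivariant with $\widehat{\CN}^k(\CC^n)=\GL(n)\times_{P_{n,k-1}}\widehat{\CN}_\ff$, the fibre $\widehat{\CN}_\ff$ is likewise irreducible.

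Next I would identify the dense open subset $U\subset \widehat{\CHilb}_\ff$ consisting of non-degenerate curvilinear subschemes. By the test curve model of Theorem \ref{bszmodel}, points of $U$ correspond to non-degenerate jets $\g\in \jetnondeg{1}{n}$ whose derivative flag equals $\ff$, modulo the $\diff_{k-1}(1)$-reparametrization action. Such a $\xi_\Lambda\in U$ is represented by a smooth curve germ $(\mathcal{C},0)\hookrightarrow (\CC^n,0)$ together with the scheme structure $\calo_{\xi_\Lambda}\simeq \CC[t]/t^k$, and both the curve germ and its parametrization class are reconstructed from $\xi_\Lambda$. This subset is nonempty (it contains $p_{n,k-1}$) and open, hence dense in the irreducible $\widehat{\CHilb}_\ff$.

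The key step is then to show that over every $\xi_\Lambda\in U$ the preimage $\pi_\Lambda^{-1}(\xi_\Lambda)$ inside $\widehat{\CN}_\ff$ consists of a single point. The crucial observation is that subschemes of length $\ell$ of a smooth curve germ at the origin are unique, namely the ideal $(t^\ell)$. Consequently, for every block $\a_i$ of every partition $\a\in\Pi(k)$, the approximating component $\xi_{\a_i}\in \GHilb^{[\a_i]}(\CC^n)$ attached to any lift of $\xi_\Lambda$ in $\widehat{\CN}_\ff$ must be the unique curvilinear scheme of length $|\a_i|$ supported at the origin on the curve germ underlying $\xi_\Lambda$. Thus the whole tuple $(\xi_S:S\subset\{1,\ldots,k\})$ is forced by $\xi_\Lambda$, giving the required bijection on $U$. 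Combining, $\pi_\Lambda$ is a proper morphism between irreducible varieties which is bijective on a dense open subset, hence birational. The main delicate point I anticipate is verifying that at a generic $\xi_\Lambda\in U$ the approximating data is genuinely forced to be the standard truncations along the single underlying curve germ; this uses curvilinearity in an essential way, since at non-curvilinear punctual schemes (for instance the Porteous point in the $k=3$ example) the various $\xi_{\a_i}$ are highly non-unique and $\pi_\Lambda$ acquires positive-dimensional fibres such as the $\PP^1\times \PP^1$ component of $\widehat{\CN}^3_0$ exhibited earlier.
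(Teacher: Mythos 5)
Your proof takes essentially the same route as the paper: restrict to the dense open locus of curvilinear schemes, then invoke the key fact that a curvilinear subscheme of length $k$ has a unique subscheme of each length $\ell \le k$, which forces the entire tuple $(\xi_S)$ to be the truncations of $\xi_\Lambda$ along the underlying curve germ. The paper's proof is two lines and uses exactly this uniqueness; your added discussion of irreducibility, properness, and the failure at non-curvilinear points such as the Porteous fixed point correctly localizes the delicate step, which both your write-up and the paper leave at the level of an observation rather than a full argument.
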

\begin{proof}
A generic point in $\widehat{\CHilb}^k(\CC_{[k-1]})$ is curvilinear, and for any curvilinear scheme $\xi \in \Curv^k(\CC_{[k-1]})$ there is a unique point in $\N^k(\CC^n))$ over it, namely
\[\pi_\Lambda^{-1}(\xi)=\xi_{\{1,\ldots, |S|\}: S \subset \{1,\ldots, k\}}\]
Indeed, a curvilinear subscheme $\xi \in \Curv^k(\CC^n)$ of length $k$ has a unique subscheme $\xi_l \subset \xi$ of length $l\le k$ for all $1\le l \le k$. 
\end{proof}
The projection $\pi_\Lambda$ extends to a birational morphism from small neighborhood as in diagram \eqref{diagramfive}:
\[\pi_\Lambda: \widehat{\BN}_\ff \to \widehat{\BHilb}^k(\CC_{[k-1]})\]
\[(\xi_S: S \subset \{1,\ldots, k\}) \mapsto  \xi_\Lambda=\xi_{\{1,\ldots, k-1\}}.\]
where $\xi_S \in \Hilb^{S}(\CC_{[k-1]}$ is formed by the points labeled by $S$. Recall that $\pi_\Lambda$ is one of the natural projections from the fully nested Hilbert scheme defined for any $R \subset \{1,\ldots, k\}$ as
\[\pi_{R} : \widehat{\BN}^k_\ff \to \widehat{\GHilb}^R_\ff\]
which sends $(\xi_S: S \subset \{1, \ldots, k\})$ to $\xi_{R}$  This is the extension of \eqref{commdiagram} to a small neighborhood over $\ff$. 

Let $F=(F_{A,\tau}: A \subset \{1,\ldots, k-1\}) \in \widehat{\CN}_\ff$ be a torus fixed point with 
\[F_\Lambda=\pi_\Lambda(F)=\Span(e_{\tau_1},\ldots e_{\tau_{k-1}}) \in \widehat{\CHilb}^k_\ff\]
for an admissible sequence $\tau=(\tau_1,\ldots, \tau_{k-1})$. Let 
\[\ff_{-1}=(\Span(e_1) \subset \ldots \subset \Span(e_1,\ldots, e_{k-2}) \subset \CC^n) \in \flag_{k-2}(\CC^n)\]
denote the smaller flag we get by dropping the $k-1$ dimensional subspace from $\ff$, and 
\[F_{-1}=F \cap \ff_{k-2}=((F_{A,\tau}: A \subset \{1,\ldots, k-2\})\in \CN^{k-1}_{\ff_{k-2}}\]
the fixed point formed by the first $k-1$ points of $F$. The projection 
\[\pi_{k \to k-1}: \widehat{\BN}^{k}_\ff  \to   \widehat{\BN}^{k-1}_{\ff_{-1}}\]
given by $(\xi_S: S \subset \{1, \ldots, k\}) \mapsto (\xi_S: S \subset \{1, \ldots, k\})$ has a restriction 
\[\pi_{k \to k-1}: \widehat{\CN}^{k}_\ff  \to   \widehat{\CN}^{k-1}_{\ff_{-1}}\]
given by $F_{A,\tau}: A \subset \{1,\ldots, k-1\} \mapsto F_{A,\tau}: A \subset \{1,\ldots, k-2\}$, and it sends the torus fixed point $F$ to $F_{-1}$. In short, the torus fixed point determined by $\tau=(\tau_1,\ldots, \tau_{k-1})$ is sent to $\tau_{-1}=(\tau_1,\ldots, \tau_{k-2})$.



Before presenting the proof in full generality, we revisit the $k=3$ case which we worked out in \S \ref{subsec:k=3}. Recall that the punctual part $N^3_0(\CC_{[2]})$ has two components: $\PP^1=CN^3(\CC_{[2]})$ is the curvilinear component which is mapped by $\pi$ dominantly to $\GHilb^3_0(\CC_{[2]})$, and a second component $\PP^1 \times \PP^1$ which sits over the Porteous point $\xi_{por}=(x^2,xy,y^2)$. A point in $\PP^1 \times \PP^1$ is given by a tuple $(\xi_{por},[v_{12}],[v_{13}],[v_{23}])$ with $[v_{12}]=[e_1]$. So in fact, this is determined by the two directions $[v_{13}],[v_{23}]$, indicated in Figure 1 by yellow resp. green arrows. 
Note that here $\xi_{por}=(x^2,xy,y^2)$ stands for the Porteous point and $\xi_{curv}=(y,x^3)$ stands for the curvilinear fixed point in $\Hilb^3(\CC^2)$. Substitute these into the Atiyah-Bott formula:
{\small \[
\int\displaylimits_{\hat{N}^3(\CC^n)}\Phi^+=\res_{z_1,z_2=\infty}\sum\limits_{F \in (\hat{N}^3_\bff)^{T_\bz^2}} \frac{(z_1-z_2)\left(\Phi(V_F^{[1,2,3]})-\Phi(V_F^{[1,2],[3]})-\Phi(V_F^{[1,3],[2]})-\Phi(V_F^{[2,3],[1]})+2\Phi(V_F^{[1],[2],[3]})\right)}{\Euler^z(T_FN^3(\CC^2)) \prod_{i=1}^{n}(z_2-\lambda_i)\prod_{i=1}^{n}(z_1-\lambda_i) }\]}

We now turn to the proof in full generality. 

\begin{proof} We start with the following observation
\begin{lemma}\label{lemma:zk-1doesnotappear} Let $\tau=([1], \tau_2,\ldots, \tau_{k-1})$ and $F=(F_{A,\tau}: A \subset \{1,\ldots, k-1\}) \in \widehat{\CN}_\ff$ a torus fixed point. Then 
\begin{enumerate}
\item If $V(\bz^{\a,\tau})$ does not contain $z_{k-1}$, then residue is zero.
\item If $\tau_{k-1}\neq [k-1]$ then $V(\bz^{\a,\tau})$ does not contain $z_{k-1}$, hence the corresponding residue is zero.
\item If $\a=(\a_1,\ldots, \a_s)$ with $s>1$ and $F\in \widehat{\CN}_\ff^{main}$ then $V(\bz^{\a,\tau})$ does not contain $z_{k-1}$, hence the corresponding residue is zero. 
\end{enumerate}
\end{lemma}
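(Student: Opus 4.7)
I would prove (1) first as the core vanishing statement and then deduce (2) and (3) from it by verifying the $z_{k-1}$-freeness hypothesis in each case.

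For (1), I apply the algebraic residue-vanishing criterion Lemma \ref{lemma:vanishprop} to the integrand
\[
R_F = \frac{\emu_F[\widehat{\BN}_\ff,\widehat{\grass}]\,\prod_{i<j}(z_i - z_j)\,\Phi(V(\bz^{\a,\tau}))}{\Euler_F(\widehat{\grass})\,\prod_{l=1}^{k-1}\prod_{i=1}^n(\lambda_i - z_l)}
\]
of \eqref{eqn:residue}, with distinguished variable $z_{k-1}$. Condition (2) of Lemma \ref{lemma:vanishprop} is vacuous because no variable index exceeds $k-1$, so only the degree inequality of condition (1) is in play. The factor $\prod_{i=1}^n(\lambda_i - z_{k-1})$ contributes $z_{k-1}$-degree $n$ with nonvanishing leading term $(-1)^n z_{k-1}^n$; in the numerator, $\Phi(V(\bz^{\a,\tau}))$ has $z_{k-1}$-degree $0$ by hypothesis and the Vandermonde $\prod_{i<j}(z_i - z_j)$ contributes at most $k-2$. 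The estimate $\deg_{z_{k-1}}\emu_F \le \deg_{z_{k-1}}\Euler_F$ follows from the positivity expansion of the equivariant multiplicity as a sum of sub-products of tangent weights of $\widehat{\grass}$ at $F$. Combining these, the required strict inequality reduces to $k - 1 < n$, which holds by the running assumption $k \le n$ of Corollary \ref{propflag}.

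Parts (2) and (3) each amount to showing that every $z_{\tau_j}$ occurring in $V(\bz^{\a,\tau}) = V \oplus \bigoplus_{i,j} V(z_{\tau_j})$ is free of $z_{k-1}$. For (2), Lemma \ref{crucial1}(2) directly gives $k-1 \notin \tau_j$ for all $j$ whenever $\tau_{k-1} \ne [k-1]$, so every $z_{\tau_j}$ is a linear combination of $z_1, \ldots, z_{k-2}$. For (3), I use that $\pi_\Lambda$ is birational on the main component (Lemma \ref{lemma:bir}) and a curvilinear scheme of length $k$ has a unique subscheme of each smaller length; hence at a torus-fixed $F \in \widehat{\CN}_\ff^{main}$ lying above $F_\tau$, the datum $F_{A,\tau}$ depends only on $|A|$ and equals $\Span(e_{\tau_1},\ldots,e_{\tau_{|A|-1}})$, giving $\Lambda_A = \{1,\ldots,|A|-1\}$. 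The hypothesis $s > 1$ forces $|\a_i| < k$ for every $i$, so $\Lambda_{\a_i} \subset \{1,\ldots,k-2\}$, and admissibility \eqref{tauconditions} restricts the entries of $\tau_j$ with $j \le k-2$ to integers $\le j \le k-2$; hence $z_{\tau_j}$ avoids $z_{k-1}$ and (1) concludes.

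The main obstacle is the precise degree bookkeeping for the equivariant multiplicity $\emu_F$ in step (1): positivity supplies the coefficient-wise bound, but one must also ensure strict inequality when $n$ is at the threshold $k$. I expect this to be handled by an induction on $k$ via the projection $\pi_{k\to k-1}\colon \widehat{\BN}^{k}_\ff \to \widehat{\BN}^{k-1}_{\ff_{-1}}$ together with Lemma \ref{crucial1}(1): whenever some intermediate index $i \in \{2,\ldots,k-1\}$ fails to appear in $\tau$, the whole summand is free of $z_i$ and the computation collapses to the corresponding $(k-1)$-fold problem on $\widehat{\BN}^{k-1}_{\ff_{-1}}$.
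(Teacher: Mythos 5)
Your proposal is correct and follows the same overall strategy as the paper's proof: part (1) is a $z_{k-1}$-degree count in the rational expression of \eqref{eqn:residue} via the vanishing criterion of Lemma \ref{lemma:vanishprop} applied with $l=k-1$ (so that its condition (2) is vacuous); part (2) follows directly from Lemma \ref{crucial1}(2); and part (3) uses that on the main component the nesting is forced by the uniqueness of subschemes of a curvilinear scheme, i.e.\ Lemma \ref{lemma:bir}, together with $|\a_i|<k$ and the admissibility constraints \eqref{tauconditions}, so $\Lambda_{\a_i}\subset\{1,\ldots,k-2\}$ and every $z_{\tau_j}$ appearing avoids $z_{k-1}$.

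The one place where you genuinely improve on the paper's exposition is part (1). The paper asserts, without elaboration, that ``the first term does not depend on $z_{k-1}$'' and its degree bookkeeping for the ``second term'' is hard to reconcile with the Vandermonde factor and with $\emu_F/\Euler_F$. You replace this with a verifiable inequality: $\deg_{z_{k-1}}\emu_F\le\deg_{z_{k-1}}\Euler_F$ from the positivity of multidegrees (the multidegree is a nonnegative sum of sub-products of the tangent weights, e.g.\ via Gr\"obner degeneration to a monomial ideal), and then the $z_{k-1}$-degree of the full integrand is $\le (k-2)-n< -1$ under the standing hypothesis $k\le n$ from Corollary \ref{propflag}. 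This is a cleaner and more defensible route to the same vanishing. Your closing remark about needing an induction via $\pi_{k\to k-1}$ to handle ``the threshold'' is unnecessary: with $k\le n$, the inequality $(k-2)-n\le -2<-1$ is already strict, so the criterion applies without further work.
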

\begin{proof} (1) If $V(\bz^{\a,\tau})$ does not contain $z_{k-1}$, then $z_{k-1}$ does not appear in $\Phi(V(\bz^{\a,\tau}))$, and hence by degree reasons the residue is zero. Indeed, the $z_{k-1}$-degree of the second term in the rational expression is $\deg_{z_{k-1}}\Phi(V(\bz^{\a,\tau}))-m-1$, and the first term does not depend on $z_{k-1}$. If $V(\bz^{\a,\tau})$ does not involve $z_{k-1}$, the total degree is smaller than $-1$, hence the residue is automatically zero. Part (2) is clear. For (3) assume $s>1$ and $F \in \widehat{\CN}^{main}$. Then $F$ is limit of curvilinear subschemes
\[F=(F_A: A\subset \{1,\ldots, k\}) =\lim_{i\to \infty} (F^i_A :A \in \{1,\ldots, k\})\]
then since $F^i_\Lambda \in \Curv^k_\ff$, by Lemma \ref{lemma:bir} $F^i_A=F^i_{1,\ldots, |A|-1} \subset \CC_{[k-2]}$ holds for all $A$, hence $F_A=F_{1,\ldots, |A|-1} \subset \CC_{[k-2]}$ for all $A$. Hence for $s=|\a|>1$ $z_{k-1}$ does not appear in $V(\bz^{\a,\tau})$.
\end{proof}

So it remains to sum over those fixed points $F=(F_{A,\tau}: A \subset \{1,\ldots, k-1\}) \in \widehat{\CN}_\ff$ in \eqref{eqn:residue}, where $\tau_{k-1}=[k-1]$, and hence $\tau=([1], \tau_2,\ldots, \tau_{k-2},[k-1])$. 

Since $\a=(\a_1,\ldots \a_s)$ is a partition of $\{1,\ldots, k\}$, we can assume that $k \in \a_s$. If we had $\a_s=\{k-1\}$ a singleton, then $\xi_{\a_i} \subset \CC_{[k-2]}$ for all $i$ and hence $V(\bz^{\a,\tau})$ does not depend on $z_{k-1}$ again. So we can assume that $|\a_s|\ge 2$.
First, we prove
We show that these fixed points all sit in the same boundary divisor. Define
\[\widehat{\CHilb}^{bound}_\ff=\{p_{k-1} \wedge v_{k-1}: [v_{k-1}] \in \PP[\CC_{[k-1]}], p_{k-1} \in \widehat{\CHilb}_{\ff_{-1}}\} \subset \widehat{\CHilb}_\ff\]
and 
\[\widehat{\CN}_\ff^{bound}=\pi_{\Lambda}^{-1}(\widehat{\CHilb}^{bound}_\ff).\]

\begin{lemma} \label{lemma:normal}
\begin{enumerate} 
\item  $\widehat{\CHilb}^{bound}_\ff$ forms a boundary divisor in $\widehat{\CHilb}_\ff$.
\item $\widehat{\CN}_\ff^{bound}$ is a component of $\widehat{\CN}_\ff$ which has codimension 1 in $\widehat{\BN}_\ff$, and normal bundle $N^\ff$.
\item The normal direction $N^\ff_F=\emu_F[\widehat{\CN}_\ff^{bound},\widehat{\BN}_\ff]$ at a fixed point $F \in \widehat{\CN}_\ff^{bound} \setminus \widehat{\CN}^{main}_\ff$ satisfying sits in $\CC_{[k-1]}$. In other works, the first order deformations of $F$ are not punctual, they are supported in at least two points.
\end{enumerate}
\end{lemma}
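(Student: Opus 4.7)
\bigskip

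\noindent\textbf{Proof plan for Lemma \ref{lemma:normal}.}

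For part (1), the plan is to exhibit $\widehat{\CHilb}^{bound}_\ff$ as the image of a $P_{n,k-1}$-equivariant morphism
\[ \wedge : \widehat{\CHilb}_{\ff_{-1}} \times \PP(\CC_{[k-1]}) \longrightarrow \widehat{\CHilb}_\ff, \qquad (p_{k-1},[v])\mapsto p_{k-1}\wedge v,\]
and then argue by dimension that the image is a divisor. Using the test curve model (Theorem \ref{bszmodel}) and Definition \ref{def:xktilde}, $\widehat{\CHilb}_\ff=\overline{P_{n,k-1}\cdot p_{n,k-1}}$ is swept out by limits of curvilinear $k$-jets $\Span(\gamma',\gamma'+\gamma'^2,\ldots)$ compatible with $\ff$; the boundary locus corresponds to those limits where the top derivative vector decouples as an arbitrary element of $\CC_{[k-1]}$ independent of the remaining jet data. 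Comparing $\dim \widehat{\CHilb}_{\ff_{-1}}+\dim \PP(\CC_{[k-1]})=\dim\widehat{\CHilb}_{\ff_{-1}}+(k-2)$ with $\dim\widehat{\CHilb}_\ff=\dim\widehat{\CHilb}_{\ff_{-1}}+(k-1)$ (one extra free parameter for the curvilinear deformation at the top), the image is irreducible of codimension one, hence a prime divisor.

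For part (2), the plan is to pull back the divisor from part (1) along the birational morphism $\pi_\Lambda:\widehat{\BN}_\ff\to \widehat{\BHilb}_\ff$ of Lemma \ref{lemma:bir}. Since $\pi_\Lambda$ is birational and proper, the scheme-theoretic preimage of the irreducible divisor $\widehat{\CHilb}^{bound}_\ff$ contains a unique irreducible component dominating it, and this component has codimension one in $\widehat{\BN}_\ff$ by the going-down property of birational maps over smooth loci. Calling this component $\widehat{\CN}^{bound}_\ff$, its conormal bundle is the pull-back of the conormal of $\widehat{\CHilb}^{bound}_\ff$, which defines $N^\ff$. The identification $\widehat{\CN}^{bound}_\ff=\pi_\Lambda^{-1}(\widehat{\CHilb}^{bound}_\ff)$ (as sets) then follows from the description of $\pi_\Lambda$ over the curvilinear locus.

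Part (3) is the main obstacle. The plan is to identify, at a torus fixed point $F=(F_{A,\tau}:A\subset\{1,\ldots,k\})\in \widehat{\CN}^{bound}_\ff\setminus\widehat{\CN}^{main}_\ff$, the normal direction to the divisor geometrically: moving along $N^\ff_F$ consists of rotating the top wedge factor $v_{k-1}\in \CC_{[k-1]}$ while keeping the lower-order data $F_{-1}$ fixed. Since the torus acts on $\CC_{[k-1]}=\Span(e_1,\ldots,e_{k-1})$ with weights $\lambda_1,\ldots,\lambda_{k-1}$, the fixed tangent directions for this rotation are exactly the weights $\lambda_j$ with $1\le j\le k-1$ (minus the weight of the fixed $v$ at $F$, which produces a weight of the form $\lambda_j-\lambda_{j'}$ with $j,j'\le k-1$); in either convention the weight $N^\ff_F$ lies in $\{\lambda_1,\ldots,\lambda_{k-1}\}$. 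The geometric interpretation is that such a first-order deformation pulls the point labeled by $\tau_{k-1}$ away from the cluster formed by the remaining points, so the support of the deformed subscheme acquires a second point. The hard step will be making precise that no residual coupling of this deformation with the lower-order curvilinear data can occur at $F$; this uses the hypothesis $F\notin \widehat{\CN}^{main}_\ff$ via Lemma \ref{crucial1}, which forces some $|\tau_i|\ge 2$ for $i\le k-2$ and hence prevents the normal direction from mixing with $\lambda_k,\ldots,\lambda_n$.
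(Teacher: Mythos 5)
The paper leaves this lemma without a proof, so I evaluate your argument on its own. Several key steps have genuine problems.

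\textbf{Part (1): the dimension count is off by one.}  The dimensions are $\dim\widehat{\CHilb}_\ff=(k-1)(k-2)/2$ and $\dim\widehat{\CHilb}_{\ff_{-1}}=(k-2)(k-3)/2$ (dimension of the punctual curvilinear component minus dimension of the flag), so the difference is $k-2$, not the $k-1$ you assert. Consequently $\dim\bigl(\widehat{\CHilb}_{\ff_{-1}}\times\PP(\CC_{[k-1]})\bigr)=\dim\widehat{\CHilb}_\ff$, and the conclusion ``image has codimension one'' does not follow from a dimension count alone; it requires showing that the wedge map has generically $1$-dimensional fibres. The $k=3$ example in the paper illustrates this: $\widehat{\CHilb}_{\ff_{-1}}$ is a point, $\PP(\CC_{[2]})=\PP^1$, $\widehat{\CHilb}_\ff\simeq\PP^1$, and the wedge map collapses the whole $\PP^1$ worth of directions $[ae_1+be_2]$ with $b\ne 0$ onto the single Porteous point $\Span(e_1,e_2)$. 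So the domain and target have equal dimension $1$, and codimension one comes from the one-dimensional fibre, not from a dimension deficit.

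\textbf{Part (2): pulling back a divisor does not directly apply.}  $\widehat{\CHilb}^{bound}_\ff$ is a divisor only inside the curvilinear fibre $\widehat{\CHilb}_\ff$, which itself sits in high codimension inside $\widehat{\BHilb}_\ff$. The map $\pi_\Lambda:\widehat{\BN}_\ff\to\widehat{\BHilb}_\ff$ is a birational morphism of the full neighbourhoods, so the locus $\widehat{\CHilb}^{bound}_\ff$ has codimension much larger than one in the target of $\pi_\Lambda$; its preimage becoming codimension one in $\widehat{\BN}_\ff$ is a nontrivial blow-up phenomenon (again visible in the $k=3$ case: the Porteous point is a single point of the three-dimensional $\widehat{\BHilb}_\ff$, and its preimage $\PP^1\times\PP^1$ has dimension two). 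The ``going-down'' justification you invoke would apply if the divisor were in the ambient space; here a genuinely separate argument is needed to see that $\widehat{\CN}^{bound}_\ff$ is an entire codimension-one component.

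\textbf{Part (3): the geometric picture is the wrong deformation.}  Rotating the top wedge factor $v_{k-1}$ inside $\CC_{[k-1]}$ is a deformation tangent to the divisor $\widehat{\CHilb}^{bound}_\ff$ (it stays in the image of the wedge map) rather than normal to it; such rotations produce weights of the form $z_j-z_{j'}$, which in the $k=3$ example are exactly the tangent weights to $\PP^1\times\PP^1$, not the normal ones. The actual normal weights in the example are $z_1$ or $z_2$, and they correspond to spreading the subscheme out along $e_1$ or $e_2$ (first-order deformations that split the support), which is the content of the lemma's last sentence. Your final sentence inadvertently concedes this: if the weight were of the form $\lambda_j-\lambda_{j'}$ it would precisely not ``lie in $\{\lambda_1,\ldots,\lambda_{k-1}\}$'', so the parenthetical alternative is inconsistent with the conclusion you draw from it. To prove (3) one should analyse the normal cone of the boundary component inside the nested Hilbert scheme directly (e.g.\ via the local description of $\GHilb^k_\nabla(f)$ from Proposition~\ref{prop:kporteous}) rather than reading it off from a projectivisation.
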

Let $F=(F_{A,\tau}: A \subset \{1,\ldots, k-1\}) \in \widehat{\CN}_\ff$ in \eqref{eqn:residue}, where $\tau_{k-1}=[k-1]$. Then $F \in \widehat{\CN}_\ff^{bound}$, but we can we can go further. Let 
\[\widehat{\CHilb}^{e_{k-1}}_\ff=\{p_{k-1} \wedge e_{k-1}: p_{k-1} \in \widehat{\CHilb}_{\ff_{-1}}\} \subset \widehat{\CHilb}_\ff^{bound}\]
and 
\[\widehat{\CN}_\ff^{e_{k-1}}=\pi_{\Lambda}^{-1}(\widehat{\CHilb}^{e_{k-1}}_\ff) \subset \widehat{\CN}_\ff^{bound}\]
and finally 
\[\widehat{\CN}_\ff^{e_{k-1},\perp}=\pi_{\Lambda}^{-1}(\widehat{\CHilb}^{e_{k-1}}_\ff) \cap \pi_{[1k]}^{-1}(e_{k-1}) \subset \widehat{\CN}_\ff^{bound}\] 
Note that $\perp$ in the upper index indicates that a point in $\widehat{\CN}_\ff^{e_{k-1},\perp}$ is a limit of non-reduced subschemes where the last, $k$th point approches the rest from direction $e_{k-1}$. We get a diagram
\begin{equation}\label{diagram:cnperp}
\xymatrix{ \widehat{\CN}_\ff^{e_{k-1},\perp}  \ar[d]^{\pi_{[1k]}} \ar@{^{(}->}[r]^{\iota} & \widehat{\CN}_\ff^{e_{k-1}} \ar[d]^{\pi_{[1k]}} \ar@{^{(}->}[r] & \widehat{\CN}_\ff^{bound} \ar@{^{(}->}[r]^{N^\ff} & \widehat{\BN}_\ff \\ [e_{k-1}] \ar@{^{(}->}[r] & \PP[\CC_{[k-1]}] & & } 
\end{equation}
and the normal bundle of $\iota$ is intuitively given by the direction in which the point labeled by $k$ approaches point labeled by $1$. In what follows, we make this statement more precise and rigorous. In particular, we will see that the the codimension of $\iota$ in the above diagram is $1$, and the normal bundle of $\iota$ can be identified by a $\PP^1$ sitting in $\PP[\CC_{[k-1]}]$.

\begin{lemma} The normal bundle of $\widehat{\CN}_\ff^{e_{k-1},\perp}$ in $\widehat{\CN}_\ff^{bound}$ at any point is 
\[\emu[\widehat{\CN}_\ff^{e_{k-1},\perp},\widehat{\CN}_\ff^{bound}]=\prod_{i=1}^{k-2}(e_{k-1}-e_i).\]
\end{lemma}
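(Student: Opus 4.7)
The plan is to realize $\widehat{\CN}_\ff^{e_{k-1},\perp}$ as the fiber over the torus-fixed point $[e_{k-1}]$ of a natural projection $\mathbf{d}:\widehat{\CN}_\ff^{bound}\to\PP[\CC_{[k-1]}]$ that is locally smooth near this fiber, and then read off the multidegree as the equivariant Euler class of the tangent space $T_{[e_{k-1}]}\PP[\CC_{[k-1]}]$.

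First, I would construct $\mathbf{d}$ as the composite of the structural map $\pi_{\{1,k\}}:\widehat{\BN}^{k}_\ff\to\widehat{\GHilb}^{\{1,k\}}_\ff$ (recording the $2$-point subscheme carried by the labels $1$ and $k$; see diagram \eqref{commdiagram}) with the direction morphism from the $2$-point punctual Hilbert scheme of $\CC_{[k-1]}$ to $\PP[\CC_{[k-1]}]$. At a reduced point this is just the line from point $1$ to point $k$; on the boundary it is defined by the blow-up underlying Definition \ref{def:hatbhilb}. Under this description, $\widehat{\CN}_\ff^{e_{k-1},\perp}$ from diagram \eqref{diagram:cnperp} coincides scheme-theoretically with $\mathbf{d}^{-1}([e_{k-1}])$: the two defining conditions (the wedge direction under $\pi_\Lambda$ is $e_{k-1}$, and $\pi_{[1k]}$ evaluates at $e_{k-1}$) both express that the collision direction of points $1$ and $k$ is $[e_{k-1}]$, compatibly under the blow-up chart.

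Second, I would establish that $\mathbf{d}$ is equivariantly flat of the expected relative dimension at points of $\mathbf{d}^{-1}([e_{k-1}])$. The key observation (essentially contained in Lemma \ref{lemma:normal}) is that $\widehat{\CN}_\ff^{bound}$ is, by its construction via the blow-up defining $\widehat{\BHilb}^{k}(U)$, a divisor in $\widehat{\BN}^{k}_\ff$ on which the collision-direction parameter $[v_{k-1}]\in\PP[\CC_{[k-1]}]$ is a free variable. In the local chart given by this blow-up, $\mathbf{d}$ is literally this free projective coordinate, so one obtains an equivariant local product $\widehat{\CN}_\ff^{bound}\cong\widehat{\CN}_\ff^{e_{k-1},\perp}\times\PP[\CC_{[k-1]}]$ in a neighborhood of the fiber, and the equivariant normal bundle is the pullback of $T_{[e_{k-1}]}\PP[\CC_{[k-1]}]$.

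Finally, since the residual torus $T^{k-1}_{\bz}$ acts on $\CC_{[k-1]}$ with weights $e_1,\ldots,e_{k-1}$, the tangent space
\[
T_{[e_{k-1}]}\PP[\CC_{[k-1]}] \;=\; \Hom\bigl(\langle e_{k-1}\rangle,\;\CC_{[k-1]}/\langle e_{k-1}\rangle\bigr)
\]
carries weights $\{e_i-e_{k-1}\}_{i=1}^{k-2}$, and its equivariant Euler class, in the orientation convention for multidegrees fixed in Section \ref{sec:equiv}, equals $\prod_{i=1}^{k-2}(e_{k-1}-e_i)$, as claimed. The main obstacle is the second step: because $\widehat{\CN}_\ff^{bound}$ is singular, transversality is unavailable, and one must produce the local product structure by hand. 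The cleanest remedy is to perform the entire calculation inside the smooth ambient space $\widehat{\grass}_{k}(S^\bullet\CC^n)$ of diagram \eqref{diagramfive}, where the analogous projection to $\PP[\CC_{[k-1]}]$ is visibly a smooth submersion, and then appeal to additivity of equivariant Poincar\'e duals (Section \ref{subsec:epdmult}) to reduce the multidegree to the weight product above.
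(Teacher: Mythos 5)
The paper's own proof of this lemma is left empty (\verb|\begin{proof}\end{proof}|), so your argument cannot be compared against a reference; it stands or falls on its own. Your closing weight calculation is correct: $T_{[e_{k-1}]}\PP[\CC_{[k-1]}]\cong\Hom(\langle e_{k-1}\rangle,\CC_{[k-1]}/\langle e_{k-1}\rangle)$ carries the residual torus weights $z_i-z_{k-1}$, $i=1,\ldots,k-2$, and up to the sign/orientation convention this gives the displayed product. The conceptual template (realize the subscheme as a fiber over a torus-fixed point of a projective space and read off the multidegree from the ambient tangent weights) is also the natural one.

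However, there is a genuine gap in your first step. The paper defines $\widehat{\CN}_\ff^{e_{k-1},\perp}=\pi_\Lambda^{-1}(\widehat{\CHilb}^{e_{k-1}}_\ff)\cap\pi_{[1k]}^{-1}(e_{k-1})$: a \emph{conjunction} of two conditions coming from two \emph{different} projections of the fully nested scheme. You assert, without argument, that both conditions ``express that the collision direction of points $1$ and $k$ is $[e_{k-1}]$,'' and therefore scheme-theoretically coincide with a single fiber $\mathbf{d}^{-1}([e_{k-1}])$. This is exactly the hard part. The $\pi_\Lambda$-condition constrains the top wedge factor of the $(k-1)$-plane representing the full $k$-point scheme under the test-curve embedding; the $\pi_{[1k]}$-condition constrains the collision direction of the specific two-point subscheme labeled $\{1,k\}$. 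For a general degenerate scheme these measure different first-order data, and their compatibility is precisely what needs to be shown (the uniqueness of subschemes used in Lemma~\ref{lemma:bir} holds on $\mathrm{Curv}^k$ but not automatically on its boundary). Without this, the codimension of $\widehat{\CN}_\ff^{e_{k-1},\perp}$ in $\widehat{\CN}_\ff^{bound}$ could a priori exceed $k-2$ and the multidegree would not be the bare tangent-weight product.

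A second, closely related issue: your identification would make $\pi_{[1k]}$ restricted to $\widehat{\CN}_\ff^{e_{k-1}}$ surject onto all of $\PP[\CC_{[k-1]}]\cong\PP^{k-2}$, but the paper's own Lemma~\ref{lemma:crucial}(1) (via diagram~\eqref{diagram:cnperp2}) asserts that this image is contained in a $\PP^1=\PP[e_{k-1},N_{F_{-1}}]$. If the image is a $\PP^1$, your fiber $\mathbf{d}^{-1}([e_{k-1}])$ is cut out by \emph{one} condition inside $\widehat{\CN}_\ff^{e_{k-1}}$, not $k-2$, and the normal direction you compute in $\PP^{k-2}$ is not the normal direction of the actual subscheme in the actual ambient. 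You should reconcile your use of $\PP[\CC_{[k-1]}]$ with the $\PP^1$-refinement before the multidegree computation can be trusted; at minimum this requires establishing precisely which factors of $T_{[e_{k-1}]}\PP^{k-2}$ survive as normal directions inside $\widehat{\CN}_\ff^{bound}$ and which are tangent to the (singular) subvariety, which is where the equivariant Poincar\'e dual reduction you mention in your last paragraph would actually need to be carried out.
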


\begin{proof}
\end{proof}

\begin{lemma} Let $F=(F_{A,\tau}: A \subset \{1,\ldots, k-1\}) \in \widehat{\CN}_\ff$ with $\tau=([1], \tau_2,\ldots, \tau_{k-2},[k-1])$. Then the corresponding residue is zero unless
$\tau_{k-2}=[k-2]$.
\end{lemma}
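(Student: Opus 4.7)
The plan is to iterate the vanishing argument of Lemma~\ref{lemma:zk-1doesnotappear}, now controlling the flag level $k-2$ instead of $k-1$. The key idea is to take the innermost residue $\res_{z_{k-1}=\infty}$ first, reducing to a residue in the remaining $k-2$ variables, and then apply the algebraic vanishing criterion Lemma~\ref{lemma:vanishprop} to the variable $z_{k-2}$ in the resulting expression.

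First I would track how $z_{k-2}$ enters the integrand under the standing assumption $\tau_{k-1} = [k-1]$. By admissibility, $\tau_j \subseteq \{1, \ldots, j\}$, so $z_{\tau_j}$ for $j < k-2$ does not involve $z_{k-2}$, and the only tautological weights containing $z_{k-2}$ are $z_{\tau_{k-1}} = z_1 + \ldots + z_{k-1}$ (always) and $z_{\tau_{k-2}}$ (only when $k-2 \in \tau_{k-2}$). Hence if $\tau_{k-2} \neq [k-2]$, the numerator $\Phi(V(\bz^{\a,\tau})) \cdot \prod_{i<j}(z_i-z_j)$ has bounded $z_{k-2}$-degree, while the denominator contains the pole factor $\prod_{i=1}^n(\lambda_i - z_{k-2})$ of leading degree $n$ together with further linear contributions from the tangent-cone factor $\emu_F[\widehat{\BN}_\ff, \widehat{\grass}]/\Euler_F(\widehat{\grass})$, which by Lemma~\ref{lemma:normal} acquires extra normal-direction weights at non-main boundary fixed points indexed by $\tau$. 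A careful degree count will then verify condition (1) of Lemma~\ref{lemma:vanishprop} for $z_{k-2}$.

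The main obstacle is condition (2) of Lemma~\ref{lemma:vanishprop}: the pure factors $\lambda_i - z_{k-2}$ in the denominator do not couple $z_{k-2}$ to any higher-index variable, so a direct application is not possible. The role of taking $\res_{z_{k-1}=\infty}$ first is precisely to eliminate this obstruction. After the $z_{k-1}$-residue, the factor $\prod_i(\lambda_i - z_{k-2})$ is effectively coupled to the $z_{k-1}$-dependent Chern roots coming from $z_{\tau_{k-1}}$ and to the Vandermonde factors $(z_i - z_{k-1})$, so that the surviving $z_{k-2}$-poles all carry the higher-index variable implicitly through coefficient relations forced by the Laurent expansion at $z_{k-1}=\infty$, and the normal-crossing condition needed for Lemma~\ref{lemma:vanishprop} becomes satisfied. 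Making this elimination explicit --- chart by chart at the non-main boundary fixed points in $\widehat{\CN}_\ff$ described by diagram~\eqref{diagram:cnperp} and paralleling the explicit tables drawn for $k=3$ in \S\ref{subsec:k=3} --- is the technical heart of the argument, which then yields the claimed vanishing.
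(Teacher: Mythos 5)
Two issues undermine the proposal.

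The first is notational and affects the degree count. In the admissible-sequence convention of the paper, $[j]$ denotes the singleton $\{j\}$, so $\tau_{k-1}=[k-1]=\{k-1\}$ and $z_{\tau_{k-1}}=z_{k-1}$, which does \emph{not} involve $z_{k-2}$ (indeed $[k-1]$ cannot stand for $\{1,\ldots,k-1\}$, since the admissibility bound $\sum\tau_{k-1}\le k-1$ would fail). The correct --- and in fact stronger --- observation, which is exactly where the paper's proof starts, is that when $\tau_{k-2}\neq[k-2]$, the constraint $\sum\tau_{k-2}\le k-2$ forces $k-2\notin\tau_{k-2}$, and combined with $\tau_j\subset\{1,\ldots,j\}$ for $j<k-2$ the index $k-2$ then appears in \emph{no} block of $\tau$, so $\Phi(V(\bz^{\a,\tau}))$ is independent of $z_{k-2}$ altogether. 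Your weaker claim of "bounded $z_{k-2}$-degree," based on the erroneous $z_{\tau_{k-1}}=z_1+\ldots+z_{k-1}$, misses this.

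The second and more serious issue is that the proposal defers the vanishing argument itself. You are right that a naive degree count alone does not close the case and that one must control the linear factors coupling $z_{k-2}$ with $z_{k-1}$ in the Laurent expansion, but you then leave the identification and removal of those mixed factors as a declared gap ("making this elimination explicit \dots is the technical heart of the argument"). The paper supplies precisely that analysis at the fixed point: the dangerous factors $az_{k-2}+bz_{k-1}$ with $ab\neq 0$ arise in $\emu_F[\widehat{\BN}_\ff,\widehat{\grass}]$ and $\Euler_F(\widehat{\grass})$, and their elimination is a case split on whether $F_{-1}$ lies in $\widehat{\CN}^{main}_{\ff_{-1}}$. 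In the main-component case the offending first-order deformation weights cancel against factors of the multidegree $Q_{k-2}$ by Theorem~\ref{vanishing2}; in the boundary case Lemma~\ref{lemma:normal} shows the normal direction sits in $\CC_{[k-2]}$, so the only tangent weights involving $z_{k-2}$ are $z_{k-2}$ itself and $z_{k-2}-z_{k-1}$, and the latter cancels against the same Vandermonde factor in the numerator. Only after these cancellations does Lemma~\ref{lemma:vanishprop} apply. Your proposal names the obstruction but does not remove it, so as written it does not constitute a proof.
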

\begin{proof} 
If $\tau_{[k-2]} \neq [k-2]$ then $k-2$ does not appear in $\tau$, hence $V(\bz^{\a,\tau})$ does not depend on $z_{k-2}$, hence by Lemma \ref{lemma:vanishprop} there must be a mixed term $az_{k-2}+bz_{k-1}$ with $ab \neq 0$ in the denominator of the residue expression. There are two options: if $F_{-1} \in \widehat{\CN}^{main}_{\ff_{-1}}$ then the first order deformation weights cancel out with factors in $Q_{k-2}$ according to Theorem \ref{vanishing2}. If $F_{-1} \notin \widehat{\CN}^{main}_{\ff_{-1}}$ then the first order deformation sits in $\CC_{[k-2]}$ by Lemma \ref{lemma:normal} and hence the corresponding tangent weights at $F$ which contain $z_{k-2}$ can only be $z_{k-2}$ and $z_{k-2}-z_{k-1}$. The latter cancels out with the same factor of the Vandermonde in the numerator. Hence there are no mixed terms in the denominator, and the residue is zero. 


\end{proof}

Assume $F=(F_{A,\tau}: A \subset \{1,\ldots, k-1\}) \in \widehat{\CN}_\ff$ with $\tau=([1], \tau_2,\ldots, \tau_{k-3}, [k-2], [k-1])$. 
Lemma \ref{lemma:normal} applied over the truncated flag $\ff_{-1}$ gives the  boundary component 
\[\widehat{\CHilb}^{bound}_{\ff_{-1}}=\{p_{k-3} \wedge v_{k-2}: p_{k-3} \in \widehat{\CHilb}_{\ff_{-2}}, v_{k-2} \in \CC_{[k-2]}\} \overset{\mathrm{codim}=1}{\subset} \widehat{\CHilb}_{\ff_{-1}}\]
and the corresponding codimension-1 component 
\[\widehat{\CN}_{\ff_{-1}}^{bound}=\pi_{\Lambda \setminus \{k\}}^{-1}(\widehat{\CHilb}^{bound}_{\ff_{-1}}) \overset{\mathrm{codim}=1}{\subset} \widehat{\BN}_{\ff_{-1}}\]

\begin{lemma}\label{lemma:crucial} Assume $F=(F_{A,\tau}: A \subset \{1,\ldots, k-1\}) \in \widehat{\CN}_\ff$ with $\tau=([1], \tau_2,\ldots, \tau_{k-3}, [k-2], [k-1])$. Following Lemma \ref{lemma:normal}, let  
\[N^{\ff_{-1}}_{F_{-1}}=\emu_{F_{-1}}[\widehat{\CN}_{\ff_{-1}}^{bound},\widehat{\BN}_{\ff_{-1}}] \in \CC_{[k-2]}\] 
denote the normal bundle of the codimension $1$ component $\widehat{\CN}_{\ff_{-1}}^{bound}$ at $F_{-1}$. Then 
\begin{enumerate}
\item $\pi_{[1k]}(\xi_A) \in \PP[e_{k-1},N_{F_{-1}}]$ for any $(\xi_A) \in \widehat{\CN}_{\ff_{-1}}^{bound,e_{k-1}}$. Hence \eqref{diagram:cnperp} has the following refinement:
\begin{equation}\label{diagram:cnperp2}
\xymatrix{ \widehat{\CN}_\ff^{e_{k-1},\perp}  \ar[d]^{\pi_{[1k]}} \ar@{^{(}->}[r]^{\iota} & \widehat{\CN}_\ff^{e_{k-1}} \ar[d]^{\pi_{[1k]}} \ar@{^{(}->}[r]^j & \widehat{\CN}_\ff^{bound} \\ [e_{k-1}] \ar@{^{(}->}[r] & \PP[e_{k-1},N_{F_{-1}}] & } 
\end{equation}
\item $\pi_{[1k]*}N^\ff = \calo_{\PP[e_{k-1},N_{F_{-1}}]}(1)$ in Diagram \eqref{diagram:cnperp2}.
\end{enumerate}
\end{lemma}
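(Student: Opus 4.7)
The plan is to analyze the limiting behavior of reduced configurations approaching the fixed point $F$, and to track how the direction $\pi_{[1k]}$ (recording the line from the point labeled by $1$ to the point labeled by $k$) is constrained by the codimension-$1$ component $\widehat{\CN}_{\ff_{-1}}^{bound}$ on which $F_{-1}$ sits. First I would unpack the meaning of $(\xi_A) \in \widehat{\CN}_\ff^{e_{k-1}}$ near $F$: by definition of $\widehat{\CN}_\ff^{e_{k-1}}$ we can write $\xi_A$ as a limit $\xi_A = \lim_{t \to 0} \xi^t_A$ where $\xi^t_\Lambda = p^t_1 \sqcup \cdots \sqcup p^t_k$ is reduced, the first $k-1$ points converge to an element of $\widehat{\CN}_{\ff_{-1}}^{bound}$, and the point $p^t_k$ approaches the rest along the $e_{k-1}$-direction. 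The constraint $F_{-1} \in \widehat{\CN}_{\ff_{-1}}^{bound}$ (equivalently $\tau_{k-2} = [k-2]$) means that to first order the $(k-1)$-th point $p^t_{k-1}$ separates from the cluster $p^t_1,\ldots, p^t_{k-2}$ along the direction $N_{F_{-1}} \subset \CC_{[k-2]}$ determined by the equivariant multiplicity of the boundary divisor.

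For part (1), I would compute $\pi_{[1k]}(\xi_A) = \lim_{t\to 0} [p^t_k - p^t_1]$. Writing $p^t_k - p^t_1 = (p^t_k - p^t_{k-1}) + (p^t_{k-1} - p^t_1)$, the first summand is $\epsilon_1(t) e_{k-1} + o(\epsilon_1)$ since $p^t_k$ approaches along $e_{k-1}$, and the second summand lies in $\CC_{[k-2]}$ with leading direction $N_{F_{-1}}$ coming from the first-order deformation of $F_{-1}$ inside $\widehat{\CN}_{\ff_{-1}}^{bound}$. After projectivisation the limit therefore sits in $\PP[e_{k-1}, N_{F_{-1}}]$, which is what we want. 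The careful bookkeeping here is ensuring that no contribution from the other $p^t_i$--$p^t_j$ displacements can overtake the leading $e_{k-1}$ and $N_{F_{-1}}$ terms; this is guaranteed precisely because $\tau_i = [i]$ for $i \leq k-2$ forces the corresponding tangent weights to sit inside $\CC_{[k-2]}$, so no other transverse direction is available.

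For part (2), I would identify the normal bundle $N^\ff = N[\widehat{\CN}_\ff^{bound}, \widehat{\BN}_\ff]$ along the fibre of $\pi_{[1k]}$ over $\PP[e_{k-1}, N_{F_{-1}}]$. Geometrically, a first-order deformation of a point in $\widehat{\CN}_\ff^{bound}$ inside $\widehat{\BN}_\ff$ records the speed (modulo reparametrisations) at which the $k$-th point splits off from the $(k-1)$-cluster, and the direction at which this happens is read off by $\pi_{[1k]}$. Thus the normal line to $\widehat{\CN}_\ff^{bound}$ is naturally identified with the tangent line to the chosen direction in $\PP[e_{k-1}, N_{F_{-1}}]$, which is the fibre of the tautological quotient $\calo_{\PP[e_{k-1}, N_{F_{-1}}]}(1)$. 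Pushing forward under $\pi_{[1k]}$ then gives the identification $\pi_{[1k]*} N^\ff = \calo_{\PP[e_{k-1}, N_{F_{-1}}]}(1)$.

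The main obstacle I expect is the rigorous identification in part (2): one has to match the abstract normal-bundle calculation on the master blow-up $\widehat{\BN}_\ff$ with the concrete tautological line bundle on the projective line $\PP[e_{k-1}, N_{F_{-1}}]$. Because $\widehat{\CN}_\ff^{bound}$ and its codimension-1 embedding are defined through the sieve of approximating Hilbert schemes rather than explicit equations, the cleanest way to handle this is probably to pass to the universal one-parameter family of reduced configurations described above, read off the first-order displacement of $p^t_k$ from the degeneration profile, and verify $T$-equivariantly that its character matches that of the hyperplane bundle on $\PP[e_{k-1}, N_{F_{-1}}]$. Part (1) then serves as a sanity check that the image of $\pi_{[1k]}$ really does land in this projective line, so that the statement of (2) is well-posed.
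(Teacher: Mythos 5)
The paper's proof of this lemma is \emph{empty}: the \verb|\begin{proof}...\end{proof}| environment following the statement contains nothing, so there is no paper argument to compare your proposal against. The same is true of Lemma~\ref{lemma:normal}, which your argument leans on. Your proposal therefore has to be judged on its own merits as an attempt to supply the missing content.

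Your geometric picture is the right one in outline: decompose the displacement $p^t_k - p^t_1$ as $(p^t_k - p^t_{k-1}) + (p^t_{k-1} - p^t_1)$, observe that the first summand is forced to live along $e_{k-1}$ by the definition of $\widehat{\CN}_\ff^{e_{k-1}}$, and observe that the second lives in $\CC_{[k-2]}$. The gap is the claim that the second summand has ``leading direction $N_{F_{-1}}$.'' The equivariant multiplicity $N_{F_{-1}} = \emu_{F_{-1}}[\widehat{\CN}_{\ff_{-1}}^{bound}, \widehat{\BN}_{\ff_{-1}}]$ is a tangent weight recording the first-order deformation of the \emph{scheme} $F_{-1}$ transverse to the boundary divisor $\widehat{\CN}_{\ff_{-1}}^{bound}$; Lemma~\ref{lemma:normal}(3) says this deformation splits the cluster into at least two points. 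Why the \emph{vector} $p^t_{k-1} - p^t_1$ between the first and $(k-1)$-th labeled points has leading behavior aligned with that particular deformation weight is not obvious and needs an explicit calculation — a priori the spread of the cluster $p^t_1,\ldots,p^t_{k-1}$ depends on how the one-parameter family approaches $F_{-1}$, not only on the transverse direction to one boundary divisor. This is exactly the step where the constraints $\tau_i = [i]$ for all $i$ up to $k-2$ must be exploited to kill the other potential directions, but as written that is asserted rather than derived. Similarly, your part (2) is a heuristic ``read off the speed of splitting'' narrative; the identification $\pi_{[1k]*}N^\ff \simeq \calo_{\PP[e_{k-1},N_{F_{-1}}]}(1)$ should at minimum be checked on $T$-weights at the two fixed points of the $\PP^1$ fiber (where the fiber of $\calo(1)$ has weight $z_{k-1}$ resp.\ the weight of $N_{F_{-1}}$), and compared against an explicit normal-cone computation in local coordinates, since the statement feeds directly into the normal-bundle integral $\int_{\PP[e_{k-1},N_{F_{-1}}]} \Phi/N^\ff$ used in the displayed formula immediately after the lemma, and a factor misidentification would break the inductive residue vanishing argument.

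In short: your proposal cannot be wrong-by-comparison since the paper supplies no proof, and your geometric intuition matches what the surrounding argument needs, but the two asserted leading-order identifications (direction of $p^t_{k-1}-p^t_1$, and the pushforward of $N^\ff$) are exactly the nontrivial content and your sketch does not yet establish them. A local-coordinate computation near the fixed point $F$ in $\widehat{\grass}$, tracking $T$-weights, is what is needed to close these gaps.
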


\begin{proof}

\end{proof}

We finally show that 

\begin{lemma} $\Phi(V(\bz^{\a,\tau}))$ is constant along the fibers of $\iota$.
\end{lemma}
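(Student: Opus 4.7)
The plan is to reduce the statement to a combinatorial observation: $\Phi(V(\bz^{\a,\tau}))$ is a function only of the admissible sequence $\tau$ and the partition $\a$, hence is constant on any locus where these data are constant, and in particular on the fibers of $\iota$.

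First I would recall from \eqref{phiviaz} that at a torus-fixed point $F=(F_{A,\tau})$ the Chern roots of $V(\bz^{\a,\tau})=\bigoplus_{i=1}^s V(\bz^{\a_i,\tau})$ are $\theta_\ell$ and $\theta_\ell+z_{\tau_j}$ for $1\le\ell\le r$, $1\le i\le s$, and $j\in\Lambda_{\a_i}$, where $z_{\tau_j}=\sum_{l\in\tau_j}z_l$. Thus $\Phi(V(\bz^{\a,\tau}))$ is a universal polynomial in the $\theta_\ell$ and the weights $z_{\tau_j}$, and its value is determined entirely by $\tau=(\tau_1,\ldots,\tau_{k-1})$ together with the combinatorial index sets $\Lambda_{\a_i}\subset\{1,\ldots,k-1\}$.

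Next I would argue that along a fiber of $\iota$ all of this data is preserved. On such a fiber the subordinate point $F_{-1}=\pi_{k\to k-1}(F)$ is fixed, so $\tau_{-1}=(\tau_1,\ldots,\tau_{k-2})$ is unchanged; by the reduction preceding Lemma \ref{lemma:crucial} we are in the stratum $\tau_{k-1}=[k-1]$, which fixes the last entry; and the condition $\pi_{[1k]}(F)=[e_{k-1}]$ is imposed throughout $\widehat{\CN}_\ff^{e_{k-1},\perp}$. Hence $\tau$ is constant on the fiber, and the index sets $\Lambda_{\a_i}$ depend only on the placement of $\a_i$ inside the ordering of $\{1,\ldots,k\}$, not on any geometric deformation of $F$. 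Consequently $V(\bz^{\a,\tau})$, and thus $\Phi(V(\bz^{\a,\tau}))$, is constant along the fiber.

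The delicate point, which I expect to be the main technical obstacle, is to verify that for the pieces $V^{[\a_i]}$ with $\a_i$ containing both labels $1$ and $k$ the torus weights of $F_{\a_i,\tau}$ acquire no correction from the fiber direction. This is handled by the containment $\xi_{\a_i}\subset\xi_\Lambda=F_\Lambda=\Span(e_{\tau_1},\ldots,e_{\tau_{k-1}})$, which forces the weight set of $F_{\a_i,\tau}$ to be a subset of $\{z_{\tau_1},\ldots,z_{\tau_{k-1}}\}$ uniquely determined by $\a_i$ once $\tau$ is fixed. The variation along the fiber of $\pi_{[1k]}$ only modifies how the length-$2$ piece $\xi_{\{1,k\}}$ glues inside $\xi_\Lambda$ and, more generally, how the refinements of $\xi_{\{1,k\}}$ degenerate; it does not alter the torus-weight decomposition of the larger pieces that feed into $V(\bz^{\a,\tau})$. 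Once this bookkeeping is spelled out, constancy is immediate.
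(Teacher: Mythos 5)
Your proposal captures the right intuition (the relevant data does not move along the fibers) but the route you take through the fixed-point label $\tau$ and the torus-weight bookkeeping is both heavier and, in one place, incorrect, compared with the paper's short and direct argument.

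The crucial misstep is your claim that ``the condition $\pi_{[1k]}(F)=[e_{k-1}]$ is imposed'' along the fiber of $\iota$. This condition is precisely what cuts out $\widehat{\CN}_\ff^{e_{k-1},\perp}$; the fibers of $\iota$ are the normal directions to $\widehat{\CN}_\ff^{e_{k-1},\perp}$ inside $\widehat{\CN}_\ff^{e_{k-1}}$, and along them $\pi_{[1k]}$ \emph{does} vary, sweeping out $\PP[e_{k-1},N_{F_{-1}}]$. So you cannot appeal to $\pi_{[1k]}$ being constant to conclude that $\tau$ is constant. More generally, the lemma is a statement about the restriction of the class $\Phi(V^\a)$ to the whole one-dimensional fiber (needed so that in the subsequent fiber integration it can be pulled out of the $\PP^1$-integral), and this is not established by a pointwise comparison of the combinatorial data at the two torus-fixed points of that fiber.

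The paper's proof operates at the level of the subschemes themselves. Writing $(\xi_A)\in\omega^{-1}(\zeta)$, one has $\xi_A=\zeta_A$ for every $A\subset\{1,\ldots,k-1\}$: only the data indexed by subsets containing $k$ can vary. Since $k\in\a_s$ and $\a_i\subset\{1,\ldots,k-1\}$ for $i<s$, all $\xi_{\a_i}$ with $i<s$ are already fixed. The one nontrivial piece is $\xi_{\a_s}$, and for that the locus $\widehat{\CN}_\ff^{e_{k-1}}$ supplies the identity $\xi_{\a_s}=\xi_{\a_s\setminus\{k\}}\wedge e_{k-1}=\zeta_{\a_s\setminus\{k\}}\wedge e_{k-1}$, so it too is constant. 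That identity is the decisive fact, and your write-up only gestures at it (``the variation \ldots{} does not alter the torus-weight decomposition of the larger pieces''); you should state it and use it. Note also that your worry about the weights of $F_{\a_i,\tau}$ ``acquiring corrections'' when $\{1,k\}\subset\a_i$ resolves itself once you see that what actually moves along the fiber is the small piece $\xi_{\{1,k\}}$ \emph{inside} the fixed $\xi_{\a_s}$, not $\xi_{\a_s}$ itself; the nesting constraint $\xi_{\{1,k\}}\subset\xi_{\a_s}$ is compatible with $\xi_{\{1,k\}}$ moving while $\xi_{\a_s}$ stays put.
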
 
\begin{proof} Indeed, if $(\xi_A: A \subset \{1,\ldots, k\}) \in \omega^{-1}(\zeta)$, then $\xi_A = \zeta_A$ for $A \subset \{1,\ldots, k-1\}$ and hence $\xi_{\a_i}=\zeta_{\a_i}$ for $1\le i \le s-1$. Finally, $\xi_{\a_s}=\xi_{\a_s \setminus \{k\}} \wedge e_{k-1}=\zeta_{\a_s \setminus \{k\}} \wedge e_{k-1}$ is constant along the fiber. 
\end{proof}

We are ready for the inductive argument which proves Theorem \ref{vanishing1}.

Integration along the normal bundle of $\iota$ of the equivariant form $\Phi(V(\bz^{\a,\tau}))$ then gives 
\[\int_{\PP[e_{k-1},N_{F_{-1}}]} \frac{\Phi(V(\bz^{\a,\tau}))}{N^\ff} = \frac{\Phi}{z_{k-1}(z_{k-1}-N_{F_{-1}})}+\frac{\Phi}{N_{F_{-1}}(N_{F_{-1}}-z_{k-1})}=\frac{\Phi}{z_{k-1}N_{F_{-1}}}\]

Recall that the form $\Phi_f(V(\bz^{\a,\tau}))$ is compactly supported on an infinitesimal neighborhood of $\widehat{\CN}_\ff$ in $\widehat{\BN}_\ff$, and integration is additive on the components of these. By Lemma \ref{lemma:zk-1doesnotappear} and Lemma \ref{lemma:crucial}   
\begin{equation}\label{formuladecompose1}  
\int_{\widehat{\BN}_\ff} \Phi^\Lambda_f(V^{[k]}) = \int_{\widehat{\CN}^{bound}_\ff} \frac{\Phi_f^\Lambda(V^{[k]})}{N^\ff}=\int_{\widehat{\CN}^{e_{k-1},\perp}_\ff} \int_{\PP[e_{k-1},N_{F_{-1}}]} j_*\frac{\Phi_f^\Lambda(V^{[k]})}{N^\ff}
\end{equation}
Recall that $\Phi^\Lambda(V^{[k]})=\Phi(V^{[k]}) + \sum \limits_{\a \in \Pi(k)\setminus \Lambda} (-1)^\a \Phi(V^{\a})$, and for $\a=(\a_1,\ldots, \a_s)$ with $s>1$ the corresponding term has the residue expression 
\begin{multline}\label{formuladecompose2} 
\sum_{F \in \widehat{\CN}^{e_{k-1},\perp}_{\ff}} \sires \frac{\emu_{F_{-1}}[\widehat{\BN}_{\ff_{-1}},\widehat{\grass_{k-1}}] \prod_{i<j\le k-1}(z_i-z_j)\dbz}{
\Euler_{F_{-1}}(\widehat{\grass_{k-1}})  \prod_{l=1}^{k-1}\prod_{i=1}^n(\lambda_i-z_l)}\cdot \frac{\Phi(V(\bz^{\a,\tau}))}{N_{F_{-1}}z_{k-1}\prod_{i=1}^{k-2}(z_i-z_{k-1})}=\\
\sum_{F_{-1} \in \widehat{\CN}^{bound}_{\ff_{-1}}} \sires \frac{\emu_{F_{-1}}[\widehat{\BN}_{\ff_{-1}},\widehat{\grass_{k-1}}]}{\Euler_{F_{-1}}(\widehat{\grass_{k-1}})N_{F_{-1}}(\bz)} \cdot \frac{\prod_{i<j\le k-2}(z_i-z_j) \Phi(V(\bz^{\a,\tau}))\dbz}{
 z_{k-1} \prod_{l=1}^{k-1}\prod_{i=1}^n(\lambda_i-z_l)}
\end{multline}
The key feature of this formula that it separated the last residue variable $z_{k-1}$ from the rest: the first term is the fixed-point contribution at the fixed point $F_{-1}$ of the formal integral 
\[\int_{\widehat{\BN}_{\ff_{-1}}} \Phi^\Lambda_{\ff_{-1}}= \int_{\widehat{\CN}^{bound}_{\ff_{-1}}} \frac{\Phi_f^\Lambda}{N^{\ff_{-1}}},\]
and this is independent of $z_{k-1}$. Moreover we can now apply an inductive argument and separate the $z_{k-2}$ variable next: 
\[\int_{\widehat{\BN}_\ff} \Phi^\Lambda_f(V^{[k]})=\sum_{F_{-2} \in \widehat{\CN}^{bound}_{\ff_{-2}}} \sires \frac{\emu_{F_{-2}}[\widehat{\BN}_{\ff_{-2}},\widehat{\grass}]}{\Euler_{F_{-2}}(\widehat{\grass})N_{F_{-2}}(\bz)} \cdot \frac{\prod_{i<j\le k-3}(z_i-z_j) \Phi(V(\bz^{\a,\tau}))\dbz}{z_{k-2}z_{k-1}\prod_{l=1}^{k-1}\prod_{i=1}^n(\lambda_i-z_l)} \]

We proceed with the induction and conclude that the residue vanishes unless $\tau=({1},[2],\ldots, [k-1])$, that is, $F \in \widetilde{\CN}^{por}_\ff$ sits over the $k$-Porteous fixed point in the fully nested Hilbert scheme, and the formula reduces to 
\[\sires \frac{\Phi(V(\bz^{\a,\tau}))\dbz}{z_1 \ldots z_{k-1} \prod_{l=1}^{k-1}\prod_{i=1}^n(\lambda_i-z_l)} \]
If $\a=(\a_1,\ldots, \a_s\})$ with $s>1$, then by Lemma \ref{crucial1} (1) $\Phi(V(\bz^{\a,\tau}))$ does not depend on at least one $z_i$ for $1\le i \le k-1$ and hence by Lemma \ref{lemma:vanishprop} the residue vanishes.


\end{proof}

\subsection{Reducing integration from the fully nested to the curvilinear Hilbert scheme}

The residue vanishing theorem reduces the formula \eqref{intnumberone} to 
\begin{equation}\label{intnumberthree} 
\int_{\widehat{\BN}^{k}(\CC^n)}\Phi_f^\Lambda(V^{[k]})
=\sum_{F\in \mathcal{F}_\ff} \sires \frac{\emu_F[\widehat{\BN}_\ff,\widehat{\grass}] \prod\limits_{i<j} (z_i-z_j) \Phi(V(\bz^{\Lambda,\tau}))}{
\Euler_F(\widehat{\grass})  \prod_{l=1}^{k-1}\prod_{i=1}^n(\lambda_i-z_l)} 
  \end{equation}
But the bundle $V^{[k]}=(\pi_\Lambda)^*V^{[k]}$ is pulled-back from a small neighborhood $\CHilb^{k}_\nabla(\CC^n)$ of $\CHilb^{k}(\CC^n)$ in $\GHilb^k(\CC^n$.  Hence the right hand side of \eqref{intnumberthree} is the localisation formula for the integral over the small neighborhood $\CHilb^{k}_\nabla(\CC^n)$, which we formulate in the following corollary.
\begin{corollary}\label{cor:intnumberfour}  
\[
\int_{\widehat{\BN}^{k}(\CC^n)}\Phi_f^\Lambda(V^{[k]})=\int_{\CHilb^{k}_\nabla(\CC^n)} \Phi_f(V^{[k]})
\]
\end{corollary}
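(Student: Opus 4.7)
The plan is to convert the left-hand side into an iterated residue using the Atiyah--Bott--Rossman localisation formula already set up in \eqref{intnumberthree}, apply the Residue Vanishing Theorem to eliminate all but one term, and then recognise what remains as the Atiyah--Bott--Rossman formula for the right-hand side.

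First, by Remark \ref{philambda} we have
\[\Phi^{\Lambda}(V^{[k]})=\Phi(V^{[k]})+\sum_{\b\in\Pi(k)\setminus\Lambda}(-1)^{|\b|-1}(|\b|-1)!\,\Phi(V^{[\b]}).\]
Substituting this into \eqref{intnumberthree} yields a sum of iterated residues indexed by partitions $\b\in\Pi(k)$. The Residue Vanishing Theorem \ref{vanishing1} applied to each $\b\neq\Lambda$ kills these correction terms, so
\[\int_{\widehat{\BN}^{k}(\CC^n)}\Phi_f^\Lambda(V^{[k]})=\sum_{F\in\mathcal{F}_\ff}\sires\frac{\emu_F[\widehat{\BN}_\ff,\widehat{\grass}]\prod_{i<j}(z_i-z_j)\,\Phi(V(\bz^{\Lambda,\tau}))\,\dbz}{\Euler_F(\widehat{\grass})\,\prod_{l=1}^{k-1}\prod_{i=1}^{n}(\lambda_i-z_l)}.\]

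Next, I would recognise the right-hand side of the display above as a localisation formula on $\widehat{\CHilb}^k_\nabla(\CC^n)$. Since $\pi_\Lambda:\widehat{\BN}^k_\nabla(\CC^n)\to\widehat{\CHilb}^k_\nabla(\CC^n)$ is birational (Lemma \ref{lemma:bir}), each surviving fixed point $F\in\mathcal{F}_\ff$ projects to a unique torus fixed point of $\widehat{\CHilb}_\ff$ indexed by a curvilinear admissible sequence $\tau\in\mathcal{P}(k-1)$, and $\pi_\Lambda$ is étale at $F$. Consequently
\[\frac{\emu_F[\widehat{\BN}_\ff,\widehat{\grass}]}{\Euler_F(\widehat{\grass})}=\frac{\emu_{F_\tau}[\widehat{\CHilb}_\ff,\widehat{\grass}_k(S^\bullet\CC_{[k-1]})]}{\Euler_{F_\tau}(\widehat{\grass}_k(S^\bullet\CC_{[k-1]}))},\]
and by the test curve model (Theorem \ref{bszmodel}), the Chern roots of $V^{[k]}$ at $F_\tau$ are $\{\theta_j,\,z_{\tau_i}+\theta_j\}$, so $\Phi(V^{[k]})|_{F_\tau}=\Phi(V(\bz^{\Lambda,\tau}))$. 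Applying Rossmann's formula (Proposition \ref{rossman}) on the ambient Grassmannian for $\widehat{\CHilb}_\ff$, followed by Proposition \ref{ABtoresidue} to convert the flag localisation over $\flag_{k-1}(\CC^n)$ into an iterated residue, and finally invoking birationality of the partial desingularisation $\widehat{\CHilb}^k_\nabla(\CC^n)\to\CHilb^k_\nabla(\CC^n)$, we identify the displayed expression with $\int_{\CHilb^k_\nabla(\CC^n)}\Phi_f(V^{[k]})$. Compact support of $\Phi_f$ in a neighbourhood of the curvilinear locus (Corollary \ref{cor:mostimportant} combined with Proposition \ref{prop:hilbfloc}) guarantees that the integrals over $\widehat{\CHilb}^k_\nabla$ and $\CHilb^k_\nabla$ agree under this birational map.

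The main obstacle will be verifying the matching of equivariant multiplicities in Step 2 at fixed points where several preimages under $\pi_\Lambda$ coexist: while all but the ``main'' preimage are killed by Theorem \ref{vanishing1}, one still needs to identify that the surviving preimage contributes the \emph{same} Rossmann equivariant multiplicity as the image fixed point. This is handled by unwinding the local models of Section \ref{sec:model}: the tangent cone of $\widehat{\BN}_\ff$ at a main curvilinear fixed point splits as the tangent cone of $\widehat{\CHilb}_\ff$ plus the tangent to the birational fiber, the latter having trivial multidegree contribution, so the two equivariant multiplicities coincide. Once this identification is in place, the corollary follows by comparing the two iterated residue expressions term by term.
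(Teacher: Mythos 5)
Your proposal follows the same two-step route as the paper: (i) apply the Residue Vanishing Theorem \ref{vanishing1} to kill the $\Phi(V^{\a})$ terms for $\a\neq\Lambda$ in \eqref{intnumbertwo}, leaving only the $\Phi(V(\bz^{\Lambda,\tau}))$ term as in \eqref{intnumberthree}; (ii) identify the resulting residue expression with the localisation formula for $\int_{\CHilb^k_\nabla(\CC^n)}\Phi_f(V^{[k]})$. Step (i) is exactly what the paper does.

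The issue is in step (ii), and you correctly flag it as ``the main obstacle,'' but your proposed resolution is imprecise. You assert that $\pi_\Lambda$ is \'etale at each surviving fixed point $F$, so that $\emu_F[\widehat{\BN}_\ff,\widehat{\grass}]/\Euler_F(\widehat{\grass})$ matches the corresponding Rossmann factor on $\widehat{\CHilb}_\ff$. But the torus fixed points $F\in\mathcal F_\ff$ sit on the \emph{boundary} of the curvilinear locus, precisely where $\pi_\Lambda$ fails to be an isomorphism: $\pi_\Lambda$ is birational with exceptional locus over the boundary, so it is not \'etale there, and moreover several fixed points $F$ may map to the same curvilinear fixed point $F_\tau\in\widehat{\CHilb}_\ff$. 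A term-by-term identification of equivariant multiplicities is therefore not available; what one can expect is only that the \emph{sum} of contributions over a $\pi_\Lambda$-fibre matches, and your sketch of the tangent cone splitting would need to be replaced by such a fibrewise cancellation argument, which is delicate.

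The cleaner and more robust way to close this gap --- and the one implicit in the paper's terse wording ``the bundle $V^{[k]}=\pi_\Lambda^*V^{[k]}$ is pulled-back\ldots hence the right hand side of \eqref{intnumberthree} is the localisation formula for the integral over $\CHilb^k_\nabla(\CC^n)$'' --- is to avoid comparing multidegrees at all. After residue vanishing, the integrand on the left is literally the pull-back $\pi_\Lambda^*\Phi_f(V^{[k]})$ under the proper generically one-to-one morphism $\pi_\Lambda\colon\widehat{\BN}^k_\nabla(\CC^n)\to\widehat{\GHilb}^k_\nabla(\CC^n)$. By the projection formula for proper degree-one maps (in compactly supported equivariant cohomology), $\int_{\widehat{\BN}^k}\pi_\Lambda^*\Phi_f=\int_{\widehat{\GHilb}^k_\nabla}\Phi_f$. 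Since $\Phi_f$ is properly supported in the sense of Definition \ref{geocond} --- its support meets the punctual locus only in $\CHilb^k$ --- this equals $\int_{\CHilb^k_\nabla(\CC^n)}\Phi_f(V^{[k]})$. This sidesteps the fixed-point-by-fixed-point comparison that your \'etale claim was trying to justify, and is the argument you should use.
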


Recall that $\Phi_f(V^{[k]}) \in \Omega^\bullet(\GHilb^{k+1}(\CC^n))$ is properly supported form as in Definition \ref{geocond}. This means that $\supp(\Phi_f(V^{[k]})$ is locally irreducible at every point and 
\[\supp(\Phi_f) \cap \GHilb^{k}_0(\CC^n) \subset \CHilb^{k}(\CC^n)\]
Recall also that $\Curv^{k}(\CC^n) \subset \CHilb^{k}(\CC^n)$ is the nonsingular open locus which parametrises curvilinear subschemes. Let $B \to \Curv^{k+1}(\CC^n)$ denote the normal bundle of $\Curv^{k}(\CC^n)$ in $\GHilb^{k}(\CC^n)$.
\begin{proposition}{(Haiman \cite{haiman})} $B=\calo_{\CC^n}^{[k]}/\calo$ is a rank $k-1$ bundle.
\end{proposition}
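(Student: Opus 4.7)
My plan is as follows. First, observe that the tautological bundle $\calo_{\CC^n}^{[k]}$ has fiber $H^0(\xi,\calo_\xi)$ at each $\xi\in\GHilb^k(\CC^n)$, and the canonical inclusion of constants $\CC\hookrightarrow\calo_\xi$ defines a trivial rank-one subbundle $\calo \hookrightarrow \calo_{\CC^n}^{[k]}$. The quotient $\calo_{\CC^n}^{[k]}/\calo$ is therefore a vector bundle of rank $k-1$ on all of $\GHilb^k(\CC^n)$, matching the expected codimension of $\Curv^k(\CC^n)$ since $\dim\Curv^k(\CC^n)=(k-1)(n-1)+n=kn-(k-1)$ while $\dim\GHilb^k(\CC^n)=kn$. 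So the rank statement is immediate; the content is the identification with the normal bundle.

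Next, to identify this quotient with the normal bundle of $\Curv^k(\CC^n)$, I would use the deformation-theoretic description $T_\xi\GHilb^k(\CC^n)=\Hom_{\calo_{\CC^n}}(I_\xi,\calo_\xi)$ at a curvilinear point $\xi$. Pick a parameter $t$ of a smooth curve $C\supset\xi$, so that $\calo_\xi\simeq\CC[t]/t^k$, together with a lift $\tilde t\in I_\xi$ of $t$; then the assignment $\varphi\mapsto \varphi(\tilde t) \bmod \CC$ is independent of the chosen lift up to the constant subbundle, producing a natural map $T_\xi\GHilb^k\to \calo_\xi/\CC$ which is surjective because $\tilde t$ generates $\calo_{\CC^n,\xi}/I_\xi$ as an algebra. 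A dimension count then shows the kernel has dimension $(k-1)(n-1)+n$; pairing this with the test curve model of Theorem~\ref{bszmodel}, where $\Curv^k(\CC^n)/\diff_k(1)$ embeds into $\grass_{k-1}(\cald^{k-1}_{\CC^n})$ and $\calo_{\CC^n}^{[k]}/\calo$ coincides with the restriction of the tautological bundle $\cale$, I would identify the kernel with $T_\xi\Curv^k(\CC^n)$.

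Globalizing over the smooth locus $\Curv^k(\CC^n)$ then identifies $B$ with $\calo_{\CC^n}^{[k]}/\calo$ as vector bundles and fits this identification into the short exact sequence $0\to\calo\to\calo_{\CC^n}^{[k]}\to B\to 0$ already supplied by Theorem~\ref{bszmodel}(3). The main obstacle I anticipate is verifying that the local map $\varphi\mapsto\varphi(\tilde t)\bmod\CC$ really is a surjection with the claimed kernel, which requires controlling the action of $\Hom(I_\xi,\calo_\xi)$ on all of $I_\xi$ beyond the distinguished generator $\tilde t$. This is essentially Haiman's local analysis of the Hilbert scheme at a curvilinear point, translated into the present setting, and the key input is that in local coordinates adapted to $C$ the ideal $I_\xi$ has a very explicit set of generators consisting of $t^k$ together with the transverse coordinates, so the $\Hom$-computation reduces to a finite linear-algebraic check.
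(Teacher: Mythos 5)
The paper does not actually prove this proposition: it is stated with a bare citation to Haiman and no argument, so there is nothing in the text to compare your proof against. Reviewing the proposal on its own terms, the rank observation is immediate and correct, but the proposed identification of the normal bundle with $\calo_{\CC^n}^{[k]}/\calo$ has two concrete problems.

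First, the phrase ``a lift $\tilde t\in I_\xi$ of $t$'' does not parse: any lift to $\calo_{\CC^n}$ of the nonzero generator $t\in\calo_\xi=\CC[t]/t^k$ lies \emph{outside} $I_\xi$, yet $\varphi$ is only defined on $I_\xi$. What you presumably intend is $\tilde t^k\in I_\xi$, a generator of the ideal of $\xi$ inside the curve $C$, with the map $\varphi\mapsto\varphi(\tilde t^k)\in\calo_\xi$.

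Second, even with that correction, the quotient you land in is not $\calo_\xi/\CC$. In adapted local coordinates $(t,y_1,\dots,y_{n-1})$ one has $I_\xi=(t^k,y_1,\dots,y_{n-1})$, and the tangent directions to $\Curv^k$ are exactly those $\varphi$ with $\varphi(t^k)\in\CC t^{k-1}$ (translation of the support along $C$) together with arbitrary $\varphi(y_j)$ (moving $C$); changing the lift $\tilde t$ also only perturbs $\varphi(\tilde t^k)$ by an element of $\CC t^{k-1}$, not of $\CC$. So the natural quotient is $N_\xi\cong\calo_\xi/(t^{k-1}\calo_\xi)$, and passing from there to $\calo_\xi/\CC\cong\mathfrak m_\xi$ requires multiplying by the local parameter, a step your sketch omits and which is not $\GL_n$-canonical. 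The mismatch is visible already for $n=1$: at $\xi=V(x^k)\in\Hilb^k(\CC)\cong\CC^k$, parametrizing ideals by the monic polynomial $x^k+a_{k-1}x^{k-1}+\cdots+a_0$, the normal space to $\Curv^k(\CC)$ at $\xi$ is spanned by $\partial/\partial a_0,\dots,\partial/\partial a_{k-2}$ with torus weights $\{2,3,\dots,k\}$, whereas $\calo_\xi/\CC$ has weights $\{-1,-2,\dots,-(k-1)\}$; these sets are not equal even up to a global sign. The appeal to Theorem~\ref{bszmodel} to ``identify the kernel with $T_\xi\Curv^k$'' does not address this shift, so the proposal as written does not establish the stated equivariant isomorphism, which is the form in which the proposition is actually used downstream (via $\Euler(B)=z_1\cdots z_{k-1}$). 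Whatever precise statement from Haiman's work is being invoked, your argument needs the extra multiplication-by-parameter twist made explicit and its effect on the equivariant structure tracked.
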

Note that the tautological bundle $B$ extends over the whole $\GHilb^{k}(\CC^n)$, and in particular, over the closure $\CHilb^{k}(\CC^n)=\overline{\Curv^{k}(\CC^n)}$, and  
\[\supp_\nabla(\Phi_f):=\supp(\Phi_f) \cap \GHilb_\nabla(\CC^n) \subset B|_{\CHilb^{k}(\CC^n)}\]
Hence $\supp(\|phi_f)$ locally modeled over $\supp(\Phi_f)_0$ as a bundle, which means that 
\begin{equation}\label{localmodelgeneral}
\xymatrix{\supp_\nabla(\Phi_f) \ar[d]  \ar[r]^\rho &  B  \ar[d]  \\
  \supp_0(\a)  \ar@{^{(}->}[r] &  \CHilb^{k}(X)}
 \end{equation}
there is a topological isomorphism $\rho$ from $\supp_\nabla(\Phi_f)$ to the total space of $B$. This local model combined with Thom-isomorphism gives 
\begin{equation}
\int_{\CHilb^{k}_\nabla(\CC^n)} \Phi_f(V^{[k]})=\int_{\CHilb^{k}(\CC^n)} \frac{\Phi_f(V^{[k]})}{\Euler(B)}
\end{equation}

\subsection{Residue vanishing theorem on curvilinear Hilbert schemes}
We can follow the same argument we developed over the nested Hilbert scheme $\widehat{\BN}^{k}(\CC^n)$, but now for $\widehat{\CHilb}^{k}(\CC^n)$: we transform the localisation into iterated residue over $\flag_{k-1}(\CC^n)$, then we apply a second localisation over the fiber with the residual $k-1$-dimensional torus with weights $z_1, \ldots, z_{k-1}$. The fiber $\CHilb^{k}_\ff$ over the flag $\ff=(\Span(e_1) \subset \ldots \subset \Span(e_1,\ldots, e_{k-1})=\CC_{[k-1]})$ is singular, but it sits in the smooth ambient space 
\[\flag_\ff=\flag_{k-1}(\Sym^{\le k-1}\CC_{[k-1]})\] 
and hence the Rossmann localisation formula of Proposition \ref{rossman} gives
\begin{equation}\label{intnumbersix} 
\int_{\CHilb^{k}(\CC^n)}\frac{\Phi_f(V^{k})}{\Euler(B)}=
\sum_{F\in \mathcal{P}_\ff} \sires \frac{\emu_F[\CHilb^{k}_\ff,\flag_\ff]
\prod_{i<j\le k-1}(z_i-z_j) \Phi(V^\Lambda(\bz))}{
\Euler_F(B)\cdot  \Euler_F(\flag_\ff) \prod_{l=1}^{k-1}\prod_{i=1}^n(\lambda_i-z_l)} \,\dbz
  \end{equation}
Recall from \eqref{tauconditions} that the fixed points in $\mathcal{P}_\ff$ are parametrised by sequences $\tau=\tau_1-\ldots -\tau_{k-1}$ where 
\begin{equation}
\tau_i \subset \{1,\ldots, i\} \text { such that } \Sigma \tau_i \le i. 
\end{equation}
We call these admissible sequences, and those admissible sequences which correspond to fixed points in $\CHilb^{k}_\ff$ are called curvilinear admissible sequences. 

Note that at the fixed point $F=\tau=(\tau_1,\ldots, \tau_{k-1})$ corresponding to the admissible sequence $(\tau_1,\ldots, \tau_{k-1})$ the Euler class of $B$ is 
\[\Euler_F(B)=z_{\tau_1} \ldots z_{\tau_{k-1}}.\]
The tangent space of $\flag_\ff$ at $\tau=(\tau_1,\ldots, \tau_{k-1})$ is 
\[\Euler_\tau(\flag_\ff)=\prod_{l=1}^k\prod_{\Sigma \tau \leq l}^{\tau\neq\tau_1\ldots \tau_l}
(z_{\tau}-z_{\tau_l})\]
We prove in \cite{bercziG&T} the following vanishing theorem
\begin{theorem}{\textbf{Residue Vanishing Theorem on curvilinear Hilbert schemes (\cite{bercziG&T} Thm. 6.1)}}\label{vanishing2} Let $m \le k$. Then 
\begin{enumerate}
\item All terms but the one corresponding to $\tau_\dist = ([1], [2],\ldots , [k-1])$ vanish in \eqref{intnumbersix}, leaving us with 
\begin{equation}\label{eqn:residue2}
\sires \frac{\emu_{\tau_\dist}[\CHilb^{k}_\ff,\flag_\ff]
\prod_{i<j\le k-1}(z_i-z_j) \Phi(V^{[k]}(\bz))}{
\Pi_{\Sigma \tau \le l \le k-1}(z_\tau-z_l) \cdot  z_1 \ldots z_{k-1} \prod_{l=1}^{k-1}\prod_{i=1}^n(\lambda_i-z_l)} \,\dbz=0
\end{equation}
\item If $|\tau|\ge 3$ then $\emu_{\tau_\dist}[\CHilb^{k}_\ff,\flag_\ff]$ is divisible by $z_\tau-z_l$ for all $l \ge \Sigma \tau$. Let 
\[Q_{k-1}(\bz)=\frac{\emu_{\tau_\dist}[\CHilb^{k}_\ff,\flag_\ff]}{\prod_{\substack{|\tau| \ge 3\\\Sigma \tau \le l \le k-1}}(z_\tau-z_l)}\]
denote the quotient polynomial. Then we get the simplified formula:
\begin{equation} \label{intnumberthreeb}
\int_{\CHilb^{k}(\CC^n)}\frac{\Phi_f(V^{k})}{\Euler(B)}=\sires \frac{Q_{k-1}(\bz)\,\prod_{i<j\le k-1}(z_i-z_j) \Phi(V^{[k]}(\bz))}{
\prod_{i+j \le l \le k-1}
(z_i+z_j-z_l)  \prod_{l=1}^{k-1}\prod_{i=1}^n(\lambda_i-z_l)} \,\dbz
  \end{equation}
\end{enumerate} 
\end{theorem}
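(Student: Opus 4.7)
The plan is to adapt the strategy used for the Residue Vanishing Theorem on the fully nested Hilbert scheme (Theorem \ref{vanishing1}) to the curvilinear setting: combine the iterated residue vanishing criterion of Lemma \ref{lemma:vanishprop} with an explicit analysis of the equivariant multiplicities coming from the test curve model of Theorem \ref{bszmodel}.

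Recall that the torus fixed points in $\CHilb^{k}_\ff$ are labelled by curvilinear admissible sequences $\tau = (\tau_1,\ldots,\tau_{k-1})$ subject to $\tau_i \subset \{1,\ldots,i\}$ and $\sum_{j\in\tau_i} j \le i$. The corresponding fixed point is the coordinate subspace $F_\tau = \Span(e_{\tau_1},\ldots,e_{\tau_{k-1}}) \subset \Sym^{\le k-1}\CC_{[k-1]}$, so the $T^{k-1}_\bz$-weights of the tautological bundle $\cale$ at $F_\tau$ are $z_{\tau_1},\ldots,z_{\tau_{k-1}}$ where $z_{\tau_j} = \sum_{i\in\tau_j} z_i$, and the equivariant Euler class of the Haiman bundle $B$ at $F_\tau$ is $\prod_{j=1}^{k-1} z_{\tau_j}$. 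Consequently, the Chern roots of $V^{[k]}$ at $F_\tau$ are $\theta_i + z_{\tau_j}$, so the numerator $\Phi(V^{[k]}(\bz))$ depends on $z_\ell$ only through those composite weights $z_{\tau_j}$ with $\ell \in \tau_j$.

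For Part (1), I would proceed by induction on $k$, applying Lemma \ref{lemma:vanishprop} with the iterated residue taken in the order $z_{k-1},z_{k-2},\ldots,z_1$. If $\tau \neq \tau_\dist$, there exists $\ell \in \{2,\ldots,k-1\}$ with $\tau_\ell \neq [\ell]$, hence $\ell$ does not appear as a singleton entry of $\tau$. When $\ell = k-1$ this means $z_{k-1}$ enters either not at all, or only inside composite weights $z_{\tau_j}$ with $|\tau_j|\ge 2$; tracking $z_{k-1}$-degrees in $\emu_{F_\tau}[\CHilb^k_\ff,\flag_\ff]$, in $\Euler_{F_\tau}(\flag_\ff)$, and in $\prod_i (\l_i - z_{k-1})$, one verifies both hypotheses of Lemma \ref{lemma:vanishprop}: the denominator $z_{k-1}$-degree exceeds the numerator $z_{k-1}$-degree by more than one, and every linear factor containing $z_{k-1}$ also involves higher-indexed variables (vacuously, since $k-1$ is maximal). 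When $\ell < k-1$, after disposing of the trivial $z_{k-1}$-residue on the sub-flag the analysis reduces to the same assertion for $z_\ell$ on $\CHilb^{k-1}_{\ff_{-1}}$, where the inductive hypothesis applies; the mixed terms $(z_\ell - z_{k-1})$ that would otherwise spoil vanishing cancel against the Vandermonde $\prod_{i<j}(z_i - z_j)$, exactly as in the argument for Theorem \ref{vanishing1}.

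For Part (2), I would compute $\emu_{\tau_\dist}[\CHilb^k_\ff, \flag_\ff]$ directly from the test curve model. Theorem \ref{bszmodel}(2) identifies a neighborhood of $\tau_\dist$ in $\CHilb^k_\ff$ with $\overline{P_{n,k-1} \cdot p_{n,k-1}}$ inside $\grass_{k-1}(\Sym^{\le k-1}\CC_{[k-1]})$. The $T$-invariant tangent cone $\tc_{\tau_\dist}\CHilb^k_\ff$ inside the tangent space $T_{\tau_\dist}\flag_\ff = \Hom(F_{\tau_\dist}, \Sym^{\le k-1}\CC_{[k-1]}/F_{\tau_\dist})$ is generated by the infinitesimal jet-reparametrization directions of $p_{n,k-1}$, and an explicit calculation shows these are spanned by weight vectors $z_\tau - z_l$ with $\tau$ admissible and $\Sigma \tau \le l \le k-1$. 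The equivariant multiplicity is therefore $\prod_{\Sigma \tau \le l}(z_\tau - z_l)$; isolating the $|\tau|\ge 3$ factors defines $Q_{k-1}$, while the remaining $|\tau|=2$ factors $\prod_{i+j\le l}(z_i+z_j-z_l)$ together with the Haiman Euler $z_1\cdots z_{k-1}$ from $\Euler_{\tau_\dist}(B)$ and the Vandermonde $\prod_{i<j}(z_i-z_j)$ produce exactly \eqref{intnumberthreeb}. The main obstacle is Part (1): carrying out the degree check uniformly over all non-distinguished $\tau$, in particular handling cases where $z_\ell$ is hidden inside composite weights $z_{\tau_j}$ with $|\tau_j|\ge 2$. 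A clean inductive set-up on $k$, together with the observation in Lemma \ref{crucial1}(1) that some $z_\ell$ is absent whenever $\tau \neq \tau_\dist$, keeps the bookkeeping manageable.
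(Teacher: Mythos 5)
The statement is imported verbatim from \cite{bercziG&T}, Theorem~6.1; the present paper offers no proof of its own (the text immediately before it reads ``We prove in \cite{bercziG&T} the following vanishing theorem''), so there is no in-paper argument to compare against. Judged on its own terms, your proposal has a genuine error in Part~(2) and a significant gap in Part~(1).

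\textbf{Part (2).} You assert that the tangent cone $\tc_{\tau_\dist}\CHilb^{k}_\ff$ inside $T_{\tau_\dist}\flag_\ff$ is the coordinate subspace spanned by the weight vectors $z_\tau - z_l$ (with $\tau$ admissible, $\Sigma\tau\le l\le k-1$), so that $\emu_{\tau_\dist}[\CHilb^{k}_\ff,\flag_\ff] = \prod_{\Sigma\tau\le l}(z_\tau-z_l)$. This is false, and it contradicts the statement you are trying to prove: Part~(2) only says $\emu_{\tau_\dist}$ is \emph{divisible} by the $|\tau|\ge 3$ factors, with a non-trivial quotient $Q_{k-1}$. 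Under your reading, $Q_{k-1}$ would always be a monomial (and, after you absorb the $|\tau|=2$ factors into the denominator of \eqref{intnumberthreeb}, identically $1$). But by Remark~\ref{remarkq}, $Q_{k-1}$ is the equivariant Poincar\'e dual of the Borel orbit closure $\OO_{k-1}=\overline{B_{k-1}\epsilon}\subset W_{k-1}$, which is a genuinely singular, non-coordinate subvariety once $k\ge 5$; indeed $Q_5=(2z_1+z_2-z_5)(2z_1^2+3z_1z_2-2z_1z_5+2z_2z_3-z_2z_4-z_2z_5-z_3z_4+z_4z_5)$. The tangent cone is cut out by the jet-reparametrisation relations of the $B_{k-1}$-action, not by a choice of coordinate hyperplanes, and its multidegree is not a product of linear weight factors.

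\textbf{Part (1).} The inductive-residue-vanishing strategy is plausible and matches the flavour of the proof of Theorem~\ref{vanishing1}, but as written it is a sketch rather than a proof. Two specific issues: (i)~you invoke the weaker property ``$\ell$ does not appear as a singleton in $\tau$,'' whereas the vanishing criterion of Lemma~\ref{lemma:vanishprop} needs $z_\ell$ to be \emph{absent from the numerator altogether}; the relevant fact is Lemma~\ref{crucial1}(1), which says some index $\ell$ does not appear in $\tau$ \emph{at all}. (ii)~The degree comparison between numerator and denominator in $z_{k-1}$ and the claimed cancellation of mixed factors $(z_\ell - z_{k-1})$ against the Vandermonde are asserted but not carried out; for the $|\tau|\ge 2$ composite weights, one must actually check that $\emu_{F_\tau}$ and $\Euler_{F_\tau}(\flag_\ff)$ do not reintroduce enough $z_{k-1}$-dependence to spoil condition~(1) of Lemma~\ref{lemma:vanishprop}, and that the surviving linear factors satisfy condition~(2). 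Without those explicit estimates the induction is not closed.
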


\begin{remark}\label{remarkq}
The geometric meaning of $Q_{k-1}(\bz)$ in \eqref{intnumberthreeb} is the following, see also \cite[Theorem 6.16]{bsz}. Let $T_{k-1}\subset B_{k-1}\subset \GL(k-1)$ be the subgroups of invertible
diagonal and upper-triangular matrices, respectively; denote the
diagonal weights of $T_{k-1}$ by $z_1, \ldots, z_{k-1}$.  Consider the $\GL(k-1)$-module of 3-tensors $\Hom(\CC^{k-1},\sym^2\CC^{k-1})$; identifying the
weight-$(z_m+z_r-z_l)$ symbols $q^{mr}_l$ and $q^{rm}_l$, we can
  write a basis for this space as follows:
\[ \Hom(\CC^{k-1},\sym^2\CC^{k-1})=\bigoplus \CC q^{mr}_l,\;  1\leq m,r,l \leq k-1.
\]
Consider the point $\epsilon=\sum_{m=1}^{k-1}\sum_{r=1}^{k-m-1}q_{mr}^{m+r}$
in  the $B_{k-1}$-invariant subspace
\begin{equation*}
  \label{nhmodule}
    W_{k-1} = \bigoplus_{1\leq m+r\leq l\leq k-1} \CC q^{mr}_l\subset
\Hom(\CC^{k-1},\sym^2\CC^{k-1}).
\end{equation*}
Set the notation $\OO_{k-1}$ for the orbit closure
$\overline{B_{k-1}\epsilon}\subset W_{k-1}$, then $Q_{k-1}(\bz)$ is the $T_{k-1}$-equivariant
Poincar\'e dual $Q_{k-1}(\bz) = \epd{\OO_{k-1},W_{k-1}}_{T_{k-1}}$,
which is a homogeneous polynomial of degree
$\dim(W_{k-1})-\dim(\OO_{k-1})$. For small $k-1$ these polynomials are the following (see \cite[Section 7]{bsz}):
\[Q_2=Q_3=1, Q_4=2z_1+z_2-z_4\]
\[Q_5=(2z_1+z_2-z_5)(2z_1^2 +3z_1z_2-2z_1z_5+2z_2z_3-z_2z_4-z_2z_5-z_3z_4+z_4z_5).\] 
\end{remark}

Hence we arrive to the final formula 
\begin{equation}\label{intnumberfour} 
\int_{\CHilb^{k}(\CC^n)}\frac{\Phi_f(V^{[k]})}{\Euler(B)}=
 \sires \frac{
Q_{k-1}(\bz) \prod_{i<j\le k-1}(z_i-z_j) \Phi(\{\theta_\ell,\theta_\ell+z_{\tau_i}\}:1\le \ell \le r, 1\le i\le k-1)}{
z_1\ldots z_{k-1} \prod_{i+j \le l \le k-1}(z_i+z_j-z_l)  \prod_{l=1}^{k-1}\prod_{i=1}^n(\lambda_i-z_l) \,\dbz}
  \end{equation}
This completes the proof of Theorem \ref{mainthm} and Theorem \ref{equivariantintegral}.

\section{Higher dimensional Segre and Chern numbers}

On surfaces, when $X=S$ is two-dimensional, the top Segre classes 
\[s_{2k}({V^{[k]}})=\int_{\Hilb^{k}(S)}s_{2k}({V^{[k]}})\]
of tautological bundles have long been studied. Here $s(F^{[k]})=1/c(F^{[k]})$ is the total Segre class of $F^{[k]}$ on $\Hilb^k(S)$. The generating series 
\[S_V(q)=\sum_{k\ge 0} s_{2k}({V^{[k]}}) q^k\]
was extensively studied for line bundles $V=L \in \mathrm{Pic}(S)$ and the well-known Lehn conjecture \cite{lehn} is a closed formula for $S_L(q)$. It was
first proved in \cite{mop2} in the special case of $K$-trivial surfaces, and then in general in
\cite{voisin,mop1}. This was the starting point of a serious of deep, beautiful structural results for Segre and Chern-integrals over surfaces: the line bundle was replaced by higher rank bundles, and a duality conjecture was formulated between Segre and Verlinde integrals \cite{johnson,mellitgottsche,mop1}. 

Our newly developed integration method allows us to generalise Segre numbers in higher dimensions. Let $X$ be a complex manifold of dimension $n$, $V$ as rank $r$ bundle over $X$. We define top Segre numbers as 
\[s_{nk}({V^{[k]}})=\int_{\GHilb^{k}(X)}s_{nk}({V^{[k]}})\]
With out notation the $\bz$-twisted classes are 
\[s(V^{[k]}(\bz))=s(\theta+\bz,\theta)=\prod_{j=1}^r\frac{1}{1+\theta_j}\cdot \prod_{i=1}^{k-1}\prod_{j=1}^r\frac{1}{1+\theta_j+z_i}=s_V \cdot (z_1\ldots z_{k-1})^{-r}\prod_{i=1}^{k-1}\cals(\frac{1}{z_i})\]
where $s_V$ is the total Segre class of $V$ and 
\[\cals(\frac{1}{z_i})=\prod_{j=1}^r \left(1-\frac{1+\theta_j}{z_i}+\frac{(1+\theta_j)^2}{z_i^2}- \ldots \right)\]
is a polynomial in $1/z_i$ with coefficients polynomials in the Chern-classes of $V$, that is $\cals(x) \in \CC[c_1(V),\ldots, c_r(V)][x]$.

Substituting into Theorem \ref{mainthm} we arrive at the following expression for the deepest term in $s_{nk}({V^{[k]}})$:
\[\sires \frac{\prod_{1\le i<j \le k-1}(z_i-z_j)Q_{k-1}(\bz)s_V d\bz}{\prod_{i+j\le l\le k-1}(z_i+z_j-z_l)(z_1\ldots z_{k-1})^{r+n+1}}\prod_{i=1}^{k-1} \cals \left(\frac{1}{z_i}\right)s_X\left(\frac{1}{z_i}\right),\]
which is a universal symmetric polynomial in the Chern roots of $X$ and $V$. This can be written as a linear form of degree-$mk$ monomials in the Chern polynomials of $X$ and $V$. Since the total Segre class is multiplicative in the sense that for the bundle decomposition $E=E' \oplus E''$  $s(E)=s(E')s(E'')$ holds, the full expression of Theorem \ref{mainthm} for $s_{nk}({V^{[k]}})$ can be reformulated using the generating function
\[S_V(q)=\sum_{k\ge 0} s_{nk}({V^{[k]}}).\]
By a general argument (see e.g. \cite{kazarian, rennemo}) 
\[S_V(q)=\exp \left(\sum_{k=0}^\infty \frac{q^k}{k!} \sires \frac{\prod_{1\le i<j \le k-1}(z_i-z_j)Q_{k-1}(\bz)s_V d\bz}{\prod_{i+j\le l\le k-1}(z_i+z_j-z_l)(z_1\ldots z_{k-1})^{r+n+1}}\prod_{i=1}^{k-1} \cals \left(\frac{1}{z_i}\right)s_X\left(\frac{1}{z_i}\right)\right)\]
The same exponential expansion holds for any multiplicative characteristic class, and in particular Chern series.

\bibliographystyle{abbrv}
\bibliography{thom.bib}
\end{document}